\documentclass[11pt,reqno]{amsart}

\usepackage{stmaryrd}
\usepackage[utf8]{inputenc}
\usepackage[T1]{fontenc}
\usepackage{graphicx} 
\usepackage[english]{babel}
\usepackage{csquotes,nicefrac}
\usepackage{amsmath,amssymb, bm}
\usepackage{tikz-cd}
\usepackage{thmtools}
\usepackage{footmisc}

\usepackage{todonotes}
\usepackage[normalem]{ulem}

\usepackage[most]{tcolorbox}
\usetikzlibrary{shapes.geometric}

\usepackage{array,booktabs,tabularx}
\usepackage{placeins}
\usepackage{float}

\usepackage{xcolor,mathrsfs,amsmath,mathtools,amssymb,hyperref,stmaryrd}
\def\E{\mathbb E}

\usepackage{enumitem}
\usepackage[capitalise,nameinlink]{cleveref} 
\crefname{enumi}{item}{items}
\crefname{equation}{}{}
\crefname{subsection}{Subsection}{Subsections}

\usepackage[
  backend=biber,
  style=alphabetic,
  maxalphanames=4,
  minalphanames=4, 
  maxbibnames=99
]{biblatex}

\usepackage{amsthm}
\usepackage{mathabx}
\usepackage{enumitem}
\usepackage{bbm}
\usepackage{amssymb}
\usepackage{amsfonts}
\usepackage[english]{babel}
\usepackage{geometry}
 \allowdisplaybreaks
 \usepackage{appendix}
\usepackage{hyperref}
\hypersetup{
colorlinks   = true,
citecolor    = blue,
linkcolor=blue
}

\allowdisplaybreaks

 \usepackage[most]{tcolorbox}
\theoremstyle{definition}
\newtheorem{theorem}{Theorem}[section]
\AfterEndEnvironment{theorem}{\noindent\ignorespaces}
\theoremstyle{definition}
\newtheorem{definition}[theorem]{Definition}
\AfterEndEnvironment{definition}{\noindent\ignorespaces}

\theoremstyle{definition}
\newtheorem{example}[theorem]{Example}
\AfterEndEnvironment{Example}{\noindent\ignorespaces}

\def\l@subsection{\@tocline{2}{0pt}{2.5pc}{5pc}{}}

\theoremstyle{definition}
\newtheorem{proposition}[theorem]{Proposition}
\AfterEndEnvironment{proposition}{\noindent\ignorespaces}

\theoremstyle{definition}
\newtheorem{lemma}[theorem]{Lemma}
\AfterEndEnvironment{lemma}{\noindent\ignorespaces}

\theoremstyle{definition}
\newtheorem{corollary}[theorem]{Corollary}
\AfterEndEnvironment{corollary}{\noindent\ignorespaces}

\newtheorem{remark}[theorem]{Remark}
\AfterEndEnvironment{remark}{\noindent\ignorespaces}

\newtheorem{assumption}{Assumption}



\usepackage{accents}

\DeclareMathOperator{\Lip}{\operatorname{Lip}}

\DeclareMathOperator*{\esssup}{\operatorname{esssup}}

\newcommand{\N}{\mathbb{N}}

\renewcommand{\P}{\mathbb{P}}

\newcommand{\R}{\mathbb{R}}

\newcommand{\X}{\mathbb{X}}

\newcommand{\cB}{\mathcal{B}}

\newcommand{\cD}{\mathcal{D}}

\newcommand{\cI}{\mathcal{I}}
\newcommand{\cJ}{\mathcal{J}}

\newcommand{\cL}{\mathcal{L}}
\newcommand{\cM}{\mathcal{M}}

\newcommand{\cP}{\mathcal{P}}

\newcommand{\cV}{\mathscr{V}}

\newcommand{\cY}{\mathcal{Y}}

\newcommand{\bM}{\mathbf{M}}
\newcommand{\bN}{\mathbf{N}}

\newcommand{\bX}{\mathbf{X}}

\newcommand{\1}{\mathbf{1}}
\renewcommand{\d}{\mathrm{d}}
\newcommand{\dd}{\,\mathrm{d}}

\renewcommand{\epsilon}{\varepsilon}

\newcommand{\sH}{\mathscr{H}}
\newcommand{\sJ}{\mathscr{J}}
\newcommand{\sV}{\mathscr{V}}

\newcommand{\rF}{\mathfrak{F}}
\newcommand{\rG}{\mathfrak{G}}

\newcommand{\edot}{\boldsymbol{\cdot}}

\newcommand{\tp}{\tilde{p}}
\newcommand{\tq}{\tilde{q}}
\newcommand{\tf}{\tilde{f}}

\newcommand{\tY}{\widetilde{Y}}
\newcommand{\tN}{\widetilde{N}}
\newcommand{\tbX}{\widetilde{\mathbf{X}}}
\newcommand{\tM}{\widetilde{M}}
\newcommand{\tX}{\widetilde{X}}

\newcommand{\D}{\mathrm{D}}

\newcommand{\Id}{\operatorname{Id}}

\usepackage[english]{babel}
\usepackage{geometry}
\usepackage{multicol}

\title{Bridging rough stochastic differential equations and rough semimartingales}

\makeatletter
\newcommand{\authorsym}[1]{\textsuperscript{\@fnsymbol{#1}}}
\makeatother 

\author{Jannis R. Dause\authorsym{1}}
\email{jannis.dause@maths.ox.ac.uk}

\author{Peter K. Friz\authorsym{1}\textsuperscript{,}\authorsym{2}}
\email{friz@math.tu-berlin.de}

\author{Jost Pieper \authorsym{3}}
\email{jost.pieper@durham.ac.uk}
\thanks{\authorsym{1} Institut für Mathematik,
Technische Universität Berlin, Berlin, Germany.}

\thanks{\authorsym{2} Weierstraß-Institut,
Berlin, Germany.}
\thanks{\authorsym{3}
Department of Mathematical Sciences, Durham University, Durham, UK}
\date{\today}

\subjclass[2020]{Primary: 60L20, 60H10}
\keywords{rough stochastic differential equations, rough semimartinales, càdlàg rough paths, martingale fixed points, rough stochastic Duhamel}

\begin{document}
    
\begin{abstract}
We bridge the gap between the mixed-moment theory of càdlàg rough stochastic differential equations (short:RSDEs), introduced by Friz, Hocquet and Lê (2021) and extended to the càdlàg setting by Allan and Pieper (2026), and the theory of rough semimartingales (short:RSMs), introduced by Friz and Zorin–Kranich (2023), by investigating the intersection of these two theories for the first time. By introducing the concept of \emph{good} solutions to RSDEs, solutions which are simultaneously both mixed-moment solutions and RSMs, we develop a solution concept, which allows for good control on both moment-based and pathwise properties of solutions. The well-posedness of good solutions is proven using a fixed point argument on the space of $L^{2}$-martingales by suitably leveraging previously known mixed-moment estimates. Based on these insights we further characterize a large class of good solutions of semi-linear RSDEs through a novel \emph{rough stochastic Duhamel's principle}.  
\end{abstract}
\maketitle
\tableofcontents
\section{Introduction}
\label{section:introduction}
 \addtocontents{toc}{\protect\setcounter{tocdepth}{1}}
The theory of \emph{rough stochastic differential equations} (RSDEs), introduced by Friz, Hocquet and L\^e
\cite{FHL21}, provides an intrinsic solution theory for dynamics driven simultaneously by
martingale noise $N$ and a deterministic rough path $\mathbf{X}$,
\begin{equation}
\label{introduction:RSDE}
    \dd Y_t = b_t(Y_t) \dd t +\sigma_t(Y_{t-}) \dd N_t \,+\, f_t(Y_{t}) \dd \mathbf{X}_t; \quad Y_{0}=\xi, 
\end{equation}
providing a hybrid theory of It\^{o}'s theory of stochastic differential equations and Lyons' theory of rough paths \cite{Lyons1998}. 
RSDEs have successfully been applied in stochastic filtering \cite{AllanPieperTeichmann2025, BFLZ25}, stochastic control \cite{horst2025pontryagin, FLZ24x, BDDF26}, mean-field games \cite{FGHT26, bayraktar2026meanfieldgamesroughcommon}, option pricing \cite{BBFP25}, stochastic numerics \cite{DauseFrizJentzenSong2026} and Hörmander theory for conditioned SDEs \cite{BCN24}.
On the theoretical side it also made contributions to the study of rough PDEs and stochastic evolution equations \cite{liang2024mildsolutionsemilinearrough, BuginiFrizStannat2024, becherer2026pathwiseapproachsemilinearspdes, bugini2026youngparabolicpdesmartingale}, BSDEs \cite{song2025backwardstochasticdifferentialequations, becherer2025roughbackwardsdesdiscontinuous, friz2026roughbackwardstochasticdifferential} and mean-field systems with common noise \cite{FHL25p, BFS25}, \\
The original well-posedness theory of \cite{FHL21} and its extension to the càdlàg setting \cite{AllanPieper2026} rely on scales
of {\em stochastic controlled rough paths} (SCRPs) and delicate estimates based on mixed-moment versions of the stochastic sewing lemma \cite{LE20}. While it is able to establish well-posedness to \eqref{introduction:RSDE} under optimal regularity assumptions on $(b, \sigma, f)$, its strong reliance on moments leads to serious restrictions when one aims to work with solutions of \eqref{introduction:RSDE} in a pathwise manner. These restrictions are particularly grave in the setting of \cite{AllanPieper2026} when considering $N$ with Lévy-type jumps, as the moments of the jumps are usually not sufficiently regular.  \\
Contrary to this, the somewhat parallel 
theory of {\em rough semimartingales} (RSMs) \cite{fzk23} is rooted
in multilevel BDG-type estimates for joint lifts of martingales and rough paths. As shown in \cite{DauseFrizJentzenSong2026} the theory of (strong) RSMs provides a powerful and relatively easy approach to constructing a pathwise \emph{rough stochastic calculus} by explicitly formalizing the connection of RSMs to Gubinelli's \emph{controlled paths} \cite{GUBINELLI200486}. However, naive attempts to solving \eqref{introduction:RSDE} in the space of RSMs lead to in-optimal regularity assumptions on $(b, \sigma, f)$ (with some recent improvements in \cite{friz2026roughbackwardstochasticdifferential}).\\
As in classical stochastic analysis both pathwise and moment-based tools are extremely relevant. We therefore propose the novel class of \emph{good} solutions to \eqref{introduction:RSDE}, which are \textbf{both} (strong) RSMs in the sense of \cite{fzk23, DauseFrizJentzenSong2026} \textbf{and} $L^{m,n}$-solutions in the sense of \cite{FHL21}. The present work then aims to answer the following question\footnote{Already posed independently in \cite{AllanPieper2026, DauseFrizJentzenSong2026} in other forms.}:
\medskip

\begin{center}
\textbf{Under which conditions can we construct solutions to \eqref{introduction:RSDE}, which are \emph{good}}?    
\end{center}

\medskip

Inspired by the classical fixed point argument for McKean--Vlasov SDEs (\cref{inspiration_McKean}), the central idea to answer this question is to solve the
additive-noise RSDE
\begin{equation}
\label{introduction:additive}
    \dd Y_{t}^{M} = b_{t}(Y_{t}^{M}) \dd t + M_{t} + f_{t}(Y^{M}_{t}) \dd \mathbf{X}_{t}; \quad Y^{M}_{0}=\xi 
\end{equation}
pathwise i.e. as a random RDE along the joint rough path lift of $\mathbf{X}$ and a given
martingale $M$. We then obtain the full dynamics \eqref{introduction:RSDE} through a contraction
\begin{equation*}
    \Phi: \mathcal{M}^{2}\to \mathcal{M}^{2}; \quad M \mapsto \int \sigma_{s} (Y_{s-}^{M})\dd N_{s}
\end{equation*}
on the martingale space $\mathcal{M}^2$. All solutions obtained in such a manner can directly be seen to be RSMs and in particular \emph{good} (assuming suitable regularity of the data). \\
To establish the contractivity of $\Phi$ (on small time intervals) the entire
argument hinges on establishing a stability estimate of the form 
\begin{equation}
 \label{introduction:stability}
\left  \| \sup_{t \in [0,T]} |Y^{M}_{t} - Y^{\bar M}_{t}| \right \|_{L^{2}} \lesssim \left \| \sup_{t \in [0,T]} |M_{t} - \bar M_{t}| \right \|_{L^2},
\end{equation}
or suitable variants thereof. We establish this non-trivial estimate in two regimes: for bounded, and deterministic coefficient fields $f$ and for linear, possibly stochastic $f$ (\eqref{introduction:RSDE} is then called "semi-linear") via leveraging suitably adjusted moment estimates from  \cite{FHL21, BCN24,AllanPieper2026}. The main technical tools allowing us to use these estimates are suitable càdlàg $p$-variation Kolmogorov-type criteria and a weak stability property of the solutions with respect to discretisations of the rough path, which together allow us to identify $L^{m,n}$- and RSM-solutions of \eqref{introduction:additive}. For linear and deterministic $f$ we additionally provide a semi-explicit characterisations of solutions of \eqref{introduction:RSDE} \& \eqref{introduction:additive} through a novel \emph{rough stochastic Duhamel's principle}.\\
Due to the aforementioned relevance in the càdlàg/ Lévy-noise setting the entire manuscript is formulated in terms of càdlàg rough (stochastic) analysis (and thus of course in $p$-Var). However, we emphasize that all of the major results are novel in the continuous ($p$-Var and $\alpha$-Hölder\footnote{Note that the corresponding $\alpha$-Hölder results come however with the usual loss of pathwise regularity due to Kolmogorov's criterion.}) setting as well.  
\subsection{Outline:}
In \cref{section:notation} we collect notation used throughout this work. In \cref{section:Fundamentals of Rough Stochastic Analysis} we introduce the theory of rough stochastic analysis by unifying both the mixed-moment theory of \cite{FHL21} and RSM-theory of \cite{fzk23, DauseFrizJentzenSong2026}. This culminates in the definition of good solutions to \eqref{introduction:RSDE}. \cref{Section:main_theorem} introduces our well-posedness approach based on the aforementioned fixed point argument. \cref{section:On semi-linear RSDEs} is concerned with the semi-linear setting ($f$ linear in \eqref{introduction:RSDE}) and provides both the necessary stability estimates (\cref{subsection:Well-posedness and stability of semi-linear RSDEs}) for \eqref{introduction:additive} and rough stochastic Duhamel's representation (\cref{subsection: Duhamel formula}). \cref{section:Well-posedness of fully non-linear RSDEs} provides the stability estimates for \eqref{introduction:additive} in the non-linear setting ($f$ being non-linear in \eqref{introduction:RSDE}). \cref{appendix:deterministic_rough_path_theory} extends the theory càdlàg rough paths of \cite{Friz_Zhang_2018} by introducing strongly controlled càdlàg rough paths and a corresponding It\^{o}-formula. \cref{appendix:section_flow_transform} explores well-posedness of good solutions by the method of \emph{rough flow-transforms}. 

In \cref{appendix: A discrete approximation approach to consistency} we introduce the weak stability property used throughout \cref{section:Well-posedness of fully non-linear RSDEs} and \cref{subsection:Well-posedness and stability of semi-linear RSDEs}. At last in \cref{section:Further technical results} we collect some useful results applied throughout this work namely a Kolmogorov-criterion in càdlàg $p$-Var for remainders and a càdlàg rough Grönwall lemma.
\subsection{Main methods:}
We give a brief description of the arguments for semi-linear RSDEs \cref{section:On semi-linear RSDEs} and RSDEs with bounded coefficients \cref{section:Well-posedness of fully non-linear RSDEs}.
\subsubsection{RSDEs with bounded coefficients:}
In \cref{section:Well-posedness of fully non-linear RSDEs}, both the noise $N$ and the mixed-moment solutions belong to $V^{p}L^{m,n}$ with $n=\infty$. The connection between this space and $\mathrm{BMO}^{p\text{-var}}$, discussed in \cite{Le2022a,AllanPieperTeichmann2025}, allows us to improve the integrability of the latter to an exponent $m$ exceeding $p$. Applying the rough stochastic Doob--Meyer decomposition \cref{prop: Doob--Meyer} with this improved integrability, we can show that any $L^{m,\infty}$-solution is good; see \cref{theorem: RSDE solution is L^q RSM}. Alternatively we show under the same assumptions how to establish this result based on the martingale fixed point argument of \cref{subsection: Well-posedness via Martingale fixed points}; see \cref{corollary: the fixed point argument is applicable for mixed moment assumptions}.  
\subsubsection{Semi-linear RSDEs:}
\cref{section:On semi-linear RSDEs} is less straightforward since here for noise $N\in V^{p}L^{m}$ (even for  $N \in V^{p}L^{m, \infty}$) the solutions can only be expected to live in $V^{p}L^{m}=V^{p}L^{m,m}$. The critical coefficient to consider here is $c \coloneqq \nicefrac{m}{p}$, since for $c\leq 1$ the $p$-Var Kolmogorov criterion breaks down; see \cite[Example A.6]{AllanPieperTeichmann2025} and \cref{example:Besov embedding}. While for $c>1$ we follow the same arguments as in the bounded case, the case $c\leq 1$ requires a different method to identify the $L^{0}$- and $L^{m}$-solutions, which we need in order to verify the stability estimate \cref{ref:Key} in the martingale fixed point argument \cref{subsection: Well-posedness via Martingale fixed points}. In particular we show this identification by introducing a weak stability property of solutions to linear RSDEs in \cref{appendix: A discrete approximation approach to consistency} (when available). Our methodology for \cref{section:On semi-linear RSDEs} is summarized in \cref{fig:proof-structure} below.
\newcommand{\LabStart}{semi-linear RSDE}
\newcommand{\CondP}{$p < m$?}        

\begin{figure}[htbp]
\centering
\begin{tikzpicture}[xscale=1,yscale=1]
  \tikzset{
    noeud/.style={minimum width=2cm,text width=4.4cm,minimum height=0.8cm,
      rectangle,rounded corners=5pt,draw,fill=yellow!50,text=black,
      font=\bfseries,text centered,text badly centered},
    core/.style={minimum width=2cm,text width=2.4cm,minimum height=1cm,
      rectangle,rounded corners=5pt,draw,fill=lime!50,text=black, text centered,text badly centered},
    main/.style={minimum width=2cm,text width=2.4cm,minimum height=1cm,
      rectangle,rounded corners=5pt,draw,fill=red!30,text=black,
      font=\bfseries,text centered,text badly centered},
    exa/.style={minimum width=2cm,text width=4cm,minimum height=0.8cm,
      rectangle,rounded corners=5pt,draw,fill=cyan!30,text=black,
      font=\bfseries,text centered,text badly centered},
    cond/.style={diamond,aspect=1.6,draw,fill=cyan!30,text=black,
      font=\bfseries,text centered,inner sep=2pt},
    startbox/.style={minimum width=2cm,minimum height=0.8cm,
      rectangle,rounded corners=5pt,draw,fill=yellow!50,text=black,
      font=\bfseries,text centered,text badly centered},
    dnode/.style={ellipse,draw,dashed,thick,black!60,minimum width=1.6cm,
      minimum height=1cm,font=\bfseries},
    regime/.style={font=\Large\bfseries},
    fleche/.style={->,>=stealth,thick},
    note/.style={->,>=stealth,thick,draw=black!50},
  }

  \node[startbox] (start) at (7,10.3) {\LabStart};
  \node[core] (b1) at (7,8.8) {linear RSDE; \cref{subsection: Well-posedness via Martingale fixed points}};
  \draw[fleche] (start) -- (b1);

  \node[cond] (cond) at (7,7.1) {\CondP};
  \draw[fleche] (b1) -- (cond);

  \node[core] (q) at (3.7,6.1) {Apply weak stability of solutions; \cref{appendix: A discrete approximation approach to consistency}};
  \draw[fleche] (cond.west) -- node[above,pos=0.3,font=\itshape] {no} (q.north);
  \node[core] (b3) at (10.3,6.1) {Apply Kolmogorov criterion; \cref{subsection:A Kolmogorov criterion and rough stochastic Doob-Meyer decomposition}};
  \draw[fleche] (cond.east) -- node[above,pos=0.3,font=\itshape] {yes} (b3.north);

  \node[main] (b4) at (5.9,2.7) {linear $L^{m}$- and $L^{0}$-solution agree; \cref{cor:good-semilinear-deterministic-jumps}};
  \draw[fleche] (q.south) -- (b4.north west);
  \draw[fleche] (b3.south) -- (b4.north east);

  \node[main] (b5) at (13,2.7) {Stability estimate \cref{ref:Key} holds};
  \node[core] (b6) at (9.5,2.7)
  {Apply $L^{m}$ estimates; \cref{proposition: exsitence of additive linear RSDEs with random coeff}};
  \draw[fleche] (b4) -- (b6);
  \draw[fleche] (b6)--(b5); 
  \node[main] (D) at (2.3,2.7) {Rough stoch. Duhamel princ.; \cref{subsection: Duhamel formula}};
  \draw[fleche,dashed,black!50] (D.south) to[out=270,in=270,looseness=0.4] (b5.south);
  \draw[fleche,dashed,black!50] (b4.west) to (D.east);

  \draw[fleche,rounded corners=8pt] (b5.east) -- ++(1.0,0) |-
        node[pos=0.28,sloped,below,font=\itshape]
        {well-posedness of good solutions} (start.east);

  \draw[fleche,dashed,rounded corners=6pt, black!50] (D.north) -- ++(0,0.6) -- ++(-1.8,0) |-
        node[pos=0.79,above,font=\itshape] {explicit represent.} (b1.west);

  \draw[fleche,dashed,rounded corners=6pt, black!50] (D.west) -- ++(-0.97,0) |-
        node[pos=0.81,above,font=\itshape] {semi-explicit represent.} (start.west);
\end{tikzpicture}
\caption{Structure of the argument for semi-linear RSDEs; see \cref{section:On semi-linear RSDEs}. The lime blocks denote methods/principles used, the diamond an "if-statement", the red block denote claims. The dashed lines are only available 
when $(f, f')$ is deterministic.}
\label{fig:proof-structure}
\end{figure}
\subsection{Comparison with the literature}
We round off this introduction by discussing this works contribution to the theory of rough (stochastic) analysis.
\label{section:literature}
\subsubsection{Rough stochastic calculus:}
In \cref{section:Fundamentals of Rough Stochastic Analysis} we, for the first time, consider the mixed-moment rough stochastic calculus  of \cite{FHL21} and RSM-based rough stochastic calculus of \cite{fzk23, DauseFrizJentzenSong2026} in a unifying framework, by introducing the notion of \emph{good SCRPs}. We further extend many main results of \cite[Section 4]{DauseFrizJentzenSong2026} to the càdlàg setting. We provide a Fubini theorem (\cref{prop:rough_stochastic_fubini}), rough stochastic It\^{o}- /product formula (\cref{prop:rsIto} \& \cref{corollary:rs_product_formula}) as well as a rough stochastic Doob-Meyer decomposition (\cref{prop: Doob--Meyer}) in the càdlàg setting, which were previously only known in the continuous case \cite{FHL21, BuginiFrizStannat2024, FGHT26}. We emphasize that our framework is built in such a way that all such results formulated in the language of RSMs can be easily extended to analogous mixed-moment results by assuming suitable moments as highlighted throughout this section. \cref{thm: Kolmogorov for some remainders} extends the càdlàg Kolmogorov criterion for paths \cite[Theorem 2.10]{AllanPieperTeichmann2025} to more general two-parameter processes.
\subsubsection{Semi-linear RSDEs:}
In \cite{BCN24, horst2025pontryagin, FGHT26} the authors investigated linear/ semi-linear/linear-affine RSDEs in the context of Malliavin calculus, stochastic control and mean-field games in the continuous setting. For the first time the present work provides well-posedness of solutions to such \footnote{Note, that all of the results that we attain for linear stochastic controlled vector fields can easily be extended to the case of affine-linear stochastic controlled vector fields with deterministic affine term.} RSDEs in the càdlàg setting (\cref{subsection:Well-posedness and stability of semi-linear RSDEs}) notably also in the full regularity scale $m \in [2, \infty)$ and $p \in [2,3)$ (which means assuming $N\in V^{p} L^{m, \infty}$) . In particular we show that, as a consequence of our approach being based on the martingale fixed point ansatz from \cref{subsection: Well-posedness via Martingale fixed points}, all solutions constructed here are \emph{good}. In \cref{proposition: good solutions for Jacobians of flows} we show that the aforementioned claims are applicable to the main application of \cite[Section 3]{BCN24} being well-posedness of linear RSDEs corresponding to Jacobians of RSDE-flows. Our claims are applicable to the semi-linear affine RSDEs of \cite{horst2025pontryagin} given that both $b , \sigma$ have bounded growth constants. In \cref{subsection: Duhamel formula} we further provide a characterization of solutions to semi-linear RSDEs with deterministic rough vector fields, as a novel rough stochastic Duhamel principle. Due to the non-invertibility of solutions to càdlàg RDEs we stress that its proof is non-trivial compared to the analogous result in the continuous setting.
\subsubsection{RSDEs with bounded coefficients :}
Solutions to RSDEs with bounded coefficients  have previously been investigated in \cite{FHL21, AllanPieper2026} with the latter being the first work in the càdlàg setting (under optimal regularity conditions). \cref{section:Well-posedness of fully non-linear RSDEs} shows that, by leveraging bounds from the latter work, those solutions can be shown to be \emph{good}, which is particularly important when considering Lévy-noise. All of the results are again obtained on the full regularity scale $p \in [2,3)$ and $ m \in [2, \infty)$ (which means assuming $N 
\in V^{p}L^{m,n}$ with $n=\infty$). Extensions to the McKean-Vlasov setting of \cite{FHL25p} can be easily (at least with measure dependence in the It\^{o} data $b,\sigma$) done by performing an additional linearization procedure in the measure dependence analogous to the martingale fixed point ansatz \cref{subsection: Well-posedness via Martingale fixed points} following \cref{inspiration_McKean}.
\par 
Our main contributions to the theory of RSDEs are also summarized in \cref{table: main results} below.
\subsubsection{Càdlàg rough analysis:}
In \cref{appendix:deterministic_rough_path_theory} we extend the theory of càdlàg rough paths initiated in \cite{Friz_Zhang_2018} by introducing the notion of \emph{strongly controlled rough paths} in the càdlàg regime and investigating properties of the backward semi-flow of RDE-solutions. The rough stochastic Grönwall Lemma \cref{lemma: gronwall type result p var} of course also applies (suitably simplified) in a deterministic form and generalizes \cite[Lemma 2.12]{DeyaGubinelliHofmanovaTindel2019} for irregular controls. 
\subsection{Acknowledgements:} The authors thank Andrew L. Allan for helpful discussions. JRD is supported by Deutsche Forschungsgemeinschaft (DFG) - Project-ID 410208580 - IRTG2544 (”Stochastic Analysis in Interaction”). PKF acknowledges support by the Deutsche Forschungsgemeinschaft (DFG, German Research Foundation)
under Germany’s Excellence Strategy – The Berlin Mathematics Research Center MATH+ (EXC-2046/1, project ID: 390685689). PKF acknowledges support from DFG CRC/TRR 388 ``Rough Analysis, Stochastic Dynamics and Related Fields". 

\subsection{Tool and computational resource disclosure:}
In accordance with the Leiden Declaration on Artificial Intelligence and Mathematics, the authors declare use of LLMs in particular OpenAI's GPT-6.0 Astra and SAIR's OpenMath Model to improve some of the presentation of this manuscript. None of the claims and proofs thereof in this manuscript were created by LLMs.

\medskip

\begin{table}[H]
    \centering
    \caption{Overview of the main results.}\label{table: main results}
    \label{table:overview_RSDEs}
    \begingroup
    \footnotesize
    \setlength{\tabcolsep}{3pt}
    \renewcommand{\arraystretch}{1.2}
    \begin{tabularx}{\textwidth}{
        @{}
        >{\hsize=.50\hsize\linewidth=\hsize
          \raggedright\arraybackslash}X
        |
        >{\hsize=.38\hsize\linewidth=\hsize
          \raggedright\arraybackslash}X
        |
        >{\hsize=2.12\hsize\linewidth=\hsize
          \raggedright\arraybackslash}X
        @{}
    }
        \hline
        \textbf{Coefficient $(f, f')$}
        & \textbf{Noise regime\textsuperscript{a}}
        & \textbf{Main results}
        \\
        \hline

        \multicolumn{3}{@{}l}{
            \textbf{Bounded}
        }
        \\[3pt]

        Deterministic controlled
        &
        $p\in (2,3)$, $m\in [2,\infty)$, $n=\infty$.
        &
        Mixed-moment solutions are scRSMs and thus good. Identification with unique $L^{0}$-solution under additional pathwise
        controlledness assumptions; \cref{theorem: RSDE solution is L^q RSM,%
        thm:consistency_of_solutions I}
        \\[8pt]

        \hline
        \multicolumn{3}{@{}l}{
            \textbf{Linear\textsuperscript{b}}
        }
        \\[3pt]

        Deterministic controlled
        &
        $p\in (2,3)$, $m \in [2,\infty)$.
        &
        Well-posedness of good solutions 
        and a Duhamel principle.
        \par\smallskip
        \cref{cor:good-semilinear-deterministic-jumps} and \cref{thm:semilinearDuhamel}
        \\[5pt]

        Rough semimartingale 
        &
        $p\in (2,3)$, $m \in [2,\infty)$.
        &
        Well-posedness of good solutions under additional jump assumptions.\textsuperscript{c}
        \par\smallskip
        \cref{cor:good-semilinear-deterministic-jumps} and \ref{corollary: consistency for linear RSM with deterministic jumps}. 
        \\[5pt]

        Coefficients evaluated along
        an RSDE solution
        &
        $p \in (2,3)$,
        $m\in [2,\infty)$. 
        &
        Well-posedness of good solutions of the specified
        auxiliary linear equations; \cref{proposition: good solutions for Jacobians of flows}
        \\

        \hline
    \end{tabularx}

    \smallskip
    \begin{minipage}{\textwidth}
        \footnotesize

        \smallskip
        \textsuperscript{a}Refers to the choice of coefficients in $N\in V^{p}L^{m,n}$.

        \smallskip
        \textsuperscript{b}Whenever we talk about linear vector fields we assume that $n=m$ unless specified otherwise.
        
        \smallskip
        \textsuperscript{c}The controlled part of the coefficient,
        after separating off its martingale component, and its
        controlled derivative have deterministic jump times;
        see the referenced results for the precise assumptions.
    \end{minipage}
    \endgroup
\end{table}

\section{Notations \&  conventions}
\label{section:notation}
\subsection{General Notations:}  We write $a \lesssim b$ to express that there exists a non-negative constant $C\ge 0$ such that $a \le Cb$. If such a constant depends on some parameters $\theta_1,\dots,\theta_N$ and we want to emphasize this fact, we write $a \lesssim_{\theta_1,\dots,\theta_N} b$.
\subsection{Linear Algebra:}
In the following we assume familiarity with direct sums and tensor products of vector spaces see \cite{bayer_introduction_2026}. 
In the following let $V, W, U$ be some finite-dim.\ real Banach spaces. We denote their direct sum by $V\oplus W$ and tensor product by $V \otimes W$. We denote by $\mathcal{L}(V; W)$ the space of linear maps $A: V \to W$ and by $\operatorname{Bil}(V \times W; U)$ the space of bilinear maps $A: V \times W\to U$.  For any $v\otimes w\in V\otimes W$ denote by $(v \otimes w)^{\top}\coloneqq w \otimes v$ and for any $A\in \mathcal{L}(V \otimes W; U)$ we denote by $A^{\top}$ the unique linear map $A^{\top}\in \mathcal{L}(W\otimes V; U)$ such that $A(v \otimes w)=A^{\top}(w \otimes v)$ for any $v \in V, w \in W$. For $v_{1}\otimes v_{2} \in V\otimes V$  denote $\operatorname{Sym}(v_{1}\otimes v_{2})\coloneqq \frac{1}{2}\left(v_{1}\otimes v_{2}+ (v_{1} \otimes v_{2})^{\top}\right)$. We denote by $\mathcal{S}(V \otimes V; W)\subset \mathcal{L}(V \otimes V; W)$ the subset of symmetric linear maps.\newline
Given vector spaces $V_{1}, V_{2}$ and $A_{i,j}\in \mathcal{L}(V_{i}\otimes V_{j}; W)$ for $1\leq i,j\leq 2$ we define the block-matrix operator $A \in \mathcal{L}((V_{1} \oplus V_{2})^{\otimes 2}; W)$ by 
\begin{equation*}
    A((v_{1}\oplus v_{2})^{\otimes 2})\coloneqq \left(\begin{array}{cc}
    A_{1, 1} & A_{1,2}\\
    A_{2, 1} & A_{2,2}
    \end{array}\right): ((v_{1}\oplus v_{2})^{\otimes 2})\coloneqq \sum_{1\leq i, j\leq 2} A_{i,j}(v_{i} \otimes v_{j}),
\end{equation*}
and for $A_{i}\in \mathcal{L}(V_{i}; W)$ the block-vector operator $ A\in \mathcal{L}(V_{1}\oplus V_{2}; W)$ by
\begin{equation*}
    A(v_{1} \oplus v_{2}) \coloneqq \left(\begin{array}{c}
    A_{1}\\
    A_{2}
    \end{array}
    \right)\cdot
    (v_{1} \oplus v_{2})\coloneqq A_{1} v_{1} + A_{2} v_{2}.
\end{equation*}
One immediately checks that these notations are generalizations of the Fréchet- and scalar-product from classical linear algebra. It will however often be helpful to perform calculations in the more compact tensor notation even when working in $\mathbb{R}^{d}$. To this end recall that for $x \in \mathbb{R}^{d_{x}}, y \in \mathbb{R}^{d_{y}}$ the tensor-product is defined by $x \otimes y\coloneqq x y^{\top}$. 
\subsection{Function spaces:}
We denote by $\mathcal{L}(V;\bar{V})$ the space of linear and continuous functions from $V$ to $\bar{V}$ Banach spaces.
The latter is a Banach space, if endowed with the norm $|T|_{\mathcal{L}} := \sup_{x \in V, |x|_V\le 1} |Tx|_{\bar{V}}$. 

We identify $\mathcal{L}(\R^m;\R^n)$ and the tensor product $\R^n \otimes \R^m$ with the matrix space $\R^{n \times m}$. 
The following general identifications also hold: $\mathcal{L}(V \otimes \bar{V};\tilde{V}) \equiv \mathcal{L}(V;\mathcal{L}(\bar{V};\tilde{V})) \equiv \mathcal{L}^2(V \times \bar{V};\tilde{V}),$
where $\mathcal{L}^2(V \times \bar{V};\tilde{V})$ denotes the space of bilinear and continuous maps from $V \times \bar{V}$ to $\tilde{V}$.
We denote by $DF:V \to \mathcal{L}(V,\bar{V})$ the first Fréchet derivative of $F$. If not otherwise specified, in this paper we always refer to differentiability in Fréchet sense simply as differentiability. Let $k\geq 0$ be an integer and let $\gamma\in (k,k+1]$.
We say that $f \in \Lip^{\gamma}(V; W)$ if $f: V \to W$ such that $f \in C^{k}_{b}(V; W)$ and $D^{k}f\in \mathcal{C}^{\gamma -k}(V; \mathcal{L}(V^{\otimes k}; W))$. For $\alpha \in (0, 1], m\in [1, \infty)$  denote by $\mathcal{B}^{\alpha}_{m}(V)$ the Besov space of paths $X: [0,T]\to V$ with $
    \int_{0}^{T}\int_{0}^{T} \left(\frac{|\delta X_{u,v}|}{|v-u|^{\alpha+\frac{1}{m}}}\right)^{m} \dd u \dd v< \infty$.
\subsection{Paths:}
For any $T>0$ we define  $\Delta_{I} := \{ (s,t) \in I\times I \ | \ s < t\}$ for any subinterval $I\subseteq [0,T]$. 
For any Banach space valued path i.e. a map $[0,T]\ni t \mapsto X_t \in W$, we define its increment as the two parameter function
$\delta X_{s,t} := X_t - X_s$ for $ (s,t) \in \Delta_{[0,T]}$. Further we define (if well-defined) for any $t \in [0,T]$ the (forward-) jump $\Delta X_{t} \coloneqq \lim_{s \uparrow t} \delta X_{s, t}$ with $\Delta X_{0}\equiv 0$ by convention.
Similarly, given a two-parameter function  $A:\Delta_{[0,T]} \to W$, $(s,t) \mapsto A_{s,t}$, we define $\delta A_{s,u,t}:= A_{s,t} - A_{s,u} - A_{u,t}$, for $s \le u \le t$. Further we define (if well-defined) for any $t \in [0,T]$ the (forward-) jump $\Delta A_{t} \coloneqq \lim_{s \uparrow t} A_{s,t}$ with $\Delta A_{0}\equiv 0$ by convention. For $A: \Delta_{[0,T]}\to W$ and $p \in (0, \infty)$ we define its \textit{p-variation} over $[0,T]$ by 
\begin{equation*}
    \Vert A \Vert_{p; [0,T]}\coloneqq \sup_{\mathcal{P}} \left( \sum_{i=0}^{N-1} |A_{t_{i}, t_{i+1}}|^{p} \right)^{\frac{1}{p}}, 
\end{equation*}
where the $\sup$ is ranging over all partitions $\mathcal{P}=(t_{i})_{i=0}^{N}$ of $[0,T]$. For a path $X: [0,T]\to W$ we denote $\Vert X \Vert_{p;[0,T]}\coloneqq \Vert \delta X \Vert_{p; [0,T]}$ and $\operatorname{Osc}(X; [s, t])\coloneqq \sup_{s\leq u \leq v \leq t}|\delta X_{u,v}|$ for any $(s,t) \in \Delta_{[0,T]}$. $A: \Delta_{[0,T]}\to W$ is 
\textit{càdlàg} (continue à droite, limite à gauche) if for any $(s, t) \in \Delta_{[0,T]}$ both $[0,t]\ni r\mapsto A_{r,t}$ and $[s, T]\ni r \mapsto A_{s,r}$ are continuous from the right with existing left limits everywhere. If $A$ is càdlàg and $\Vert A \Vert_{p; [0,T]}< \infty$, we say $A\in V^{p}([0,T]; W)$. If a path $X$ is càdlàg and $\Vert X \Vert_{p; [0,T]}< \infty$, then we say  $X\in V^{p}([0,T]; W)$. We denote the space of such paths which are additionally continuous by $V^{p,c}([0,T]; W)$. For $A: \Delta_{[0,T]}\to W$ and a partition $\mathcal{P}$ of $[0,T]$ we say that the sum $\sum_{[u,v]\in \mathcal{P}} A_{u,v}$ converges to $K \in W$ in Mesh Riemann-Stieltjes (short:MRS) sense, if for any $\epsilon > 0$ there is $\delta >0$ such that for any partition $\cP$ of $[0,T]$ with $|\mathcal{P}|< \delta$ it holds that $|\sum_{[u,v]\in \mathcal{P}} A_{u,v}- K |< \epsilon$. We say that it converges in Refinement Riemann-Stieltjes (short: RRS) sense, if for any $\epsilon>0$ there is a partition $(\mathcal{P}_{\epsilon})$ such that for any refinement $\mathcal{P}_{\epsilon}\subset \mathcal{P}$ it holds that $|\sum_{[u,v]\in \mathcal{P}} A_{u,v}- K |< \epsilon$. Let $w: \Delta_{[0,T]}\to \mathbb{R}_{+}$. Then we call $w$ a \textit{mild control} if it is increasing in $t$, decreasing in $s$, vanishes on the diagonal and for any $\delta>0$ there are only finitely many $u \in [0,T]$ such that $w(u-, u+)>\delta$. We call $w$ a \textit{control} if it is super-additive i.e. $w(s, u)+ w(u,t) \leq w(s, t)$ for any $s\leq u \leq t$. Any control is also a mild control.
\subsection{Rough Paths}
We refer to \cite{Friz_Zhang_2018} for the standard reference on càdlàg rough paths. In particular for $V$ some finite-dim. vector space and $p \in [1, \infty)$ we denote the set of $V$-valued càdlàg rough paths by $\mathbf{X}=(X, \mathbb{X})\in \mathscr{V}^{p}([0,T]; V)$ and their brackets by $\delta [\mathbf{X}]_{0, t}\coloneqq (\delta X)_{0, t} \otimes (\delta X)_{0,t}- 2 \operatorname{Sym}(\mathbb{X}_{0, t})$.  
The space of   
$X$-controlled $W$-valued càdlàg rough paths is denoted by $(Y, Y')\in \mathscr{V}^{p}_{X}([0,T]; W)$ i.e. the space of càdlàg $(Y, Y'):[0,T]\to W \times \mathcal{L}(V; W)$ such that for $R^{Y, Y'}_{s,t}\coloneqq \delta Y_{s,t}- Y'_{s} \delta X_{s,t}$ it holds 
\begin{equation*}
    \Vert Y, Y'\Vert_{X; p; [0,T]}\coloneqq \Vert \delta Y' \Vert_{p; [0,T]}+ \Vert R^{Y, Y'}\Vert_{\frac{p}{2}; [0,T]}< \infty. 
\end{equation*}
Further $\mathscr{V}^{p, c}$ and $\mathscr{V}^{p, c}_{X}$ denote the subspaces of continuous rough paths resp. continuous $X$-controlled rough paths. 
\subsection{Probability theory:}
Given a probability space $(\Omega,\mathfrak{F},\mathbb{P})$ and an integrability exponent $m\in [1,\infty)$, we denote by $\|\cdot\|_{L^m;V}$ - or simply by $\|\cdot\|_{L^m}$, if clear from the context - the usual norm on the Lebesgue space $L^m(\Omega;V)$ of $V$-valued random variables. The norm $\|\cdot\|_{L^\infty}$ denotes the essential supremum norm.
If $V=\R$, we sometimes write $L^m(\Omega)$ instead of $L^m(\Omega;\R)$. For a given sub-$\sigma$-field $\rG \subseteq \rF$, the space $L^m(\Omega,\rG;V)$ is the space of $\rG$-measurable random variables in $L^m(\Omega;V)$.\\
Given a fixed time horizon $T>0$ and a filtered probability space $(\Omega, \mathfrak{F}, (\mathfrak{F}_{t})_{t \in [0,T]}, \mathbb{P})$. We also adopt the shorthand $\E_s [\, \cdot \,] := \E [\, \cdot \,|\, \rF_s]$ for the conditional expectation at time $s \in [0,T]$.
As in \cite{FHL21, AllanPieper2026}, for $m \in [1,\infty)$, $n \in [m,\infty]$, $s \in [0,T]$ and a random variable $Y$, we write
\[ \|Y\|_{m,n,s} := \big\| \E_s [|Y|^m]^{\frac{1}{m}} \big\|_{L^n} \]
and we write $L^{m,n}_s$ for the space of random variables $Y$ such that $\|Y\|_{m,n,s} < \infty$ we will sometimes call these spaces \emph{mixed-moment spaces}.

\smallskip

For convenience, we recall the following properties of the $\|\cdot\|_{m,n,s}$ norm, the proofs of which may be found in \cite[Proposition 2.3 \& Lemma 2.4]{FHL21}.

\begin{itemize}
    \item $(L^{m,n}_s,\|\cdot\|_{m,n,s})$ is a Banach space.
    \item We have that $\|Y\|_{L^m} \leq \|Y\|_{m,n,s} \leq \|Y\|_{L^n}$ and $\| \E_s[Y] \|_{L^n} \leq \|Y\|_{m,n,s}$.
    \item The norm $\|Y\|_{m,n,s}$ is non-decreasing in each of the variables $m, n$ and $s$.
    \item The following version of H\"older's inequality,
    \begin{equation}\label{eq: Holder's inequality for q,r,s norm}
        \big\|\E_s[|Y||Z|]\big\|_{L^\ell} \leq \big\|\E_s[|Y|^p]^{\frac{1}{p}}\big\|_{L^{p \ell}} \big\|\E_s[|Z|^m]^{\frac{1}{m}}\big\|_{L^{m \ell}} = \|Y\|_{p,p \ell,s} \|Z\|_{m,m \ell,s},
    \end{equation}
    holds for any $\ell \in [1,\infty]$ whenever $p, m \in (1,\infty)$ with $\frac{1}{p} + \frac{1}{m} = 1$.
\end{itemize}
\subsection{Stochastic processes:}
Consider a $V$-valued stochastic process $X: [0,T]\times \Omega \to V$. We say "$X$ is càdlàg" if it is càdlàg almost surely. $X$ is said to be $L^m$-integrable if $X_t$ belongs to $L^m(\Omega;V)$ for any $t \in [0,T]$.
For $p, m \in [1,\infty)$, $n \in [m,\infty]$, and a two-parameter stochastic process $F = (F_{s,t})_{(s,t) \in \Delta_{[0,T]}}$ we say $F$ is $(\rF_t)$-adapted, if for any $(s,t)\in \Delta_{[0,T]}$ we have $F_{s,t} \in \rF_t$, and we write
\begin{equation*}
    \|F\|_{p,m,n,[s,t]} := \bigg(\sup_{\cP \subset [s,t]} \sum_{[u,v] \in \cP} \|F_{u,v}\|_{m,n,u}^p\bigg)^{\hspace{-2pt}\frac{1}{p}} = \bigg(\sup_{\cP \subset [s,t]} \sum_{[u,v] \in \cP} \big\|\E_u[|F_{u,v}|^m]^{\frac{1}{m}}\big\|_{L^n}^p\bigg)^{\hspace{-2pt}\frac{1}{p}}
\end{equation*}
for $(s,t) \in \Delta_{[0,T]}$. For a stochastic process $Y = (Y_t)_{t \in [0,T]}$, we then let
\begin{equation*}
    \|Y\|_{p,m,n,[s,t]} := \|\delta Y\|_{p,m,n,[s,t]}.
\end{equation*}
We write $V^p L^{m,n} = V^p L^{m,n}([0,T];E)$ for the space of stochastic processes $Y \colon [0,T] \times \Omega \to W$ which are c\`adl\`ag almost surely, and are such that $Y_0 \in L^m$ and $\|Y\|_{p,m,n,[0,T]} < \infty$.
In the special case when $n = m$, we also write
\begin{equation}\label{eq: defn p,infty,[0,T] norm}
    \|F\|_{p,m,[s,t]} := \|F\|_{p,m,m,[s,t]} = \bigg(\sup_{\cP \subset [s,t]} \sum_{[u,v] \in \cP} \|F_{u,v}\|_{L^m}^p\bigg)^{\hspace{-2pt}\frac{1}{p}},
\end{equation}
with $\|Y\|_{p,m,[s,t]} := \|\delta Y\|_{p,m,[s,t]}$ and $V^p L^m := V^p L^{m,m}$. We can also define $\|\cdot\|_{p,\infty,[s,t]}$ and $V^p L^\infty$, by simply replacing the $L^m$ norm in \eqref{eq: defn p,infty,[0,T] norm} with an $L^\infty$ norm. We denote for $m \in [1, \infty)$ by $\mathcal{M}^{m}([0,T])$ the space of càdlàg martingales with finite $m$th-moments equipped with the norm $\Vert M \Vert_{\mathcal{M}^{m}([0,T])}\coloneqq \Vert \sup_{t \in [0,T]} |M_{t}|\Vert_{L^{m}}$ (we omit the dependence on $[0,T]$ if clear from context and the dependence on $m$ for $m=1$). Further, for any $p\in [1,\infty)$ and $n\in [m,\infty]$ we write $\cM^{p,m,n}:= \cM \cap V^p L^{m,n}$ and whenever $m=n$ for brevity also $\cM^{p,m}$. Let $V, W$ finite-dim. Banach spaces, $M$ be a $V$-valued (resp. $\mathcal{L}(V; W)$-valued) and $H$ be a $\mathcal{L}(V; W)$-valued (resp. $V$-valued) stochastic process.
We denote by $H \bullet \dd M$ (resp. $\dd M \bullet H$) the $W$-valued stochastic integral of a suitable integrand $H$ against an integrator $M$ in the sense of It\^{o}, if well-defined. For a sequence of stochastic processes $(X^{n})_{n \in 
\mathbb{N}}$ and a process $X$ we denote convergence \emph{uniformly on compacts in probability} (short:u.c.p.) by 
\begin{equation*}
    \operatorname{u.c.p.-}\lim_{n \to \infty} X^{n} = X :\Leftrightarrow \sup_{t \in [0,T]} |X^{n}_{t} - X_{t}|\stackrel{\mathbb{P}; n \to \infty}{\longrightarrow}0,
\end{equation*}
where the latter stands for convergence in probability w.r.t. $\mathbb{P}$.

\subsection{Classifications of RSDEs:}
In the course of this work we will be interested in RSDEs of the form 
\begin{equation}
\label{eq:classification}
    Y_{t}= \xi + \int_{0}^{t} b_{s}(Y_{s}) \dd s+ \int_{0}^{t} \sigma_{s} (Y_{s-}) \dd N_{s} + \int_{0}^{t} (f, f')_{s} (Y_{s}) \dd \mathbf{X}_{s}. 
\end{equation}
Since we think of the $\d \bX$-term as the term of lowest regularity (even though $\mathbf{X}\in \mathscr{V}^{\frac{1}{2}}$ is allowed and may be more regular than the sample paths of $N$), we will treat this as the "leading order term". Accordingly we refer to \eqref{eq:classification} as "semilinear" if $y \mapsto f_{s}(y)$ is a linear map for any $s \in [0,T]$. 

\subsection{Conventions:}
Here we collect some fundamental assumptions that are assumed to hold throughout the whole manuscript. We consider a finite time horizon $T$>0 and a fixed filtered probability space $(\Omega, \mathfrak{F}, (\mathfrak{F}_{t})_{t\in [0,T]}, \mathbb{P})$, satisfying the 'usual hypotheses'. Properties related to measurability or nullsets are always meant with respect to this filtered probability space, if not specified otherwise. $V, W, U$ are used to denote real finite-dim. (but otherwise arbitrary) Banach spaces (unless stated otherwise as in \cref{appendix: A discrete approximation approach to consistency}). 
\addtocontents{toc}{\protect\setcounter{tocdepth}{3}}
\section{Fundamentals of rough stochastic analysis}
\label{section:Fundamentals of Rough Stochastic Analysis}
This section both revisits and extends the fundamental elements of the mixed-moment theory of rough stochastic analysis from \cite{FHL21} (following the exposition from \cite{AllanPieper2026}) and the $L^{0}$-theory of \cite{fzk23} (partially following the exposition from \cite{DauseFrizJentzenSong2026}) and for the first time considers both in a unifying framework. First we however revisit and extend some of the deterministic theory of càdlàg rough paths. 
\subsection{Some notions from càdlàg rough paths theory}
\label{subsection:Some notions from deterministic rough paths theory}
We quickly revisit some notions related to càdlàg controlled rough paths and vector fields mostly spread throughout the literature \cite{Friz_Zhang_2018, BCN24, DauseFrizJentzenSong2026} (whenever necessary adapting the notion to the càdlàg $p$-Var setting). 
In the following $U, V, W$ will denote arbitrary finite-dimensional Banach spaces.\\
Inspired by \cite[Chapter 3]{DauseFrizJentzenSong2026} we introduce the following convenient 'expansion notation'.
\begin{definition}
\label{def:expansion_notation}
    Let $A: \Delta_{[0,T]}\to V$. We say that for $r>0$ , $A_{s,t} \stackrel{r}{=}0$ if there exist $N \in \mathbb{N}$, controls $w_{i, j}$ and $\alpha_{i,j}>0 $ for $i=1,2$, $j=1,\dots, N$ such that for any $(s, t) \in \Delta_{[0,T]}$ it holds that
    \begin{equation}
    \label{def:eq:expansion_notation}
        |A_{s,t}| \leq \sum_{j=1}^{N} w_{1, j}(s, t-)^{\alpha_{1, j}}w_{2, j}(s+, t)^{\alpha_{2, j}}
    \end{equation}
    and $\alpha_{1, j}+ \alpha_{2, j}\geq \nicefrac{1}{r}$ for any $j =1, \dots, N$. For $A, B:\Delta_{[0,T]}\to V$ we define $A\stackrel{r}{=}B : \Leftrightarrow A-B\stackrel{r}{=}0$. 
    \end{definition}
In the following we are mostly interested in the situation where $r< 1$, since this corresponds precisely to the bounds in the sewing lemma \cite[Theorem 2.5]{Friz_Zhang_2018}. 
\begin{lemma}
\label{lemma:expansion_notation}
    Let $r< 1$ and $A: \Delta_{[0,T]}\to V$. Then 
    \begin{enumerate}
        \item \label{lemma:expansion_notation_item1} If $A\stackrel{r}{=}0$, then $A\in V^{r}$. 
        \item \label{lemma:expansion_notation_item3} If $A\in V^{r}$ and $A_{s,t}$ is both right-continuous in  $s$ and left-continuous in $t$ then $A\stackrel{r}{=}0$.  
        \item
        \label{lemma:expansion_notation_item4} If $A\stackrel{r}{=}0$, then 
        $
        \lim_{|\mathcal{P}|\downarrow 0} \sum_{[s, t]\in \mathcal{P}} |A_{s,t}|=0$.
        in RRS-sense. 
        \end{enumerate}
In particular if $A$ is given by a path increment $A_{s,t}\coloneqq \delta X_{s,t}$ for $X: [0,T]\to V$, then 
\begin{enumerate}[resume]
        \item \label{lemma:expansion_notation_item5} $\delta X \stackrel{r}{=}0$ iff $ X$ is constant. 
    \end{enumerate}
\end{lemma}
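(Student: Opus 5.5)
The plan is to prove the four items in order. Items \ref{lemma:expansion_notation_item1} and \ref{lemma:expansion_notation_item3} are elementary manipulations of controls. Item \ref{lemma:expansion_notation_item4} is a localisation argument around the large jumps of the controls, and it is where the real work lies. Item \ref{lemma:expansion_notation_item5} is a direct corollary of \ref{lemma:expansion_notation_item4}.

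For \ref{lemma:expansion_notation_item1}, I use monotonicity of the controls to bound each summand in \eqref{def:eq:expansion_notation} by $w_{1,j}(s,t)^{\alpha_{1,j}}w_{2,j}(s,t)^{\alpha_{2,j}}$. Put $\beta_j=\alpha_{1,j}+\alpha_{2,j}\ge 1/r$ and $\theta_j=\alpha_{1,j}/\beta_j$. By the weighted AM--GM inequality,
\[
w_{1,j}^{\alpha_{1,j}}w_{2,j}^{\alpha_{2,j}}=\bigl(w_{1,j}^{\theta_j}w_{2,j}^{1-\theta_j}\bigr)^{\beta_j}\le \bigl(w_{1,j}+w_{2,j}\bigr)(s,t)^{\beta_j}.
\]
Raising to the power $r$ and using $r\beta_j\ge 1$ gives
\[
|A_{s,t}|^r\lesssim_N \sum_j (w_{1,j}+w_{2,j})(s,t)\,(w_{1,j}+w_{2,j})(0,T)^{r\beta_j-1}.
\]
Superadditivity of $w_{1,j}+w_{2,j}$ then bounds the sum over any partition uniformly, so $\Vert A\Vert_{r;[0,T]}<\infty$. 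Càdlàg regularity of $A$ is not implied by \eqref{def:eq:expansion_notation}, so I read \ref{lemma:expansion_notation_item1} as finiteness of the $r$-variation, with regularity of $A$ assumed in the ambient context.

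For \ref{lemma:expansion_notation_item3}, I set $w(s,t):=\Vert A\Vert_{r;[s,t]}^r$. This is a control, since concatenating partitions of $[s,u]$ and $[u,t]$ gives superadditivity, and it satisfies $|A_{s,t}|\le w(s,t)^{1/r}$. Left-continuity in $t$ gives $|A_{s,t}|=\lim_{u\uparrow t}|A_{s,u}|\le w(s,t-)^{1/r}$. Right-continuity in $s$ gives likewise $|A_{s,t}|\le w(s+,t)^{1/r}$. Taking the geometric mean of these two bounds yields
\[
|A_{s,t}|\le w(s,t-)^{1/(2r)}\,w(s+,t)^{1/(2r)},
\]
which is \eqref{def:eq:expansion_notation} with $N=1$, $w_{1,1}=w_{2,1}=w$ and exponents summing to $1/r$.

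For \ref{lemma:expansion_notation_item4}, which I expect to be the main obstacle, it suffices to treat a single term $w_1(s,t-)^{\alpha_1}w_2(s+,t)^{\alpha_2}$ with $\alpha_1+\alpha_2=:\beta\ge 1/r>1$. The difficulty is that along a refinement the mesh need not shrink, and jumps of $w_1$ at left endpoints, or of $w_2$ at right endpoints, do not vanish. Set $\omega:=w_1+w_2$ and fix $\delta>0$. I first choose a partition $\mathcal{P}_\delta$ that contains every $u$ with $\omega(u-,u+)>\delta$ (finitely many, since a control is a mild control) and satisfies $\omega(a+,b-)\le\delta$ for each consecutive pair $[a,b]\in\mathcal{P}_\delta$. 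This is the standard Friz--Zhang construction for càdlàg controls. I then add the midpoint of each such interval. Now let $[s,t]$ be an interval of a refinement, lying inside $[a,b]\in\mathcal{P}_\delta$ (before midpoints were added). If $s\neq a$, then $w_1(s,t-)\le\omega(a+,b-)\le\delta$. If $s=a$, then $t$ is at most the midpoint, so $t<b$ and $w_2(s+,t)\le\delta$. In both cases one factor is at most $\delta$ while both factors are at most $\omega(s,t)$. Hence, provided both $\alpha_i>0$, each summand is bounded by $\delta^{\eta}\,\omega(s,t)^{\beta-\eta}$ for some small $\eta>0$ with $\beta-\eta\ge1$. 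Summing with superadditivity gives at most $\delta^\eta\,\omega(0,T)^{\beta-\eta}$, which tends to $0$ as $\delta\downarrow0$. If one exponent is zero, the argument is simpler, because only one endpoint type matters.

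For \ref{lemma:expansion_notation_item5}, a constant $X$ has $\delta X\equiv0$, which trivially satisfies $\delta X\stackrel{r}{=}0$. Conversely, suppose $\delta X\stackrel{r}{=}0$ and fix $t\in[0,T]$. Restricting the controls to $[0,t]$, for every partition $\mathcal{P}$ of $[0,t]$ we have $|\delta X_{0,t}|\le\sum_{[u,v]\in\mathcal{P}}|\delta X_{u,v}|$. Letting $\mathcal{P}$ run through refinements, item \ref{lemma:expansion_notation_item4} on $[0,t]$ forces this sum to $0$. Hence $X_t=X_0$ for every $t$.
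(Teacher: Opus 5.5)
Your proof is correct. For items (1), (2) and (4) it follows the paper's route up to cosmetic differences:
\begin{itemize}
\item In (1) you dominate each term by a power $\ge 1$ of the control $w_{1,j}+w_{2,j}$ via weighted AM--GM, whereas the paper observes directly that $w_{1,j}^{r\alpha_{1,j}}w_{2,j}^{r\alpha_{2,j}}$ is a control.
\item In (2) you take the geometric mean of the two one-sided bounds, whereas the paper bounds $|A_{s,t}|$ by $\Vert A\Vert_{r;(s,t)}=w(s+,t-)^{1/r}$; this amounts to the same thing.
\item Item (4) is the same triangle-inequality argument.
\end{itemize}

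The genuine difference is item (3). The paper settles it by citing \cite[Lemma B.1]{AllanPieper2026}, whereas you prove it directly. Your construction has two parts. First, take a partition with $\omega(a+,b-)\le\delta$ on every cell. Second, add the midpoint of each cell, so that no cell of a refinement can both start at $a$ and end at $b$. This is precisely what the one-sided form of the bound requires. The factor $w_1(s,t-)$ still sees a jump of $w_1$ at $s$, and $w_2(s+,t)$ still sees a jump of $w_2$ at $t$. So a cell $[a,b]$ that a refinement leaves intact need not be small, and the midpoint rules this out.

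Once one factor is at most $\delta$, you combine three ingredients:
\begin{itemize}
\item positivity of both exponents;
\item $\beta_j\ge 1/r>1$, which lets you extract $\delta^{\eta}$ while keeping the exponent $\beta_j-\eta\ge1$;
\item superadditivity, which finishes the argument.
\end{itemize}
Your route is self-contained and makes visible exactly where $r<1$ and $\alpha_{i,j}>0$ enter. The paper's route buys brevity.

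Your caveat on item (1) is also correct, and it applies equally to the paper's proof. The bound defining $A\stackrel{r}{=}0$ does not force c\`adl\`ag regularity. For example, $A_{s,t}=(t-s)^{2}\mathbbm{1}_{\mathbb{Q}}(t)$ satisfies it with $r=\frac12$ and $w(s,t)=t-s$, yet it is not right-continuous in $t$. So what is actually established is $\Vert A\Vert_{r;[0,T]}<\infty$.
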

\begin{proof}
    For \eqref{lemma:expansion_notation_item1} note that for any partition $\mathcal{P}$ of $[0,T]$ it holds 
    \begin{equation*}
        \sum_{[s, t]\in \mathcal{P}} |A_{s,t}|^{r} \lesssim_{N, r} \sum_{j=1}^{N} \sum_{[s, t]\in \mathcal{P}} w_{1, j}(s, t-)^{r\alpha_{1, j}} w_{2, j}(s+, t)^{r \alpha_{2, j}}\leq \sum_{j=1}^{N} \tilde{w}_{j}(0, T) < \infty, 
    \end{equation*}
    with the controls $\tilde{w}_{j}(s, t)\coloneqq w_{1, j}(s, t)^{r\alpha_{1, j}}w_{2, j}(s, t)^{r\alpha_{2, j}}$. For \eqref{lemma:expansion_notation_item3} this follows by noting that if $A$ is right-continuous in $s$ and left-continuous in  $t$, then $|A_{s,t}|\leq \Vert A \Vert_{r; (s, t)}\eqqcolon w_{1}(s+, t-)^{\frac{1}{r}}$. \eqref{lemma:expansion_notation_item4} follows directly by applying 
    \cite[Lemma B.1]{AllanPieper2026}. For \eqref{lemma:expansion_notation_item5} obviously $X$ being constant implies $\delta X\stackrel{r}{=}0$. Conversely given $\delta X\stackrel{r}{=}0$, one uses the triangle inequality and \eqref{lemma:expansion_notation_item4}, to see that $X$ is constant. 
\end{proof}
\begin{definition}[Proposition 2.6 in \cite{Friz_Zhang_2018}]
    Let $p \in [2,3)$ and $X \in V^{p}([0,T]; V)$. Then we call a tuple $\mathcal{Y}=(Y, Y')$ of paths $\mathcal{Y}: [0,T]\to W \times \mathcal{L}(V; W)$ an $X$-controlled rough path if for $R^{Y, Y'}\coloneqq \delta Y_{s,t}- Y'_{s} \delta X_{s,t}$ it holds
    \begin{equation*}
        \Vert Y' \Vert_{p; [0,T]}< \infty; \quad \Vert R^{Y, Y'}\Vert_{\frac{p}{2}; [0,T]} < \infty
    \end{equation*}
    We denote the space of $W$-valued $X$-controlled rough paths by $\mathscr{V}^{p}_{X}([0,T]; W)$ and further equip it with the semi-norm\footnote{
    As usual this semi-norm as well as any other appearing in this work can easily be turned into norms by a standard modification.
    }
    \begin{equation*}
        \Vert Y, Y'\Vert_{X; p}\coloneqq \Vert Y'\Vert_{p; [0,T]}+ \Vert R^{Y, Y'}\Vert_{\frac{p}{2}; [0,T]}. 
    \end{equation*}
\end{definition}
\begin{definition}
\label{def:scRP}
    Let $p \in [2,3)$ and $\mathbf{X} \in \mathscr{V}^{p}([0,T]; V)$. Then we call a triple $\mathcal{Y}=(Y, Y',  Y'')$ of paths $\mathcal{Y}: [0,T]\to W \times \mathcal{L}(V; W)\times \mathcal{L}(V^{\otimes 2}; W)$ a strongly $X$-controlled rough path if $(Y', Y'')\in \mathscr{V}^{p}_{X}([0,T]; \mathcal{L}(V; W))$ and
    \begin{equation*}
    \delta Y_{s, t}\stackrel{\frac{p}{3}}{=} Y'_{s} \delta X_{s,t} + Y''_{s} \mathbb{X}_{s,t}. 
    \end{equation*} We denote the space of such $W$-valued strongly $\mathbf{X}$-controlled rough paths by $\mathscr{V}^{p}_{\mathbf{X}}([0,T]; W)$.
\end{definition}
\begin{remark}
\label{remark:scRPandRI}
    Let $p \in (2, 3)$. Then for any $(Y', Y'')\in \mathscr{V}_{X}^{p}([0,T]: \mathcal{L}(V; W))$, the rough integral $\mathcal{I}_{t}\coloneqq \int_{0}^{t} (Y', Y'')_{r}\dd \mathbf{X}_{r}$ can be constructed using sewing \cite[Proposition 2.6]{Friz_Zhang_2018}. Then $(\mathcal{I}, Y', Y'')\in \mathscr{V}_{\mathbf{X}}^{p}$. Conversely for any $(Y, Y', Y'')\in \mathscr{V}_{\mathbf{X}}^{p}$ it follows directly $Y\equiv \mathcal{I}$ using the fact that $\mathcal{I}$ is a path and \cref{lemma:expansion_notation}. 
\end{remark}
The notion of controlled paths also naturally extends to vector fields. 
\begin{definition}
    Let $p \in [2,3)$ and $X \in V^{p}([0,T]; V)$. We consider 
    \begin{equation*}
    (f, f'): [0,T]\times W \to U \times \mathcal{L}(V; U); \quad (t, x) \mapsto (f_{t}(x), f'_{t}(x))
    \end{equation*}
    Further assume that both 
    \begin{enumerate}
        \item $f_{t}\in \Lip^{2}(W; U)$, $f'_{t}\in \Lip^{1}(W; \mathcal{L}(V; U))$ uniformly in $t\in [0,T]$. 
        \item $(f(x), f'(x))\in 
        \mathscr{V}^{p}_{X}([0,T]; U)$ and $Df(x)\in V^{p}([0,T]; \mathcal{L}(W; U))$ uniformly in $x \in W$.
    \end{enumerate}
    We denote the space of such $(f, f')$ by $\mathscr{V}^{p}_{X}\Lip^{2}(W; U)$ and further equip this space with the semi-norm
    \begin{equation*}
    \begin{aligned}
        \Vert f , f'\Vert_{X; [0,T], W;p, 2}\coloneqq & \sup_{t\in [0,T]} \left (\Vert f_{t}\Vert_{\Lip^{2}(W; U)}+ \Vert f'_{t} \Vert_{\Lip^{1}(W; U)} \right)\\
        &+ \sup_{x \in W} \left ( \Vert f(x), f'(x) \Vert_{X; p} + \Vert Df(x) \Vert_{p; [0,T]}\right)
    \end{aligned}
    \end{equation*}
\end{definition}

\begin{definition}[Definition B.2 in \cite{DauseFrizJentzenSong2026}]
\label{Def:controlled Lip3 fields}
   Let $p \in [2,3)$ and $X \in V ^{p}([0,T]; V)$. We consider 
    \begin{equation*}
    (f, f'): [0,T]\times W \to U \times \mathcal{L}(V; U); \quad (t, x) \mapsto (f_{t}(x), f'_{t}(x))
    \end{equation*}
    Further assume that for $k \in \mathbb{N}_{\geq 2}$ both 
    \begin{enumerate}
        \item $f_{t}\in \Lip^{k}(W; U)$, $f'_{t}\in \Lip^{k-1}(W; \mathcal{L}(V; U))$ uniformly in $t\in [0,T]$. 
        \item $(D^{j}f, (D^{j}f')^{\top})\in 
        \mathscr{V}^{p}_{X}\Lip^{2}(W; \mathcal{L}(W^{\otimes j}, U))$ for any $j=0, \dots, k-2$. 
    \end{enumerate}
   We denote the space of such $(f, f')$ by $\mathscr{V}^{p}_{X}\Lip^{k}(W; U)$. We further equip this space with the semi-norm 
    \begin{equation*}
    \begin{aligned}
        \Vert f , f'\Vert_{X; [0,T], W;p, k}\coloneqq & \sup_{t\in [0,T]} \left (\Vert f_{t}\Vert_{\Lip^{k}(W; U)}+ \Vert f'_{t} \Vert_{\Lip^{k-1}(W; U)} \right)\\
        &+ \sup_{x \in W} \left ( \sum_{j=0}^{k-2}\Vert D^{j}f(x), (D^{j}f')^{\top}(x) \Vert_{X; p} + \Vert D^{k-1}f(x) \Vert_{p; [0,T]}\right)
    \end{aligned}
    \end{equation*} 
\end{definition}

We will mostly work with the spaces $\mathscr{V}^{p}\Lip^{3}$ as they provide a natural space of rough vector fields driving an RDE. Furthermore we are also interested in the following linear variants. 
\begin{definition}[Definition 3.2 in \cite{BCN24}]
\label{def:rough_linear_vf}
   Let $p\in [1, \infty)$ and $X \in V^{p}([0,T]; V)$. Then we call a tuple $\mathcal{A}=(A, A')$ of paths $\mathcal{A}: [0,T]\to \mathcal{L}(W; U)\times \mathcal{L}(W; \mathcal{L}(V; U))$ an $X$-controlled linear rough vector field if for $R^{A, A'}_{s,t}x\coloneqq \delta A_{s,t}x- A'_{s}x \delta X_{s,t}$ it holds
   \begin{equation*}
   \begin{aligned}
       \Vert A'\Vert_{p; [0,T]; \mathcal{L}(W; \mathcal{L}(V; U))}&\coloneqq \sup_{\mathcal{P}}\left( \sum_{[s,t]\in \mathcal{P}} \Vert \delta A'_{s,t}\Vert^{p}_{\mathcal{L}(W; \mathcal{L}(V; U))} \right)^{\frac{1}{p}}< \infty\\
       \Vert R^{A, A'}\Vert_{\frac{p}{2}; [0,T]; \mathcal{L}(W; U)}&\coloneqq \sup_{\mathcal{P}}\left( \sum_{[s,t]\in \mathcal{P}} \Vert  R^{A, A'}_{s,t}\Vert^{\frac{p}{2}}_{\mathcal{L}(W; U)} \right)^{\frac{2}{p}}< \infty. 
    \end{aligned}
   \end{equation*}
   The space of such $\mathcal{A}$ is denoted by $\mathscr{V}^{p}_{X}\cL(W; U)$.
\end{definition}
\subsection{Rough semimartingales \& stochastic controlled rough paths} 
We start off by introducing spaces of \textit{rough semimartingales}, extending their original definition from \cite{fzk23}.
\begin{definition}
    Let $p\in [1,\infty)$ and $m\in \{0 \} \cup [1,\infty)$, and $X\in V^p([0, T]; V)$ then we say a pair of càdlàg adapted processes $(Y, Y'): [0,T] \times \Omega \to W \times \mathcal{L}(V; W)$ is an almost surely $X$-controlled process if for $\mathbb{P}$-a.e. $\omega \in \Omega$ it holds $(Y(\omega), Y'(\omega))\in \mathscr{V}^{p}_{X}([0,T]; W)$. We say $(Y, Y')$ is an \emph{$L^{m}$-X controlled process}, if 
    \begin{equation*}
        \|Y,Y'\|_{L^{m}(\sV^p_X)} \coloneqq \left \Vert \Vert Y, Y'\Vert_{X; p; [0,T]} \right \Vert_{L^m}<\infty.  
    \end{equation*}
    In this case we write $(Y,Y') \in L^{m}\sV^p_X([0,T]; W)$, where in the case $m=0$ we mean that $\Vert Y, Y'\Vert_{X; p; [0,T]}$ is a.s. finite and for simplicity we call $(Y,Y')$ an \emph{$X$-controlled process}. 
\end{definition}

\begin{definition}\label{definition: Lm RSM}
    Let $p\in [1,\infty)$ and $X\in \sV^p([0,T], V)$, then we call a $3$-tuple $(Y,Y'; M)$ an $X$-controlled, $W$-valued, \emph{$p$-rough semimartingale} (short: RSM), if $M\in \cM(W)$ and 
    \begin{equation*}
        (Y, Y') :  [0, T] \times \Omega  \rightarrow W \times \cL(V; W)
   \end{equation*}
   is an almost surely càdlàg adapted process such that $(Y-M, Y') \in L^{0}\sV^p_X([0,T]; W)$. 
   Further, for any $m \in [1,\infty)$, we say $(Y, Y'; M)$ is an \emph{($X$-controlled) $L^m$-$p$-rough semimartingale} if $M \in \cM^m$ and $(Y-M,Y') \in L^m\sV^p_X([0,T]; W)$. In this case we write $(Y,Y';M) \in \sH^{p,m}_X([0,T]; W)$ and define the semi-norm
   \begin{equation}
        \big\|(Y,Y';M)\big\|_{\sH^{p,m}_X}:= \Big\|[M]^{\frac{1}{2}}_T \Big\|_{L^m}+ \|Y-M,Y'\|_{L^{m}(\sV^p_X)}.
    \end{equation}
    Further, we set $\mathscr{H}^{p}_{X}([0,T]; W)\coloneqq \mathscr{H}^{p, 0}_{X}([0,T]; W)$. 
\end{definition}
\begin{remark}\label{remark:comaprison_FSZZ26_1}
    The space $\mathscr{H}_{X}^{p}([0,T]; W)$ is the same as the space $\operatorname{RSM}^{(p,p)-\operatorname{Var}}_{X}([0,T]; W)$ from  \cite[Definition 2.6]{friz2026roughbackwardstochasticdifferential}. For continuous RSMs $(Y, Y'; M)$ the authors of \cite{friz2026roughbackwardstochasticdifferential} proceed to equip this space with a semi-norm given by 
    \begin{equation*}
    \begin{aligned}
       \bm{\Vert} Y, Y'; M \bm{\Vert}_{X; p, p; m, \infty}\coloneqq & \sup_{(s, t) \in \Delta_{[0,T]}} \left \Vert \Vert \delta Y'\Vert_{p; [s, t]} |\mathfrak{F}_{s} \right \Vert_{m , \infty}+ \sup_{t \in [0,T]} \Vert Y'_{t} \Vert_{\infty; [0,T]} \\
       &+ \sup_{(s, t) \in \Delta_{[0,T]}} \left \Vert \Vert \delta R^{Y, Y'} \Vert_{\frac{p}{2}; [s, t]} |\mathfrak{F}_{s} \right \Vert_{m , \infty}+ \Vert \delta M \Vert_{\operatorname{BMO}; [0,T]}.
    \end{aligned}
    \end{equation*}
     We emphasize that using $\Vert \cdot \Vert_{L^m}\leq \Vert \cdot \Vert_{L^{m,n}}$ for any $n \in [m, \infty]$ and the John-Nirenberg inequality (together with Doob- and BDG-inequality as usual) to handle the BMO-term, it holds
     \begin{equation*}
         \Vert (Y, Y'; M)\Vert_{\mathscr{H}^{p, m}_{X}}+ \sup_{t \in [0,T]}\Vert Y'_{t} \Vert_{L^{\infty}} 
         \lesssim \bm{\Vert} Y, Y'; M \bm{\Vert}_{X; p, p; m, \infty}. 
     \end{equation*}
     In the càdlàg setting however working with the classical $\operatorname{BMO}$-norm is less desirable, since $M \in \operatorname{BMO}$ implies $M \in \mathcal{M}^{2}$ with bounded jumps. This rules out many interesting examples e.g. $M$ being the piece-wise constant approximation of Brownian Motion. It is therefore unclear to us as of yet how to extend the results of \cite{friz2026roughbackwardstochasticdifferential} in an appropriate manner to the setting of this work.
\end{remark}
The main technical feature of rough semimartingales is that they have a 1:1 correspondence to random controlled rough paths.

\begin{definition} \label{def:randomCRPs}
    Let $p \in [2,3)$ and $X: [0,T]\times \Omega \to V$ càdlàg, adapted with $X\in V^{p}([0,T]; V)$ almost surely. Then define the space of càdlàg, adapted controlled rough paths $L^{0}_{ad}(\Omega, \mathscr{V}^{p}_{X}([0,T]; W))$ as consisting of càdlàg adapted $(Y, Y'): [0,T]\times \Omega \to W\times \mathcal{L}(V; W)$ such that $(Y, Y')(\omega)\in \mathscr{V}^{p}_{X(\omega)}([0,T]; W)$ for $\mathbb{P}$-almost every $\omega \in \Omega$. We additionally define the space of càdlàg, adapted controlled rough vector fields $L^{0}_{ad}(\Omega, \mathscr{V}^{p}\Lip^{2}_{X}([0,T], W; U))$ as consisting of càdlàg, adapted $(f, f'): [0,T] \times \Omega \times W\to U \times \mathcal{L}(V; U)$ such that $(f, f')(\omega)\in \mathscr{V}^{p}_{X(\omega)}\Lip^{2}(W; U)$ for $\mathbb{P}$-almost surely every $\omega \in \Omega$. 
\end{definition}
\begin{lemma}[Theorem 6.5 in \cite{fzk23}]
\label{lemma:RSM<->cRP}
    Let $p \in [2,3)$, $X \in V^{p}([0,T]; V)$ and  $M \in \mathcal{M}(U)$. Then it holds that:  
    \begin{enumerate}
    \item Let $(Y, Y')\in L^{0}_{ad}(\Omega, \mathscr{V}^{p}_{(X; M)}([0,T]; W \oplus U))$ with $Y'=(\partial_{X}Y, \partial_{M}Y)^{\top}\in \cL(V; W \oplus U)$. Then $(Y, \partial_{X}Y; N)\in \mathscr{H}_{X}^{p}([0, T]; W)$, where $N_{t}\coloneqq \int_{0}^{t} \partial_{M}Y_{r-} \dd M_{r}$\footnote{Note that in \cite[Theorem 6.5]{fzk23} $N$ (called $\tilde{g}$ there) is incorrectly defined, which is why we provide the corrected statement here.}.
    \item Let $(Y, \partial_{X}Y; M)\in \mathscr{H}^p_{X}([0,T]; W)$. Then $(Y, Y')\in L^{0}_{ad}(\Omega, \mathscr{V}^{p}_{(X, M)}([0,T]; W \oplus W))$ with $Y' \coloneqq (\partial_{X} Y, \Id_{W})^{\top}$.
    \end{enumerate} 
\end{lemma}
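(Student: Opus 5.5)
We prove both directions pathwise, fixing $\omega$ outside a null set. The main tool is the definition of controlled rough paths driven by the joint path $(X,M)$. The one genuinely stochastic input is the fact that a càdlàg local martingale has finite $p$-variation for $p>2$ almost surely, for instance by Lépingle's BDG-type inequality. The degenerate case $p=2$ will be handled through the convention used in \cite{fzk23}, which restricts to $p\in(2,3)$ where needed. In particular $(X,M)(\omega)\in V^{p}([0,T];V\oplus U)$ almost surely, so both statements are meaningful.

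For item (2), we start from $(Y,\partial_X Y;M)\in\sH^p_X$, i.e.\ $Z:=Y-M$ satisfies $(Z,\partial_XY)\in\sV^p_X$ almost surely. We set $Y'=(\partial_XY,\Id_W)^\top$. The derivative $Y'$ is then càdlàg and adapted, and $\|\delta Y'\|_{p}=\|\delta\partial_XY\|_p<\infty$. For the remainder we compute
\[
R^{Y,Y'}_{s,t}=\delta Y_{s,t}-\partial_XY_s\delta X_{s,t}-\delta M_{s,t}=\delta Z_{s,t}-\partial_XY_s\delta X_{s,t}=R^{Z,\partial_XY}_{s,t}.
\]
Hence $\|R^{Y,Y'}\|_{p/2}<\infty$, and item (2) follows immediately.

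For item (1), we start from $(Y,Y')$ controlled by $(X,M)$ and set $N:=\int_0^\cdot\partial_MY_{r-}\,\dd M_r$. Since $\partial_MY_{-}$ is predictable and locally bounded, the integral $N$ is a well-defined local martingale; if $\cM$ is meant to contain only true martingales, we localize. Put $Z:=Y-N$. It suffices to show $(Z,\partial_XY)\in\sV^p_X$ almost surely. The $p$-variation of $\partial_XY$ is inherited from that of $Y'$. For the remainder we split
\[
\delta Z_{s,t}-\partial_XY_s\delta X_{s,t}=R^{Y,Y'}_{s,t}+\Big(\partial_MY_s\delta M_{s,t}-\int_s^t\partial_MY_{r-}\,\dd M_r\Big)=R^{Y,Y'}_{s,t}-\int_s^t\delta(\partial_MY)_{s,r-}\,\dd M_r .
\]
The first term has finite $p/2$-variation by assumption. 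The key step, and the main obstacle, is to show that the second term
\[
\Xi_{s,t}:=\int_s^t(\partial_MY_{r-}-\partial_MY_s)\,\dd M_r
\]
has finite $p/2$-variation almost surely. The difficulty is that $\Xi$ is a stochastic integral, not a rough integral, so a pathwise estimate is not directly available.

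To control $\Xi$, we first identify it with a rough integral. Since $\partial_MY$ is itself controlled by $(X,M)$, the Itô integral $N$ coincides almost surely with the rough integral of $(\partial_MY,(\partial_MY)')$ against the Itô lift $(X,M,\mathbb{X}^{\mathrm{It\hat o}})$ of the pair. The joint lift exists, with the $M$–$M$ and mixed levels defined by Itô integration and the $X$–$X$ level given by $\mathbb{X}$. This identification follows from the convergence of left-point Riemann sums in probability, together with the sewing lemma \cite[Theorem 2.5]{Friz_Zhang_2018} applied pathwise. The required input is that the Itô lift has finite $p/2$-variation at level two almost surely, which is the multilevel BDG estimate of \cite{fzk23}. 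Sewing then gives
\[
\Xi_{s,t}=(\partial_MY)'_s\,\mathbb{Z}_{s,t}+O\big(w(s,t)^{3/p}\big),
\]
where $\mathbb{Z}$ denotes the relevant component of the level-two lift. It follows that $\|\Xi\|_{p/2}<\infty$ almost surely.

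Finally, $Z$ is càdlàg and adapted, so $(Y,\partial_XY;N)\in\sH^p_X$. If the rough path $\mathbf X$ is deterministic and $X$ is not random, adaptedness of the lift is automatic.
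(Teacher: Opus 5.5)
Your item (2) is correct, and so is the algebraic reduction in item (1):
\[
\delta(Y-N)_{s,t}-\partial_XY_s\,\delta X_{s,t}=R^{Y,Y'}_{s,t}-\Xi_{s,t},\qquad \Xi_{s,t}=\int_s^t\delta(\partial_MY)_{s,r-}\,\dd M_r .
\]
You rightly identify the $\frac p2$-variation of $\Xi$ as the crux. Your argument for it, however, rests on the claim that $\partial_MY$ is itself controlled by $(X,M)$, and this is not part of the hypothesis. The condition $(Y,Y')\in\mathscr{V}^p_{(X,M)}$ only says that $Y'=(\partial_XY,\partial_MY)$ is càdlàg, adapted and of finite $p$-variation. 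There is no Gubinelli derivative $(\partial_MY)'$, and no $\frac p2$-remainder attached to $\partial_MY$.

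For example, take $X\equiv0$, $M=B$ a Brownian motion, $W$ an independent Brownian motion, and $Y:=\int_0^\cdot W_r\,\dd B_r$. Then $(Y,(0,W))$ is $(X,M)$-controlled, with remainder $\int_s^t\delta W_{s,r}\,\dd B_r$. But $W$ is not controlled by $B$: an expansion $\delta W_{s,t}=W'_s\delta B_{s,t}+R_{s,t}$ with $R\in V^{p/2}$ would force $0=[W,B]=\int W'\,\dd t$, hence $W'\equiv0$ and $[W]\equiv0$. So the expansion $\Xi_{s,t}=(\partial_MY)'_s\mathbb Z_{s,t}+O(w(s,t)^{3/p})$ is not available. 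Neither the sewing step nor the identification of $N$ with a rough integral against the joint Itô lift can be carried out.

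The missing ingredient is the $p$-variation estimate for martingale transforms with a \emph{general} adapted integrand, \cite[Theorem 1.1]{fzk23}. This is what \cite[Theorem 6.5]{fzk23} rests on, and what the paper itself invokes for the analogous terms in the proof of \cref{prop:scRPtoscRSM}. It states that if $H$ is càdlàg, adapted and of finite $p$-variation, then $(s,t)\mapsto\int_s^t\delta H_{s,r-}\,\dd M_r$ has finite $\rho$-variation for every $\rho$ with $\frac1\rho<\frac1p+\frac12$. This holds with moment bounds, hence almost surely after the localization of \cite[Lemma 4.4]{fzk23}. It is the Young-type exponent $\frac1\rho=\frac1p+\frac1r$ obtained by counting $M$ as having finite $r$-variation for any $r>2$.

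Since $\frac2p<\frac1p+\frac12$ exactly when $p>2$, applying this with $H=\partial_MY$ gives $\Xi\in V^{p/2}$ almost surely directly, with no sewing needed. The multilevel BDG bound you invoke for the Itô lift is just the case $H\in\{X,M\}$ of the same estimate.

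Two smaller points:
\begin{itemize}
\item The case $p=2$ cannot be handled by convention. Both Lépingle's inequality and the transform estimate fail there, and for Brownian $M$ the space $\mathscr{V}^2_{(X,M)}$ is not even defined, so one must restrict to $p\in(2,3)$.
\item Localizing does not turn $N$ into a true martingale. Without integrability of $\partial_MY$, $N$ is in general only a local martingale, so $\mathcal M$ has to be read as local martingales in item (1).
\end{itemize}
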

A similar, but a priori different, notion are \textit{stochastic controlled rough paths} first introduced in \cite{FHL21}. 
\begin{definition}[Definition 4.1 in \cite{AllanPieper2026}]
Let $p \in [2,3)$, $m \in [2,\infty)$ and $n \in [m,\infty]$, and let $X \in V^p([0,T] ; V)$. We call a pair of processes $(Y,Y'): [0,T]\times \Omega \to W \times \mathcal{L}(V; W) $ a \emph{stochastic controlled (rough) path} (short: SCRP)(relative to $X$), if
\begin{enumerate}[label=(\roman*)]
\item $Y$ and $Y'$ are both progressively measurable,
\item $Y \in V^p L^{m,n}([0,T]; W)$, 
\item $Y' \in V^p L^{m,n}([0,T]; \mathcal{L}(V; W))$ and $\sup_{s \in [0,T]} \|Y'_s\|_{L^{n}} < \infty$, and
\item $\|\E_{\cdot} R^{Y, Y'}\|_{\frac{p}{2}, n ,[0,T]} < \infty$, where the two-parameter process $R^{Y, Y'}: \Delta_{[0,T]}\to W$ is defined by
\begin{equation}\label{eq: defn R^Y_s,t}
R_{s,t}^{Y, Y'}\coloneqq \delta Y_{s,t}- Y'_{s} \delta X_{s,t}; \quad (\mathbb{E}_{\cdot}[R^{Y, Y'}])_{s,t}\coloneqq \mathbb{E}_{s}[R^{Y, Y'}_{s,t}], 
\end{equation}
for any $(s,t)\in \Delta_{[0,T]}$.
\end{enumerate}
We write $\sV^{p,m,n}_X([0,T]; W)$ for the space of stochastic controlled paths relative to $X$. For brevity, in the case when $m = n$, we will simply write $\sV^{p,m}_X([0,T]; W) \coloneqq \sV^{p,m,m}_X([0,T]; W)$.
\end{definition}

We further recall the following rough stochastic counterpart to the controlled vector fields defined in \cref{Def:controlled Lip3 fields}, first introduced in \cite{FHL21} and subsequently generalised to the càdlàg $p$-Var setting in \cite{AllanPieper2026}.

\begin{definition}\label{definition: stochastic controlled vector fields}
Let $p \in [2,3)$, $m \in [2,\infty)$, $n \in [m,\infty]$, $\gamma \in \{2,3\}$, let $E, W, \bar{W}$ be finite-dimensional Banach spaces, and let $X \in V^p$ be an $E$-valued path. We call $(f,f')$ a \emph{stochastic controlled vector field} if the following hold.
\begin{enumerate}
\item \begin{equation*}
(f,f') \colon [0,T] \times \Omega \to \Lip^{\gamma}(W;\bar{W}) \times \Lip^{\gamma - 1}\big(W;\cL(E; \bar{W})\big)
\end{equation*}
are strongly\footnote{Recall, this essentially just means that $f$ and $f'$ take values in a separable subspace; see \cite[Sections~2.1 \& 3.3]{FHL21} for a discussion of the necessity of the strong measurability assumption here.} progressively measurable, càdlàg almost surely, and
\[\|f,f'\|_{n; \gamma} := \sup_{s \in [0,T]} \big\|  \|f_s\|_{\Lip^{\gamma}} \big\|_{L^n} + \sup_{s \in [0,T]} \big\| \|f'_s\|_{\Lip^{\gamma-1}} \big\|_{L^n} < \infty. \]
\item Setting
\begin{equation}\label{eq: defintion bracket of vector field}
\llbracket g \rrbracket_{p,m,n,[s,t]} := \Big\| \sup_{x \in W} |\delta g(x)| \Big\|_{p,m,n,[s,t]},
\end{equation}
we have that $\llbracket f \rrbracket_{p,m,n,[0,T]}$, $\llbracket  f' \rrbracket_{p,m,n,[0,T]}$, $\llbracket  D f \rrbracket_{p,m,n,[0,T]}$  and if $\gamma =3$ additionally $\llbracket  \D^2 f \rrbracket_{p,m,n,[0,T]}$ and $\llbracket D f' \rrbracket_{p,m,n,[0,T]}$ are finite.
\item Setting for the map $\E_{\edot} R^{g,g'}$, defined by $(s,t) \mapsto \E_s R^{g,g'}_{s,t} := \E_s [\delta g_{s,t}] - g'_s \delta X_{s,t}$, 
\begin{equation*}
\llbracket \E_{\edot} R^{g,g'} \rrbracket_{\frac{p}{2},n,[0,T]} := \bigg( \sup_{\cP \subset [0,T]} \sum_{[s,t] \in \cP} \Big\| \sup_{x \in W} \big| \E_s \big[ R_{s,t}^{g,g'}(x) \big] \big| \Big\|^{\frac{p}{2}}_{L^n} \bigg)^{\hspace{-2pt}\frac{2}{p}},
\end{equation*}
\end{enumerate}
then $\llbracket \E_{\cdot} R^{f,f'}\rrbracket_{\frac{p}{2},n, [0,T]}$ and if $\gamma = 3$ additionally $\llbracket \E_{\cdot} R^{D f, D f'}\rrbracket_{\frac{p}{2}, n, [0,T]}$ are finite. We denote the space of stochastic controlled vector fields by $\sV^{p,m,n}_X \Lip^\gamma$. 
\end{definition}

For $\gamma \in \{2,3\}$, and a stochastic controlled vector field $(f,f') \in \cV^{p,m,n}_X \Lip^\gamma$, we write for any $(s,t) \in \Delta_{[0,T]}$
\begin{align*}
    \llbracket f, f'\rrbracket_{p,m,n,[0,T]} &:= \sum_{j=0}^{\gamma-2}\Big( \llbracket \D^j f \rrbracket_{p,m,n,[s,t]}+\llbracket \D^j f'\rrbracket_{p,m,n,[s,t]}+ \big\llbracket\E_{\cdot} R^{\D^j f, \D^j f'} \big\rrbracket_{\frac{p}{2}, n, [s,t]}\Big)\\
    &\quad +\llbracket \D^{\gamma-1} f\rrbracket_{p,m,n, [s,t]}.
\end{align*}
For the (semi-)norm in \eqref{eq: defintion bracket of vector field}, we also adopt the shorthand $\| \hspace{1pt} \cdot \hspace{1pt} \|_{p,m,[s,t]} := \| \hspace{1pt} \cdot \hspace{1pt} \|_{p,m,m,[s,t]}$.
We further define for $(f,f')\in \cV^{p,m,\infty}_X \Lip^3$ and $(\tf,\tf')\in \cV^{p,m,n}_{\tX} \Lip^2$ and any $(s,t)\in \Delta_{[0,T]}$
\begin{equation}\label{eq: distance vfs}
\begin{split}
     \llbracket f,f'; \tf, \tf' \rrbracket_{p,m,n, [s,t]}&:=  \llbracket f-\tf \rrbracket_{p,m,n,[s,t]} + \llbracket f'-\tf' \rrbracket_{p,m,n,[s,t]} + \llbracket \D f- \D \tf \rrbracket_{p,m,n,[s,t]} \\
    &\quad+ \big\llbracket \E_{\edot} R^f -R^{\tf} \big \rrbracket_{\frac{p}{2},n,[s,t]}
\end{split}
\end{equation}

Similarly, we define the linear analogue to stochastic controlled vector fields as introduced in \cite{BCN24}.

\begin{definition}\label{definition: stochastic controlled linear vector field}
Let $X\in V^p([0,T];V)$ be deterministic for some $p\in[2,3)$,
$m\in[2,\infty)$ and $n\in[m,\infty]$.
We call a pair $(A,A')$ of adapted, almost surely càdlàg
processes with values in $\cL(W; \bar W)$ and
$\cL(W; \cL(V; \bar W))$, respectively, a stochastic
($X$-)controlled linear vector field from $W$ to $\bar W$
and write $(A,A')\in\sV_X^{p,m,n}\cL(W ; \bar W)$ if
\begin{equation*}
     \|A,A'\|_{\infty; \cL} :=\|A\|_\infty+\|A'\|_\infty<\infty,
    \quad \text{and} \quad (A,A') \in \sV^{p,m,n}([0,T];\cL(W ;\bar W)),
\end{equation*}
where we define $\|G\|_{\infty}:=\esssup_{\omega\in\Omega}
\sup_{t\in[0,T]}|G_t(\omega)|_{\cL}$.
\end{definition}

For any $(A,A')\in \sV^{p,m,n}_X\cL$ and $(\tilde{A}, \tilde{A}')\in \sV^{p,m,n}_{\tX}\cL$ and any $(s,t)\in \Delta_{[0,T]}$, we define $\llbracket A,A', \tilde{A}, \tilde{A}' \rrbracket_{p,m,n,[s,t]}$ analogously to \eqref{eq: distance vfs}.

It will be useful for the remainder of this work to have a notion of processes, which combine both the rough semimartingale and stochastic controlled structure. 
\begin{definition}
    Let $p\in [2,3), m \in [2, \infty), n \in [m, \infty]$. We call a triple of processes $(Y, Y'; M)$ a \emph{good $L^{m, n}$-stochastic controlled rough path} if $(Y, Y') \in \mathscr{V}^{p, m, n}_{X}([0,T]; W)$ and $(Y, Y'; M)\in \mathscr{H}^{p}_{X}([0,T]; W)$.  
\end{definition}

Most notably, good stochastic controlled rough paths admit natural composition rules for $n=\infty$. 
\begin{lemma}\label{lemma: composition for RSM}
    Let $\mathcal{Y}=(Y, Y'; M) \in \mathscr{H}_{X}^{p}([0,T]; W)$ and $(f, f')\in \mathscr{V}^{p}_{X}\Lip^{2}(W; U)$. Then for 
    \begin{equation*}
        (f, f') \circ \mathcal{Y}\coloneqq (f(Y), Df(Y)Y'+ f'(Y); Df(Y) \bullet \dd M)
    \end{equation*}
    it holds
    \begin{enumerate}[label=(\roman*)]
        \item $(f, f')\circ \mathcal{Y}\in \mathscr{H}^{p}_{X}([0,T]; U)$.
        \item Let $(Y, Y')\in \mathscr{V}^{p, m,n}_{X}([0,T]; W)$. Then $(f(Y), Df(Y)Y'+ f'(Y))\in \mathscr{V}^{p, m, \frac{n}{2}}_{X}([0,T]; W)$.  
    \end{enumerate}
   In particular the composition maps good $L^{m,n}$-SCRPs to good $L^{m, \frac{n}{2}}$-SCRPS if $n=\infty$ and so (good) $L^{m, \infty}$-SCRPs are stable under composition with $\mathscr{V}^{p}\Lip^{2}$-vector fields. 
\end{lemma}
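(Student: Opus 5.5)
For part (i) I will go through the correspondence of \cref{lemma:RSM<->cRP} rather than estimate the RSM structure directly. By \cref{lemma:RSM<->cRP}(2), for a.e.\ $\omega$ the pair $(Y,(Y',\Id_W)^{\top})$ is a controlled rough path with respect to the joint path $(X,M)$, and $M$ has finite $p$-variation a.s.\ since $p>2$. The composition is then deterministic, pathwise for fixed $\omega$: I treat $(f,f')$ as a controlled vector field relative to the joint path $(X,M)$, with derivative $(f',0)$ in the $M$-direction. The standard composition estimate for controlled rough paths then shows that $f(Y)$ is $(X,M)$-controlled with Gubinelli derivative $(Df(Y)Y'+f'(Y),\,Df(Y))$.

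The only real work is the remainder expansion $\delta f(Y)_{s,t}$. I split it as
\[
\big(f_t(Y_t)-f_t(Y_s)\big)+\big(f_t(Y_s)-f_s(Y_s)\big).
\]
For the first term I Taylor-expand using $f_t\in\Lip^2$: it equals $Df_t(Y_s)\delta Y_{s,t}+O(|\delta Y_{s,t}|^2)$. I then replace $Df_t$ by $Df_s$, paying $\|Df(x)\|_p\,|\delta Y|$, which is a $p/2$-variation term. For the second term I use the controlledness of $(f(x),f'(x))$ uniformly in $x$. Substituting $\delta Y=Y'\delta X+\delta M+R$ gives the remainder in $p/2$-variation, pathwise with a.s.\ finite norms. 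The derivative $Df(Y)Y'+f'(Y)$ has finite $p$-variation by the product rule and the Lipschitz bounds. Applying \cref{lemma:RSM<->cRP}(1) then yields the RSM $(f(Y),Df(Y)Y'+f'(Y);\int Df(Y_{r-})\dd M_r)$. Adaptedness and càdlàg regularity are clear, and the left limit in the integrand is the predictable version matching the notation $Df(Y)\bullet\dd M$.

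For (ii) I follow the mixed-moment composition argument of \cite{FHL21,AllanPieper2026}. First, $\|\delta f(Y)_{s,t}\|_{m,n/2,s}$ is bounded by $\|f\|$ times $\|\delta Y\|_{m,n,s}$ plus $\llbracket f\rrbracket$. The derivative term $Df(Y)Y'$ is a product of two $L^{m,n}$-type objects, and the conditional Hölder inequality \eqref{eq: Holder's inequality for q,r,s norm} costs half the outer integrability, which is why the exponent drops to $n/2$. For the conditional remainder $\E_s R$, I take conditional expectation of the Taylor expansion above. The quadratic term $\E_s|\delta Y_{s,t}|^2$ is controlled in $L^{n/2}$ by $\|\delta Y\|_{m,n,s}^2$, hence in $p/2$-variation. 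The term $Df(Y_s)\E_s R^Y_{s,t}$ uses the bound $\sup\|Y'\|_{L^n}$ together with the bounds on the field, and $\E_s R^{f,f'}$ is assumed finite.

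The main obstacle is bookkeeping in (ii): keeping the outer exponents consistent so that every product lands in $L^{n/2}$. This is exactly why the final claim needs $n=\infty$, since then $n/2=\infty$ and goodness is preserved under iteration. Combining (i) and (ii) gives the closing statement about good $L^{m,\infty}$-SCRPs being stable under composition.
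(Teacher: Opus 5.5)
Your proposal is correct and takes the same route as the paper. For (i), the paper also lifts $(Y,Y';M)$ via \cref{lemma:RSM<->cRP} to an $(X,M)$-controlled path with derivative $(Y',\Id_W)$. It then composes with the field $(f,(f',0)^{\top})$ using the classical composition rule, and returns via the first part of that lemma to obtain the martingale $Df(Y)\bullet\dd M$. For (ii), the paper simply cites \cite[Lemma 4.11]{AllanPieper2026}, which is the mixed-moment composition argument you sketch: the $n/2$ loss comes from the Hölder step in $Df(Y)Y'$ and from the quadratic Taylor term $\E_s|\delta Y_{s,t}|^2$.
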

\begin{proof}
    For $(i)$ first note that by \cref{lemma:RSM<->cRP} for $\tilde{Y}'\coloneqq (Y', \Id_{W})$ it holds \\
    $(Y, \tilde{Y}')\in L^{0}_{ad}(\Omega, \mathscr{V}^{p}_{(X, M)}([0,T]: W))$. Then we know that by applying classical composition rules for controlled rough paths for $\tilde{f}'\coloneqq (f', 0)^{\top}\in \mathcal{L}(V; W)\oplus \mathcal{L}(W; W)$, that 
    \begin{equation*}
        (f, \tilde{f}') \circ (Y, \tilde{Y}')\coloneqq (f(Y), Df(Y) \tilde{Y}'+ \tilde{f}'(Y))\in L^{0}(\Omega, \mathscr{V}^{p}_{(X, M)}([0,T]; U)).
    \end{equation*}
    By applying again \cref{lemma:RSM<->cRP} we thus infer $(f, f') \circ \mathcal{Y}\in \mathscr{H}_{X}^{p}([0,T]; U)$. For $(ii)$ see \cite[Lemma 4.11]{AllanPieper2026}. 
\end{proof}
\begin{remark}
  The above loss of integrability $n< \infty$ is completely analogous to \cite[Lemma 3.11]{FHL21} and leads to the necessary restriction $n=\infty$. We emphasize that we don't equip the vector space of good $L^{m,n}$ solutions with a norm and therefore   \eqref{lemma: composition for RSM} is a purely qualitative statement. In the continuous setting a quantitative version for $n=\infty$ of this statement can be shown using the stronger RSM-norm from \cref{remark:comaprison_FSZZ26_1}; see \cite[Proposition 3.10]{friz2026roughbackwardstochasticdifferential}.
\end{remark}
\begin{definition}
\label{def:rough_stochastic_vf}
    Let $p \in (2,3)$ and $X \in V^{p}([0, T]; V)$. Consider a triplet of càdlàg, adapted $(A, A'; G): [0,T]\times \Omega \to \mathcal{L}(W; U) \times \mathcal{L}(W; \mathcal{L}(V; U)) \times \mathcal{L}(W; U)$, where $G \in \mathcal{M}$ such that 
\begin{enumerate}
\item It holds $(A, A')\in V^{p}([0,T]; \mathcal{L}(W; U)\times \mathcal{L}(W; \mathcal{L}(V; U))$ almost surely. 
    \item \label{def:item:lin_stoch_vf} It holds
    \begin{equation*}
        \left \Vert \sup_{t \in [0,T]} | A_{t} |_{\mathcal{L}(W; U)} \right\Vert_{\infty}+\left \Vert \sup_{t \in [0,T]} | A'_{t} |_{\mathcal{L}(W; \mathcal{L}(V; U))} \right\Vert_{\infty} < \infty.  
    \end{equation*}
        \item  For $R^{A, A'}_{s,t}\coloneqq \delta A_{s,t} - \delta G_{s,t} - A'_{s} \delta X_{s,t} \in \mathcal{L}(W; U)$
        \footnote{Here $(A'_{s}\delta X_{s,t})w\coloneqq (A'_{s}w) \delta X_{s,t}$ for any $w \in W$.} it holds $R^{A, A'}\in V^{\frac{p}{2}}([0,T]; \mathcal{L}(W; U))$ almost surely. 
\end{enumerate}
The space of such linear rough stochastic vector fields $(A, A'; G)$ is denoted by $\mathscr{H}_{X}^{p}\cL(W; U)$. 
\end{definition}
\subsection{Joint martingale lifts}
The following ideas were introduced in \cite{DOR13, fzk23}. 
First we define the integral of a martingale against a rough path. 
\begin{definition}[Proof of Theorem 1.3 in \cite{fzk23}]\label{definition: integral mart against rp}
    Let $p \in [2,3), X\in V^{p}([0,T]; V)$ and $M \in \mathcal{M}([0,T
    ]; W)$. Then we define the càdlàg adapted process 
    \begin{equation*}
        \int_{0}^{t} M_{s-} \otimes \dd X_{s}\coloneqq \delta (M \otimes X)_{0,t}- \left(\int_{0}^{t} X_{s-} \otimes \dd M_{s}\right)^{\top}-[M , X]_{t},  
    \end{equation*}
    where the integral on the right-hand side is the It\^{o} integral and the bracket term is defined as
    \begin{equation*}
        [M, X]_{t} =\sum_{0\leq s\leq t} \Delta M_{s} \otimes \Delta X_{s},
    \end{equation*} which is well-posed by \cite[Theorem 5.4]{fzk23}. Further we define $[X, M]\coloneqq [M, X]^{\top}$.
\end{definition}

\begin{definition}[Theorem 1.1. in \cite{fzk23}]
    Let $p \in (2,3), X \in V^{p}([0,T]: V)$ and $M \in \mathcal{M}([0,T]; W)$. Then we define the iterated integrals for $(s, t) \in \Delta_{[0,T]}$
    \begin{equation*}
    \begin{aligned}
        \Pi(X; M)_{s,t}&\coloneqq  \int_{s}^{t} \delta X_{s,r-} \otimes \dd M_{r}\\ 
        \Pi(M; X)_{s, t}&\coloneqq \int_{s}^{t} \delta M_{s,r-} \otimes \dd X_{r}\\
        \Pi(M; M)_{s,t} &\coloneqq \int_{s}^{t} \delta M_{s,r-} \otimes \dd M_{r}
    \end{aligned}
    \end{equation*}
\end{definition}
\begin{lemma}
\label{lemma: joint lift and bracket}
    Let $p \in (2,3), \mathbf{X}=(X, \mathbb{X})\in \mathscr{V}^{p}([0,T]; V)$ and $M \in \mathcal{M}([0,T]; W)$. We define the \emph{joint lift} of $\mathbf{X}$ and $M$ by 
    \begin{equation*}
        \mathscr{J}(\mathbf{X}; M)\coloneqq \left( \begin{pmatrix}
            X \\
            M 
        \end{pmatrix}, \begin{pmatrix}
            \mathbb{X} & \Pi(X; M)\\
            \Pi(M; X) & \Pi(M; M)
        \end{pmatrix} \right).
    \end{equation*}
    Then $\mathscr{J}(\mathbf{X}; M)\in \mathscr{V}^{p}([0,T]; V \oplus W)$ almost surely and
    \begin{equation*}
        [\mathscr{J}(\mathbf{X}; M)]= \begin{pmatrix}
            [\mathbf{X}]& [X, M]\\
            [M, X] & [M]
        \end{pmatrix},
    \end{equation*}
    where $[\mathscr{J}(\mathbf{X}; M)],[\mathbf{X}]$ denote the rough path bracket, $[M]$ the classical covariation bracket of $M$ and 
    \begin{equation*}
        [M, X]_{t}\coloneqq  
        \sum_{0 \leq s \leq t} \Delta M_{s} \otimes \Delta X_{s} \eqqcolon [X, M]^{\top}.
    \end{equation*}
\end{lemma}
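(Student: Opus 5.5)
The claim has two parts. First, $\mathscr{J}(\mathbf{X};M)$ is a càdlàg $p$-rough path almost surely: it satisfies Chen's relation and has finite $p$-variation at level one and $\tfrac p2$-variation at level two. Second, its bracket has the stated block form. I would prove these in that order, block by block.

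\textbf{Chen's relation.} For each block I would check that, for $s\le u\le t$,
\[
\delta\mathbb{Z}_{s,u,t}=\delta Z_{s,u}\otimes\delta Z_{u,t},\qquad Z=(X,M).
\]
The $\mathbb{X}$ block is given. For $\Pi(M;M)$, $\Pi(X;M)$ and $\Pi(M;X)$, splitting $\int_s^t=\int_s^u+\int_u^t$ and writing $\delta M_{s,r-}=\delta M_{s,u}+\delta M_{u,r-}$ gives the identity directly. For $\Pi(X;M)$ this uses linearity of the Itô integral in a constant $\mathcal{F}_u$-measurable integrand. For $\Pi(M;X)$ it uses the definition in \cref{definition: integral mart against rp}, which is additive over intervals.

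\textbf{Regularity.} The level-one statement is clear: $X\in V^p$, and martingales have finite $p$-variation for $p>2$ almost surely by Lépingle's BDG inequality. At level two:
\begin{itemize}
\item $\Pi(M;M)$ has finite $\tfrac p2$-variation almost surely; this is the Lépingle/Friz--Victoir lift of càdlàg martingales.
\item For $\Pi(X;M)$ I would invoke the multilevel BDG estimate of \cite[Theorem 1.1]{fzk23}, which gives $\|\Pi(X;M)\|_{p/2}<\infty$ almost surely, after localization.
\item For the mixed term $\Pi(M;X)$ I would use the identity
\[
\Pi(M;X)_{s,t}=\delta M_{s,t}\otimes\delta X_{s,t}-\Pi(X;M)^{\top}_{s,t}-\bigl(\delta[M,X]_{s,t}-\Delta M_s\otimes\Delta X_s\bigr)
\]
up to the jump conventions, which follows from \cref{definition: integral mart against rp} by integration by parts on $[s,t]$. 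The product term has finite $\tfrac p2$-variation by Cauchy--Schwarz on $p$-variations. The bracket term has finite $1\le\tfrac p2$ variation because $\sum|\Delta M||\Delta X|$ is finite, which follows from Hölder with $\sum|\Delta M|^2<\infty$ and $\sum|\Delta X|^2\le\sum|\Delta X|^p$ bounded since $p<3$ (\cite[Theorem 5.4]{fzk23}).
\end{itemize}
Càdlàg regularity of the two-parameter objects follows from that of the stochastic and Stieltjes integrals.

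\textbf{Bracket.} By definition,
\[
[\mathscr{J}]_{0,t}=\delta Z_{0,t}\otimes\delta Z_{0,t}-2\operatorname{Sym}(\mathbb{Z}_{0,t}).
\]
I would compute this block by block:
\begin{itemize}
\item The $(X,X)$ block is $[\mathbf{X}]$ by definition.
\item For the $(M,M)$ block, Itô's product formula $M_t\otimes M_t=\int M_-\otimes\dd M+\int \dd M\otimes M_-+[M]_t$ gives $\delta M\otimes\delta M-\Pi(M;M)-\Pi(M;M)^{\top}=[M]$.
\item For the off-diagonal blocks, the symmetrization pairs $\Pi(X;M)$ with $\Pi(M;X)^{\top}$. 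The relation defining $\int M_-\otimes\dd X$ then yields $\delta M\otimes\delta X-\Pi(M;X)-\Pi(X;M)^{\top}=[M,X]$, and the $(X,M)$ block is its transpose.
\end{itemize}
Care is needed with the $\operatorname{Sym}$ factor on the block matrix: $2\operatorname{Sym}$ places $\mathbb{Z}_{12}+\mathbb{Z}_{21}^{\top}$ in the off-diagonal slot.

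\textbf{Main obstacle.} I expect the hardest step to be the $\tfrac p2$-variation of $\Pi(X;M)$ with both $X$ and $M$ càdlàg. This is not elementary. I would import it wholesale from the multilevel BDG estimate in \cite{fzk23}, localizing $M$ by stopping times to reduce to $M\in\mathcal{M}^2$, so no new argument is needed.
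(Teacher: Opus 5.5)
Your overall route is the paper's. The paper imports the rough-path property of $\mathscr{J}(\mathbf{X};M)$ wholesale from \cite[Theorem 1.1]{fzk23} and gets the bracket from the same It\^o-product/integration-by-parts computation you do (following \cite[Lemma 4.8]{DauseFrizJentzenSong2026}). Your Chen checks and your block computation of $[\mathscr{J}(\mathbf{X};M)]$ are correct, including placing $\Pi(X;M)+\Pi(M;X)^{\top}$ off the diagonal.

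The step that fails is your regularity argument for the bracket term in the decomposition of $\Pi(M;X)$. The inequality $\sum|\Delta X|^2\le\sum|\Delta X|^p$ is backwards for $p>2$, since it reverses for small jumps. A path in $V^p$ with $p>2$ need not have square-summable jumps, and $\sum_r|\Delta M_r||\Delta X_r|$ can be infinite, so $[M,X]$ need not have bounded variation. Here is a counterexample:
\begin{itemize}
\item Take $V=W=\mathbb{R}$ and times $t_n\uparrow T$.
\item Let $X$ be piecewise constant with $\Delta X_{t_n}=(-1)^n n^{-1/2}$. Every increment is bounded by the first jump it contains, so $X\in V^{p}$ for all $p>2$; lift it by $\mathbb{X}_{s,t}=\tfrac12(\delta X_{s,t})^2$.
\item Let $\Delta M_{t_n}=\xi_n n^{-1/2}/\log(n+1)$ with i.i.d.\ Rademacher $\xi_n$, which gives an $L^2$-bounded martingale.
\end{itemize}
Then $\sum_n|\Delta M_{t_n}||\Delta X_{t_n}|=\sum_n 1/(n\log(n+1))=\infty$. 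Meanwhile $\sum_n\Delta M_{t_n}\Delta X_{t_n}$ converges only conditionally, as the martingale transform $\int\Delta X\dd M$.

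So $[M,X]$ exists only through martingale cancellations. Since $t\mapsto[M,X]_t$ is a martingale and $\tfrac p2<2$, Lépingle's inequality does not give its $\tfrac p2$-variation either. That has to come from the martingale-transform estimates of \cite[Section 5]{fzk23}; the paper cites Theorem 5.4 there just to make sense of $[M,X]$ in \cref{definition: integral mart against rp}. Alternatively, do as the paper does and take the $\tfrac p2$-variation of the whole second level, including $\Pi(M;X)$, from \cite[Theorem 1.1]{fzk23}. Then your integration-by-parts identity is only needed for the bracket computation.

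A minor point: with $\delta[M,X]_{s,t}=\sum_{s<r\le t}\Delta M_r\otimes\Delta X_r$, the identity holds exactly as $\Pi(M;X)_{s,t}=\delta M_{s,t}\otimes\delta X_{s,t}-\Pi(X;M)^{\top}_{s,t}-\delta[M,X]_{s,t}$. There is no extra $\Delta M_s\otimes\Delta X_s$ correction.
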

\begin{proof}
First $\mathscr{J}(\mathbf{X}; M)\in \mathscr{V}^{p}([0,T]; V \oplus W)$ a.s. follows by \cite[Theorem 1.1]{fzk23}. The characterisation of the bracket follows by analogous calculation to \cite[Lemma 4.8]{DauseFrizJentzenSong2026}. 
\end{proof}
\subsection{On rough stochastic integration}
We introduce the notion of rough stochastic integration for both RSM- and SCRP-integrands. 
\begin{definition} \label{def:RSI}
    Let $\mathbf{X}\in \mathscr{V}^{p}([0,T]; V)$ and $(Y, Y'): [0,T]\times \Omega \to \mathcal{L}(V; W) \times \mathcal{L}(V^{\otimes 2}; W)$ $(\mathfrak{F}_{t})$-progressive. We say the \textit{rough stochastic integral} (short:RSI) of $(Y, Y')$ with respect to $\mathbf{X}$ exists if there is a measurable process $\mathcal{I}(Y, Y'; \mathbf{X}): [0,T]\times \Omega \to W$ such that for any  sequence of deterministic partitions $\mathcal{P}^{n}$ of $[0,T]$ of locally vanishing mesh-size it holds that\begin{equation*}
        \operatorname{u.c.p.-}\lim_{n \to \infty}\left(\sum_{[u,v]\in \mathcal{P}^{n}} Y_{u} \delta X_{u \wedge \cdot ,v\wedge \cdot } + Y'_{u}\mathbb{X}_{u \wedge \cdot ,v \wedge \cdot}\right)_{\cdot} =\mathcal{I}(Y, Y'; \mathbf{X}).
    \end{equation*}
    If so this limit is denoted by
    \begin{equation*}
    \int_{0}^{t} (Y, Y')_{r} \dd \bX_{r} \coloneqq \mathcal{I}(Y, Y'; \mathbf{X})_{t}.
    \end{equation*}
\end{definition}
\begin{remark}
    By imposing the convergence in u.c.p. sense it holds that, by equipping $\sV^{p}$ with a $\sigma$-field $\mathfrak{V}$ we can under natural assumptions on $(Y, Y')$ show $\mathcal{I}$ to be jointly $\mathfrak{P}\otimes \mathfrak{V}$ measurable in $(t, \omega, \mathbf{X})$ when equipping $[0,T]\times \Omega$ with it's progressive $\sigma$-field $\mathfrak{P}$; see \cite{FLZ25} for details. In particular this allows for the randomization of $\mathcal{I}$ in $\mathbf{X}$ i.e. setting $\overline{\mathcal{I}}(\omega)\coloneqq \mathcal{I}((Y, Y')(\omega); \mathbf{X})|_{\mathbf{X}=\mathbf{Z}(\omega)}$, where $Z$ is a càdlàg semimartingale of $(Y, Y')$ and $\mathbf{Z}$ is a suitable rough path lift of $Z$. Notably when choosing $\mathbf{Z}$ to be the It\^{o}-lift and $Y$ suitably regular, it holds that  $\overline{\mathcal{I}}= \int Y_{s-} \dd Z_{s}$ due to the left-point approximation chosen above; see \cite[Proposition 3.6]{AllanPieperTeichmann2025}.
\end{remark}

\begin{lemma}
\label{lemma:RSI_cadlag_jumpstructure}
Let $(Y, Y'), \mathbf{X}$ as in \cref{def:RSI} and $(Y,Y')$ càdlàg in probability such that the RSI $\mathcal{I}=\mathcal{I}((Y, Y'); \mathbf{X})$ exists. Then $\mathcal{I}: [0,T]\times \Omega \to W$ is a càdlàg adapted process such that almost surely for any $t \in [0,T]$ 
\begin{equation}
\label{eq:RSI_jump_structure}
    \Delta \mathcal{I}_{t}= Y_{t-} \Delta X_{t}+ Y'_{t-} \Delta \mathbb{X}_{t}.
\end{equation}
\end{lemma}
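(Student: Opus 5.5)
The plan is to identify $\mathcal{I}$ on each fixed time through the discrete Riemann sums, and then read off the jump by letting the partition mesh concentrate around $t$. Write $S^{n}_{t}\coloneqq \sum_{[u,v]\in\mathcal{P}^{n}} Y_{u}\delta X_{u\wedge t,v\wedge t}+Y'_{u}\mathbb{X}_{u\wedge t,v\wedge t}$.

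\textbf{Step 1: regularity and adaptedness.} For each $n$, the process $S^{n}$ is adapted, since $Y_{u}$ is $\mathfrak{F}_{u}$-measurable with $u\le t$ in every nonzero summand and $\mathbf{X}$ is deterministic. It is also càdlàg, because $t\mapsto \delta X_{u\wedge t,v\wedge t}$ and $t\mapsto\mathbb{X}_{u\wedge t,v\wedge t}$ are càdlàg; the latter uses Chen's relation together with the càdlàg property of $\mathbb{X}$ in its second argument.

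\textbf{Step 2: passing to the limit.} U.c.p. convergence gives a subsequence along which $\sup_{t}|S^{n}_{t}-\mathcal{I}_{t}|\to0$ almost surely. A uniform limit of càdlàg functions is càdlàg, so $\mathcal{I}$ has a càdlàg modification. A limit of adapted processes is adapted, using the usual hypotheses to absorb null sets.

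\textbf{Step 3: the jump formula.} Fix $t$, which may be a jump time of $X$. For $s<t$ close to $t$ I would use the sum over the partition intervals lying in $[s,t]$, namely $\delta\mathcal{I}_{s,t}=\lim_{n}\delta S^{n}_{s,t}$. The cleanest route is to insert $t$ into every $\mathcal{P}^{n}$; this is allowed since the definition requires convergence for any sequence of partitions with vanishing mesh. Write $[u_{n},t]$ for the last interval before $t$. Its contribution is $Y_{u_{n}}\delta X_{u_{n},t}+Y'_{u_{n}}\mathbb{X}_{u_{n},t}$, and as $u_{n}\uparrow t$ this tends in probability to $Y_{t-}\Delta X_{t}+Y'_{t-}\Delta\mathbb{X}_{t}$. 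This uses $\delta X_{u_{n},t}\to\Delta X_{t}$, $\mathbb{X}_{u_{n},t}\to\Delta\mathbb{X}_{t}$, and the assumption that $(Y,Y')$ is càdlàg in probability, so that $Y_{u_{n}}\to Y_{t-}$ in probability.

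The remaining contribution is $S^{n}_{u_{n}}-S^{n}_{s}$. I need to show that
\[
\limsup_{s\uparrow t}\limsup_{n}\big|(S^{n}_{u_{n}}-S^{n}_{s})-\delta\mathcal{I}_{s,t-}\big|=0,
\]
in probability. This follows from uniform convergence applied on a common grid: $S^{n}_{u_{n}}$ is close to $\mathcal{I}_{u_{n}}$ uniformly, and $\mathcal{I}_{u_{n}}\to\mathcal{I}_{t-}$ because $\mathcal{I}$ is càdlàg by Step 2. Combining the two pieces, $\mathcal{I}_{t}-\mathcal{I}_{t-}$ equals the limit of the last-interval term, which gives \eqref{eq:RSI_jump_structure} almost surely for each fixed $t$.

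\textbf{Step 4: upgrading to all $t$ simultaneously.} Both sides of \eqref{eq:RSI_jump_structure} vanish outside the countable set of jump times of $X$ together with the jump times of $\mathcal{I}$. I expect this to be the main obstacle. The jumps of $\mathcal{I}$ are random, and at times $t$ where $X$ is continuous I must show that $\mathcal{I}$ does not jump, which a fixed-$t$ argument cannot reach.

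For this I would use the almost sure uniform convergence $S^{n}\to\mathcal{I}$ from Step 2 directly, which controls the jumps at all times at once. Since $\Delta S^{n}_{t}=Y_{u}\Delta X_{t}+Y'_{u}\Delta\mathbb{X}_{t}$ for $t\in(u,v]$, this gives
\[
|\Delta\mathcal{I}_{t}|\le\limsup_{n}|\Delta S^{n}_{t}|\lesssim \Big(\sup_{r}|Y_{r}|+\sup_{r}|Y'_{r}|\Big)\big(|\Delta X_{t}|+|\Delta\mathbb{X}_{t}|\big).
\]
The supremum factor is finite almost surely, as $(Y,Y')$ is càdlàg in probability; if that is not enough, I would argue locally in probability by stopping. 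Hence almost surely $\Delta\mathcal{I}_{t}=0$ wherever $\mathbf{X}$ is continuous. At the countably many jump times of $\mathbf{X}$, Step 3 applies, and a countable intersection of full-measure events finishes the proof.
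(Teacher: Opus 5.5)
Your proof follows the same route as the paper's. You identify $\Delta\mathcal{I}_t$ at each fixed $t$ using u.c.p.\ convergence and the càdlàg-in-probability property of $(Y,Y')$. You then use almost sure uniform convergence along a subsequence to confine the jumps of $\mathcal{I}$ to the deterministic countable set $J_{\mathbf{X}}=\{t:|\Delta X_t|\vee|\Delta\mathbb{X}_t|>0\}$, and finish by intersecting countably many full-measure events. In Step 3 you insert $t$ into the partitions and compare increments over the last interval. The paper skips this and uses the direct bound $|\Delta\mathcal{I}_t-\Delta\mathcal{I}^n_t|\le 2\sup_s|\mathcal{I}_s-\mathcal{I}^n_s|$; both work.

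One justification in Step 4 is wrong, though the step survives without it. Càdlàg in probability gives no pathwise control, so $\sup_r|Y_r|<\infty$ a.s.\ does not follow. For example, $Y_r=|r-U|^{-1}$ with $U$ uniform and $\mathfrak{F}_0$-measurable is continuous in probability, yet every path is unbounded. Stopping does not repair this either, since there is no path regularity to localise.

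The bound is not needed. For $t\in(u,v]$, Chen's relation gives $\Delta S^n_t=Y_u\Delta X_t+Y'_u(\Delta\mathbb{X}_t+\delta X_{u,t-}\otimes\Delta X_t)$. You dropped the cross term, which is harmless here. This expression vanishes identically for $t\notin J_{\mathbf{X}}$. Hence $|\Delta\mathcal{I}_t|\le 2\sup_s|\mathcal{I}_s-S^n_s|$ holds simultaneously for all $t\notin J_{\mathbf{X}}$, and the right-hand side tends to $0$ almost surely along your subsequence. This is exactly the paper's argument, and with this replacement your proof is complete.
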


\begin{proof}
    The adaptedness of $\mathcal{I}$ follows directly from the adaptedness of the discrete sums. Note that u.c.p. limits of càdlàg processes are always càdlàg themselves and thus so is $\mathcal{I}$. The jump structure \eqref{eq:RSI_jump_structure} can be identified by noting that for any $\epsilon>0$
    \begin{equation*}
    \begin{aligned}
        \mathbb{P}\left(|\Delta \mathcal{I}_{t}- Y_{t-} \Delta X_{t} - Y'_{t-} \Delta \mathbb{X}_{t}|> \epsilon \right)\leq & 2 \mathbb{P}\left(\sup_{s \in [0,T]}| \mathcal{I}_{s}-\mathcal{I}^{n}_{s}|> \frac{\epsilon}{3} \right)\\
        &+ \mathbb{P}\left(|\Delta \mathcal{I}^{n}_{t}- Y_{t-} \Delta X_{t} - Y'_{t-} \Delta \mathbb{X}_{t}|> \frac{\epsilon}{3} \right) \stackrel{n \to \infty}{\longrightarrow}0,
    \end{aligned}
    \end{equation*}
    where $\mathcal{I}^{n}$ denotes the left-point sum from \cref{def:RSI} along a locally vanishing partition sequence $(\mathcal{P}^{n})_{n \in \mathbb{N}}$.  By definition we especially also know that $\sup_{s\in [0,T]} |\Delta \cI_s- \Delta \cI^n_s| \to 0$ as $n\to \infty$ in probability, and thus almost surely,
    \[ \{t \in (0,T] \, : \, |\Delta \cI_t| > 0\} \subset \{t \in (0,T] \, : \, |\Delta X_t| \vee |\Delta \X_t| > 0\} =: J_{\bX}, \]
    where, crucially, the latter set of jump times $J_{\bX}$ of $\bX = (X,\X)$ is deterministic and countable. Since \eqref{eq:RSI_jump_structure} holds almost surely for each $t \in (0,T]$, it also holds simultaneously for every $t \in J_{\bX}$ almost surely, and the result follows.
\end{proof}

In \cite{fzk23,FHL21, AllanPieper2026} the authors give sufficient conditions for the existence of such limits based on either the concepts of \textit{rough semimartinagles} or \textit{stochastic controlled rough paths}
\begin{lemma}
\label{lemma:sufficient_conditions_RSI}
    Let $\mathbf{X}\in \mathscr{V}^{p}([0,T]; V)$ with $p \in [2,3)$ and càdlàg adapted $(Y, Y'): [0,T]\times \Omega \to \mathcal{L}(V; W)\times \mathcal{L}(V^{\otimes 2}; W)$. Then $\cI(Y, Y'; \bX)$ exists if any of the following conditions is satisfied:
    \begin{enumerate}
        \item \label{lemma:sufficient_conditions_RSI:RSM}There is a martingale $M \in \mathcal{M}$ such that $(Y, Y'; M) \in \mathscr{H}^{p}_{X}([0,T]; \mathcal{L}(V; W))$; 
        \item \label{lemma:lemma:sufficient_conditions_RSI:SCRP}  $m \in [2, \infty), n \in [m , \infty]$ and $(Y, Y') \in \sV^{p,m,n}_{X}([0,T]; \cL(V; W))$.
    \end{enumerate}
\end{lemma}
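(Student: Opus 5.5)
For \eqref{lemma:sufficient_conditions_RSI:RSM} I would reduce to a pathwise rough integral against the joint lift. By \cref{lemma:RSM<->cRP}(2), $(Y,\tilde Y')$ with $\tilde Y'=(Y',\Id)^{\top}$ is almost surely controlled by $(X,M)$. By \cref{lemma: joint lift and bracket}, $\mathscr{J}(\mathbf{X};M)$ is almost surely a càdlàg $p$-rough path. Hence for a.e.\ $\omega$ the deterministic sewing lemma \cite[Proposition 2.6]{Friz_Zhang_2018} gives a rough integral $\int (Y,\tilde Y')\dd\mathscr{J}(\mathbf{X};M)$ as the limit of compensated Riemann sums
\[
\sum_{[u,v]}\bigl( Y_u\delta X_{u,v}+Y'_u\mathbb{X}_{u,v}+\Pi(M;X)_{u,v}\bigr),
\]
suitably contracted. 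The sums converge along any deterministic partition sequence with vanishing mesh, uniformly in the running time, by the sewing estimate. Summing $\Pi(M;X)_{u,v}$ over the partition telescopes to the path-level quantity $\int_0^{\cdot}(M_{r-}-M_{u(r)})\otimes \dd X_r$, whose contribution the stochastic sewing term must absorb. More precisely, I would write the target limit as the pathwise joint rough integral minus $\sum_{[u,v]}\Pi(M;X)_{u\wedge\cdot,v\wedge\cdot}$ and show that this last sum tends to $0$ u.c.p. Using \cref{definition: integral mart against rp}, the sum equals an Itô integral $\int (X_{r-}-X_{\pi(r)})\otimes\dd M$ plus a bracket term. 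Both vanish u.c.p.\ as the mesh tends to $0$: the first by dominated convergence for stochastic integrals, the second by the jump description in \cite[Theorem 5.4]{fzk23}. Adaptedness and measurability of the limit follow from those of the sums.

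For \eqref{lemma:lemma:sufficient_conditions_RSI:SCRP} I would simply invoke the mixed-moment stochastic sewing results \cite[Theorem 4.?]{FHL21} and, in the càdlàg setting, \cite{AllanPieper2026}. There $A_{u,v}=Y_u\delta X_{u,v}+Y'_u\mathbb{X}_{u,v}$ satisfies $\|\mathbb{E}_u\delta A_{u,r,v}\|_{L^n}\lesssim w(u,v)^{3/p}$ and $\|\delta A_{u,r,v}\|_{m,n,u}\lesssim w(u,v)^{2/p}$, using Chen's relation $\delta A_{u,r,v}=-R^{Y,Y'}_{u,r}\delta X_{r,v}-\delta Y'_{u,r}\mathbb{X}_{r,v}$. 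This yields $L^m$-convergence of the Riemann sums for each fixed $t$. The upgrade from pointwise convergence to u.c.p.\ I would obtain from a maximal (Doob/BDG-type) inequality applied to the martingale part of the stochastic sewing decomposition, while the compensator part is controlled uniformly in $t$ via the $\frac p3$-bound.

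The main obstacle is the u.c.p.\ upgrade in both cases at jump times: partition points must be allowed not to contain jump times of $X$. I would handle this with the mild-control form of the estimates in \cref{def:expansion_notation}, using the locally vanishing mesh to isolate the finitely many large jumps.
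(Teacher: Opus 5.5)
Your outline is correct, and for (2) it is the paper's route. The paper also obtains the integral from the càdlàg stochastic sewing of \cite{AllanPieper2026}: it cites \cite[Theorem 3.4]{AllanPieper2026} together with the argument of \cite[Lemma 4.3]{AllanPieper2026}, noting that the latter on its own does not give u.c.p.\ convergence. That u.c.p.\ upgrade is exactly what your maximal-inequality sketch would reprove. You would still have to control the incomplete last interval $[u_t,t]$. The easiest way is to show that the Riemann-sum processes of two partitions are Cauchy u.c.p., comparing them at the points of their common refinement and pathwise in between.

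For (1) you argue differently from the result the paper cites, \cite[Theorem 1.3]{fzk23}. That proof (cf.\ \cref{definition: integral mart against rp}) splits $Y=(Y-M)+M$. The a.s.\ $X$-controlled part $(Y-M,Y')$ is integrated pathwise against the deterministic $\mathbf{X}$, and one shows $\sum_{[u,v]}M_u\delta X_{u,v}\to\int M_{-}\,\dd X$ u.c.p.

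The two routes are the same computation. Since $M_u\delta X_{u,v}+\Pi(M;X)_{u,v}=\int_u^v M_{r-}\,\dd X_r$ exactly, your compensated joint sums equal the $(Y-M)$-sums plus the partition-independent process $\int_0^{\cdot}M_{r-}\,\dd X_r$. Your step $\sum\Pi(M;X)_{u\wedge\cdot,v\wedge\cdot}\to0$ is therefore precisely the convergence of the Riemann sums of $\int M_-\,\dd X$.

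So for existence alone the detour through $\mathscr{J}(\mathbf{X};M)$ is unnecessary, and with it the $p$-variation estimates behind \cref{lemma: joint lift and bracket}. Deterministic sewing plus stochastic dominated convergence suffice. What your route buys is the identification of the RSI with an $\omega$-wise rough integral against the joint lift. This is the viewpoint used in \cref{remark:commutation_diagramm} and \cref{thm:FZK_RSDE}.

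In both routes, as you note yourself, the RRS sewing of \cite{Friz_Zhang_2018} must still be upgraded to mesh convergence. This is done by splitting the finitely many partition intervals that straddle a large jump.

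One bookkeeping correction. \cref{definition: integral mart against rp} gives $\Pi(M;X)_{u,v}=\int_u^v \dd M_r\otimes(X_v-X_{r-})-\delta[M,X]_{u,v}$. The Itô part here does not vanish. It converges to $\int \dd M_r\otimes\Delta X_r=[M,X]$, because $\Delta X$ is supported on countably many deterministic times, and so it cancels the bracket. With your left-endpoint integrand $X_{r-}-X_{\pi(r)}$, the leftover term is $\sum\delta M_{u,v}\otimes\delta X_{u,v}-[M,X]$. It vanishes by the same dominated-convergence argument (the integrand $X_v-X_u$ on $(u,v]$ tends to $\Delta X_r$), not by \cite[Theorem 5.4]{fzk23}.
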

\begin{proof}
    The claim  \eqref{lemma:sufficient_conditions_RSI:RSM} is the content of \cite[Theorem 1.3]{fzk23}. The claim \eqref{lemma:lemma:sufficient_conditions_RSI:SCRP} follows by \cite[Theorem 3.4]{AllanPieper2026} and analogous arguments to \cite[Lemma 4.3]{AllanPieper2026}\footnote{Notice that \cite[Lemma 4.3]{AllanPieper2026} does not immediately provide the existence of the RSI as it does not provide the necessary convergence in u.c.p.}.
\end{proof}
\begin{remark}
Note that the sufficient conditions given in \cref{lemma:sufficient_conditions_RSI} are by no means complete. In \cite[Theorem 3.5]{FHL21} the authors construct the RSI for generalizations of SCRPs. In \cite[Theorem 4.22]{DauseFrizJentzenSong2026} the authors consider compositions between Kunita-type parameter-dependent martingales $(M_{t}(x))_{t \in [0,T], x \in \mathbb{R}^{d}}$ and a.s. $\mathbf{X}$-controlled rough paths $(Y, Y')$, which also fall into neither of the above categories.
\end{remark}

Notably working with SCRPs comes with the direct benefit of having moment-estimates on the RSI in terms of the mixed moment norms of the germ.

\begin{lemma}[Lemma 4.3 in \cite{AllanPieper2026}]\label{lemma: existence of rough stochastic integrals}
Let $p \in [2,3)$, $m \in [2,\infty)$ and $n \in [m,\infty]$. Let $\bX = (X,\X) \in \sV^p$ be a c\`adl\`ag rough path, and let $(Y,Y') \in \sV^{p,m,n}_X$ be a stochastic controlled path. Then there exists an $L^m$-integrable adapted process $(\int_0^t Y_u \dd \bX_u)_{t \in [0,T]}$, such that, for every $(s,t) \in \Delta_{[0,T]}$,
\begin{equation*}
\lim_{|\cP| \to 0} \bigg\|\int_s^t Y_u \dd \bX_u - \sum_{[u,v] \in \cP} \big(Y_u \delta X_{u,v} + Y'_u \X_{u,v}\big)\bigg\|_{m,n,s} = 0,
\end{equation*}
where the limit holds along partitions $\cP$ of the interval $[s,t]$ as the mesh size tends to zero. Moreover, we have that
\begin{equation}\label{eq: Lqr- estimate for rough stochastic integral}
\bigg\|\E_s \bigg[\int_s^t Y_u \dd \bX_u - Y_s \delta X_{s,t} - Y'_s \X_{s,t}\bigg]\bigg\|_{L^{n}} \leq C\Big(\|\E_{\edot} R^{Y,Y'}\|_{\frac{p}{2},n,[s,t)} \|X\|_{p,[s,t]} + \|Y'\|_{p,m,n,[s,t)} \|\X\|_{\frac{p}{2},[s,t]}\Big)
\end{equation}
and
\begin{equation}\label{eq: p,q,r,s- estimate for rough stochastic integral}
\begin{split}
&\bigg\|\int_s^t Y_u \dd \bX_u - Y_s \delta X_{s,t} - Y'_s \X_{s,t}\bigg\|_{m,n,s}\\
&\leq C \Big(\|Y\|_{p,m,n,[s,t)} + \|Y'\|_{p,m,n,[s,t)} + \|\E_{\edot} R^{Y,Y'}\|_{\frac{p}{2},n,[s,t)} + \|X\|_{p,[s,t)} \sup_{u \in [0,T)} \|Y'_u\|_{L^{n}}\Big) \|\bX\|_{p,[s,t]}
\end{split}
\end{equation}
for every $(s,t) \in \Delta_{[0,T]}$, where the constant $C$ depends only on $p$ and $m$.
\end{lemma}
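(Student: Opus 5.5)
The plan is the mixed-moment stochastic sewing lemma in its càdlàg form, applied to the germ $A_{u,v}\coloneqq Y_u\delta X_{u,v}+Y'_u\X_{u,v}$ (the càdlàg $p$-variation version of Lê's lemma, cf.\ \cite[Theorem 3.4]{AllanPieper2026}). That lemma needs two controls, in terms of the $\|\cdot\|_{m,n,s}$ norms. First, $\|\delta A_{s,u,t}\|_{m,n,s}$ must be bounded by a control to a power $\geq 1/2$. Second, $\|\E_s\delta A_{s,u,t}\|_{L^n}$ must be bounded by a control to a power $>1$, with the two controls superadditive. Given these, the lemma yields the limit process $\int_s^t Y\dd\bX$ as an $L^{m,n}$-limit of the Riemann sums, together with the two local bounds. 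The bound on $\E_s$ of the remainder corresponds to \eqref{eq: Lqr- estimate for rough stochastic integral}, and the full $m,n,s$-bound to \eqref{eq: p,q,r,s- estimate for rough stochastic integral}.

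The key algebraic step is Chen's relation, which gives
\begin{equation*}
\delta A_{s,u,t}=-R^{Y,Y'}_{s,u}\delta X_{u,t}-\delta Y'_{s,u}\X_{u,t}.
\end{equation*}
For the conditional expectation I would use that $\delta X_{u,t}$ and $\X_{u,t}$ are deterministic. Then
\begin{equation*}
\|\E_s\delta A_{s,u,t}\|_{L^n}\le \|\E_sR^{Y,Y'}_{s,u}\|_{L^n}|\delta X_{u,t}|+\|\E_s\delta Y'_{s,u}\|_{L^n}|\X_{u,t}|.
\end{equation*}
The first term is bounded via $\|\E_\cdot R\|_{\frac p2,n}$. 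For the second I use $\|\E_s Z\|_{L^n}\le\|Z\|_{m,n,s}$, which bounds it by the $\|Y'\|_{p,m,n}$ control. Hölder's inequality then gives exponents $2/p+1/p=3/p>1$ and $1/p+2/p>1$. For the $m,n,s$ bound I need a term involving $R$ itself, not only its conditional expectation. Here I write $R^{Y,Y'}_{s,u}=\delta Y_{s,u}-Y'_s\delta X_{s,u}$. The $\delta Y$ part is controlled in $\|\cdot\|_{m,n,s}$ by $\|Y\|_{p,m,n}$. The $Y'_s$ part uses $\sup\|Y'\|_{L^n}$, since $Y'_s$ is $\mathfrak F_s$-measurable and so $\|Y'_s\xi\|_{m,n,s}\le\|Y'_s\|_{L^n}|\xi|$ for deterministic $\xi$. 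This gives exponent $2/p\ge 1/2$ times $\|\bX\|_p$, which is why the $\|X\|_p\sup\|Y'\|_{L^n}$ term appears.

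Existence along vanishing-mesh partitions follows from the sewing lemma's Cauchy property in $\|\cdot\|_{m,n,s}$. Adaptedness and $L^m$-integrability are inherited from the Riemann sums.

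The main obstacle is the càdlàg setting. In the continuous case one uses control functions $w(s,t)$, but here jumps of $X$ make naive $p$-variation controls fail the mesh-to-zero convergence: the sums need not converge at jump times unless one uses left-point evaluation and mild controls of the form $w(s,t-)$, $w(s+,t)$. I would handle this as in \cite[Lemma B.1]{AllanPieper2026} and \cref{lemma:expansion_notation}. That is, I split off the finitely many large jumps of the controls, using that a mild control has only finitely many $u$ with $w(u-,u+)>\delta$. The germ is exact across a deterministic jump time when that time is a partition point. On the remaining intervals the controls are small, so the sewing bound on them tends to zero as the mesh vanishes. Getting this uniform as the mesh goes to zero, rather than only along refinements, is the delicate point. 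The intervals $[s,t)$ in the estimates reflect that the final jump at $t$ is carried exactly by the germ.
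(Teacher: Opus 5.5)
The part that carries the analysis follows the route behind the cited result; the paper itself only quotes Allan--Pieper. You apply stochastic sewing to the left-point germ $A_{u,v}=Y_u\delta X_{u,v}+Y'_u\X_{u,v}$ and use the Chen identity $\delta A_{s,u,t}=-R^{Y,Y'}_{s,u}\delta X_{u,t}-\delta Y'_{s,u}\X_{u,t}$. The two local bounds have exponents $3/p>1$ and $2/p>\frac12$; the latter must be strict, not $\geq\frac12$, but it is. You also correctly trace where the term $\|X\|_{p,[s,t)}\sup_u\|Y'_u\|_{L^n}$ comes from. That algebra, and matching the two sewing conclusions to the two stated estimates, is fine.

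\textbf{The gap: convergence as $|\cP|\to0$.} The statement asserts this convergence, and your proposal does not establish it.

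\emph{Why the listed hypotheses are not enough.} With càdlàg controls, bounds of the form ``$\|\delta A_{s,u,t}\|$ bounded by a control to a power'' only give convergence along refinements. For example, the midpoint germ $Y_{(u+v)/2}\,\delta X_{u,v}$ with $X=Y=\mathbf{1}_{[\tau,T]}$ satisfies them. Yet its Riemann sums equal $0$ or $1$ depending on which side of the midpoint $\tau$ falls. So ``existence along vanishing-mesh partitions follows from the Cauchy property'' does not follow from what you list, and your last paragraph leaves exactly this open. The germ is also not ``exact'' at a jump time.

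\emph{The missing idea.} Use the $u$-dependent factorisation you already computed. In $\delta A_{s,u,t}$ the random factor lives on $[s,u]$ and the deterministic factor $(\delta X_{u,t},\X_{u,t})$ lives on $[u,t]$. Hence a jump at a single time is seen by at most one of them.

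\emph{Inserting the large jumps.} Given $\epsilon>0$, let $J$ be the finitely many times at which the relevant controls jump by more than $\epsilon$. Take $|\cP|$ below the minimal gap of $J$. Inserting $\tau\in J$ into $\cP$ changes the Riemann sum by $\delta A_{t_i,\tau,t_{i+1}}$. Its $\|\cdot\|_{m,n,s}$-norm is at most $C(|\delta X_{\tau,t_{i+1}}|+|\X_{\tau,t_{i+1}}|)$, where $C$ depends only on $\|Y\|_{p,m,n}$, $\|Y'\|_{p,m,n}$, $\sup_u\|Y'_u\|_{L^n}$ and $\|X\|_p$. This tends to $0$ with $|\cP|$ by right-continuity of $\bX$ at $\tau$.

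\emph{Summing the local errors.} For the partition $\cP\cup J$, a large jump can only sit at an endpoint of an interval $[a,b]$. There it is seen by at most one of the two factors of the local sewing bound: the $Y$-side quantities over $[a,b)$, or $\|\bX\|_{p,[a,b]}$. Hence every local error $E_{a,b}=\int_a^b Y\dd\bX-A_{a,b}$ carries a small factor. Sum these errors using the conditional BDG inequality for the parts $E_{a,b}-\E_aE_{a,b}$ and the triangle inequality for the parts $\E_aE_{a,b}$. Extracting the small factor by superadditivity then gives a bound that vanishes uniformly over such $\cP$. This is also why the estimates carry $[s,t)$ on the $Y$-side but $[s,t]$ on the $\bX$-side.
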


Using the pathwise estiamtes provided in \cite{Friz_Zhang_2018} together with the corresponding martingale transform estimates from \cite{fzk23} we however find similarly for $L^m$-$X$-rough semimartingales that

\begin{lemma}
   Let $p \in [2, 3)$, $m\in \{0\}\cup [1, \infty)$ and $\bX\in \sV^{p}([0,T]; V)$. Then for any $(Y, Y'; M) \in \sH^{p,m}_X([0,T]; \cL(V; W))$ we have that 
   \begin{equation*}
       \bigg(\int_0^{\cdot} (Y,Y')_{r} \dd \bX_r, Y, 0\bigg)\in \sH^{p,m}_{X}. 
   \end{equation*}
   Further if $m\neq 0$ it holds  
   \begin{equation*}
       \bigg\|\int_0^{\cdot} (Y,Y')_{r} \dd \bX_r, Y ; 0\bigg\|_{\sH^{p,m}_X} \leq C \big(\|Y_0\|_{L^m}+\|Y'_0\|_{L^m}+ \|M_0\|_{L^m} + \|Y,Y'; M\|_{\sH^{p,m}_X} \big)\|\bX\|_{p,[0,T]},
   \end{equation*}
   where $C>0$ only depends on $p, m$.
\end{lemma}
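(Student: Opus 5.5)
To prove the statement I would reduce it to the deterministic sewing estimate of \cite{Friz_Zhang_2018}, applied pathwise, and then take $L^m$ norms. Write $\cI_t\coloneqq\int_0^t (Y,Y')_r \dd\bX_r$, whose existence is guaranteed by \cref{lemma:sufficient_conditions_RSI}. The claim $(\cI, Y; 0)\in\sH^{p,m}_X$ means three things: $(\cI,Y)$ is a.s. $X$-controlled, its Gubinelli derivative is $Y$, and its martingale part vanishes. So I must control $\|\delta Y\|_{p}$ and $\|R^{\cI,Y}\|_{p/2}$, where $R^{\cI,Y}_{s,t}=\delta\cI_{s,t}-Y_s\delta X_{s,t}$.

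\textbf{Step 1: pathwise controlled structure.} By \cref{lemma:RSM<->cRP}, $(Y,\tilde Y')$ with $\tilde Y'=(Y',\Id)$ is an a.s. $(X,M)$-controlled path. I would not integrate against the joint lift. Instead I decompose the germ directly as $\delta Y_{s,t}=\delta M_{s,t}+Y'_s\delta X_{s,t}+R^{Y-M,Y'}_{s,t}$. Then the sewing defect is
\[
\delta\Xi_{s,u,t}=-\delta Y_{s,u}\delta X_{u,t}-\delta Y'_{s,u}\X_{u,t}+(\text{Chen}),
\]
with $\Xi_{s,t}=Y_s\delta X_{s,t}+Y'_s\X_{s,t}$. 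The terms $R^{Y-M,Y'}\delta X$ and $\delta Y'\,\X$ are of order $3/p>1$ and are handled by the deterministic sewing lemma. The term $\delta M_{s,u}\delta X_{u,t}$ is only of order $2/p\le 1$ pathwise, so it is not sewable pathwise. This is exactly why \cite{fzk23} integrates the martingale part separately.

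\textbf{Step 2: split and estimate.} I would write $\cI=\int (Y-M,Y')\dd\bX+\int M_{r-}\dd X_r$. The first integral is a pathwise rough integral of a controlled path. By \cite[Prop.~2.6]{Friz_Zhang_2018} it satisfies
\[
|\cI^1_{s,t}-(Y-M)_s\delta X_{s,t}-Y'_s\X_{s,t}|\lesssim\big(\|R^{Y-M,Y'}\|_{p/2}\|X\|_p+\|Y'\|_p\|\X\|_{p/2}\big)
\]
in $p/3$-variation. The second integral is defined via \cref{definition: integral mart against rp}. Its remainder $\int_s^t\delta M_{s,r-}\dd X_r=\Pi(M;X)_{s,t}$ has finite $p/2$-variation with the BDG-type bound $\big\|\|\Pi(M;X)\|_{p/2}\big\|_{L^m}\lesssim\|[M]_T^{1/2}\|_{L^m}\|X\|_p$ from \cite[Theorem 1.1]{fzk23}. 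Summing the two pieces gives
\[
R^{\cI,Y}=\text{(sewing remainder)}+Y'_s\X_{s,t}+\Pi(M;X)_{s,t}.
\]
Its $p/2$-variation is bounded by $\sup|Y'|\,\|\X\|_{p/2}$ plus the terms above. Finally, $\|\delta Y\|_p\le\|\delta M\|_p+\sup|Y'|\|X\|_p+\|R^{Y-M,Y'}\|_{p/2}$.

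\textbf{Step 3: moments.} I would bound $\sup_t|Y'_t|\le|Y'_0|+\|Y'\|_p$. For the martingale term I would use $\big\|\|M\|_p\big\|_{L^m}\lesssim\|[M]^{1/2}_T\|_{L^m}$, which is the Lépingle $p$-variation BDG inequality valid for $p>2$. Taking $L^m$ norms and applying Hölder then produces the right-hand side. The products $\sup|Y'|\cdot\|X\|$ are harmless because $\bX$ is deterministic. The $Y_0, M_0$ terms enter only through normalisations, and the resulting factor is $\|\bX\|_{p}$, assuming $\|\bX\|_p\lesssim 1$ or absorbing powers into the constant. For $m=0$ the same pathwise bounds give a.s. finiteness.

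The main obstacle is Step 2's martingale term. I must check that the u.c.p. RSI from \cref{def:RSI} coincides with $\cI^1+\int M_{-}\dd X$, which I would do by comparing left-point sums. I must also secure the $L^m$ $p/2$-variation bound on $\Pi(M;X)$, including at $m$ near $1$, where I would rely on the Davis-type inequality in \cite{fzk23}.
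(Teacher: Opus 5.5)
Your decomposition is the paper's. It writes $R^{\cI,Y}_{s,t}=R^{\int(Y-M,Y')\dd\bX,\,Y-M}_{s,t}+\Pi(M;X)_{s,t}$. It then bounds the first piece by the deterministic sewing estimate \cite[Proposition 2.6]{Friz_Zhang_2018} together with the germ term $Y'_s\X_{s,t}$, bounds the second by the $p/2$-variation estimate of \cite{fzk23}, and stops there. Every term in this remainder estimate carries exactly one factor $\|X\|_{p}$ or $\|\X\|_{p/2}$. So for the component $\big\|\|R^{\cI,Y}\|_{p/2,[0,T]}\big\|_{L^m}$ your Step 2 gives precisely the displayed inequality.

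The identification of the u.c.p.\ integral with $\cI^1+\int M_{r-}\dd X_r$ that you flag is not a real obstacle. That split is how the integral is built in the proof of \cite[Theorem 1.3]{fzk23}, which is why the paper uses it without comment.

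You go beyond the paper by also estimating the other part of the $\sH^{p,m}_X$-norm of $(\cI,Y;0)$, namely $\|\delta Y\|_{p,[0,T]}$, via L\'epingle's inequality. This is genuinely needed for the membership claim, and it requires $p>2$. For $p=2$, take $Y=M$ a Brownian motion and $Y'=0$: this is an element of $\sH^{2,m}_X$ with $\|\delta Y\|_{2,[0,T]}=\infty$. So the endpoint $p=2$ is covered by neither argument.

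The step that does not go through is Step 3's claim that everything combines into the overall factor $\|\bX\|_{p,[0,T]}$. In $\|\delta Y\|_{p}\le\|\delta M\|_{p}+\sup_t|Y'_t|\,\|X\|_{p}+\|R^{Y-M,Y'}\|_{p/2}$ the first and last terms do not scale with $\bX$. ``Absorbing powers into the constant'' cannot manufacture a missing factor.

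This cannot be repaired, because the displayed inequality is false for the full norm. For $\bX=0$ one has $\cI\equiv 0$ and $R^{\cI,Y}\equiv 0$. The left-hand side then equals $\big\|\|\delta Y\|_{p,[0,T]}\big\|_{L^m}$, which is positive for instance when $Y=M$ is a nonconstant martingale and $Y'=0$, while the right-hand side is $0$.

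What your argument, and the paper's, actually proves is the stated bound for the remainder component together with
$\big\|\|\delta Y\|_{p,[0,T]}\big\|_{L^m}\lesssim_{p,m}(1+\|X\|_{p,[0,T]})\big(\|Y'_0\|_{L^m}+\|Y,Y';M\|_{\sH^{p,m}_X}\big)$. You should state the conclusion in that form rather than assert the factor $\|\bX\|_{p,[0,T]}$ for the whole norm.
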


\begin{proof}
    Notice, that $(s,t) \in \Delta_{[0,T]}$ 
    \begin{equation*}
    \begin{aligned}
        R^{\int_0^{\cdot} (Y,Y') \dd \bX_r}_{s,t} &\coloneqq \int_s^t (Y,Y')_r \dd \bX_r - Y_s \delta X_{s,t}\\ &= \int_s^t (Y-M,Y')_r \dd \bX_r - (Y_s -M_s) \delta X_{s,t} +\int_s^t \delta M_{s,r-} \dd X_r\\
        &= R^{\int_0^{\cdot }((Y-M),Y')_r \dd \bX_r,(Y-M)}_{s,t} + \Pi(M,X)_{s,t}
    \end{aligned}
    \end{equation*}
    and again by \cite[Theorem 1.3]{fzk23} together with \cite[Proposition 2.6]{Friz_Zhang_2018} we find that 
    \begin{align*}
        \|R^{\int_0^{\cdot} (Y,Y')_r \dd \bX_r,Y}\|_{L^m\big(V^{\frac{p}{2}} \big)} &\leq \|R^{\int_0^{\cdot }((Y-M),Y')_r \dd \bX_r,(Y-M)}\|_{L^m \big(V^{\frac{p}{2}}\big)} + \|\Pi(M,X)\|_{L^m\big(V^{\frac{p}{2}} \big)} \\
        &\leq C\Big(\|Y'_0\|_{L^m}+ \|R^{(Y-M),Y'}\|_{L^m\big(V^{\frac{p}{2}}\big)} \|X\|_{p,[s,t]} + \|Y'\|_{L^m(V^p)} \|\X\|_{\frac{p}{2},[0,T]}\Big)\\
        &\quad + \|X\|_{p,[0,T]}\||M|^{*}_T\|_{L^m}, 
    \end{align*}
    which immediately implies the claim.
\end{proof}

Similarly, by \cite[Lemma 3.4]{Friz_Zhang_2018} we find the following stability result for RSIs in $\sH^{p,m}_X$:

\begin{lemma}
\label{lemma:stability_rough_integration_Hpq}
Let $p\in [2,3)$, $m\in [1,\infty)$ and let $\bX,\widetilde{\bX}$  be two $p$-rough paths.
Suppose
\[ (Y,Y';M)\in \sH^{p,m}_X([0,T];\cL(V;W)),\quad
(\widetilde Y,\widetilde Y';\widetilde M)\in
\sH^{p,m}_{\widetilde X}([0,T];\cL(V;W)).\] 
For any two paths $(A, \widetilde{A}) \in \{(Y-M,\tY-\tM), (Y',\tY'), (M,\tM), (X, \tX)\}$ we write $\Delta A= A-\widetilde A$  
and similarly set $\Delta R= R^{(Y-M),Y'}-R^{(\tY-\tM), \tY'}$ as well as $\|\Delta\bX\|_{p,[0,T]} := \|\bX -\widetilde{\bX}\|_{p, [0,T]}$ .
Then 
\begin{align*}
    \Big\| R^{\int(Y,Y')\,d\mathbf X,Y} &- R^{\int(\tY,\tY') \dd \widetilde{\bX},
    \widetilde Y}
    \Big\|_{L^m(V^{p/2})} \\
    &\lesssim_{p}  (\| \bX \|_{p,[0,T]}+1)\Big(
    \big( \|\Delta Y_0\|_{L^m} +\|\Delta M_0\| +\|\Delta R\|_{L^m(V^{\frac{p}{2}})}\\
    &\quad +\|\Delta (Y-M)\|_{L^m(V^p)} +\||\Delta M|^*_T\|_{L^m} \big)\|\bX\|_{p,[0,T]}+\big(1+ \|\widetilde Y\|_{\sH^{p,m}_X} \big) \|\Delta \bX\|_{p,[0,T]} \Big).
\end{align*}
\end{lemma}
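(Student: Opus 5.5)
The plan is to mirror the proof of the preceding lemma: split each rough stochastic integral remainder into a pathwise part and a martingale-against-rough-path part, and treat the two differences separately. For $(s,t)\in\Delta_{[0,T]}$ the preceding computation gives
\begin{equation*}
R^{\int(Y,Y')\dd\bX,Y}_{s,t} = R^{\int(Y-M,Y')\dd\bX,\,Y-M}_{s,t} + \Pi(M;X)_{s,t},
\end{equation*}
and the analogous identity holds for $(\tY,\tY';\tM)$ with $\widetilde{\bX}$. Subtracting and using the triangle inequality in $L^m(V^{p/2})$ leaves two differences: $D^{1}$, the difference of the pathwise remainders, and $D^{2}:=\Pi(M;X)-\Pi(\tM;\tX)$.

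For $D^{1}$, note that $(Y-M,Y')$ and $(\tY-\tM,\tY')$ are, for a.e. $\omega$, deterministic controlled paths with respect to $X(\omega)$ and $\tX(\omega)$. We therefore apply the deterministic càdlàg stability estimate \cite[Lemma 3.4]{Friz_Zhang_2018} pathwise. It bounds the $V^{p/2}$-norm of $D^{1}$ by $(1+\|\bX\|_{p})$ times terms of the following form.
\begin{itemize}
\item $\big(|\Delta(Y-M)_0| + |\Delta Y'_0| + \|\Delta R\|_{p/2} + \|\Delta Y'\|_{p}\big)\|\bX\|_{p}$, where $\|\Delta(Y-M)\|_{p}$ controls $\|\Delta Y'\|_p$ together with $\Delta R$.
\item $\big(1+\|\tY-\tM,\tY'\|_{X;p}\big)\|\Delta\bX\|_{p}$.
\end{itemize}
The estimate has exactly this bilinear form in the deterministic data, so taking $L^m$ norms and applying Minkowski yields the desired contribution. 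The constants there depend only on $p$. Note that $|\Delta(Y-M)_0|\le|\Delta Y_0|+|\Delta M_0|$.

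For $D^{2}$, write it bilinearly:
\begin{equation*}
D^{2}=\Pi(M-\tM;X)+\Pi(\tM;X-\tX).
\end{equation*}
The first term is handled by the multilevel BDG estimate from \cite[Theorem 1.3]{fzk23}, as in the preceding lemma: $\|\Pi(M-\tM;X)\|_{L^m(V^{p/2})}\lesssim \|X\|_{p}\,\||\Delta M|^*_T\|_{L^m}$. The second term is bounded in the same way by $\|\Delta X\|_{p}\,\||\tM|^*_T\|_{L^m}$. Doob/BDG gives $\||\tM|^*_T\|_{L^m}\lesssim\|[\tM]^{1/2}_T\|_{L^m}\le\|\tY\|_{\sH^{p,m}_X}$, which fits under $(1+\|\tY\|)\|\Delta\bX\|$. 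Note that \cite[Theorem 1.3]{fzk23} applies to $\Pi(\tM;Z)$ for any deterministic $Z\in V^p$, in particular $Z=X-\tX$. This requires only linearity of $Z\mapsto\Pi(\tM;Z)$ via \cref{definition: integral mart against rp}.

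The main obstacle is the $p/2$-variation bound for $\Pi(\cdot\,;\cdot)$ with the explicit linear dependence on $\|Z\|_p$ and on the $L^m$ maximal norm of the martingale, which must hold uniformly for $m\ge1$. This requires checking that the BDG-type estimate of \cite{fzk23} is indeed of this homogeneous form. Scaling $Z\mapsto\lambda Z$ and $M\mapsto\mu M$ shows that any such bound is automatically bilinear, so I expect this to go through. One bookkeeping point: in the statement, the norm $\|\tY\|_{\sH^{p,m}_X}$ is to be read relative to $\tX$. Collecting all terms gives the claim.
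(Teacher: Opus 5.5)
Your route is the one the paper intends; the paper offers nothing beyond a pointer to \cite[Lemma 3.4]{Friz_Zhang_2018} applied after the decomposition of the preceding lemma. It consists of the decomposition $R^{\int(Y,Y')\dd\bX,Y}=R^{\int(Y-M,Y')\dd\bX,Y-M}+\Pi(M;X)$, the pathwise use of \cite[Lemma 3.4]{Friz_Zhang_2018}, and the split $\Pi(M;X)-\Pi(\tM;\tX)=\Pi(M-\tM;X)+\Pi(\tM;X-\tX)$ handled with \cite{fzk23}. The $\Pi$-part is fine; apply BDG to $\tM-\tM_0$, since $\Pi(\tM;\cdot)$ only sees increments.

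The gap is in matching the pathwise estimate to the stated right-hand side. The remainder of $\int(Y-M,Y')\dd\bX$ contains the germ term $Y'_s\mathbb{X}_{s,t}$, and $Y'_s\mathbb{X}_{s,t}-\tY'_s\tilde{\mathbb{X}}_{s,t}=(Y'_s-\tY'_s)\mathbb{X}_{s,t}+\tY'_s(\mathbb{X}_{s,t}-\tilde{\mathbb{X}}_{s,t})$. So the estimate produces $\sup_s|Y'_s-\tY'_s|$ (hence $|\Delta Y'_0|$ and $\|\Delta Y'\|_p$) in front of $\|\bX\|$, and $\sup_s|\tY'_s|$ (hence $|\tY'_0|$) in front of $\|\Delta\bX\|$. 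This leaves three problems:
\begin{itemize}
\item Your claim that $\|\Delta(Y-M)\|_p$ together with $\Delta R$ controls $\|\Delta Y'\|_p$ is false: a Gubinelli derivative is not determined by the path unless $X$ is truly rough.
\item The term $|\Delta Y'_0|$ that you list does not appear on the target right-hand side.
\item Your second bullet drops $|\tY'_0|$, which the seminorm $\|\tY\|_{\sH^{p,m}}$ does not see.
\end{itemize}

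In fact the printed inequality is false, so no argument can close this gap. Take $X=\tX\equiv 0$. For any nonconstant $V^{\otimes 2}$-valued path $A$ of finite $\frac p2$-variation (antisymmetric if you want weak geometricity), $(0,\delta A)$ is a $p$-rough path. For a constant $c\in\cL(V^{\otimes2};W)$, the deterministic triple $(0,c;0)$ lies in $\sH^{p,m}_X$ with vanishing remainder, and its rough integral against $(0,\delta A)$ over $[s,t]$ equals $c\,\delta A_{s,t}$.

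(i) Let $\bX=\tbX=(0,\delta A)$, $(Y,Y';M)=(0,c;0)$ and $(\tY,\tY';\tM)=(0,0;0)$. The left-hand side is $\|c\,\delta A\|_{\frac p2,[0,T]}>0$, while every term on the right vanishes.

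(ii) Let $\bX=(0,0)$, $\tbX=(0,\delta A)$, and take both triples equal to $(0,c;0)$. The left-hand side is again $\|c\,\delta A\|_{\frac p2,[0,T]}$, while the right-hand side is a $p$-dependent multiple of $\|\Delta\bX\|_{p,[0,T]}$, independent of $c$. Now let $|c|\to\infty$.

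What your argument actually proves is the following (sewing applied to the difference of the germs $(Y-M)_s\delta X_{s,t}+Y'_s\mathbb{X}_{s,t}$, plus your $\Pi$-estimates):
\begin{equation*}
\begin{aligned}
\Big\|R^{\int(Y,Y')\dd\bX,Y}-R^{\int(\tY,\tY')\dd\tbX,\tY}\Big\|_{L^m(V^{p/2})}
\lesssim_p{}&\big(\|\Delta Y'_0\|_{L^m}+\|\Delta Y'\|_{L^m(V^p)}+\|\Delta R\|_{L^m(V^{p/2})}+\||\Delta M|^*_T\|_{L^m}\big)\|\bX\|_{p,[0,T]}\\
&+\big(\|\tY'_0\|_{L^m}+\|(\tY,\tY';\tM)\|_{\sH^{p,m}_{\tX}}\big)\|\Delta\bX\|_{p,[0,T]}.
\end{aligned}
\end{equation*}
This is presumably what is meant: $\Delta Y'_0$ and $\Delta Y'$ in place of $\Delta Y_0$ and $\Delta(Y-M)$, and $\tY'_0$ included in the norm of $\tY$. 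The terms $\Delta Y_0$ and $\Delta M_0$ are harmless but irrelevant, since the remainders depend neither on $(Y-M)_0$ nor on $M_0$. You should state and prove this corrected version, rather than asserting that $\|\Delta(Y-M)\|_{L^m(V^p)}$ absorbs the derivative terms.
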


At last we provide a Fubini's theorem for the RSI with suitable integrands.

\begin{proposition}\label{prop:rough_stochastic_fubini}
    Let $\mathbf{X}\in \mathscr{V}^{p}([0,T]; V)$ with $p \in [2, 3)$ and càdlàg adapted $(Y, Y'): [0,T]\times \Omega \to \mathcal{L}(V; W)\times \mathcal{L}(V^{\otimes 2}; W)$. Suppose that any of the following holds true
    \begin{enumerate}
        \item \label{eq: rough_stochastic_Fubini_1} There is a martingale $M\in \cM^1$ such that $(Y, Y'; M) \in \mathscr{H}^{p, 1}_{X}([0,T]; \mathcal{L}(V; W))$;
        \item \label{eq: rough_stochastic_Fubini_2} For $m \in [2, \infty)$ and $n \in [m , \infty]$ $(Y, Y') \in \mathscr{V}^{p, m,n}_{X}([0,T]; \mathcal{L}(V; W))$.
    \end{enumerate}
    Then $(\mathbb{E}[Y], \mathbb{E}[Y'])\in \mathscr{V}^{p}_{X}([0,T]; \mathcal{L}(V; W))$ and for any $t \in [0,T]$ it holds 
    \begin{equation}
    \label{eq:rough_stochastic_Fubini}
        \mathbb{E}\left [ \int_{0}^{t} (Y, Y')_{r} \dd \mathbf{X}_{r} \right]= \int_{0}^{t} (\mathbb{E}[Y_{r}], \mathbb{E}[Y'_{r}]) \dd \mathbf{X}_{r},
    \end{equation}
    where the latter integral is the classical rough integral. 
\end{proposition}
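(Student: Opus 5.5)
The plan is to show that $(\mathbb{E}[Y],\mathbb{E}[Y'])$ is a deterministic controlled rough path, and then pass expectations through the compensated Riemann sums. The identification of the integral goes through a uniform integrability argument.

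\emph{Step 1: controlledness of the expectations.} For the remainder, $\mathbb{E}[Y_t]-\mathbb{E}[Y_s]-\mathbb{E}[Y'_s]\delta X_{s,t}=\mathbb{E}[R^{Y,Y'}_{s,t}]$, since $X$ is deterministic.

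In case \eqref{eq: rough_stochastic_Fubini_2} we use the tower property, $\mathbb{E}[R_{s,t}]=\mathbb{E}[\mathbb{E}_s R_{s,t}]$. Hence $|\mathbb{E}R_{s,t}|\le \|\mathbb{E}_s R_{s,t}\|_{L^n}$, so $\|\mathbb{E}R\|_{p/2}\le \|\mathbb{E}_\cdot R\|_{\frac p2,n}<\infty$. Similarly $|\delta\mathbb{E}[Y']_{s,t}|\le \|\delta Y'_{s,t}\|_{m,n,s}$, which gives finite $p$-variation. Càdlàg regularity of $\mathbb{E}[Y],\mathbb{E}[Y']$ follows by dominated convergence. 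Domination comes from uniform $L^m$ bounds: $\sup_t\|Y'_t\|_{L^n}<\infty$, and $Y\in V^pL^{m,n}$ gives $\sup_t\|Y_t\|_{L^m}<\infty$ together with $L^m$-right continuity on the partition level.

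In case \eqref{eq: rough_stochastic_Fubini_1} we write $Y=(Y-M)+M$. Since $M\in\mathcal{M}^1$, $\mathbb{E}[\delta M_{s,t}]=0$, so $\mathbb{E}[R^{Y,Y'}_{s,t}]=\mathbb{E}[R^{Y-M,Y'}_{s,t}]$. For any partition, Minkowski/Jensen gives
\[
\Big(\sum|\mathbb{E}R^{Y-M,Y'}_{u,v}|^{p/2}\Big)^{2/p}\le \mathbb{E}\Big[\Big(\sum|R^{Y-M,Y'}_{u,v}|^{p/2}\Big)^{2/p}\Big]\le \big\|\|R^{Y-M,Y'}\|_{p/2}\big\|_{L^1},
\]
using that $\|\cdot\|_{\ell^{p/2}}$ is a norm for $p/2\ge1$. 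The same argument bounds $\mathbb{E}[Y']$ in $p$-variation. Càdlàg-ness follows from dominated convergence with dominating function $|Y'_0|+\|Y'\|_p$ (respectively $|M|^*_T$ plus the controlled part), all in $L^1$.

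\emph{Step 2: identify the integral.} Fix $t$ and a sequence of deterministic partitions $\mathcal{P}^k$ of $[0,t]$ with vanishing mesh. Let $S_k=\sum_{[u,v]\in\mathcal{P}^k}Y_u\delta X_{u,v}+Y'_u\mathbb{X}_{u,v}$. Since $\mathbf{X}$ is deterministic, $\mathbb{E}[S_k]=\sum(\mathbb{E}Y_u\,\delta X_{u,v}+\mathbb{E}Y'_u\,\mathbb{X}_{u,v})$. By Step 1 and the deterministic sewing lemma \cite[Proposition 2.6]{Friz_Zhang_2018}, this converges to $\int_0^t(\mathbb{E}Y,\mathbb{E}Y')\dd\mathbf{X}$. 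On the other hand, $S_k\to\mathcal{I}_t$ in probability by \cref{lemma:sufficient_conditions_RSI}. It therefore suffices to show that $\mathbb{E}[S_k]\to\mathbb{E}[\mathcal{I}_t]$.

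In case (2) this is immediate: \cref{lemma: existence of rough stochastic integrals} gives $L^m$ convergence, hence $L^1$ convergence.

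\emph{Step 3 (main obstacle): case (1), uniform integrability.} Here we only have convergence in probability, so we need a uniform integrability bound. Decompose
\[
S_k=\sum\big((Y-M)_u\delta X_{u,v}+Y'_u\mathbb{X}_{u,v}\big)+\sum M_u\delta X_{u,v}.
\]
For the first sum, the pathwise sewing estimate dominates $|S^1_k-(Y-M)_0\delta X_{0,t}|$ uniformly in $k$ by
\[
C\big(\|R^{Y-M,Y'}\|_{p/2}\|X\|_p+(|Y'_0|+\|Y'\|_p)\|\mathbb{X}\|_{p/2}\big)+|Y'_0|\,|\mathbb{X}_{0,t}|.
\]
This is an $L^1$ dominating variable independent of $k$, so dominated convergence applies. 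The term $(Y-M)_0$ needs $Y_0\in L^1$; I would assume this, or alternatively observe that it is part of the $\mathscr{H}^{p,1}$ setting.

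For the martingale sum, summation by parts gives
\[
\sum M_u\delta X_{u,v}=M_t X_t-M_0X_0-\sum X_v\,\delta M_{u,v}.
\]
Its expectation is exactly computable: $\mathbb{E}\big[\sum X_v\delta M_{u,v}\big]=0$ because $X_v$ is deterministic. So $\mathbb{E}\big[\sum M_u\delta X_{u,v}\big]=\mathbb{E}[M_0]\,\delta X_{0,t}$, independent of $k$.

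It remains to match this with the limit, namely $\int_0^t M_{r-}\dd X_r$ from \cref{definition: integral mart against rp}. Its expectation is $\mathbb{E}[M_0]\delta X_{0,t}-\mathbb{E}[M,X]_t+\mathbb{E}[M,X]_t$. I would handle this by directly checking that $\sum X_v\delta M_{u,v}$ converges in $\mathcal{M}^1$, via BDG with bounded deterministic $X$, to a mean-zero limit. Alternatively, dominate $|\sum M_u\delta X_{u,v}|\le 2|M|^*_T\|X\|_{1\text{-var}}$ only after summation by parts, bounding it by $|M|^*_T(2\sup|X|)+|\sum X_v\delta M_{u,v}|$. The latter is a martingale transform, uniformly integrable by Davis' inequality ($\|\cdot^*\|_{L^1}\lesssim\|[\,\cdot\,]^{1/2}\|_{L^1}\le\sup|X|\,\|[M]^{1/2}_T\|_{L^1}$) together with $L^1$ convergence of martingale transforms with converging bounded integrands. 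This is the delicate point, and Davis/BDG in $L^1$ is what I would try first.
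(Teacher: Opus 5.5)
Your proof is correct. For case \eqref{eq: rough_stochastic_Fubini_2} it coincides with the paper's: both pass the expectation through the $L^m$-convergent Riemann sums of \cite[Lemma 4.3]{AllanPieper2026}. You additionally verify, via the tower property, that $(\E Y,\E Y')$ is controlled, which the paper leaves implicit.

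For case \eqref{eq: rough_stochastic_Fubini_1} you take a different route.
\begin{itemize}
\item The paper argues through local expansions. It splits $\int_s^t (Y,Y')\dd\bX=\int_s^t (Y-M,Y')\dd\bX+M_s\delta X_{s,t}+\int_s^t\delta M_{s,r-}\otimes\dd X_r$ and bounds the first remainder in $L^1$ by sewing. It shows the last term has zero mean using the integration-by-parts definition (\cref{definition: integral mart against rp}) together with $\E[\delta[M,X]_{s,t}]=\E\big[\int_s^t\Delta X_r\dd M_r\big]=0$. It then concludes by $\stackrel{r}{=}$-uniqueness of the deterministic rough integral.
\item You instead prove uniform integrability of the compensated Riemann sums. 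The controlled part is dominated by the partition-uniform Young--Lo\`eve sewing bound. After Abel summation, the martingale part is a boundary term minus $\int_0^t \dd M_r\,H^k_r$, where $H^k$ is deterministic, bounded, and converges pointwise to $X$. This converges in $\mathcal{M}^1$ by Davis' inequality and dominated convergence, to the mean-zero limit $\int_0^t\dd M_r\,X_r$.
\end{itemize}

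The cancellation is the same in both arguments: deterministic integrands against a martingale have mean zero. Your version avoids the bracket $[M,X]$ and the uniqueness lemma, and it gives the slightly stronger fact that the RSI is an $L^1$-limit of its Riemann sums. The only extra input you need is convergence of the deterministic compensated sums for $(\E Y,\E Y')$ along your chosen partitions. This holds because left-point sums converge in mesh sense (the deterministic special case of \cite[Lemma 4.3]{AllanPieper2026}). Alternatively, since \cref{def:RSI} allows any deterministic sequence, you can take nested partitions refining the RRS partitions.

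One correction. Your expression ``$\E[M_0]\delta X_{0,t}-\E[M,X]_t+\E[M,X]_t$'' is muddled. From \cref{definition: integral mart against rp} one gets $\E[M_0]\delta X_{0,t}-\E\big[[M,X]_t\big]$, and showing that the last term vanishes is exactly what the paper has to do. You do not need this matching step at all. Once $S^1_k$ is dominated and $S^2_k$ converges in $L^1$, the sums $S_k$ are uniformly integrable. Vitali then upgrades $S_k\to\cI_t$ in probability to convergence in $L^1$, so $\E[S_k]\to\E[\cI_t]$ without identifying the limit of $S^2_k$. Your argument and the paper's both tacitly use $Y_0,Y'_0\in L^1$, which the $\sH^{p,1}_X$ seminorm does not include.
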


\begin{proof}
    In the case of \eqref{eq: rough_stochastic_Fubini_1}, $(\mathbb{E}[Y], \mathbb{E}[Y'])\in \mathscr{V}_{X}^{p}([0,T]; \mathcal{L}(V; W))$ follows directly by the construction of $\mathscr{H}^{p, 1}_{X}([0,T]; \mathcal{L}(V; W))$, indeed for any $(s,t) \in \Delta_{[0,T]}$ it holds by Minkowski's inequality that for any partition $\cP$ of $[0,T]$ we have that
    \begin{equation*}
    \begin{aligned}
        \bigg(\sum_{[u,v]\in \cP} \big|\delta (\E [Y_{\cdot}])_{u,v}- \E [Y'_{u}]\delta X_{u,v} \big|^{\frac{p}{2}}\bigg)^{\frac{2}{p}} &\leq \bigg(\sum_{[u,v]\in \cP} (\E[|R^{Y-M,Y'}_{u,v}|]^{\frac{p}{2}}\bigg)^{\frac{2}{p}}\\
        &\leq \E\bigg[\bigg(\sum_{[u,v]\in \cP} |R^{Y-M,Y'}_{u,v}|^{\frac{p}{2}}\bigg)^{\frac{2}{p}}\bigg]
    \end{aligned}
    \end{equation*}
    and thus 
    \begin{align*}
        \big\|R^{\E[Y],\E[Y']} \big\|_{\frac{p}{2},[0,T]} \leq \big\|R^{Y-M,Y'} \big\|_{\frac{p}{2},1,[0,T]} \leq \|Y-M,Y'\|_{\sH^{p,1}_X} <\infty.
    \end{align*}
    Further, by \cref{theorem: a p-variation kolmogorov result} we also know $Y-M \in V^pL^1$ as well as $Y'\in V^p L^1$.
    Thus, we know that indeed the right-hand side of \eqref{eq:rough_stochastic_Fubini} is well-defined as the unique process $\cI$ such that there exists an $r<1$ such that
    \begin{equation}\label{eq: identification integral fubini}
        \big(\delta \cI_{s,t}- \E[Y_s] \delta X_{s,t}- \E[Y'_s] \X_{s,t}\big)_{(s,t) \in \Delta_{[0,T]}} \stackrel{r}{=} 0.
    \end{equation}
    However, as the previous argument also shows that $(Y-M, Y')\in \sV^p_X([0,T], L^1(\Omega))$ we find by the sewing lemma \cite[Proposition 2.2]{Friz_Zhang_2018} that 
    \begin{align*}
        &\bigg|\mathbb{E}\bigg [ \int_{s}^{t} (Y, Y')_{r} \dd \mathbf{X}_{r} \bigg]- \E[Y_s]\delta X_{s,t}-\E[Y'_s] \X_{s,t}\bigg|\\ &\leq \bigg \| \int_{s}^{t} (Y-M, Y')_{r} \dd \mathbf{X}_{r} -(Y_s-M_s)\delta X_{s,t}- Y'_s \X_{s,t}\bigg \|_{L^1} + \bigg\| \int_s^t \delta M_{s,r-} \otimes \dd X_r\bigg\|_{L^1}\\
        &\stackrel{\frac{p}{3}}{=} \bigg| \E\bigg[ \int_s^t \delta M_{s,r-}\otimes \dd X_r\bigg ] \bigg|.
    \end{align*}
    However, notice by \cref{definition: integral mart against rp} we have that 
    \begin{align*}
       \E\bigg[ \int_s^t \delta M_{s,r-} \otimes \dd X_r\bigg ] = \E[\delta [M,X]_{s,t}]= \E\bigg[\int_s^t \Delta X_r \dd M_r\bigg]=0
    \end{align*}
    and thus the claim follows, where we used that as $[0,T] \ni t \mapsto \Delta X_t $ is deterministic and thus predictable and bounded and thus especially $\Delta X \in L^1(M)$.
    
    For \eqref{eq: rough_stochastic_Fubini_2} we may use that we immediately have by construction, that 
    \begin{equation*}
        \E\bigg[\lim_{n \to \infty} \sum_{[u,v] \in \cP^n} Y_u \delta X_{u,v} + Y'_{u} \X_{u,v}\bigg] = \lim_{n \to \infty} \sum_{[u,v] \in \cP^n} \E[Y_u \delta X_{u,v} + Y'_{u} \X_{u,v}]
    \end{equation*}
    for any sequence of partitions $\{\cP^n\}_{n\in \N}$ of $[0,t]$ with vanishing mesh-size by \cite[Lemma 4.3]{AllanPieper2026}.
\end{proof}

\subsection{On strongly controlled processes}
We further adopt the following notion of strongly controlled RSMs from \cite{DauseFrizJentzenSong2026} into the càdlàg setting.
\begin{definition}
  \label{def:scRSM}
  Let $p \in [2,3)$ and $\mathbf{X}\in \mathscr{V}^{p}([0,T]; V)$. 
  Consider a $6$-tuple 
  \[\mathcal{Y}=(Y, \partial_{X} Y, \partial^{2}_{X}Y, V; M , N)\] 
of càdlàg, adapted processes of suitable dimensions:
\begin{equation}
\label{eq:dim_scRSM}
\mathcal{Y}: [0, T] \times \Omega \longrightarrow W \times \mathcal{L}(V; W) \times \mathcal{L}(V^{\otimes 2}; W)\times W \times W \times \mathcal{L}(V; W), 
\end{equation}
  where $(Y', Y''; N)\in \mathscr{H}_{X}^{p}([0,T]; \mathcal{L}(V; W))$. Further assume $M \in \mathcal{M}(W)$, $A \in V^{1}([0,T]; W)$ and 
  \begin{equation*}
  Y_{t}=Y_{0}+ A_{t} + M_{t} + \int_{0}^{t} (Y',  Y'')_s \dd \bX_s
  \end{equation*}
  for any $t \in [0,T]$, almost surely. 
  Then  the process $\mathcal{Y}$ is called a \textit{strongly} $\mathbf{X}$-\textit{controlled} \textit{rough semimartingale} (short: $\mathbf{X}$-scRSM). We denote the space of such processes by $\mathscr{H}_{\mathbf{X}}^{p}([0,T]; W)$. 
  \end{definition}
  
\begin{definition}
        Let $p \in [2,3)$ and $X: [0,T]\times \Omega \to V, \mathbb{X}: \Delta_{[0,T]}\times \Omega \to V^{\otimes 2}$ càdlàg, adapted with $\mathbf{X}=(X, \mathbb{X})\in \mathscr{V}^{p}([0,T]; V)$ almost surely. Then define the space of càdlàg, adapted strongly $\mathbf{X}$-controlled rough paths $L^{0}_{ad}(\Omega, \mathscr{V}^{p}_{\bX}([0,T]; W))$ as consisting of càdlàg adapted $(Y, Y', Y''): [0,T]\times \Omega \to W\times \mathcal{L}(V; W) \times \mathcal{L}(V^{\otimes 2}; W)$ such that $(Y, Y', Y'')(\omega)\in \mathscr{V}^{p}_{\mathbf{X}(\omega)}([0,T]; W)$ as defined in \cref{def:scRP} for $\mathbb{P}$-a.e. $\omega \in \Omega$.
  \end{definition}
The following is a càdlàg $p$-Var version of \cite[Proposition 4.10 \& Proposition 4.13]{DauseFrizJentzenSong2026}. 
\begin{proposition}
\label{prop:scRPtoscRSM}
   Let $p \in (2,3)$ and $\mathbf{X}\in \mathscr{V}^{p}([0,T]; V)$.  Then it holds; 
   \begin{enumerate}
       \item \label{prop:item:SCRP_to_RIP}Let $(Y, Y', Y'')\in L^{0}_{ad}(\Omega, \mathscr{V}^{p}_{(\mathbf{X}; M)}([0,T]; V\oplus W))$ with 
   \begin{equation*}
  \begin{aligned}
      Y'_{t}&\eqqcolon \left(\left( 
    \begin{array}{c}
    \partial_1 Y_{t}\\
    \partial_2 Y_{t}
    \end{array} \right)\cdot \right) \in \mathcal{L}(V \oplus W; W) ;\\
    Y''_{t}&\eqqcolon \left(\left( \begin{array}{ll}
    \partial_{1,1} Y_{t} & \partial_{1,2} Y_{t}\\
    \partial_{2,1} Y_{t} & \partial_{2,2} Y_{t}
  \end{array} \right) : \right) \in \mathcal{L}((V \oplus W)^{\otimes 2}; W).
  \end{aligned}
  \end{equation*}
   Then for $i = 1, 2$, $(\partial_i Y,
\partial_{1, i} Y; N_{i})\in \mathscr{H}_{X}^{p}$ with $N_i
:= \int_0 (\partial_{2, i} Y) d M \in \mathcal{M}$. If further $p \in [2,3)$, then almost surely
\begin{equation}
  \label{prop:eq:SCRPtoRIP} Y = N + \int (\partial_{X} Y, \partial_{X}^{2}Y) d \mathbf{X}, \quad \text{ where } N  :=
  \int_0 \partial_{M}Y d M \in \mathcal{M}^{c,1},  
\end{equation}
where $\partial_{X}Y\coloneqq\partial_{1}Y, \partial^{2}_{X}Y \coloneqq\partial_{1,1} Y$ and $\partial_{M}Y\coloneqq \partial_{2}Y$. 
\item  \label{prop:item:RIP_to_SCRP}Let $(Y, \partial_{X} Y, \partial^{2}_{X} Y, A; M , N)\in \mathscr{H}_{\mathbf{X}}^{p}([0,T]; W)$. 
  Then there exist processes $Y', Y''$ of the form
  \begin{equation}
  \label{prop:eq:RIP_to_SCRP}
  \begin{aligned}
      Y'_{t}&\coloneqq \left( \left( \begin{array}{c}
      \partial_{X} Y_{t}\\
      \operatorname{Id}\\
      0
      \end{array}\right) \cdot \right)\in \mathcal{L}(V\oplus W \oplus W; W) \\
      Y''_{t} &\coloneqq \left(\left(\begin{array}{ccc}
          \partial_{X}^{2} Y_{t} & 0 &0\\
          0&0&0 \\
          \operatorname{Id} &0&0
      \end{array}\right):\right)\in \mathcal{L}((V\oplus W \oplus W)^{\otimes 2}; W)
  \end{aligned}    
  \end{equation}
  such that $(Y - A, Y', Y'')\in L^{0}_{ad}(\Omega, \mathscr{V}^{p}_{(\mathbf{X}; M; N)}([0,T]; W))$. 
  \end{enumerate}
\end{proposition}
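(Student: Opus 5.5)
Both items come down to translating, with one algebraic identity at a time, between the joint expansion of $Y$ along $\mathscr{J}(\mathbf{X};M)$ (resp. $\mathscr{J}(\mathbf{X};(M,N))$) and a decomposition into martingale part plus rough integral. The tools are \cref{lemma:RSM<->cRP}, \cref{lemma: joint lift and bracket}, \cref{remark:scRPandRI} and \cref{lemma:expansion_notation}. Everything is done $\omega$-wise on a full-measure set, with adaptedness inherited from the data.

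\textbf{Item \eqref{prop:item:SCRP_to_RIP}.} By definition $(Y',Y'')$ is a $(X,M)$-controlled path. Its component $\partial_i Y$ is therefore controlled by $(X,M)$ with Gubinelli derivative $(\partial_{1,i}Y,\partial_{2,i}Y)$. The first part of \cref{lemma:RSM<->cRP} then gives $(\partial_iY,\partial_{1,i}Y;N_i)\in\mathscr{H}^p_X$ with $N_i=\int\partial_{2,i}Y_{r-}\dd M_r$.

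For the representation of $Y$, \cref{remark:scRPandRI} applied to the joint lift $\mathscr{J}(\mathbf{X};M)$ gives
\begin{equation*}
\delta Y_{s,t}\stackrel{p/3}{=}\partial_1Y_s\delta X_{s,t}+\partial_2Y_s\delta M_{s,t}+\partial_{1,1}Y_s\mathbb{X}_{s,t}+\partial_{1,2}Y_s\Pi(X;M)_{s,t}+\partial_{2,1}Y_s\Pi(M;X)_{s,t}+\partial_{2,2}Y_s\Pi(M;M)_{s,t}.
\end{equation*}
Set $Z\coloneqq Y-N-\int(\partial_XY,\partial^2_XY)\dd\mathbf{X}$, where the rough stochastic integral exists by \cref{lemma:sufficient_conditions_RSI} and the case $i=1$. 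It suffices to show $\delta Z\stackrel{r}{=}0$ for some $r<1$; then \cref{lemma:expansion_notation}\eqref{lemma:expansion_notation_item5} yields $Z\equiv Z_0=0$.

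The $X$-terms cancel against the local expansion of the rough stochastic integral, since $\int_s^t\partial_XY\dd\mathbf{X}-\partial_1Y_s\delta X_{s,t}-\partial_{1,1}Y_s\mathbb{X}_{s,t}\stackrel{p/3}{=}\int_s^t\delta(\partial_1Y-M\text{-part})\dots$. The pathwise route is cleaner: regard $N$ itself as the rough integral of $(\partial_2Y,(\partial_{1,2}Y,\partial_{2,2}Y))$ against the joint lift. This identification holds because the Itô integral of a controlled integrand against a martingale, with Itô-type lift $\Pi(M;M)$, coincides with the rough integral. For this I would cite the càdlàg analogue of \cite[Prop.~3.6]{AllanPieperTeichmann2025} and the left-point convention. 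After that, $Y$ and $N+\int\partial_XY\dd\mathbf{X}$ are both rough integrals against $\mathscr{J}(\mathbf{X};M)$ with expansions differing only by $\partial_{2,1}Y_s\Pi(M;X)_{s,t}$, which must be absorbed. It is absorbed because the $X$-integral of $\partial_1Y$ has derivative $(\partial_{1,1}Y,\partial_{2,1}Y)$ along the joint lift, by the first part of this item. Hence both local expansions agree to order $p/3$, and uniqueness of sewing gives equality.

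\textbf{Item \eqref{prop:item:RIP_to_SCRP}.} Write $Y-A=Y_0+M+\mathcal{I}$ with $\mathcal{I}=\int(\partial_XY,\partial^2_XY)\dd\mathbf{X}$. Applying \cref{lemma:RSM<->cRP} to $(\partial_XY,\partial_X^2Y;N)$, the path $\partial_XY$ is controlled by $(X,N)$ with derivative $(\partial^2_XY,\Id)$. Using $\delta\partial_XY_{s,t}=\partial^2_XY_s\delta X_{s,t}+\delta N_{s,t}+R_{s,t}$ inside the sewing germ, the rough integral satisfies
\begin{equation*}
\delta\mathcal{I}_{s,t}\stackrel{p/3}{=}\partial_XY_s\delta X_{s,t}+\partial_X^2Y_s\mathbb{X}_{s,t}+\Pi(N;X)_{s,t}.
\end{equation*}
This local expansion is the step I expect to be the main obstacle. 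It requires re-sewing with the RSM germ, i.e. showing $\sum(\partial_XY_u\delta X_{u,v}+\partial^2_XY_u\mathbb{X}_{u,v})$ and $\sum(\cdots+\Pi(N;X)_{u,v})$ have the same limit, using $\Pi(N;X)$ additivity defect estimates from \cite{fzk23}. Adding $\delta M_{s,t}$, which equals $\Id\,\delta M_{s,t}$ exactly, gives precisely the expansion encoded by \eqref{prop:eq:RIP_to_SCRP} along $\mathscr{J}(\mathbf{X};(M,N))$. It remains to check that $(Y',Y'')$ is itself $(X,M,N)$-controlled. This holds because $Y'$ has entries $\partial_XY,\Id,0$ with increments $\partial^2_XY\delta X+\Id\,\delta N+R$, and $Y''$ has $p$-variation finite by the $\mathscr{H}^p_X$ assumption. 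All of this holds almost surely, which concludes the proof.
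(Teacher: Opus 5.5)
\textbf{There is a genuine gap in item (1).} Your outline has the same architecture as the paper's proof: expand $\delta Y$ along $\mathscr{J}(\mathbf{X};M)$, match the $M$-part with $N$ and the $X$-part with the RSI, then conclude with \cref{lemma:expansion_notation}. But those two matching steps are the entire content of the proof, and you assert them rather than prove them. Concretely, one needs
\begin{align*}
A^{1}_{s,t}&\coloneqq \delta N_{s,t}-\partial_{2}Y_{s}\delta M_{s,t}-\partial_{1,2}Y_{s}\Pi(X;M)_{s,t}-\partial_{2,2}Y_{s}\Pi(M;M)_{s,t}\stackrel{\frac{p}{3}}{=}0,\\
A^{2}_{s,t}&\coloneqq \Pi(N_{1};X)_{s,t}-\partial_{2,1}Y_{s}\Pi(M;X)_{s,t}\stackrel{\frac{p}{3}}{=}0,
\end{align*}
together with the expansion $\int_s^t(\partial_1Y,\partial_{1,1}Y)_r\dd\mathbf{X}_r\stackrel{\frac{p}{3}}{=}\partial_1Y_s\delta X_{s,t}+\partial_{1,1}Y_s\mathbb{X}_{s,t}+\Pi(N_1;X)_{s,t}$.

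For $A^1$ you invoke consistency of rough and It\^o integration for It\^o lifts of semimartingales. That result does not cover this situation. $(X,M)$ is in general not a semimartingale, since $X$ is a deterministic path of finite $p$-variation with $p>2$. Moreover $\partial_2Y$ is controlled by $X$ as well, and the expansion contains the cross term $\Pi(X;M)$. Identifying the It\^o limit with the pathwise joint-lift integral is precisely the claim $A^1\stackrel{\frac{p}{3}}{=}0$.

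For absorbing $\partial_{2,1}Y_s\Pi(M;X)_{s,t}$, your justification ``by the first part of this item'' does not work. Knowing $(\partial_1Y,\partial_{1,1}Y;N_1)\in\mathscr{H}^p_X$ says nothing about how the RSI expands along the joint lift. What is needed is the RSI expansion above, plus the genuine estimate $A^2\stackrel{\frac{p}{3}}{=}0$. The RSI expansion itself follows from the construction: the RSI is the pathwise integral of $(\partial_1Y-N_1,\partial_{1,1}Y)$ plus the integration-by-parts integral of $N_1$ against $X$, whose increments are exactly $N_{1,s}\delta X_{s,t}+\Pi(N_1;X)_{s,t}$.

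\textbf{The missing idea is how to obtain $\stackrel{\frac{p}{3}}{=}$ rather than merely finite $\frac{p}{3}$-variation.} The multilevel BDG bounds of \cite[Theorem 1.1]{fzk23}, after localisation \cite[Lemma 4.4]{fzk23}, give that $A^1,A^2$ have almost surely finite $\frac{p}{3}$-variation. In the c\`adl\`ag setting this does not imply $A^i\stackrel{\frac{p}{3}}{=}0$: the increment of $\1_{[\tau,T]}$ has finite $r$-variation for every $r$. Item (2) of \cref{lemma:expansion_notation} also needs right-continuity in $s$ and left-continuity in $t$.

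The paper therefore splits $A^i_{s,t}=A^i_{s,t-}+\tilde A^i_{s,t}$ and applies the lemma to $A^i_{s,t-}$. It controls the jump part through the jump structure of the It\^o integral, $|\tilde A^1_{s,t}|=|(\delta(\partial_2Y)_{s,t-}-\partial_{1,2}Y_s\delta X_{s,t-}-\partial_{2,2}Y_s\delta M_{s,t-})\Delta M_t|\le w_1(s,t-)^{\frac{2}{p}}w_2(s+,t)^{\frac{1}{p}}$, and treats $A^2$ analogously via \cref{lemma:RSI_cadlag_jumpstructure}. Your sentence ``both local expansions agree to order $p/3$, and uniqueness of sewing gives equality'' skips exactly this step, which is where the c\`adl\`ag statement differs from its continuous predecessor.

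\textbf{Item (2) is fine in outline.} The step you flag there as the main obstacle is immediate from the same decomposition of the RSI: $\delta\mathcal{I}_{s,t}=\delta\big(\int(\partial_XY-N,\partial^2_XY)\dd\mathbf{X}\big)_{s,t}+N_s\delta X_{s,t}+\Pi(N;X)_{s,t}$. The first term is expanded pathwise by \cref{remark:scRPandRI}, so no re-sewing is needed.
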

\begin{proof}
    For \eqref{prop:item:SCRP_to_RIP} the proof is similar to \cite[Proposition 4.10]{DauseFrizJentzenSong2026}. Notably showing that $(\partial_{i} Y, \partial_{1, i}Y; N_{i})\in \mathscr{H}_{X}^{p}$ follows analogously using \cref{lemma:RSM<->cRP}. Now by definition it holds almost surely, denoting by $\mathscr{J}^{(2)}(\mathbf{X}; M)$ the second level of the rough path $\mathscr{J}(\mathbf{X}; M)$,
    \begin{equation*}
        \begin{aligned}
            \delta Y_{s,t} \stackrel{\frac{p}{3}}{=} & Y'_{s} \delta (X; M)_{s, t}+ Y''_{s} \mathscr{J}^{(2)}(\mathbf{X}; M)_{s,t} \\
            =& \partial_{1} Y_{s} \delta X_{s,t}+ \partial_{2} Y_{s} \delta M_{s,t} + \partial_{1,1}Y_{s} \mathbb{X}_{s,t} + \partial_{1,2} Y_{s} \Pi(X; M)_{s,t}\\
            &+ \partial_{2, 1}Y_{s} \Pi(M ; X)_{s,t} + \partial_{2,2}Y_{s} \Pi(M; M)_{s,t}\\
            \stackrel{\frac{p}{3}}{=} & \int_{s}^{t} (\partial_{1}Y, \partial_{1,1}Y)_{r}\dd \mathbf{X}_{r}+ \partial_{2} Y_{s} \delta M_{s,t}+ \partial_{1,2} Y_{s} \Pi(X; M)_{s,t}\\
            &+ \partial_{2, 1}Y_{s} \Pi(M ; X)_{s,t} + \partial_{2,2}Y_{s} \Pi(M; M)_{s,t}- \Pi(N_{1}; X)_{s,t}.\\
        \end{aligned}
    \end{equation*}
    Now we are left to show that almost surely 
    \begin{equation}
    \label{eq:scRPtoscRSMHilfsaussage}
    \begin{aligned}
    A^{1}_{s,t}  &\coloneqq \delta N_{s,t} - \partial_{2}Y_{s} \delta M_{s,t} - \partial_{1,2} Y_{s} \Pi(X; M)_{s,t}- \partial_{2,2} Y_{s} \Pi(M; M)_{s,t}\stackrel{\frac{p}{3}}{=}0, \\
    A^{2}_{s,t} & \coloneqq \Pi(N_{1}; X)_{s,t} - \partial_{2,1} Y_{s} \Pi(M ; X)_{s,t} \stackrel{\frac{p}{3}}{=}0.
    \end{aligned}
    \end{equation}
    Using \cite[Theorem 1.1]{fzk23} and a suitable localization argument see \cite[Lemma 4.4]{fzk23}, one obtains $A^{i}\in V^{\frac{p}{3}}_{2}$ almost surely for $i=1,2$. Now define $\tilde{A}_{s,t}^{i}\coloneqq A^{i}_{s,t}- A^{i}_{s, t-}$. Then using the jump-structure of the It\^{o} integral it follows
    \begin{equation*}
    \begin{aligned}
        |\tilde{A}^{1}_{s, t}|&= \big| \left(\delta (\partial_{2} Y_{\cdot})_{s,t-}- \partial_{1,2} Y_{s} \delta X_{s, t-}- \partial_{2,2} Y_{s} \delta M_{s,t-}\right) \Delta M_{t}|\\
        &\leq w_{1}(s, t-)^{\frac{2}{p}}w_{2}(t-,t)^{\frac{1}{p}}\leq w_{1}(s, t-)^{\frac{2}{p}}w_{2}(s+,t)^{\frac{1}{p}}, 
    \end{aligned}   
    \end{equation*}
    for suitable controls $w_{1}, w_{2}$. Thus, $\tilde{A}^{1}_{s,t} \stackrel{\frac{p}{3}}{=}0$ and so $\tilde{A}^{1}\in V^{\frac{p}{3}}$ almost surely. Since $(s, t) \mapsto A_{s,t-}$ is right-continuous in $s$ and left-continuous in $t$ it follows $A^{1}_{s, t-}\stackrel{\frac{p}{3}}{=}0$ and therefore $A^{1}_{s,t}\stackrel{\frac{p}{3}}{=}0$. The claim for $A^{2}$ follows analogously using the jump-structure of the RSI \cref{lemma:RSI_cadlag_jumpstructure}. Thus, \eqref{eq:scRPtoscRSMHilfsaussage} is shown and so is the claim \eqref{prop:item:SCRP_to_RIP}. The claim \eqref{prop:item:RIP_to_SCRP} follows analogously to \cite[Proposition 4.13]{DauseFrizJentzenSong2026}. 
\end{proof}
\begin{remark}
\label{remark:commutation_diagramm}
    To see how \cref{prop:scRPtoscRSM} may be applied we define the bundles
\begin{equation*}
    \begin{aligned}
        \mathscr{V}^{p}_{\mathcal{M}}([0,T
        ]; W)&\coloneqq \bigcup_{\mathbf{X} \in \mathscr{V}^{p}(V), M \in \mathcal{M}(W)} \{\mathscr{J}(\mathbf{X}; M)\} \times L^{0}_{ad}(\Omega, \mathscr{V}^{p}_{\mathscr{J}(\mathbf{X}; M)}([0,T]; W))\\
        \mathscr{H}^{p}([0,T]; W)&\coloneqq \bigcup_{\mathbf{X} \in \mathscr{V}^{p}([0,T]; V)} \{\mathbf{X}\} \times \mathscr{H}_{\mathbf{X}}^{p}([0,T]; W). 
    \end{aligned}
\end{equation*}
Now \cref{prop:scRPtoscRSM} tells us that there is an explicit bijection $T: \mathscr{H}^{p}\to \mathscr{V}^{p}_{\mathcal{M}}$. assume now that we want to consider a pathwise map $\Phi: \mathscr{H}^{p} \to \mathscr{H}^{p}$ e.g. composition with smooth functions. Then trivially $\Phi = T^{-1} \circ \tilde{\Phi} \circ T$ with $\tilde{\Phi}\coloneqq T \circ \Phi \circ T^{-1}$. However, note that $\tilde{\Phi}$ is just a map on controlled rough paths, which are already well-established. Thus, in some situations it may be advantageous to follow the commutative diagramm  
\[
\begin{tikzcd}[
    column sep=7em,
    row sep=5em,
    arrows={line width=1pt}
]
\mathscr{H}^{p}
  \arrow[r, shift left=1ex, "T"]
  \arrow[d, "\Phi"']
&
\mathscr{V}^{p}_{\mathcal{M}}
  \arrow[l, shift left=1ex, "T^{-1}"]
  \arrow[d,  "\widetilde{\Phi}"]
\\
\mathscr{H}^{p}
  \arrow[r, shift left=1ex, "T"]
&
\mathscr{V}^{p}_{\mathcal{M}}
  \arrow[l, shift left=1ex, "T^{-1}"]
\end{tikzcd}
\]
over considering $\Phi$ directly. This is precisely the idea behind the rough stochastic It\^{o}-Wentzell formula \cite[Theorem 4.16]{DauseFrizJentzenSong2026} and the following rough stochastic It\^{o} formula. 
\end{remark}

\begin{proposition} \label{prop:rsIto}
    Let $p \in (2, 3)$ and $\mathbf{X}=(X, \mathbb{X})\in \mathscr{V}^{p}_{\mathbf{X}}$ as well as $\mathcal{Y}=(Y, \partial_{X} Y, \partial_{X}^{2} Y, A; M, N)\in \mathscr{H}_{\mathbf{X}}^{p}([0,T]; W)$. Further let $F \in \Lip^{3}(W; U)$. Then it holds for any $t \in [0,T]$ almost surely
    \begin{equation*}
    \begin{aligned}
        F&(Y_{t})-F(Y_{0}) \\
        =& \int_{0}^{t} (DF(Y_{s})\partial_{X}Y_{s} , DF(Y_{s-}) \partial_{X}^{2} Y_{s} + D^{2} F(Y_{s}) (\partial_{X} Y_{s}, \partial_{X} Y_{s}) )\dd \mathbf{X}_{s} + \int_{0}^{t} DF(Y_{s})\dd M_{s}\\
        &+ \int_{0}^{t} DF(Y_{s}) \dd A_{s} + \frac{1}{2} \int_{0}^{t} D^{2} F(Y_{s}) (\partial_{X} Y_{s}, \partial_{X} Y_{s}) \dd [\mathbf{X}]_{s}+ \frac{1}{2} \int_{0}^{t} D^{2} F(Y_{s}) \dd [M]_{s}\\
        &+ \sum_{0< s\leq t} F(Y_{s})- F(Y_{s-}) -DF(Y_{s-}) \Delta A_{s} - DF(Y_{s-}) (\partial_{X} Y_{s-} \Delta X_{s} + \Delta M_{s}) \\
        & \quad \quad -DF(Y_{s-}) \partial_{X}^{2} Y_{s-}\Delta \mathbb{X}_{s}  -\frac{1}{2} D^{2} F(Y_{s-}) \left( (\partial_{X}Y_{s-} \Delta X_{s}, \partial_{X} Y_{s-}\Delta X_{s}) +(\Delta M_{s}, \Delta M_{s})\right),
    \end{aligned}
    \end{equation*}
    where all of the term are either RSIs, Young-integrals or sums converging almost surely in RRS sense. 
\end{proposition}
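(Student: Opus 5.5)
The plan is to follow the commutative-diagram philosophy of \cref{remark:commutation_diagramm}. We first strip off the finite-variation part $A$. By \cref{prop:scRPtoscRSM}\eqref{prop:item:RIP_to_SCRP}, the process $Z\coloneqq Y-A$ together with $(Y',Y'')$ as in \eqref{prop:eq:RIP_to_SCRP} is, almost surely, a strongly controlled rough path driven by the joint lift $\mathbf{Z}\coloneqq\mathscr{J}(\mathbf{X};(M,N))$. By \cref{lemma: joint lift and bracket}, $\mathbf{Z}$ is a càdlàg $p$-rough path almost surely. Adding back a path $A\in V^{1}$ is harmless: a finite-variation (Young-regular, since $1/1+1/p>1$) perturbation can be absorbed by enlarging the driver to include $A$, with vanishing second-order entries involving $A$ except for the Young cross-integrals. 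Concretely, we view $Y$ as controlled by $\tilde{\mathbf{Z}}\coloneqq(\mathbf{Z},A)$, whose second level is completed by Young/Riemann–Stieltjes integrals. This reduces everything to a fixed $\omega$ and a deterministic strongly controlled càdlàg rough path.

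Next, we apply the deterministic càdlàg Itô formula for strongly controlled rough paths, namely the one announced for \cref{appendix:deterministic_rough_path_theory}, which extends \cite{Friz_Zhang_2018}. For $F\in\Lip^{3}$ it gives
\[
F(Y_t)-F(Y_0)=\int_0^t (DF(Y)Y',\,DF(Y)Y''+D^2F(Y)(Y',Y'))\,\mathrm{d}\tilde{\mathbf{Z}}+\tfrac12\int_0^t D^2F(Y)(Y',Y')\,\mathrm{d}[\tilde{\mathbf{Z}}]+\text{jump sum},
\]
with all terms defined pathwise in RRS sense. If that appendix result is not to be used, we would prove it directly. The route is a second-order Taylor expansion of $F(Y_v)-F(Y_u)$ over partitions, substituting $\delta Y_{u,v}\stackrel{p/3}{=}Y'_u\delta Z_{u,v}+Y''_u\mathbb{Z}_{u,v}$ and $\delta Z\otimes\delta Z=2\operatorname{Sym}\mathbb{Z}+\delta[\mathbf{Z}]$, and discarding $\stackrel{r}{=}0$ terms via \cref{lemma:expansion_notation}. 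The jump corrections then come from the increments $\delta[\mathbf{Z}]$ and the discontinuous remainder, exactly as in \cite{Friz_Zhang_2018}.

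Finally, we translate back by reading off the block structure of $Y',Y''$. The rough integral against $\tilde{\mathbf{Z}}$ splits into the following pieces:
\begin{itemize}
\item an RSI against $\mathbf{X}$ with integrand $(DF(Y)\partial_XY,\dots)$, which is a rough semimartingale by \cref{lemma: composition for RSM}, so the RSI exists by \cref{lemma:sufficient_conditions_RSI};
\item the Itô integral $\int DF(Y_{s-})\,\mathrm{d}M_s$, identified via \cref{prop:scRPtoscRSM}\eqref{prop:item:SCRP_to_RIP} as the martingale part of a rough integral against a joint lift;
\item the Young integral against $A$.
\end{itemize}
The cross terms from the $\operatorname{Id}$ entry in $Y''$ pairing $N$ with $X$ must recombine into the correct $\mathbf{X}$-integrand. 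We expect this to go through the identities $A^{1},A^{2}\stackrel{p/3}{=}0$ established in the proof of \cref{prop:scRPtoscRSM}. The bracket block decomposition of \cref{lemma: joint lift and bracket} splits $\int D^2F(Y)(Y',Y')\,\mathrm{d}[\tilde{\mathbf{Z}}]$ into a $[\mathbf{X}]$-term, a $[M]$-term, and mixed $[X,M]$ terms. The mixed terms are pure jump sums and get absorbed into the jump sum, where $[\mathbf{Z}]$ is replaced by its jumps $\Delta Z\otimes\Delta Z-2\operatorname{Sym}\Delta\mathbb{Z}$.

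The main obstacle is the bookkeeping of jumps. We need to show that the deterministic jump corrections, together with the jump structure of the RSI (\cref{lemma:RSI_cadlag_jumpstructure}) and of the Itô integral, collapse exactly to the stated sum, with left limits $Y_{s-}$ in the right places. We also need that sum to converge absolutely a.s. For convergence, the summand is $O(|\Delta Y_s|^3)$ plus the $\Delta A$-, $\Delta\mathbb{X}$- and cross terms. These are summable because $Y\in V^p$ with $p<3$, $\Delta\mathbb{X}\in\ell^{p/2}$, and $\sum|\Delta M|^2<\infty$. The remaining cross products such as $\Delta X\,\Delta M$ are summable by Cauchy–Schwarz.
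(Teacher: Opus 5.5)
Your route is the paper's. You use \cref{prop:scRPtoscRSM}\eqref{prop:item:RIP_to_SCRP} to obtain an $\omega$-wise strongly controlled path over $\mathscr{J}(\mathbf{X};(M,N))$, apply \cref{theorem:roughIto}, and translate back with \cref{prop:scRPtoscRSM}\eqref{prop:item:SCRP_to_RIP} and the block structure of the bracket. Enlarging the driver by $A$, instead of using the bounded-variation version of \cref{theorem:roughIto} from the remark after it, is an equivalent choice. Your jump bookkeeping needs one more fact, which the paper singles out: the $\operatorname{Id}$-entry of $Y''$ pairs with $\Pi(N;X)$, and $\Delta\Pi(N;X)_t=\lim_{s\uparrow t}\delta N_{s,t-}\otimes\Delta X_t=0$ because the integrand is taken at left limits. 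The same holds for $\Pi(X;M)$ and $\Pi(M;M)$. Hence the term $-DF(Y_{s-})Y''_{s-}\Delta\mathbb{Z}_s$ from \cref{theorem:roughIto} reduces to $-DF(Y_{s-})\partial_X^2Y_{s-}\Delta\mathbb{X}_s$.

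Your last paragraph, however, contains a false step: the claim that the jump sum converges absolutely, with $\Delta X\,\Delta M$-terms handled by Cauchy--Schwarz. After the first-order terms cancel via $\Delta Y_s=\Delta A_s+\Delta M_s+\partial_XY_{s-}\Delta X_s+\partial_X^2Y_{s-}\Delta\mathbb{X}_s$ (\cref{lemma:RSI_cadlag_jumpstructure}), the summand still contains $D^2F(Y_{s-})(\partial_XY_{s-}\Delta X_s,\Delta M_s)$. This is exactly the $[X,M]$-part you absorbed. Cauchy--Schwarz would need $\sum_s|\Delta X_s|^2<\infty$, but $X\in V^p$ with $p>2$ only gives $\sum_s|\Delta X_s|^p<\infty$. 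For a counterexample, take $d_X=d_Y=1$, $Y=X+M$, and $F$ equal to $y^2/2$ on a large ball. Let $X$ be pure jump with alternating jumps $(-1)^k k^{-1/2}$, and let $M$ have independent symmetric jumps $\pm(k^{1/2}\log k)^{-1}$, $k\geq 2$, at the same deterministic times. Then $X\in V^p$, $M\in\mathcal{M}^2$, and the summand is exactly $\Delta X_s\Delta M_s$ while $Y$ stays in the ball, yet $\sum_s|\Delta X_s\Delta M_s|=\sum_k (k\log k)^{-1}=\infty$. So this cannot be proved, and it is not needed. The jump sum coming from \cref{theorem:roughIto} is absolutely summable, being $O(|\Delta Y_s|^3)$ plus products involving $\Delta A$ or $\Delta\mathbb{X}$. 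The cross term, by contrast, exists only as the Young integral of $D^2F(Y)(\partial_XY\,\cdot,\cdot)$ against the pathwise $V^{p/2}$ bracket block $[X,M]+[M,X]$ of $[\mathscr{J}(\mathbf{X};M)]$, and its convergence rests on martingale cancellations (cf.\ \cite[Theorem 5.4]{fzk23}). The merged sum in the statement must therefore be read in that RRS sense, not as an absolutely convergent series.
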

\begin{proof}
    By applying \cref{prop:scRPtoscRSM} to $\mathcal{Y}$ we obtain explicit $(Y', Y'')$ such that $\tilde{Y}\coloneqq(Y-A, Y', Y'') \in L^{0}_{ad}(\Omega, \mathscr{V}^{p}_{(\mathbf{X}; M; N)}([0,T]; W))$. We then apply first \cref{theorem:roughIto} to $\tilde{Y}$ and $F$ and afterwards \cref{prop:scRPtoscRSM} to obtain the above formula. The details of this calculation are analogous to \cite[Theorem 4.16]{DauseFrizJentzenSong2026} and left for the readers. We would like to however put attention to the following two details appearing in the calculation, which are different from the continuous case. First by an easy integration by parts calculation one verifies, that 
    \begin{equation*}
[\sJ(\mathbf{X}; M; N)] =
\begin{pmatrix}
[\mathbf{X}] & [X, M] & [X, N] \\
[M, X] & [M] & [M, N] \\
[N, X] & [N, M] & [N]
\end{pmatrix}, 
\end{equation*}
where $[\mathbf{X}]$ denotes the rough path bracket, $[X, M], [X, N]$ and transposes thereof the brackets from \cite{fzk23} and all other terms the classical QV-brackets. Secondly we note that 
\begin{equation*}
    \Delta \Pi(N; X)_{t}= \lim_{s \uparrow t}\Pi(N; X)_{s, t}= \lim_{s \uparrow t}\left ( \delta X_{s,t} \delta M_{s,t} - \int_{s}^{t} \delta X_{s, r-} \dd M_{r} - \delta [X, M]_{s,t} \right)=0. 
\end{equation*}
Therefore the jumps of the joint lifts are not appearing in the above expression.\end{proof}
Compared to the It\^{o} formulae \cite[Theorem 2.12]{Friz_Zhang_2018} and \cite[Theorem 6.7]{AllanPieper2026}, \cref{prop:rsIto} is formulated on the level of strongly controlled processes and thus allows for more explicit decompositions. This implies e.g. the following rough stochastic product rule, which is a càdlàg, moment-free version of \cite[Theorem A.3]{FGHT26}.  
\begin{corollary}
\label{corollary:rs_product_formula}
    Let $p \in (2,3)$ and $\mathbf{X}=(X, \mathbb{X})\in \mathscr{V}^{p}([0,T]; V)$. Consider \\
    $\mathcal{Y}^{i}=(Y^{i}, \partial_{X} Y^{i}, \partial_{X}^{2} Y^{i}, A^{i}; M^{i}, N^{i})\in \mathscr{H}_{\mathbf{X}}^{p}([0,T]; \mathbb{R})$ for $i=1,2$. Let $\phi: \{1,2\} \to \{1, 2\}$ with $\phi(1)=2, \phi(2)=1$. Then it holds almost surely
    \begin{equation*}
    \begin{aligned}
        Y^{1}_{t} Y^{2}_{t} - Y^{1}_{0}Y^{2}_{0}=& \sum_{i=1,2}  \int_{0}^{t} (Y^{i}_{s} \partial_{X}Y^{\phi(i)}_{s}, Y^{i}_{s} \partial_{X}^{2}Y^{\phi(i)}_{s}+ \partial_{X}Y^{i}_{s} \partial_{X}Y^{\phi(i)}_{s})\dd \mathbf{X}_{s} \\
        &+ \int_{0}^{t} Y^{i}_{s-}\dd M^{\phi(i)}_{s} + \int_{0}^{t} Y^{i}_{s} \dd A^{\phi(i)}_{s} + \frac{1}{2}\int_{0}^{t} (\partial_{X} Y^{i}_{s}, \partial_{X}Y^{\phi(i)}_{s}) \dd [\mathbf{X}]_{s}+\frac{1}{2} [M^{1}, M^{2}]_{t} \\
        &+ \sum_{0< s \leq t} \frac{1}{2} Y^{i}_{s} Y^{\phi(i)}_{s}- \frac{1}{2} Y^{i}_{s-} Y^{\phi(i)}_{s-}- \frac{1}{2} (\partial_{X} Y^{i}_{s-}\Delta X_{s}, \partial_{X} Y^{\phi(i)}_{s-} \Delta X_{s}) - \frac{1}{2} \Delta M^{1}_{s} \Delta M^{2}_{s}\\
        & \quad - \partial_{X}Y^{i}_{s-} (\Delta A^{\phi(i)}_{s} + \Delta M^{\phi(i)}_{s} + \partial_{X}Y^{\phi(i)}_{s-} \Delta X_{s}+ \partial_{X}^{2}Y^{\phi(i)}_{s-} \Delta \mathbb{X}_{s}).
    \end{aligned}
    \end{equation*}
\end{corollary}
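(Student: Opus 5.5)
The plan is to derive the formula from the rough stochastic Itô formula \cref{prop:rsIto}. I apply it to the $\mathbb{R}^{2}$-valued strongly controlled process $\mathcal{Y}=(Y^{1},Y^{2})$ and the function $F(y_{1},y_{2})=y_{1}y_{2}$. Since $F$ is a quadratic polynomial it is not globally $\Lip^{3}$. I therefore first localise with stopping times $\tau_{k}$ at which $|Y^{1}|+|Y^{2}|$ (or its left limit) exceeds $k$. Before $\tau_{k}$ I replace $F$ by a $\Lip^{3}$ function $F_{k}$ that coincides with $F$ on a ball of radius larger than $k$. Because the jumps are handled separately and all terms of the formula are local, I then let $k\to\infty$. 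The identity holds almost surely on $\{\tau_{k}>t\}$, and these events exhaust $\Omega$.

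\textbf{Step 1: stacking the two processes.} I check that the componentwise stacking
\[
\mathcal{Y}\coloneqq\bigl((Y^{1},Y^{2}),(\partial_{X}Y^{1},\partial_{X}Y^{2}),(\partial_{X}^{2}Y^{1},\partial_{X}^{2}Y^{2}),(A^{1},A^{2});(M^{1},M^{2}),(N^{1},N^{2})\bigr)
\]
belongs to $\mathscr{H}^{p}_{\mathbf{X}}([0,T];\mathbb{R}^{2})$. This is immediate from \cref{def:scRSM}, because the RSI and the $\mathscr{H}^{p}_{X}$-conditions are componentwise.

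\textbf{Step 2: substituting the derivatives.} I insert
\[
DF(y)h=y_{1}h_{2}+y_{2}h_{1},\qquad D^{2}F(y)(h,k)=h_{1}k_{2}+h_{2}k_{1}.
\]
The rough integrand then becomes $\sum_{i}Y^{i}\partial_{X}Y^{\phi(i)}$ in the first slot. In the second slot it becomes $\sum_{i}Y^{i}\partial^{2}_{X}Y^{\phi(i)}+2\,\partial_{X}Y^{1}\partial_{X}Y^{2}$, which is the same as $\sum_{i}\bigl(Y^{i}\partial^{2}_{X}Y^{\phi(i)}+\partial_{X}Y^{i}\partial_{X}Y^{\phi(i)}\bigr)$. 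The Young, martingale and bracket terms split symmetrically. The factor $\tfrac12 D^{2}F$ gives $\tfrac12\sum_{i}(\cdot^{i},\cdot^{\phi(i)})$, and in particular $\tfrac12\int D^{2}F\,\dd[M]=[M^{1},M^{2}]$, which I write as $\tfrac12\sum_{i}[M^{i},M^{\phi(i)}]$.

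\textbf{Step 3: the jump sum.} I expand $F(Y_{s})-F(Y_{s-})$ and $DF(Y_{s-})\Delta(\cdot)$ by the same symmetrisation, writing each symmetric term as $\tfrac12\sum_{i}$. I then match the resulting terms to the stated expression.

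One point needs care. In \cref{prop:rsIto} the martingale integral must be read with left limits, $\int DF(Y_{s-})\,\dd M_{s}$, and the $\dd A$ integral is Young/Stieltjes. Replacing $Y_{s}$ by $Y_{s-}$ (or conversely) changes these integrals by exactly the jump terms $\Delta Y^{i}\Delta A^{\phi(i)}$. These differences must be absorbed consistently into the RRS jump sum, so I will carefully reconcile the jump terms, together with the convention $Y^{i}_{s}$ versus $Y^{i}_{s-}$ in the $\dd A$- and $\dd\mathbf{X}$-integrals, with the displayed formula.

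The main obstacle is this bookkeeping of jump and bracket conventions. The localisation is routine, since the stopping times do not interfere with the deterministic rough path and the sums converge in RRS sense pathwise.
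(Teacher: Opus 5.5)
Your plan is the paper's proof: stack $(Y^{1},Y^{2})$ into an $\mathbb{R}^{2}$-valued $\mathbf{X}$-scRSM and apply \cref{prop:rsIto} with $F(y_{1},y_{2})=y_{1}y_{2}$. Your localisation is a welcome extra, since $F\notin\Lip^{3}$ under the paper's bounded convention and the paper applies \cref{prop:rsIto} to it without comment. When you do the matching in Step 3, use $DF(Y_{s-})=(Y^{2}_{s-},Y^{1}_{s-})$: the prefactor of $(\Delta A^{\phi(i)}_{s}+\Delta M^{\phi(i)}_{s}+\partial_{X}Y^{\phi(i)}_{s-}\Delta X_{s}+\partial_{X}^{2}Y^{\phi(i)}_{s-}\Delta\mathbb{X}_{s})$ then comes out as $Y^{i}_{s-}$ rather than the printed $\partial_{X}Y^{i}_{s-}$, which is a typo in the statement (only $Y^{i}_{s-}$ reduces to the classical product rule when $\mathbf{X}\equiv0$, $A^{i}\equiv0$). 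The $\dd A$- and $\dd M$-conventions of the corollary already coincide with those of \cref{prop:rsIto}, so no $\Delta Y^{i}\Delta A^{\phi(i)}$ corrections arise.
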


\begin{proof}
    Consider $\tilde{\mathcal{Y}}\coloneqq (\tilde{Y}, \partial_{X} \tilde{Y}, \partial_{X}^{2} \tilde{Y}, \tilde{A}; \tilde{M}; \tilde{N})\in \mathscr{H}_{\mathbf{X}}^{p}([0,T]; \mathbb{R}^{2})$, where $\tilde{G}\coloneqq (G^{1}, G^{2})^{\top}$ for $G^{i} \in \{ Y^{i}, M^{i}, A^{i} \}$ i=1,2, $\partial_{X} \tilde{Y}=( \partial_{X}Y^{1}, \partial_{X} Y^{2})^{\top}\cdot)$ and $\partial_{X}^{2} \tilde{Y}\coloneqq ((\partial_{X}^{2} Y^{1}, \partial_{X}^{2} Y^{2})^{\top}\cdot )$. Then apply \cref{prop:rsIto} to $\tilde{\mathcal{Y}}$ and $F(x,y)\coloneqq xy$. 
\end{proof}
  \begin{definition}
  \label{def:rIp}
    Consider a $5$-tuple $\mathcal{Y}=(Y, Y', Y'', A; M )$ of càdlàg, adapted processes of suitable dimensions as in \eqref{eq:dim_scRSM}. Further assume that $(Y', Y'')\in \sV^{p,m, n}_X$, 
    $M \in \mathcal{M}^{p, m, n}$ and $V \in \mathscr{V}^1$ such that 
    \begin{equation*}
    Y_{t}=Y_{0}+ A_{t} + M_{t} + \int_{0}^{t} (Y', Y'')_s \dd  \mathbf{X}_{s}
    \end{equation*}
    for any $t \in [0,T]$, almost surely. 
  Then  the process $\mathcal{Y}$ is called an $L^{m,n}$ \emph{rough Itô process} w.r.t. $\mathbf{X}$ (short: $L^{m, n}$-$\mathbf{X}$-rIp). We denote the space of such $\mathcal{Y}$'s by $\mathscr{V}_{\mathbf{X}}^{p, m, n}([0,T]; W)$.
  \end{definition}
  
\begin{definition}
      Consider a $6$-tuple $\mathcal{Y}=(Y, \partial_{X} Y, \partial^{2}_{X}Y, A; M , N)$. If $\mathcal{Y}$ is an $\mathbf{X}$-scRSM and $(Y, \partial_{X} Y, \partial^{2}_{X}Y, A; M)$ is an $L^{m,n}$-$\mathbf{X}$-rIp, we say that $\mathcal{Y}$ is a \emph{good $L^{m,n}$rough Itô process w.r.t. $\mathbf{X}$} (short: good $L^{m,n}$-$\mathbf{X}$-rIp or just 'good rough Itô process' if $m,n, \mathbf{X}$ are clear from context/irrelevant).
  \end{definition}

\subsection{A Kolmogorov criterion and rough stochastic Doob-Meyer decomposition}
\label{subsection:A Kolmogorov criterion and rough stochastic Doob-Meyer decomposition}

We first recall the Kolmogorov-type criterion for non-continuous $p$-Var settings as established in \cite{PisierXu1987} in discrete-time and \cite[Theorem 2.10]{AllanPieperTeichmann2025} in continuous-time.

\begin{theorem}\label{theorem: a p-variation kolmogorov result}
Let $Y \in V^p L^m$ for some $1 \leq p < m < \infty$. Then there exists a modification of $Y$ with sample paths which are almost surely c\`adl\`ag and such that for every $p<\tp$ we have that 
\begin{equation*}
    \Big\| \|Y\|_{\tp,[0,T]} \Big\|_{L^m} \leq C \|Y\|_{p,m,[0,T]}
\end{equation*}
where the constant $C>0$ only depends on $\tp, p, m$ and $T$. 
Conversely, let $Y \in L^m(V^p)$ with almost surely càdlàg sample paths by which we mean that $\|\|Y\|_{p,[0,T]}\|_{L^m} <\infty$\footnote{Note here that by the assumption that $Y$ is almost surely càdlàg we indeed ensure that $\|Y\|_{p,[0,T]}$ is a well defined random variable.} for some $1 \leq m \leq p < \infty$. Then also
\begin{equation*}
   \|Y\|_{p,m,[0,T]} \leq C  \Big\| \|Y\|_{p,[0,T]} \Big\|_{L^m}, 
\end{equation*}
where again the constant $C>0$ only depends on $p,m$ and $T$. 
\end{theorem}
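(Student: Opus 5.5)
The statement has two halves, and I would prove them separately. The first half is the substantial one; it is a càdlàg $p$-variation Kolmogorov criterion for $1\le p<m$. The second half, for $m\le p$, is elementary.

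\emph{First half ($p<m$).} The plan is to reduce to a deterministic control and a Besov/Garsia--Rodemich--Rumsey-type embedding, after a time change.
\begin{enumerate}
\item Set $w(s,t)\coloneqq \|Y\|^p_{p,m,[s,t]}$. The first thing to check is that $w$ is a control (super-additive), which holds because $\|Y\|_{p,m}$ is defined through a supremum over partitions of sums of $\|\delta Y_{u,v}\|_{L^m}^p$. By construction $\|\delta Y_{s,t}\|_{L^m}\le w(s,t)^{1/p}$.
\item Time change by $\tau(t)\coloneqq w(0,t)/w(0,T)$, which is nondecreasing. The jumps of $\tau$ correspond to atoms of $w$, countably many and at deterministic times. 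On the range of $\tau$ define $Z_{\tau(t)}\coloneqq Y_t$. At each atom, fill in the corresponding gap of $\tau$ by linear interpolation between $Y_{t-}$ and $Y_t$. This gives $\|\delta Z_{a,b}\|_{L^m}\lesssim |b-a|^{1/p}$, i.e.\ $Z$ is $L^m$-Hölder with exponent $1/p$.
\item Since $m/p>1$, the classical Kolmogorov/Besov embedding applies. Taking expectations in the Besov norm of $\mathcal{B}^{\alpha}_m$ with $\alpha<1/p$ gives $\mathbb E\int\!\!\int |\delta Z_{u,v}|^m/|v-u|^{\alpha m+1}\,du\,dv<\infty$ whenever $m/p-\alpha m>0$. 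The Besov-to-$\tilde p$-variation embedding $\mathcal B^{\alpha}_m\hookrightarrow V^{1/\alpha}$, valid for $\alpha>1/m$ (GRR), then yields $\|\|Z\|_{\tilde p}\|_{L^m}\lesssim \|Y\|_{p,m,[0,T]}$ with $\tilde p=1/\alpha$ arbitrarily close to $p$ from above.
\item The window $1/m<\alpha<1/p$ is non-empty exactly because $p<m$. Since $p$-variation is invariant under reparametrisation, the bound transfers back to $Y$.
\end{enumerate}

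\emph{Main obstacle: the modification.} The hardest step is producing a càdlàg modification and making the time change rigorous at the jumps. Filling gaps by linear interpolation is harmless, since interpolation does not increase the $\tilde p$-variation of the original path. The genuine difficulty is that the continuous modification of $Z$ only determines $Y$ on the range of $\tau$ and at the atoms. I would define the modification as $\tilde Y_t\coloneqq \tilde Z_{\tau(t)}$, using right limits $\tau(t+)$ where $\tau$ jumps. Then I would check $\tilde Y_t=Y_t$ a.s.\ for each fixed $t$ via $L^m$-continuity, and check the càdlàg property from the continuity of $\tilde Z$ together with right-continuity of $w$ in $t$. Intervals where $\tau$ is constant carry zero $L^m$-increments, so $Y$ is a.s.\ constant there and nothing is lost.

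\emph{Second half ($m\le p$).} Fix a partition $\mathcal P$. Since $p/m\ge 1$, the reverse Minkowski-type inequality for the $\ell^{p/m}$ norm of the vector $(\mathbb E|\delta Y_{u,v}|^m)_{[u,v]\in\mathcal P}$ gives
\begin{equation*}
\Big(\sum_{[u,v]\in\mathcal P}\|\delta Y_{u,v}\|_{L^m}^p\Big)^{m/p}=\Big\|\big(\mathbb E|\delta Y_{u,v}|^m\big)_{[u,v]\in\mathcal P}\Big\|_{\ell^{p/m}}\le \mathbb E\Big\|\big(|\delta Y_{u,v}|^m\big)_{[u,v]\in\mathcal P}\Big\|_{\ell^{p/m}} \le \mathbb E\,\|Y\|_{p,[0,T]}^m .
\end{equation*}
Taking the supremum over $\mathcal P$ gives the claim with $C=1$.
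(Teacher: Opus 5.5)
Your argument is correct, but for $p<m$ it follows a genuinely different route from the one the paper relies on. The paper does not reprove the statement. It cites Pisier--Xu and Allan--Pieper--Teichmann, and the argument it reuses for the two-parameter remainder version in the appendix is a Ledoux--Lyons--Qian-type chaining carried out directly on $[0,T]$. That argument takes the dyadic points of the control $w(\cdot,\cdot-)$ and splits increments into pre-jump pieces $\delta Y_{t^n_i,t^n_{i+1}-}$ and jumps $\Delta Y_{t^n_{i+1}}$. It then uses $(\sum_n a_n)^{\tilde p}\lesssim \sum_n n^{\gamma}a_n^{\tilde p}$ for $\gamma>\tilde p-1$ and estimates level by level in $L^m$.

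You instead reduce to the continuous case: time change by $w(0,\cdot)$, fill the countably many deterministic gaps linearly, apply Kolmogorov and the Besov--variation embedding, and pull back through $\tilde Y=\tilde Z\circ\tau$. In effect, you make the paper's Besov-embedding heuristic rigorous in the càdlàg setting.
\begin{itemize}
\item Your route is shorter given the classical embeddings, gives a $T$-independent constant, and shows that for $p<m$ jumps can only occur at the deterministic atoms of $w$.
\item The chaining route is self-contained and carries over almost verbatim to remainders satisfying a Chen-type relation, which is what the paper actually needs. Your interpolation would first have to be made compatible with that relation.
\end{itemize}

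Two points should be made explicit.
\begin{itemize}
\item Set $v(t)\coloneqq w(0,t)$. Super-additivity gives $\|\delta Y_{s,t}\|_{L^m}^p\le v(t)-v(s)$, and hence $\|\Delta Y_t\|_{L^m}^p\le v(t)-v(t-)$. This is exactly what makes linear interpolation across a gap compatible with the $L^m$-Hölder-$1/p$ bound. After splitting at gap endpoints, the constant is at most $3\|Y\|_{p,m,[0,T]}$.
\item Right-continuity of $v$ is not automatic and must be derived from the a.s.\ càdlàg property built into $V^pL^m$. Fatou and finiteness of the $p$-variation give $w(t,t+h)\to0$, and splitting a partition of $[0,t+h]$ at $t$ then gives $v(t+)=v(t)$. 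Once this holds, $\tau$ is right-continuous and the $\tau(t+)$ device is superfluous. Moreover, $\tilde Z\circ\tau$ is automatically a càdlàg modification.
\end{itemize}

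The pull-back bound only uses that composition with a nondecreasing map does not increase $\tilde p$-variation; interpolation plays no role there. Monotonicity in $\tilde p$ covers the case $\tilde p\ge m$.

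Your second half is the standard argument. The inequality used there is Minkowski's integral inequality in $\ell^{p/m}$ with $p/m\ge1$, not a reverse one.
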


In \cite[Example A.6]{AllanPieperTeichmann2025} the authors provide counterexamples, to show that the conditions in the first part of \cref{theorem: a p-variation kolmogorov result} on the coefficients $m$ and $p$ are indeed sharp. Since this break-down of the Kolmogorov-criterion for $m \leq p$ is crucial to the structure of \cref{section:On semi-linear RSDEs} we further motivate this condition by the following Besov-embedding heuristic. 

\begin{example} \label{example:Besov embedding}
Let $W$ be a finite-dim. Banach space and suppose $Y\in V^{p, c}L^{m}([0,T]; W)$. By \cite[Proposition 5.14]{Friz_Victoir_2010} we see that there is a continuous, non-decreasing, function $\varphi: [0,T]\to [0,1]$ with $\varphi(0)=0$ and $\varphi(T)=1$ and a continuous process $X\in \mathcal{C}^{\alpha}([0,1]; L^{m}(W; \mathbb{P}))$ with $\alpha=\nicefrac{1}{p}$ such that $Y= X\circ \varphi$ up to indistinguishability. 
One easily verifies that for any $\tilde{p}\in (p, \infty)$ with $\tilde{\alpha}\coloneqq \frac{1}{\tilde{p}}$ it holds $X\in \mathcal{B}^{\tilde{\alpha}}_{m}(L^{m}(W; \mathbb{P})))$. Now if $\tilde{\alpha} m > \operatorname{dim}([0,T])=1$ we have 
\begin{equation*}
     \mathcal{B}_{ m}^{\tilde{\alpha}}(L^{m}(W; \mathbb{P}))= L^{m}(\mathcal{B}^{\tilde{\alpha}}_{m}(W); \mathbb{P})\subset L^{m} ( V^{\tilde{p}}(W); \mathbb{P}),
\end{equation*}
where the last inclusion follows by the Besov-embedding \cite[Corollary A.3]{Friz_Victoir_2010}. Therefore $X\in L^{m}(V^{\tilde{p}}([0,T]; W))$. By the invariance of $\Vert \cdot \Vert_{\tilde{p}}$ under reparameterization we see $Y\in L^{m}(V^{\tilde{p}}([0,T]; W))$. Note that for $\tilde{\alpha}m\leq 1$ the Besov-embedding is known to break down and thus the above argument fails for $m \leq p$. 
\end{example}

In order to establish a first connection between the notions of stochastic controlled paths and rough semimartingales based on Kolmogorov-type criteria, we will need the following two-parameter generalisation of the previous result, similar to the $\alpha$-Hölder counter part (see \cite[Theorem A.1]{DauseFrizJentzenSong2026} or \cite[Lemma 5.8]{BCN24}) to also treat the pathwise regularity of the remainder of a stochastic controlled path.

\begin{theorem}\label{thm: Kolmogorov for some remainders}
   Let $p \in [1,\infty)$, $\tp \in (p,\infty)$, $m\in [2, \infty)$ and $m_2\in (p, \infty]$ as well as $(R_{s,t})_{(s,t) \in \Delta_{[0,T]}}$ a right-continuous in probability two parameter processes and assume that there exist two processes $R^{1} \in V^{\tilde{p}}$ almost surely as well as $R^{2} \in V^{p} L^{m_2}$ such that 
    \begin{equation}\label{eq: Chen for remainders}
        \delta R_{s,u,t} = \delta R^{(1)}_{s,u} \otimes \delta R^{(2)}_{u,t}.
    \end{equation}
    If $R \in V^{\frac{p}{2}} L^m$, then there exists a modification $\tilde{R}$ of $R$ with $\tilde{R}\in V^{\frac{\tilde{p}}{2}}$ almost surely as a two-parameter process. \\
    Moreover, if additionally $R^{(1)} \in L^{m_1}(V^{\tp})$ for $m_1 \in [2,\infty)$ such that $\frac{1}{m_1}+ \frac{1}{m_2}=\frac{1}{m}$ then also 
    \begin{equation*}
        \Big\|\|R\|_{\frac{\tp}{2},[0,T]}\Big\|_{L^{m}}\lesssim \|R\|_{\frac{p}{2},m,[0,T]}+ \big\|\|R^{(1)}\|_{\tp, [0,T]}\big\|_{L^{m_1}} \|R^{(2)}\|_{p,m_2,[0,T]}.
    \end{equation*}
\end{theorem}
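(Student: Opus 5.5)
The plan is to mimic the standard Kolmogorov argument for two-parameter processes satisfying a Chen-type relation, reducing to a dyadic/greedy partition argument. The engine is the Pisier--Xu type criterion of \cref{theorem: a p-variation kolmogorov result}, applied not to $R$ directly but to suitable one-parameter objects; the algebraic identity \eqref{eq: Chen for remainders} controls the error between $R$ and its values on a countable skeleton.

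\textbf{Step 1: reduction to a skeleton and a control.} Since $R\in V^{\frac p2}L^m$, the function $w(s,t)\coloneqq \|R\|_{\frac p2,m,[s,t]}^{p/2}+\|R^{(2)}\|_{p,m_2,[s,t]}^{p}$ is superadditive in the sense required for a mild control. I reparametrise time by $w$ and pick, for each level $k$, a finite partition $\mathcal{D}_k$ of $[0,T]$ into intervals of $w$-size at most $2^{-k}$ (plus finitely many jump points where $w(u-,u+)$ is large, handled separately using right-continuity in probability). On $D=\bigcup_k\mathcal{D}_k$ I estimate
\begin{equation*}
\mathbb{E}\Big[\sum_k 2^{k\theta}\sum_{[u,v]\in\mathcal{D}_k}|R_{u,v}|^{m}\Big]\lesssim \sum_k 2^{k\theta}\,2^{-k(\frac{2m}{p}-1)}<\infty
\end{equation*}
for some $\theta>0$. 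Here I use that $\frac{2m}{p}>1$, because $m\geq 2$ and $p<\tilde p$ gives room; this is exactly where the exponent $\frac{\tilde p}{2}>\frac p2$ is spent. For $R^{(2)}$ I apply \cref{theorem: a p-variation kolmogorov result}, using $m_2>p$, to obtain $R^{(2)}\in L^{m_2}(V^{\tilde p})$ with the norm bound.

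\textbf{Step 2: chaining via Chen.} For an arbitrary pair $s<t$ of skeleton points, I write $[s,t]$ as a dyadic chain $s=u_0<\dots<u_N=t$ with at most two intervals from each level. Iterating \eqref{eq: Chen for remainders} gives
\begin{equation*}
R_{s,t}=\sum_i R_{u_i,u_{i+1}}+\sum_i \delta R^{(1)}_{s,u_i}\otimes\delta R^{(2)}_{u_i,u_{i+1}}.
\end{equation*}
I bound the first sum by the level-wise random constants from Step 1, producing $|R_{s,t}|\lesssim K\,\tilde w(s,t)^{2/\tilde p}$. For the second sum I note that it is a Riemann--Stieltjes-type sum of $\delta R^{(1)}$ against $R^{(2)}$, both of which lie in $V^{\tilde p}$ a.s. with $\frac{2}{\tilde p}>1$ after possibly shrinking $\tilde p$. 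The Young--Loève estimate then bounds it by $\|R^{(1)}\|_{\tilde p,[s,t]}\|R^{(2)}\|_{\tilde p,[s,t]}$. Summing $|R_{s,t}|^{\tilde p/2}$ over a partition and using superadditivity yields $\|R\|_{\frac{\tilde p}{2}}<\infty$ on $D$.

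\textbf{Step 3: extension and moments.} I define $\tilde R$ as the right limit from $D$, which exists because of the uniform bound and the càdlàg property of $R^{(1)},R^{(2)}$ combined with the Chen relation. By right-continuity in probability, $\tilde R$ is a modification of $R$, and lower semicontinuity of $p$-variation gives the pathwise claim. For the moment bound I take $L^m$ norms in the estimate from Step 2. The first term gives $\|R\|_{\frac p2,m,[0,T]}$, via the Garsia--Rodemich--Rumsey/Pisier--Xu sum in Step 1. For the second term I use Hölder with $\frac1{m_1}+\frac1{m_2}=\frac1m$ together with the $L^{m_2}(V^{\tilde p})$ bound on $R^{(2)}$ from \cref{theorem: a p-variation kolmogorov result}.

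\textbf{Main obstacle.} I expect the main difficulty to be Step 2 in the càdlàg setting. Dyadic chaining is not directly available there: jumps of $w$ make the level sets irregular, and the Young sum must be taken in left-point form with $R^{(1)}$ only a.s. $\tilde p$-variation. I would first try handling big jumps by isolating the finitely many atoms of $w$ exceeding each threshold and treating them as deterministic partition points. If that fails, I would instead run the Pisier--Xu discrete-time argument on the finite partitions directly and pass to the limit using the mild-control property from \cref{lemma:expansion_notation}.
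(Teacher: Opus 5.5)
The genuine gap is the cross term in your Step 2. You propose to bound $\sum_i \delta R^{(1)}_{s,u_i}\otimes\delta R^{(2)}_{u_i,u_{i+1}}$ by Young--Lo\`eve, but that estimate needs $\frac{1}{\tilde p}+\frac{1}{\tilde p}>1$, i.e.\ $\tilde p<2$. However, $\tilde p>p\geq 2$ in the regime where the theorem is actually used (e.g.\ the Doob--Meyer decomposition with $p\in[2,3)$). You cannot shrink $\tilde p$ below $p$, nor below the exponent in the hypothesis $R^{(1)}\in V^{\tilde p}$.

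The inequality you want is in fact false for $\tilde p>2$. With $R^{(1)}=R^{(2)}=x$ scalar, the sum equals $\frac12\big((\delta x_{s,t})^2-\sum_i(\delta x_{u_i,u_{i+1}})^2\big)$, and quadratic sums along fine partitions are not controlled by $\tilde p$-variation. So pathwise information on $R^{(2)}$ cannot close the argument; this includes $R^{(2)}\in L^{m_2}(V^{\tilde p})$, which is all you extract from the one-parameter criterion. This is exactly why the hypothesis on $R^{(2)}$ is a \emph{moment} hypothesis $R^{(2)}\in V^pL^{m_2}$ with $m_2>p$, while $R^{(1)}$ only needs pathwise $\tilde p$-variation.

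The missing idea, which is how the paper proceeds, is to treat the cross term asymmetrically inside the chaining. Bound $|\delta R^{(1)}_{s,u_i}|\le\|R^{(1)}\|_{\tilde p,[s,t]}$, and send the chain increments $\delta R^{(2)}_{u_i,u_{i+1}}$ through the same dyadic machinery as $R$ itself. That is why the moment $p$-variation of $R^{(2)}$ enters the control defining the dyadics. Using $(\sum_n a_n)^{\tilde p}\lesssim\sum_n n^\gamma a_n^{\tilde p}$ for $\gamma>\tilde p-1$ and $ab\le\frac12(a^2+b^2)$, and summing over a partition, you get $\|R^{(1)}\|^{\tilde p}_{\tilde p,[0,T]}$ plus sums such as $\sum_n n^\gamma\sum_i|\delta R^{(2)}_{t^n_i,t^n_{i+1}-}|^{\tilde p}$. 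These have finite expectation, taking w.l.o.g.\ $\tilde p\le m_2$. For the moment bound, keep the product $\|R^{(1)}\|_{\tilde p,[0,T]}\big(\sum_n n^\gamma\sum_i|\delta R^{(2)}_{t^n_i,t^n_{i+1}-}|^{\tilde p}\big)^{1/\tilde p}$ and apply H\"older with $\frac1{m_1}+\frac1{m_2}=\frac1m$.

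Two further corrections are needed.
\begin{itemize}
\item Your Step 1 (geometric weights on $m$-th moments, then level-wise pointwise bounds) is the H\"older-type Kolmogorov scheme. It only reaches $\tilde p>\frac{2mp}{2m-p}$, not every $\tilde p>p$. The lossless version sums $\frac{\tilde p}{2}$-th powers with polynomial weights $n^\gamma$ (Pisier--Xu). This needs $\frac{\tilde p}{2}\le m$, i.e.\ the implicit condition $p<2m$, which does not follow from $m\ge 2$ for general $p$.
\item Jumps cannot be handled by isolating finitely many atoms of $w$, since there may be countably many, accumulating. Instead one uses $w(\cdot,\cdot-)$-dyadics and splits each dyadic increment into a left-limit part and a jump. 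This produces extra sums in $\Delta R_{t^n_{i+1}}$ and in $\delta R^{(1)}\,\Delta R^{(2)}_{t^n_{i+1}}$, which are controlled by the same moments.
\end{itemize}
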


The following result may be interpreted as a form of Doob--Meyer decomposition which is an extension of the corresponding $\alpha$-Hölder version \cite[Theorem 3.3]{FHL21} and in combination with the previously discussed Kolmogorov-criteria especially shows a first consistency between SCRPs and RSMs. 

\begin{proposition}\label{prop: Doob--Meyer}
         Let $p\in [2,3)$, $\tp \in (p, \infty)$, $m \in [2,\infty)$ and $n \in [m,\infty]$ and $\bX=:(X, \X)$ a deterministic $p$-rough path. If $(Y,Y') \in \sV^{p,m,n}_X$ is such that for any $s\leq u \leq t$ we have that 
         \begin{equation}\label{eq: jump bound for remainder Doob--Meyer}
             \Big\|\big(\E_s-\E_u\big)\big[\Delta R^{Y,Y'}_{t}\big] \Big\|_{L^n} \lesssim \sum_{i=1}^{N} w_{1,i}(s,t-)^{\alpha_{1,i}}w_{2,i}(s+,t)^{\alpha_{2,i}}
         \end{equation} 
         for some $N\in \N$, some controls $w_{1,i}, w_{2,i}$ for every $i\in \{1, \ldots N\}$ and $\alpha_{1,i}, \alpha_{2,i}>0$ such that $\alpha_{1,i}+\alpha_{2,i}>\frac{1}{2}$, then there exists a unique adapted càdlàg $M\in \cM^{p,m,n}$ with $M_0=0$ such that $(Y-M, Y')$ is an $L^{m,n}$-valued $X$-controlled path in the sense that 
        \begin{equation}\label{eq: bound Doob--Meyer on remainder}
            \big\|R^{(Y-M, Y')} \big\|_{m,n,s} \lesssim \|Z'\|_{p,m,n,[s,t)} \|X\|_{p,[s,t]} + \|\E_{\cdot}R^{Y,Y'}\|_{\frac{p}{2},n, [s,t]}.
        \end{equation} 
         Thus, especially if $p<m$, then there exists an almost surely càdlàg modification $(\tY, \tY')$ of $(Y,Y')$ such that $(\tY, \tY'; M) \in \mathscr{H}^{\tp,m}_X$ with  
         \begin{equation}\label{eq: bound Doob--Meyer RSM p<m}
             \|\tY,\tY; M\|_{\mathscr{H}^{\tp,m}_X} \lesssim \|Y,Y'\|_{p,m,[0,T]}.
         \end{equation}
         Further, if $n=\infty$ and $m\leq p$, then there exists an almost surely càdlàg modification $\tY$ of $Y$ such that $\tY \in V^{\tp}$ almost surely. If additionally there exists an almost surely càdlàg, modification $\tY'$ of $Y'$ with  $\tY'\in V^{\tp}$ almost surely as well, then again $(\tY,\tY'; M) \in \mathscr{H}^{\tp}_X$. 
    \end{proposition}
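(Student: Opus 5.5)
The natural candidate for the martingale is the Doob--Meyer type object obtained by sewing the conditional remainders. Set
\[
\Xi_{s,t}\coloneqq R^{Y,Y'}_{s,t}-\E_s[R^{Y,Y'}_{s,t}],
\]
which is the martingale part of the remainder on $[s,t]$, and define
\[
M_t\coloneqq \lim_{|\cP|\to 0}\sum_{[u,v]\in\cP}\Xi_{u,v}
\]
along partitions of $[0,t]$. Equivalently, $M$ is the martingale part of $Y$ in its Doob--Meyer decomposition $Y=Y_0+M+A$, where $A_t=\lim\sum\E_u[\delta Y_{u,v}]$. I would carry this out via the stochastic sewing lemma in the càdlàg mixed-moment form of \cite{AllanPieper2026}, as in \cite[Theorem 3.3]{FHL21}.

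\textbf{Step 1: existence of $A$.} The germ $\E_u[\delta Y_{u,v}]=\E_u R^{Y,Y'}_{u,v}+Y'_u\delta X_{u,v}$ is $L^n$-controlled. We have
\[
\E_s\delta(\E_\cdot\delta Y)_{s,u,t}=-\E_s[\delta Y'_{s,u}]\delta X_{u,t}+\ldots,
\]
which is of order $\|Y'\|_{p,m,n}\|X\|_p$, and this exponent sum is $2/p$. That is not enough for deterministic sewing. Hence I would instead sew the modified germ $\E_u[\delta Y_{u,v}]-Y'_u\delta X_{u,v}=\E_uR_{u,v}$, which is already $V^{p/2}L^n$. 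Then I add the sewing of $Y'_u\delta X_{u,v}$, which is a stochastic Young-type integral, using the $V^pL^{m,n}$ bound on $Y'$ together with stochastic sewing (the conditional part is $\E_s\delta Y'_{s,u}\delta X_{u,t}$, which needs $\E_sR^{Y',\cdot}$; here one only has $\|\delta Y'\|_{m,n}$, giving $2/p>1/2$ for the martingale part and requiring care for the drift part).

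This is where the jump hypothesis \eqref{eq: jump bound for remainder Doob--Meyer} enters. The càdlàg stochastic sewing lemma of \cite{AllanPieper2026} requires a control of the $(\E_s-\E_u)$-part at jump times, with exponent $>1/2$ in the mixed form $w(s,t-)^{\alpha_1}w(s+,t)^{\alpha_2}$. Condition \eqref{eq: jump bound for remainder Doob--Meyer} is exactly this. Then $M\coloneqq Y-Y_0-A$ is a martingale, since $M$ is the limit of the martingale sums $\sum(\delta Y_{u,v}-\E_u\delta Y_{u,v})$ in $L^m$. It is càdlàg after modification by standard martingale regularity. Uniqueness follows because a martingale with $M_0=0$ that is simultaneously $V^{p/2}L^m$-negligible (since $p/2<1$ sums vanish) is zero: a martingale whose increments satisfy $\|\delta M\|_{L^2}\lesssim w^{\alpha}$ with $\alpha>1/2$ has vanishing quadratic variation.

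\textbf{Step 2: the bound \eqref{eq: bound Doob--Meyer on remainder}.} I would write
\[
R^{Y-M,Y'}_{s,t}=\delta A_{s,t}-Y'_s\delta X_{s,t}.
\]
The sewing estimate then gives $\|\delta A_{s,t}-\E_s\delta Y_{s,t}\|_{m,n,s}$ bounded by the right-hand side, and $\E_sR_{s,t}$ contributes the last term. This step is the main obstacle, because of the Young-type piece and the jump terms.

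\textbf{Step 3: pathwise conclusions.} If $p<m$, \cref{theorem: a p-variation kolmogorov result} applied to $Y'$ and $M$ gives $\tp$-variation in $L^m$. The BDG inequality bounds $[M]^{1/2}_T$ in $L^m$ by $\|Y\|_{p,m}$. For the remainder of $(Y-M,Y')$ I use \cref{thm: Kolmogorov for some remainders}: Chen's relation gives $\delta R_{s,u,t}=\delta Y'_{s,u}\delta X_{u,t}$, which holds since $M$ cancels. I take $R^{(1)}=Y'$ and $R^{(2)}=X$ deterministic ($m_2=\infty$), and Step 2 gives $R\in V^{p/2}L^m$. This yields \eqref{eq: bound Doob--Meyer RSM p<m}.

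If $n=\infty$ and $m\le p$, I use the BMO/John--Nirenberg improvement from \cite{Le2022a,AllanPieperTeichmann2025}: the $V^pL^{m,\infty}$ condition upgrades to $V^pL^{\bar m,\infty}$ for any $\bar m>p$. The Kolmogorov criterion then applies with $\bar m$, giving the $\tp$-variation of $\tY$. Given an a.s. $V^{\tp}$ modification of $Y'$, \cref{thm: Kolmogorov for some remainders} (first part, with $R\in V^{p/2}L^{\bar m}$) yields $R^{Y-M,Y'}\in V^{\tp/2}$ a.s., and hence $(\tY,\tY';M)\in\mathscr{H}^{\tp}_X$.
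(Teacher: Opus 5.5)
Your overall plan, stochastically sewing $\E_u[\delta Y_{u,v}]$ as in \cite[Theorem 3.3]{FHL21}, is the right one. However, the proposal has two genuine gaps.

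\textbf{Step 1 rests on a miscomputation.} For $J_{s,t}:=\E_s[\delta Y_{s,t}]$ one has $\delta J_{s,u,t}=(\E_s-\E_u)[\delta Y_{u,t}]$, so $\E_s[\delta J_{s,u,t}]=0$ identically. The term $-\E_s[\delta Y'_{s,u}]\delta X_{u,t}$ you isolate is cancelled exactly by the term $+\E_s[\delta Y'_{s,u}]\delta X_{u,t}$ hidden in your ``$\ldots$'', which comes from Chen's relation $\delta R^{Y,Y'}_{s,u,t}=\delta Y'_{s,u}\delta X_{u,t}$.

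Stochastic sewing therefore applies to $J$ as a whole, and only needs $\|\delta J_{s,u,t}\|_{m,n,s}$ to be of order $>\frac12$. Split $R_{u,t}=R_{u,t-}+\delta Y'_{u,t-}\Delta X_t+\Delta R_t$. This gives $\|Y'\|_{p,m,n,[s,t)}\|X\|_{p,[s,t]}+\|\E_\cdot R\|_{\frac p2,n,[s,t)}$, with exponent $\frac2p>\frac12$, plus the term $(\E_s-\E_u)[\Delta R_t]$. That last term is exactly where \eqref{eq: jump bound for remainder Doob--Meyer} keeps the bound in the mixed form required for càdlàg sewing. This is the paper's argument, and your Step 2 then goes through as you describe.

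Your replacement route fails, because splitting $J$ into $\E_uR_{u,v}$ and $Y'_u\delta X_{u,v}$ destroys precisely this cancellation. Neither piece is sewable under the hypotheses:
\begin{itemize}
\item $\E_s\delta(\E_\cdot R)_{s,u,t}=\E_s[\delta Y'_{s,u}]\delta X_{u,t}$ is only of order $w^{2/p}$ with $\frac2p\le1$. Membership in $V^{\frac p2}L^n$ with $\frac p2\ge1$ does not make the sums converge.
\item $\sum Y'_u\delta X_{u,v}$ is not a Young-type sum, since $\frac2p\le1$ and $Y'$ is not assumed controlled. Already for deterministic $(Y,Y')$ these first-order Riemann sums need not converge.
\end{itemize}

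Relatedly, your uniqueness argument invokes $\frac p2<1$, which is false. Moreover, in the càdlàg setting a bound $\|\delta N_{s,t}\|_{L^2}\lesssim w(s,t)^{\alpha}$ with $\alpha>\frac12$ does not force a martingale to vanish. Take a martingale with a single jump at a fixed time $t_0$ and $w(s,t)=\1_{s<t_0\le t}$. Uniqueness has to use the mixed-form bounds, e.g.\ via \cite[Lemma B.2]{AllanPieper2026}.

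\textbf{The case $n=\infty$, $m\le p$.} Upgrading $Y$ itself from $V^pL^{m,\infty}$ to $V^pL^{\bar m,\infty}$ with $\bar m>p$ is false in general. The John--Nirenberg-type estimate \cite[Proposition 4.3]{AllanPieperTeichmann2025} also requires an $L^\infty$ $p$-variation bound on the jumps, which the martingale part of $Y$ need not satisfy. For example, let $L_t=P_t-t$ be a compensated Poisson process. Then $(L,0)$ satisfies all hypotheses with $n=\infty$ and $2\le m\le p$, since $\|\delta L_{s,t}\|_{m,\infty,s}\approx(t-s)^{1/m}$ and $\E_sR_{s,t}=0$. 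Yet $L\notin V^pL^{\bar m,\infty}$ for every $\bar m>p$.

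The missing idea is to treat the two components separately.
\begin{itemize}
\item $M$ has a.s.\ finite $\tp$-variation by Lépingle's inequality; no upgrade is needed.
\item The upgrade is applied only to $Y-M$, through the piecewise constant approximations $J^{\cP}_t=\sum_i\E_{t_i}[\delta Y_{t_i,t_{i+1}}]\1_{[t_{i+1},T]}(t)$. Their jumps are bounded in $L^\infty$ by $\|\delta Y_{t_i,t_{i+1}}\|_{m,\infty,t_i}$. A BDG argument on the martingale differences $\delta(Y-M)_{t_i,t_{i+1}}-J_{t_i,t_{i+1}}$ gives $\sup_{\cP}\|J^{\cP}\|_{p,m,\infty,[0,T]}<\infty$.
\item One then upgrades uniformly in $\cP$ to every $\tilde m<\infty$, and passes to the limit using the RRS convergence together with Fatou's lemma for $\|\cdot\|_{\tilde m,\infty,s}$.
\item Only then does one apply \cref{theorem: a p-variation kolmogorov result} to $Y-M$.
\end{itemize}
No upgrade is needed for the remainder either, since $m\ge2>\frac p2$ already suffices in \cref{thm: Kolmogorov for some remainders}. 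Your treatment of the case $p<m$ agrees with the paper.
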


    \begin{proof}
        We follow the same line of argument as in the proof of \cite[Theorem 3.3]{FHL21} and define $J_{s,t}:= \E_{s}[\delta Y_{s,t}]$, then one immediately checks, that $\delta J_{s,u,t}=(\E_{s}-\E_{u})[\delta Y_{u,t}]$ and by Chen's relation also
        \begin{equation*}
        \begin{split}
            \big(\E_{s}-\E_{u}\big)[\delta Y_{u,t}] &= \big(\E_{s}-\E_{u}\big)[\delta Y'_{s,u} \delta X_{u,t}]+ \big(\E_{s}-\E_{u}\big)[R^{Y,Y'}_{u,t}]\\
            &= \big(\E_{s}-\E_{u}\big)[\delta Y'_{s,u} \delta X_{u,t}]+ \big(\E_{s}-\E_{u}\big)[R^{Y,Y'}_{u,t-}] \\
            &\quad + \big(\E_{s}-\E_{u}\big)[\delta Y'_{u,t-}  \Delta X_{t}] +  \big(\E_{s}-\E_{u}\big)[\Delta R^{Y,Y'}_{t}]. 
        \end{split}
        \end{equation*}
       Therefore, we especially have that
        \begin{align*}
            \Big\|\Big(\E_{s}-\E_{u}\Big)(\delta Y_{u,t})\Big\|_{m,n,s}
            &\lesssim \|Y'\|_{p,m,n,[s,t)}\|X\|_{p,[s,t]} + \|\E_{\cdot} R^{Y,Y'}\|_{\frac{p}{2}, n, [s,t)}\\
            &\quad + \sum_{i=1}^{N} w_{1,i}(s,t-)^{\alpha_{1,i}}w_{2,i}(s+,t)^{\alpha_{2,i}}.
        \end{align*}
        Thus, we find that $J$ fulfills the assumptions of the stochastic sewing lemma (see \cite[Theorem 3.2]{AllanPieper2026})\footnote{\label{footnote: generalisation of stochastic sewing}The stochastic sewing results of \cite[Theorems 3.2]{AllanPieper2026} extend to the present setting without any substantive change to their proofs: it can be shown that the RRS convergence only requires bounds of the form
        $\sum_{i=1}^{N}w_{1,i}(s,t-)^{\alpha_{1,i}}w_{2,i}(s+,t)^{\alpha_{2,i}}$. The corresponding general statement is included in J.~Pieper's forthcoming doctoral thesis \cite{Pieper2026}.}  and the existence of $M$ and the bound \eqref{eq: bound Doob--Meyer on remainder} follow immediately by an application thereof. 
        Especially, we find $\cJ= Y-M$ fulfills 
        \begin{align*}
            \|\delta \cJ_{s,t}- J_{s,t}\|_{m,n,s} &\lesssim \|Y'\|_{p,m,n,[s,t)}\|X\|_{p,[s,t]} + \|\E_{\cdot} R^{Y,Y'}\|_{\frac{p}{2}, n, [s,t)}+ \sum_{i=1}^{N} w_{1,i}(s,t-)^{\alpha_{1,i}}w_{2,i}(s+,t)^{\alpha_{2,i}}\\
            &=: w(s,t)^{\frac{1}{2}}
        \end{align*}
         and $\E_{s}[\delta \cJ_{s,t}- J_{s,t}]=0$, where we defined the control 
        \begin{align*}
             w(s,t):= \|Y'\|^{2}_{p,m,n,[s,t)}\|X\|^{2}_{p,[s,t]} + \|\E_{\cdot} R^{Y,Y'}\|^{2}_{\frac{p}{2}, n, [s,t)} + \sum_{i=1}^{N} w_{1,i}(s,t-)^{2\alpha_{1,i}}w_{2,i}(s+,t)^{2\alpha_{2,i}}.
        \end{align*}
        An application of \cite[Lemma B.2]{AllanPieper2026} in the same spirit as \cite[Theorem 3.3 (vi)]{Le2023} then shows the uniqueness. 
        Whenever $p<m$ then a direct application of \cref{theorem: a p-variation kolmogorov result} and \cref{thm: Kolmogorov for some remainders} implies \eqref{eq: bound Doob--Meyer RSM p<m} directly. Here we point out that $R^{\tY, \tY'}$ is version of $R^{Y,Y'}$ and thus $\|\E_{\cdot}R^{\tY,\tY'}\|_{\frac{p}{2}, n , [0,T]}\vee \|R^{\tY,\tY'}\|_{\frac{p}{2},m, n, [0,T]} <\infty$. Further, as $(s,t) \in \Delta_{[0,T]} \mapsto R^{\tY, \tY'}$ is jointly right-continuous almost surely, the version of almost surely finite $\frac{\tp}{2}$-variation, found through \cref{thm: Kolmogorov for some remainders} coincides with $R^{\tY, \tY'}$ up to indistinguishability.
        Finally, consider, $m\leq p$ and $n=\infty$. Define the discrete approximation of $J$ for a partition $\cP= \{t_i\}_{i=0}^N$ by
        \begin{equation*}
            J^{\cP}_t := \sum_{i=0}^{N}  J_{t_i,t_{i+1}} \1_{[t_{i+1},T]}(t).
        \end{equation*}

        By definition, we have that $J^{\cP}$ is piecewise-constant and especially almost surely càdlàg.
        Write $\lfloor s \rfloor_{\cP}:= \sup \{u\in \cP \ : \ u\leq s\}$ and $\llceil s \rrceil_{\cP }:= \inf \{u\in \cP \ : \ u> s\} \wedge T$.
        Then, also by definition of $J^{\cP}$ we find that for any $(s,t) \in \Delta_{[0,T]}$ we have that 
        \begin{equation*}
            \delta J^{\cP}_{s,t}= \sum_{\substack{i\leq N \\ [t_i,t_{i+1}]\subset [s,t]}}  J_{t_i,t_{i+1}} + J_{\lfloor s \rfloor_{\cP} , \llceil s \rrceil_{\cP}\wedge \lfloor t\rfloor_{\cP}},
        \end{equation*}
        where we use the convention that a sum over the empty set equals $0$.
        Thus, especially by a standard BDG argument (see \cite[Lemma 7.1]{AllanPieper2026}) applied to $z_i=\cJ_{t_i,t_{i+1}} - J_{t_i,t_{i+1}}$ we find that 
        \begin{align*}
           &\| \delta J^{\cP}_{s,t}\|_{m,\infty,s}\\
           &\leq \bigg\| \sum_{\substack{i\leq N \\ [t_i,t_{i+1}]\subset [s,t]}}  \delta \cJ_{t_i,t_{i+1}} - J_{t_i,t_{i+1}}\bigg\|_{m,n,s} + \|J_{\lfloor s \rfloor_{\cP}, \llceil s \rrceil_{\cP} \wedge \lfloor t\rfloor_{\cP}}\|_{m,n,s} + \|\delta \cJ_{\llceil s \rrceil_{\cP }\wedge \lfloor t\rfloor_{\cP},\lfloor t \rfloor_{\cP}}\|_{m,n,s} \\
           &\leq \bigg(\sum_{\substack{i\leq N \\ [t_i,t_{i+1}]\subset [s,t]}} \|\delta \cJ_{t_i,t_{i+1}} -J_{t_i,t_{i+1}}\|^2_{m,n,t_i}\bigg)^{\frac{1}{2}}+ \|\delta Y_{\lfloor s \rfloor_{\cP}, \llceil s \rrceil_{\cP}\wedge \lfloor t\rfloor_{\cP}}\|_{m,\infty, \lfloor s \rfloor_{\cP}}+ \|\delta\cJ_{\llceil s \rrceil_{\cP }\wedge \lfloor t\rfloor_{\cP},\lfloor t \rfloor_{\cP}}\|_{m,n,s}\\
           &\leq w(s,t)^{\frac{1}{2}}+\|\cJ\|_{p,m,\infty, [\lfloor s\rfloor_{\cP}, \lfloor t\rfloor_{\cP}]}+ \|Y\|_{p,m,\infty, [\lfloor s\rfloor_{\cP}, \lfloor t\rfloor_{\cP}]},
        \end{align*}
        where we used that 
        \begin{align*}
            \|J_{\lfloor s \rfloor_{\cP}, \llceil s \rrceil_{\cP}\wedge \lfloor t\rfloor_{\cP}}\|_{m,\infty,s} &= \Big\| \E_{s} \big[\big| \E_{\lfloor s \rfloor_{\cP}}\big[\delta Y_{\lfloor s \rfloor_{\cP}, \llceil s \rrceil_{\cP}\wedge \lfloor t\rfloor_{\cP}}\big]\big|^m\big]^{\frac{1}{m}} \Big\|_{L^n}\\
            &= \Big\| \big| \E_{\lfloor s \rfloor_{\cP}}\big[\delta Y_{\lfloor s \rfloor_{\cP}, \llceil s \rrceil_{\cP} \wedge \lfloor t\rfloor_{\cP}}\big]\big| \Big\|_{L^n}\leq \|\delta Y_{\lfloor s \rfloor_{\cP}, \llceil s \rrceil_{\cP} \wedge \lfloor t\rfloor_{\cP}}\|_{m,n,\lfloor s \rfloor_{\cP}}
        \end{align*}
        
        As for any other partition $\tilde \cP$ the set $\{\lfloor u \rfloor_{\cP} \ : \ u\in \tilde \cP\} =: \bar{\cP}$ is yet another partition of $[0,T]$, we find that 
        \begin{equation*}
        \begin{split}
            \sum_{[s,t] \in \tilde \cP}\|\cJ\|^p_{p,m,\infty, [\lfloor s\rfloor_{\cP}, \lfloor t\rfloor_{\cP}]}+ \|Y\|^p_{p,m,\infty, [\lfloor s\rfloor_{\cP}, \lfloor t\rfloor_{\cP}]} &= \sum_{[u,v] \in \bar{\cP}}\|\cJ\|^p_{p,m,\infty, [u,v]}+ \|Y\|^p_{p,m,\infty,[u,v]}\\
            &\leq \|\cJ\|^p_{p,m,\infty, [0,T]}+ \|Y\|^p_{p,m,\infty, [0,T]}.
        \end{split}
        \end{equation*}
      Thus, especially
        \begin{equation*}
            \| J^{\cP}\|_{p, m,\infty,[0,T]} \leq  w(0,T)^{\frac{1}{2}}+ \|\cJ\|_{p,m,\infty, [0,T]}+ \|Y\|_{p,m,\infty, [0,T]}
        \end{equation*}
        for every partition $\cP$. Notice, that also, by definition we know that 
        \begin{equation*}
            \Gamma_{s,t}^{\cP} := \sup_{v\in [s,t]} |\Delta J^{\cP}_v| \leq  \sup_{\substack{[u,v] \in \cP; v\in [s,t]}} \E_{u}[|\delta Y_{u,v}|]
        \end{equation*}
        and thus one easily checks, that .
        \[\big\|\Gamma^{\cP} \big\|_{p,\infty, [0,T]} \lesssim \|Y\|_{p,m,\infty, [0,T]}.\]
        Therefore, by an application of \cite[Proposition 4.3]{AllanPieperTeichmann2025} we know that 
        \begin{align*}
            \sup_{\cP} \|J^{\cP}\|_{\textup{BMO}^{p\text{-var}}, [0,T]} \lesssim  w(0,T)+ \|\cJ\|_{p,m,\infty, [0,T]}+ \|Y\|_{p,m,\infty, [0,T]}.
        \end{align*}
        Moreover, as $J^{\cP}_0=0$, we have for every $\tilde{m}\in [2,\infty)$, again by \cite[Proposition 4.3]{AllanPieperTeichmann2025}, also that 
        \begin{equation*}
            \sup_{\cP} \|J^{\cP}\|_{p,\tilde{m}, \infty, [0,T]} \lesssim w(0,T)+ \|\cJ\|_{p,m,\infty, [0,T]}+ \|Y\|^p_{p,m,\infty, [0,T]},
        \end{equation*}
        where the constant may depend on $\tilde m$.
        Fix a partition $\cP^{0}$ of $[0, T]$, then, by the RRS convergence established above we may choose a nested sequence of refinemenets of $\cP^0$ such that for every $(s,t)\in \cP^{0}$ we have that 
        \begin{equation*}
            \|\delta \cJ_{s,t}- \delta J^{\cP^n}_{s,t}\|_{L^m}=\bigg\|\delta \cJ_{s,t}- \sum_{(u,v) \in \cP^n|_{[s,t]}} J_{u,v}\bigg \|_{L^m} \longrightarrow 0
        \end{equation*}
        as $n\to \infty$. As there are only finitely many $(s,t)\in \cP^{0}$, we may choose a subsequence $(n_k)_{k\in \N}$ such that 
        \begin{equation*}
            |\delta \cJ_{s,t} - J^{\cP_{n_k}}_{s,t}| \longrightarrow 0
        \end{equation*}
        $k\to \infty$ as a $\P$-almost sure limit. Thus, by the Fatou's lemma,  \cite[$(v)$~in~Proposition 2.2]{AllanPieper2026}, we find, for any $\tilde{m} \in [2,\infty)$ that 
        \begin{equation*}
            \sum_{(s,t) \in \cP^{0}} \|\delta \cJ_{s,t}\|^p_{\tilde{m} ,\infty,s}\leq \sum_{(s,t) \in \cP^{0}} \liminf_{k\to \infty} \|\delta J^{\cP^{n_k}}_{s,t}\|^p_{\tilde{m},\infty,s} \lesssim w(0,T)^{\frac{p}{2}}+ \|\cJ\|^p_{p,m,\infty,[0,T]}+\|Y\|^p_{p,m,\infty,[0,T]}
        \end{equation*}
        with the constant being independent of $\cP^{0}$. An application of \cref{theorem: a p-variation kolmogorov result} implies the existence of the wanted modification. Whenever there is a almost surely càdlàg version of $Y'$ we may conclude the final claim as before by \cref{thm: Kolmogorov for some remainders}. 
\end{proof}

\begin{example}
One can easily convince oneself of the following situations where \eqref{eq: jump bound for remainder Doob--Meyer} is fulfilled:
\begin{enumerate}
    \item it is trivially fulfilled if $(Y,Y')$ is a continuous stochastic controlled path à \cite{FHL21} and more generally if $Y$ only jumps at totally inaccessible times.
    \item unsurprisingly, if we let $\cY=(Y,Y',Y'',A;M) \in \sV^{p,m,n}_{\bX}$ such that $\delta A \stackrel{\frac{p}{2}}{=} 0$ in $L^{m,n}$, then one may use \eqref{eq: Lqr- estimate for rough stochastic integral} to establish \eqref{eq: jump bound for remainder Doob--Meyer} once again. Thus, especially whenever $p<m$ or $n=\infty$, and if $M,A, \bX, (Y',Y'')$ are all continuous almost surely, then the $L^{m,n}$-rough Itô process $\cY = (Y,Y',Y'',A; M)$ fulfills even $(Y,Y',Y'',A; M, M^{Y'}) \in \sH^{p}_{\bX}$, where $M^{Y'}$ is the martingale in the Doob--Meyer decomposition of $(Y',Y'')$.
    \item Let $Y_t= X_t + \1_{[t_0,T]}(t) \xi$ for $\xi\sim \mathfrak{F}_{s_0}$, $\xi \perp\!\!\!\perp \mathfrak{F}_{s_0-}$ with $\P[\xi = \pm 1]=\frac{1}{2}$, for some $s_0<t_0$, then one easily checks that \eqref{eq: jump bound for remainder Doob--Meyer} is fulfilled wit $w_1(s,t) := \delta_{s_0}((s,t])$ and $w_2(s,t) \coloneqq \delta_{t_0}((s,t])$.
\end{enumerate}
\end{example}

As a direct consequence of \cref{prop: Doob--Meyer} we also find the following rough stochastic analogue of \cref{remark:scRPandRI}:
\begin{corollary}\label{corollary: Doob-Meyer for strongly controlled}
   Let $p\in [2,3)$, $m\in [2,\infty)$ and $n\in [m,\infty]$ as well as  $(Y,Y'), (Y',Y'') \in \sV^{p,m,n}_{\bX}$ be a pair of stochastic controlled paths such that
   \begin{equation}\label{eq: remainder assumption on strongly stochastically controlled}
       \big\|\E_{s}[\delta Y_{s,t}- Y'_s \delta X_{s,t}- Y''_s\X_{s,t}]\big\|_{L^n} \stackrel{r}{=}0
   \end{equation}
   for some $r\in (0,\frac{p}{2}]$. Then there exists a martingale $M^{Y}$ and an $A\in V^{r}L^{m,n}$ with $\delta A \stackrel{r}{=} 0$ in $L^n$ such that for every $t\in [0,T]$ we have that 
   \begin{equation*}
       Y_t= Y_0+ A_t+ M^{Y}_t +\int_0^t (Y',Y'')_s \dd \bX_s \qquad \P\text{-a.s.}
   \end{equation*}
   Especially, $A\equiv 0$ whenever $r<1$.
\end{corollary}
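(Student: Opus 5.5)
Since $(Y',Y'')\in\sV^{p,m,n}_X$, \cref{lemma: existence of rough stochastic integrals} gives the rough stochastic integral $\cI_t\coloneqq\int_0^t (Y',Y'')_s\dd\bX_s$ as an adapted $L^m$-process. I then set $Z\coloneqq Y-\cI$ and reduce everything to a statement about $Z$. The local expansion from \cref{lemma: existence of rough stochastic integrals}, namely \eqref{eq: Lqr- estimate for rough stochastic integral}, together with the assumption \eqref{eq: remainder assumption on strongly stochastically controlled}, yields
\begin{equation*}
\big\|\E_s[\delta Z_{s,t}]\big\|_{L^n}\le \big\|\E_s[\delta Y_{s,t}-Y'_s\delta X_{s,t}-Y''_s\X_{s,t}]\big\|_{L^n}+\big\|\E_s[\delta\cI_{s,t}-Y'_s\delta X_{s,t}-Y''_s\X_{s,t}]\big\|_{L^n}.
\end{equation*}
The first term is $\stackrel{r}{=}0$ by hypothesis. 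The second is bounded by $\|\E_\cdot R^{Y',Y''}\|_{\frac p2,n}\|X\|_p+\|Y''\|_{p,m,n}\|\X\|_{\frac p2}$, which is of order $3/p$ and hence also $\stackrel{r}{=}0$, since $r\le p/2<p/3$ fails only in the wrong direction. More carefully, $\tfrac1r\ge\tfrac2p$, so the exponent $3/p$ is not enough to conclude for $r$ directly. I therefore define the germ's order as $\min(\tfrac1r,\tfrac3p)$ and use $r'\coloneqq\max(r,p/3)$ throughout. In every case the order of $\E_s[\delta Z_{s,t}]$ in $L^n$ is strictly greater than $1/2$, which is all the next step needs. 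Moreover, $Z\in V^pL^{m,n}$ because both $Y$ and $\cI$ are.

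Next I run the stochastic sewing / Doob--Meyer argument from the proof of \cref{prop: Doob--Meyer} for the germ $J_{s,t}\coloneqq\E_s[\delta Z_{s,t}]$. I check that $\delta J_{s,u,t}=(\E_s-\E_u)[\delta Z_{u,t}]$ and bound this via the $L^{m,n}$-regularity of $Z$. Its conditional expectation $\E_s[\delta J_{s,u,t}]=0$ vanishes, and its $L^n$ norm is controlled by $\|\E_s\delta Z_{s,t}\|_{L^n}+\|\E_u\delta Z_{u,t}\|_{L^n}$, which has order greater than $1/2$. The càdlàg generalisation of stochastic sewing cited in the footnote to \cref{prop: Doob--Meyer} then produces a process $A\coloneqq\lim\sum J$, meaning $A_t=\lim_{|\cP|\to0}\sum_{[u,v]\in\cP}\E_u[\delta Z_{u,v}]$. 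It satisfies $\|\delta A_{s,t}-J_{s,t}\|_{m,n,s}$ of order $>1/2$ and $\|\E_s[\delta A_{s,t}-J_{s,t}]\|_{L^n}$ of order $>1$. Setting $M^Y\coloneqq Z-Z_0-A$, I show that $M^Y$ is a martingale by the standard telescoping computation $\E_s[\delta M^Y_{s,t}]=\lim\sum\E_s[\delta Z_{u,v}-\E_u\delta Z_{u,v}]=0$. Rearranging gives the claimed decomposition.

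It remains to prove the regularity claims for $A$: that $\delta A\stackrel{r}{=}0$ in $L^n$ and $A\in V^rL^{m,n}$. Here $A$ is a limit of sums of the conditional expectations $\E_u[\delta Z_{u,v}]$, each of which is $\stackrel{r}{=}0$ in $L^n$ (modulo the $r$ versus $p/3$ bookkeeping above). Summing the controls over a partition and passing to the limit through Fatou's lemma gives $\|\delta A_{s,t}\|_{L^n}\lesssim\sum_j w_{1,j}(s,t-)^{\alpha_{1,j}}w_{2,j}(s+,t)^{\alpha_{2,j}}$. The key point is that $A$ is predictable-like, with $\E_s$ commuting through the sums, so no martingale part survives. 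The conditional $L^{m}$ bound follows because $\|\cdot\|_{m,n,s}\le\|\cdot\|_{L^n}$. This is the step I expect to be the main obstacle, because the sewing only directly controls $\delta A-J$; I must bound the whole $\delta A$ by the deterministic controls, uniformly over partitions, while also handling jumps in the $t-$/$s+$ form.

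Finally, if $r<1$, the estimate gives $\delta A\stackrel{r}{=}0$ pathwise in $L^n$. By \cref{lemma:expansion_notation}\eqref{lemma:expansion_notation_item4} applied in $L^n$, the sums $\sum|\delta A_{u,v}|$ vanish, so the triangle inequality gives $A_t=A_0=0$ almost surely.
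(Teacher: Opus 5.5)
Your construction of $A$ and $M^Y$ is sound. Re-running the sewing for $J_{s,t}=\E_s[\delta Z_{s,t}]$ with $Z=Y-\int_0^\cdot (Y',Y'')_s\dd\bX_s$ is a sensible variant of applying \cref{prop: Doob--Meyer} to $(Y,Y')$, which is what the paper's one-line justification suggests. Your variant makes the jump condition \eqref{eq: jump bound for remainder Doob--Meyer} unnecessary. The $r$ versus $p/3$ bookkeeping is also harmless, since $r<p/3$ forces $r'=p/3<1$ and hence $A\equiv 0$.

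\textbf{The gap: regularity of $A$ for $r\in(1,\frac p2]$.} This is the only non-trivial regime. You bound $\|\delta A_{s,t}\|_{L^n}$ by $\liminf_{\cP}\sum_{[u,v]\in\cP}\|\E_u[\delta Z_{u,v}]\|_{L^n}$ and then sum the controls. Superadditivity only controls $\sum_{[u,v]\in\cP}w(u,v)^{\theta}$ for $\theta\ge 1$. Here $\theta=\frac1r<1$, and such sums blow up as $|\cP|\to 0$: for $w(u,v)=v-u$ and $N$ equal pieces one gets $N^{1-\frac1r}T^{\frac1r}$. So Fatou gives nothing, and your argument only covers $r\le 1$. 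Likewise, letting $\E_s$ commute through the sums only identifies $\E_s[\delta A_{s,t}]=\E_s[\delta Z_{s,t}]$. It gives no control on the fluctuation $\delta A_{s,t}-\E_s[\delta A_{s,t}]$, which is exactly what is at stake.

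\textbf{How to close it.} The missing step is already in your sewing argument, provided you keep track of the exponent instead of recording only ``order $>\frac12$''. You have $\E_s[\delta J_{s,u,t}]=0$ and $\|\delta J_{s,u,t}\|_{m,n,s}\le 2\|\E_u[\delta Z_{u,t}]\|_{L^n}\le 2\sum_j w_{1,j}(s,t-)^{\alpha_{1,j}}w_{2,j}(s+,t)^{\alpha_{2,j}}$ with $\alpha_{1,j}+\alpha_{2,j}\ge\frac{1}{r'}>\frac12$. The stochastic sewing lemma then bounds $\|\delta A_{s,t}-J_{s,t}\|_{m,n,s}$ by a constant times the same expression, with the same exponents. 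It is the BDG-type cancellation inside stochastic sewing that tolerates exponents below one; the triangle inequality cannot. Hence $\|\delta A_{s,t}\|_{m,n,s}\le\|J_{s,t}\|_{L^n}+\|\delta A_{s,t}-J_{s,t}\|_{m,n,s}$ is of order $\frac1{r'}$. This gives $A\in V^{r}L^{m,n}$ and $\delta A\stackrel{r}{=}0$ in the mixed norm $\|\cdot\|_{m,n,s}$ (note $r'=r$ for $r\ge 1>\frac p3$).

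This mixed norm is also the right target: for $n>m$ a bound on $\|\delta A_{s,t}\|_{L^n}$ itself is not available in general. For example, take $n=\infty$, $Y'=Y''=0$ and $Y=\int_0^\cdot\xi_s\dd h_s$, with $h$ deterministic of finite $r$-variation and $\xi$ a bounded Brownian martingale. Then $A=Y$, and a support-theorem argument shows $\delta A_{s,t}\notin L^\infty$ whenever $h$ has infinite total variation on $[s,t]$.
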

\begin{remark}
\label{remark:continuous_case}
    In the continuous $\alpha$-Hölder setting, the Doob--Meyer
    decomposition for stochastic controlled rough paths was already
    established in \cite{FHL21,Le2023}. Combining this decomposition
    with the classical Kolmogorov criterion and its extension to remainder processes \cite[Lemma 5.8]{BCN24}, one obtains the
    following: if $m>\frac{1}{\alpha}$ and
    $(Y,Y')\in\mathbf{D}^{2\alpha}_X L^{m,n}$, then, for every
    $\beta\in(0,\alpha-\frac{1}{m})$, there exists a martingale $M$
    such that $(Y,Y';M)$ is a $\beta$-Hölder $L^m$-rough
    semimartingale.\footnote{By which we mean the straightforward
    analogue of \cref{definition: Lm RSM} in the Hölder setting,
    with the corresponding pathwise controlled Hölder seminorms
    in $L^m$; see also \cite{DauseFrizJentzenSong2026}.}
    For $n=\infty$, the connection between
    $\textup{VMO}^{\alpha}$-processes and
    $C^{\alpha}L^{m,\infty}$-processes developed in
    \cite{Le2022a} suggests that the full range
    $\beta\in(0,\alpha)$ could likewise be recovered, after suitably treating the remainder process.

    For $n=m$, the general passage from moment bounds to pathwise
    Hölder bounds entails a loss of Hölder regularity depending
    on $m$. In contrast, the $p$-Var Kolmogorov criterion
    above allows an arbitrarily small increase of the variation
    exponent, subject to the condition $p<m$. In the continuous
    setting, a related approach could also be based on
    Besov-space estimates \cite{FRIZ2022152}, after a suitable extension to controlled remainder processes; see also the discussion in \cref{example:Besov embedding}.

    A converse implication holds in the Hölder setting when the
    corresponding pathwise Hölder norms are $L^m$-integrable.
    More precisely, let $(Y,Y';M)$ be an $\alpha$-Hölder
    $L^m$-rough semimartingale, and
    assume that \footnote{Notice that we here include the $\alpha$-Hölder norm of the martingale. Upon a suitable localization one can verify this assumption for (local) $L^1$-martingales whose quadratic variation is Lipschitz continuous in time; see \cite[Lemma 4.2]{DauseFrizJentzenSong2026}. We further point out, however, that in order to establish finite $p$-variation as an element of $L^m$ no such restriction is necessary due to Lepingle's inequality.}
    \[
        \big\|\|Y\|_{\alpha,[0,T]}\big\|_{L^m}
        +\big\|\|Y'\|_{\alpha,[0,T]}\big\|_{L^m}
        +\big\|\|R^{Y-M,Y'}\|_{2\alpha,[0,T]}\big\|_{L^m}
        <\infty.
    \]
    Together with  corresponding integrability assumptions on the initial values,
    these bounds imply $(Y,Y')\in\mathbf{D}^{2\alpha}_X L^m$.
    Indeed,
    \begin{align*}
        \sup_{0\leq s<t\leq T}
        \frac{\|\delta Y_{s,t}\|_{L^m}}{|t-s|^\alpha}
        &\leq
        \left\|
            \sup_{0\leq s<t\leq T}
            \frac{|\delta Y_{s,t}|}{|t-s|^\alpha}
        \right\|_{L^m},
    \end{align*}
    and the same estimate applies to $Y'$. Moreover, since
    $\E_s[\delta M_{s,t}]=0$, conditional Jensen's inequality gives
    \begin{align*}
        \sup_{0\leq s<t\leq T}
        \frac{\|\E_s[R^{Y,Y'}_{s,t}]\|_{L^m}}
             {|t-s|^{2\alpha}}
        &\leq
        \left\|
            \sup_{0\leq s<t\leq T}
            \frac{|R^{Y-M,Y'}_{s,t}|}{|t-s|^{2\alpha}}
        \right\|_{L^m}.
    \end{align*}

    In the càdlàg setting, however, integrable pathwise $p$-Var does not in general imply $p$-Var in mixed moments. To see this, let $P$ be a
    unit-rate Poisson process and set $L_t=P_t-t$. Then
    $L\in\cM^m$ for every $m\in[2,\infty)$ and  $\|L\|_{p,[0,T]}\in L^m$ for every $p\geq1$ and
    every $m\in [2,\infty)$ on the other hand $L\notin V^p L^{m}$ for any $p<m$ as already described in \cite[Example 2.16]{AllanPieper2026}.
\end{remark}

\section{A new approach to rough stochastic differential equations}
\label{Section:main_theorem}
For the remainder of this work we will consider rough stochastic differential equations (short:RSDEs) of the form 
\begin{equation}
\label{main_ref:RSDE}
Y_{t}= \xi + \int_{0}^{t} b_{s}(Y_{s}) \dd s+ \int_{0}^{t} \sigma_{s} (Y_{s-}) \dd N_{s} + \int_{0}^{t} (f, f')_{s} (Y_{s}) \dd \mathbf{X}_{s}
\end{equation}
 for any $t \in [0,T]$, where $\xi \in L^{0}(\mathfrak{F}_{0}; \mathbb{R}^{d_{Y}}), N\in \mathcal{M}(\mathbb{R}^{d_{N}}), \mathbf{X}\in \mathscr{V}^{p}$ for $p \in (2,3)$ and 
 \begin{equation*}
     \begin{aligned}
         (f, f'): [0, T]\times \Omega &\to C^{1}\left(\mathbb{R}^{d_{Y}}; \mathcal{L}(\mathbb{R}^{d_{X}}; \mathbb{R}^{d_{Y}}) \times \mathcal{L}((\mathbb{R}^{d_{X}})^{\otimes 2}; \mathbb{R}^{d_{Y}})\right)\\
         \sigma: [0,T] \times \Omega  &\to C\left( \mathbb{R}^{d_{Y}}; \mathcal{L}(\mathbb{R}^{d_{N}}; \mathbb{R}^{d_{Y}}) \right),\\
         b: [0,T]\times \Omega &\to  C\big( \mathbb{R}^{d_{Y}}; \R^{d_{Y}}\big)
     \end{aligned}
 \end{equation*}
 where $
 \sigma$ is predictable and $b,(f, f')$ are progressively measurable. Here we use the short-hand notation $(f, f')_{s}(Y_{s})\coloneqq (f_{s}(Y_{s}), Df(Y_{s}) f(Y_{s})+ f'_{s}(Y_{s}))$. We will be focusing on the following two choices of $(f, f'):$
 \begin{assumption}
 \label{assumption:vector_fields}
     Let $(f, f')$ be as above. Further assume that one of the following is satisfied:
      \begin{enumerate}[label=\Roman*.]
     \item \label{assumption:item:vector_fields_1} It holds $(f, f')\in L^{0}_{\operatorname{prog}}(\Omega, \mathscr{V}^{p}_{X}\Lip^{3}(\mathcal{L}(\mathbb{R}^{d_{X}}; \mathbb{R}^{d_{Y}})))$.
     \item \label{assumption:item:vector_fields_2} It holds $(f, f'; G)\in \mathscr{H}_{X}^{p}\cL(\mathbb{R}^{d_{Y}}; \cL(\mathbb{R}^{d_{X}}; \mathbb{R}^{d_{Y}}))$ for some $G \in \mathcal{M}$.
 \end{enumerate}
 \end{assumption}

\begin{definition}
\label{def:solution_RSDE}
    We say an $(\mathfrak{F}_{t})$-adapted process $(Y_{t})_{t \in [0,T]}$ solves the RSDE \eqref{main_ref:RSDE} if $\int (f, f')(Y) \dd \mathbf{X}$ is a well-defined RSI in the sense of \cref{def:RSI} and 
    \begin{equation*}
        Y =\xi + \int_{0}^{\cdot} b_{s}(Y_{s}) \dd s+\int_{0}^{\cdot} \sigma_{s}(Y_{s-}) \dd N_{s} + \int_{0}^{\cdot}(f, f')_{s}(Y_{s}) \dd \bX_{s}
    \end{equation*}
    on $[0,T]$ up to indistinguishability. We further say that such $Y$ is 
    \begin{enumerate}
        \item an $L^{0}$\emph{-solution} if for 
        \begin{equation*}
            \mathcal{Y}\coloneqq (Y, f(Y), Df(Y)f(Y)+ f'(Y), b(Y); \sigma(Y) \bullet \d N, Z)
        \end{equation*} 
        it holds $\mathcal{Y}\in \mathcal{H}^{\tilde{p}}_{\mathbf{X}}([0,T]; \mathbb{R}^{d_{Y}})$ for any $\tilde{p}\in (p, \infty]$ \footnote{To simplify notation we abbreviate $\int_{0}^{\cdot} b_{s}(Y_{s})\dd s$ as '$b(Y)$' for the drift component of the RSDE written as an element of $\mathscr{H}_{\mathbf{X}}$}, where 
        \begin{equation*}
            Z = \begin{cases} 
Df(Y) \sigma(Y) \bullet \d N  & \text{if } (f, f') \text{  satisfy \eqref{assumption:item:vector_fields_1}}  \\ 
 Df(Y) \sigma(Y) \bullet \d N +  \d G \bullet Y& \text{if } 
 (f, f') \text{  satisfy \eqref{assumption:item:vector_fields_2}} 
\end{cases}
\end{equation*}
        \item an $L^{m,n}$\emph{-solution} for $2 \leq m < \infty$ and $n \in [m, \infty]$ if for 
        \begin{equation*}
        \tilde{\mathcal{Y}}\coloneqq (Y, f(Y), Df(Y)f(Y)+ f'(Y), b(Y); \sigma(Y)\bullet \d N)
        \end{equation*}
        it holds $\tilde{\mathcal{Y}}\in \mathscr{V}^{p, m,n}_{\mathbf{X}}([0,T]; \mathbb{R}^{d_{Y}})$.
        \item a \emph{good} $L^{m,n}$\emph{-solution} if it is both an $L^{0}$- and $L^{m,n}$-solution,.
    \end{enumerate}
\end{definition}

\begin{remark}\label{rem: difficulties with stochastic controlled vfs}
In order to consider a general non-linear stochastic controlled vector field $(f, f')\in \sV^{p, m, \infty}_{X}\Lip^{3}$ à \cite{FHL21, AllanPieper2026} in \eqref{main_ref:RSDE} in the context of this section, 
one would need to define $Z$ in  \cref{def:solution_RSDE} of the following form: 
\begin{equation*}
    Z_{t}\coloneqq\int_{0}^{t} Df_{s-}(Y_{s-}) \sigma_{s}(Y_{s-})\dd N_{s} + \int_{0}^{t} G(\d s, Y_{s-}).  
\end{equation*}
Here $G$ is a non-linear martingale field and the latter term is a Kunita-type non-linear stochastic integral i.e. 
\begin{equation*}
    \int_{0}^{t} G(\d s, Y_{s}) \sim \lim \sum_{i} G(t_{i+1}, Y_{t_{i}})- G(t_{i}, Y_{t_{i}}),
\end{equation*}
see \cite{kunita_1997} for more details. Although non-linear rough integrals have been developed \cite{CATELLIER20162323, NualartXia2020, Galeati2023} (in the continuous regime) we restrict to $(f, f')$ as in \cref{assumption:vector_fields} to avoid additional technicalities coming from the non-linear integration theory. 
\end{remark}

Throughout the remainder of this work, motivated by the considerations of \cref{section:Fundamentals of Rough Stochastic Analysis}, we will be concerned with the well-posedness of $L^{0}$- and good $L^{m,n}$-solutions. Given the existence of a unique $L^{m,n}$-solutions and a unique $L^{0}$-solution to \eqref{main_ref:RSDE} we will see that they necessarily need to agree and thus obtain well-posedness of good $L^{m,n}$-solutions. 

\begin{theorem}\label{theorem: RSDE solution is L^q RSM}
    Let $p\in (2,3)$, $m\in [2,\infty)$, $n\in [m,\infty]$, $\xi \in L^m(\rF_0)$, $(f, f')\in \mathscr{V}^{p}_{X}\Lip^{2}$ and $(Y,Y') \in \sV^{p,m,n}_X$ with $Y'\equiv f(Y)$ an $L^{m,n}$-solution to the rough SDE
    \begin{equation}\label{eq: mixed moment RSDE}
        Y_t = \xi+ \int_0^t b_s(Y_s) \dd s + \int_0^t \sigma_s(Y_{s-}) \dd N_s + \int_0^t (f,f')_s(Y_s) \dd \bX_s
    \end{equation}
    then 
    \begin{enumerate}
        \item if $p<m$, we have for any $p<\tp$ that 
         \[ (Y, f(Y), Df(Y)f(Y)+ f'(Y), b(Y); \sigma(Y) \bullet \d N, Df(Y) \sigma(Y) \bullet \d N)\in \sH^{\tp,m }_{\bX},\]
         with 
        \begin{equation*}
             \Big\|(Y,f(Y), \sigma(Y)\bullet \d N)\Big\|_{\sH^{ \tp, m}_{X}} \lesssim \|Y,f(Y)\|_{p,m, [0,T]};
        \end{equation*}
        \item if $n=\infty$, then again for any $p<\tp$ we have that 
        \[ (Y, f(Y), Df(Y)f(Y)+ f'(Y), b(Y); \sigma(Y) \bullet \d N, Df(Y) \sigma(Y) \bullet \d N)\in \sH^{\tp}_{\bX}.\]
    \end{enumerate}
\end{theorem}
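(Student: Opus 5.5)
The plan is to reduce the claim to the rough stochastic Doob--Meyer decomposition (\cref{prop: Doob--Meyer}) applied to the controlled pair $(f(Y), Df(Y)f(Y)+f'(Y))$. The solution itself will then follow from the equation combined with \cref{lemma:RSM<->cRP} and \cref{lemma: composition for RSM}.

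\textbf{Step 1: decompose $Y$ as an $L^{m,n}$ rough It\^o process.} Since $Y$ is an $L^{m,n}$-solution, $\tilde{\mathcal Y}=(Y,f(Y),Df(Y)f(Y)+f'(Y),b(Y);\sigma(Y)\bullet \d N)$ is an $L^{m,n}$-$\mathbf X$-rIp. Hence
\begin{equation*}
Y = \xi + A + M + \int (Y',Y'')\dd \mathbf X,
\end{equation*}
with $A=\int b(Y)\dd s$ of bounded variation, $M=\sigma(Y)\bullet \d N$, and $(Y',Y'')=(f(Y),Df(Y)f(Y)+f'(Y))\in \sV^{p,m,n}_X$. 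By \cref{lemma: composition for RSM}(ii) this last membership requires $n=\infty$ for the general composition. In case (1) I expect that $(Y',Y'')\in\sV^{p,m,n}_X$ with the same $(m,n)$ is part of what it means to be an $L^{m,n}$-solution, so no composition loss is needed there.

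\textbf{Step 2: apply the Doob--Meyer decomposition to $(Y',Y'')$.} I would apply \cref{prop: Doob--Meyer} to $(Y',Y'')$. The main obstacle is verifying the jump condition \eqref{eq: jump bound for remainder Doob--Meyer} for $R^{Y',Y''}$. By \cref{lemma:RSI_cadlag_jumpstructure},
\begin{equation*}
\Delta Y_t = Y'_{t-}\Delta X_t + Y''_{t-}\Delta\mathbb X_t + \Delta M_t ,
\end{equation*}
where $\Delta M_t=\sigma_t(Y_{t-})\Delta N_t$ is a martingale-type jump. Then $\Delta f_t(Y_t)$ splits into a part driven by $\Delta X$ and $\Delta \mathbb X$, which is controlled by products of the form $w(s,t-)^{1/p}w(s+,t)^{1/p}$, and a part $Df(Y_{t-})\Delta M_t$ plus a second-order term. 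Under $\E_s-\E_u$, the $\Delta M_t$ part is handled by the martingale property, and the $\Delta f(\cdot)_t$ contribution of the controlled vector field is handled by its controlledness. Some care is needed here, possibly via a Taylor expansion and the $\Lip^2$ bounds; this yields exponents summing to $2/p>1/2$. This produces a martingale, which I identify with $Df(Y)\sigma(Y)\bullet \d N$ by the uniqueness part of \cref{prop: Doob--Meyer} together with a direct expansion of $\delta f(Y)_{s,t}$.

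\textbf{Step 3: pathwise regularity, case $p<m$.} For $(Y',Y'')$, the $p<m$ branch of \cref{prop: Doob--Meyer} gives $(Y',Y'';Df(Y)\sigma(Y)\bullet\d N)\in\sH^{\tp,m}_X$. For $Y$ itself, the $p<m$ branch of \cref{prop: Doob--Meyer} applied to $(Y,Y')$ gives a martingale which must coincide with $M$ by uniqueness, since $\int(Y',Y'')\dd\mathbf X$ has conditional expectation remainder of order $p/3$ by \eqref{eq: Lqr- estimate for rough stochastic integral}. This yields the $\sH^{\tp,m}_X$ bound in terms of $\|Y,f(Y)\|_{p,m,[0,T]}$. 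It then remains to assemble the strongly controlled structure of \cref{def:scRSM}: the identity $Y=\xi+A+M+\int(Y',Y'')\dd\mathbf X$ holds a.s., $A\in V^1$, and $(Y',Y'';Z)\in\sH^{\tp}_X$, so $\tilde{\mathcal Y}$ extended by $Z$ lies in $\sH^{\tp}_{\mathbf X}$.

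\textbf{Step 4: case $n=\infty$.} I would use the last part of \cref{prop: Doob--Meyer}, which gives a càdlàg modification of finite $\tp$-variation whenever $m\le p$. The paths $Y$, $Y'=f(Y)$ and $Y''$ are already càdlàg, so their pathwise $\tp$-variation follows. Composition with the a.s.\ controlled vector field then gives $Y''$ an a.s.\ càdlàg version in $V^{\tp}$. This yields $(Y',Y'';Z)\in\sH^{\tp}_X$, and the assembly proceeds exactly as in Step 3.
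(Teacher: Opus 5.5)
\textbf{Gap in Step 2.} Applying \cref{prop: Doob--Meyer} to the pair $(Y',Y'')=(f(Y),Df(Y)f(Y)+f'(Y))$ does not give the martingale $Df(Y)\sigma(Y)\bullet\d N$. This step is also reused in your Steps 3--4. In the c\`adl\`ag setting, the $L^{m,n}$-Doob--Meyer martingale of $f(Y)$ also carries the compensated second-order jump terms $f_r(Y_r)-f_r(Y_{r-})-Df_r(Y_{r-})\Delta Y_r$. These do not have conditional mean zero.

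A concrete counterexample: take $X\equiv0$, $b\equiv0$, $\sigma\equiv1$, $\xi=0$, and $N_t=\varepsilon\1_{[t_0,T]}(t)$ with $\varepsilon=\pm1$ symmetric and independent of $\rF_{t_0-}$. Let $f$ be time-independent with $f(y)=y^3$ on $[-2,2]$ and $f'\equiv0$. Then $Y=N$, and $f(Y)=N$ is itself a martingale. Since $\E_s[\delta f(Y)_{s,t}]=0$ and $X\equiv0$, the bound \eqref{eq: bound Doob--Meyer on remainder} forces $\delta(f(Y)-M)\equiv0$. So the Doob--Meyer martingale is $N$. By contrast, $Df(Y)\sigma(Y)\bullet\d N=3Y_{t_0-}^2\Delta N_{t_0}\1_{[t_0,T]}=0$. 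The uniqueness part of \cref{prop: Doob--Meyer} therefore rules out the identification you claim rather than proving it.

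Independently, your check of \eqref{eq: jump bound for remainder Doob--Meyer} for $R^{Y',Y''}$ is not justified. With only $\Lip^2$ regularity, the quadratic jump term is merely $O(|\sigma_t(Y_{t-})\Delta N_t|^2)$, and $\E_s-\E_u$ does not annihilate it. This gives at best a bound of size $w(s+,t)^{2/p}$, with no vanishing factor $w_1(s,t-)^{\alpha_1}$, $\alpha_1>0$, as the hypothesis requires. It also costs integrability when $n<\infty$.

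\textbf{The missing idea and the fix.} The statement only asserts membership in the pathwise class $\sH^{\tp}_{\bX}$. There the martingale component is determined only modulo martingales of finite $\tp/2$-variation, so no second moment-based decomposition is needed. The paper applies \cref{prop: Doob--Meyer} only to $(Y,f(Y))$, which is your Step 3 and, for $n=\infty$, the $V^{\tp}$ argument of your Step 4. This is harmless because
\[
\Delta R^{Y,f(Y)}_t=Y''_{t-}\Delta\mathbb X_t+\sigma_t(Y_{t-})\Delta N_t,
\]
and $\E_u[\sigma_t(Y_{t-})\Delta N_t]=0$ for $u<t$ by predictability of $\sigma$. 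So uniqueness correctly identifies the martingale as $\sigma(Y)\bullet\d N$.

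The paper then invokes the purely pathwise \cref{lemma: composition for RSM}(i) with $M=\sigma(Y)\bullet\d N$. This directly gives $(f(Y),Df(Y)f(Y)+f'(Y);Df(Y)\sigma(Y)\bullet\d N)\in\sH^{\tp}_X$, with no moments or jump condition needed. Assembling this with $Y=\xi+\int b(Y)\,\d s+\sigma(Y)\bullet\d N+\int(f(Y),Y'')\,\d\bX$ gives the strongly controlled RSM. Replace your Step 2 by this composition step; you cite the lemma in your opening sentence but never use it for this purpose.
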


\begin{proof}
   In the case $p<m$, \cref{prop: Doob--Meyer} yields $(Y,f(Y);\sigma(Y)\bullet \d N)\in\sH^{\tp}_X$ for any $\tp>p$, together with the corresponding moment bounds. In the case $n=\infty$ and $m\leq p$, $Y$ is càdlàg almost surely by \cite[Lemma 4.6]{AllanPieper2026}, and hence $Y'=f(Y)\in V^{\tp}$ almost surely, so \cref{prop: Doob--Meyer} again yields $(Y,f(Y);\sigma(Y)\bullet \d N)\in\sH^{\tp}_X$.
   By the stability of compositions of rough semimartingales with $(f, f') \in \mathscr{V}^{p} \Lip^{2}$ shown in \cref{lemma: composition for RSM} it also follows that $(f(Y), D f(Y)f(Y)+ f'(Y); D f(Y)\sigma(Y) \bullet \d N)$ is again a $\tilde{p}$-$X$-RSM, which thus yields the claim.
\end{proof}

\begin{remark}
    In light of \cref{prop: Doob--Meyer} one may notice that in the case where $(f,f')\in \sV^{p,m,\infty}_X\Lip^3$ as considered in \cite{AllanPieper2026} one may also show that any $L^{m,\infty}$-solution to \eqref{eq: mixed moment RSDE} indeed fulfils $(Y,f(Y); \sigma(Y)\bullet \d N) \in \sH^{\tp}_X$ for any $p<\tp$ and similarly we can also show that the solution is an element of $\sH^{\tp}_{\bX}$. However, if we want to specify the martingale component $Z$ we are faced with the difficulties already described in \cref{rem: difficulties with stochastic controlled vfs}.
\end{remark}

A straightforward consequence of \cref{theorem: RSDE solution is L^q RSM} is the following consistency result.

\begin{theorem}\label{thm:consistency_of_solutions I} 
    Let $p\in (2,3)$, $m\in [2,\infty)$, $n\in [m,\infty]$ such that $p<m$ or $n=\infty$. Let $\mathcal{Y}^{1}= (Y^{1}, f(Y^{1}), D F(Y^{1})f(Y^{1})+ f'(Y^{1}), b(Y^{1}); \sigma(Y^{1}) \bullet \d N, D f(Y^{1}) \bullet \d N)$ be a {\bf{unique}} $L^{0}$-solution and $\mathcal{Y}^{2}= (Y^{2}, f(Y^{2}), D F(Y^{2})f(Y^{2})+ f'(Y^{2}), b(Y^{2}); \sigma(Y^{2}) \bullet \d N)$ an $L^{m,n}$-solution of \eqref{main_ref:RSDE}. Then $Y^{1}$ and $Y^{2}$ are indistinguishable and thus
    \begin{equation*}
        \mathcal{Y}^{1}=\left( \mathcal{Y}^{2}, D f(Y^{1}) \sigma(Y^{1}) \bullet \d N\right)
    \end{equation*}
    up to indistinguishability. 
\end{theorem}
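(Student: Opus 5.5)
The plan is to show that the given $L^{m,n}$-solution $Y^{2}$ is itself an $L^{0}$-solution in the sense of \cref{def:solution_RSDE}, with martingale component $Z = Df(Y^{2})\sigma(Y^{2})\bullet \d N$. Uniqueness of $L^{0}$-solutions then forces $Y^{1}=Y^{2}$ up to indistinguishability. Since every component of $\mathcal{Y}^{i}$ is a deterministic functional of $Y^{i}$ (composition with $f$, $Df$, $f'$, $b$, $\sigma$ and It\^o integration against $N$), identical $Y$'s give identical tuples. This yields the asserted identity $\mathcal{Y}^{1}=(\mathcal{Y}^{2}, Df(Y^{1})\sigma(Y^{1})\bullet \d N)$. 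Throughout, $(f,f')$ is as in \cref{assumption:vector_fields}\eqref{assumption:item:vector_fields_1}, so $Z$ has no $G$-term.

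\textbf{Step 1.} Apply \cref{theorem: RSDE solution is L^q RSM} to $Y^{2}$. Under the hypothesis $p<m$ use part (1); if instead $n=\infty$ use part (2). In either case, for every $\tp>p$,
\[
(Y^{2}, f(Y^{2}), Df(Y^{2})f(Y^{2})+f'(Y^{2}), b(Y^{2}); \sigma(Y^{2})\bullet \d N, Df(Y^{2})\sigma(Y^{2})\bullet \d N)\in \sH^{\tp}_{\bX}.
\]
The theorem may first produce a càdlàg modification $\tY^{2}$ of $Y^{2}$. Since $Y^{2}$ solves \eqref{main_ref:RSDE} up to indistinguishability, its right-hand side is càdlàg (the RSI is càdlàg by \cref{lemma:RSI_cadlag_jumpstructure}, and the It\^o and Lebesgue integrals are càdlàg). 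Hence $Y^{2}$ is already càdlàg, and the modification is indistinguishable from it.

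\textbf{Step 2.} Check that the RSI appearing in the $L^{m,n}$-solution coincides with the one required of an $L^{0}$-solution. Both are defined by \cref{def:RSI} as the u.c.p.\ limit of the same left-point Riemann sums for the integrand $(f(Y^{2}), Df(Y^{2})f(Y^{2})+f'(Y^{2}))$. By \cref{lemma:sufficient_conditions_RSI} this limit exists under either the SCRP or the RSM structure, and u.c.p.\ limits are unique up to indistinguishability. Therefore the integral equation holds in the sense required by the $L^{0}$-definition, and $Y^{2}$ is an $L^{0}$-solution.

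\textbf{Step 3.} Invoke the assumed uniqueness of $L^{0}$-solutions to get $Y^{1}=Y^{2}$. Then read off the remaining components, using that It\^o integrals of indistinguishable predictable integrands are indistinguishable.

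The main subtlety is Step 1 in the case $m\le p$, $n=\infty$. There one needs the almost sure $V^{\tp}$-regularity of both $Y^{2}$ and $f(Y^{2})$ in order to use the final clause of \cref{prop: Doob--Meyer}. This is exactly what \cref{theorem: RSDE solution is L^q RSM}(2) supplies, so beyond this the argument is bookkeeping.
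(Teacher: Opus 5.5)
This is the paper's argument: the paper gives no separate proof and states the result as a direct consequence of \cref{theorem: RSDE solution is L^q RSM}. That is, the $L^{m,n}$-solution $Y^{2}$ is upgraded to an element of $\mathscr{H}^{\tp}_{\bX}$ with $Z=Df(Y^{2})\sigma(Y^{2})\bullet \d N$, hence it is an $L^{0}$-solution, and the assumed uniqueness gives $Y^{1}=Y^{2}$. Your additional remarks (the càdlàg modification, and the RSI being the same u.c.p.\ limit in both solution notions) are correct bookkeeping. Note, however, that \cref{theorem: RSDE solution is L^q RSM} is formulated for deterministic $(f,f')\in\sV^{p}_{X}\Lip^{2}$, whereas \cref{assumption:vector_fields}\eqref{assumption:item:vector_fields_1} allows random fields. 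You should either work in the deterministic setting or observe that the pathwise composition step in \cref{lemma: composition for RSM} carries over to random fields.
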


\begin{remark}\label{remark: restrictions of Kolmogorov consistency}
    The attentive reader will notice that in \cref{prop: Doob--Meyer}, \cref{theorem: RSDE solution is L^q RSM} and therefore also in the consistency result \cref{thm:consistency_of_solutions I} the case $n<\infty$ and $m \leq p$ is excluded. We, however, emphasise that all situations that have previously been considered in the literature, especially in \cite{FHL21, AllanPieper2026, BCN24, BuginiFrizStannat2024}, allow to either choose $n=\infty$ or $p<m$.
    Nonetheless, the restriction is far from being just a cosmetic one as general Lévy type noise only allows for sufficient moment bounds for $m\leq p$ but not for $p<m$ (see \cite[Example 2.16]{AllanPieper2026}). In the case of bounded coefficients (as considered in \cite{AllanPieper2026}) this does not cause issues as one may choose $n=\infty$; see \cite[Theorem 5.2]{AllanPieper2026}. If one however wants to consider linear coefficients one can only expect the solution to exist in $V^p L^m$ (i.e. with $n=m$ see \cite{BCN24}) and thus especially linear rough SDEs with general Lévy type noise are out of scope for \cref{thm:consistency_of_solutions I}.
\end{remark}
\subsection{The joint lift ansatz for RSDEs}
\label{subsection:The joint lift ansatz for RSDEs}Let
$\mathbf{X}=(X,\mathbb{X}) \in \mathscr{V}^{p}([0,T]; \mathbb{R}^{d_{X}})$,
$p \in (2,3)$, and $N$ an $\mathbb{R}^{N}$-valued càdlàg martingale. The idea behind the \textit{joint lift ansatz} to RSDEs is to rewrite an equation of the form 
\begin{equation*}
    \d Y_{t} = b_{t}(Y_{t}) \dd t+ \sigma_{t}(Y_{t}) \dd N_{t}+ f_{t}(Y_{t}) \dd \mathbf{X}_{t}; \quad Y_{0}=\xi
\end{equation*}
as an RDE driven by a random rough path $\mathscr{J}(\mathbf{X}; N)\in \mathscr{V}^{p}([0,T]; \mathbb{R}^{d_{X}} \oplus \mathbb{R}^{d_{N}})$ almost surely i.e.
\begin{equation*}
    \d Y_{t}= b_{t}(Y_{t}) \dd t+ (\sigma_{t} \oplus f_{t})(Y_{t}) \dd \mathscr{J}(\mathbf{X}; N)_{t}; \quad Y_{0}= \xi
\end{equation*}
and then to solve the latter $\omega$-wise. This ansatz again essentially follows the ideas developed in \cref{remark:commutation_diagramm}, where now we are looking for a fixed point of the map
\begin{equation*}
\begin{aligned}
    \Phi: \mathscr{H}_{\mathbf{X}}^{p} &\to \mathscr{H}_{\mathbf{X}}^{p},\\
    (Y, \partial_{X}Y, \partial_{X}^{2} Y, V; M, Z) &\mapsto (Y, f(Y), Df(Y)f(Y)+f'(Y), b(Y); \sigma(Y) \bullet \d N, Df(Y) \sigma(Y) \bullet \d N) .
\end{aligned}
\end{equation*}

\begin{definition}
\label{def:tensorized vf}
    Let $p\in (2, 3)$, $\mathbf{X}\in \mathscr{V}^{p}([0,T]; \mathbb{R}^{d_{X}})$ $N \in \cM$ and \\
    $(\sigma, \sigma ')\in L^{0}_{pred}(\Omega, \mathscr{V}^{p}_{\mathbf{N}}\Lip^{3}(\mathbb{R}^{d_{Y}}; \cL(\mathbb{R}^{d_{N}}; \mathbb{R}^{d_{Y}})))$, where $\mathbf{N}\coloneqq (N, \Pi(N; N))$ denotes the It\^{o} rough path lift of $N$. For $(f,f')$ a pair of vector fields which
    \begin{enumerate}
        \item fulfils \eqref{assumption:item:vector_fields_1} in \cref{assumption:vector_fields} we define  
        \begin{equation*}
                ((f \oplus \sigma), (f\oplus \sigma)')\coloneqq  \left( \left (\begin{array}{c} f \\ \sigma \end{array} \right), \left(\begin{array}{cc}
            f' & 0 \\
            0 & \sigma ' \end{array} \right)\right) \in L^{0}_{ad}\big(\Omega, \mathscr{V}^{p}_{\mathscr{J}(\mathbf{X}; N)} \operatorname{Lip}^{3}( \mathbb{R}^{d_{Y}}; \cL(\mathbb{R}^{d_{X}} \oplus \mathbb{R}^{d_{N}}; \mathbb{R}^{d_{Y}}) \big),
        \end{equation*}
        
        \item fulfils \eqref{assumption:item:vector_fields_2} in \cref{assumption:vector_fields} with $G$ the corresponding $\mathbb{R}^{d_{G}}\coloneqq \cL(\mathbb{R}^{d_{Y}} \otimes \mathbb{R}^{d_{X}} ;\mathbb{R}^{d_{Y}})$-valued martingale, we define\footnote{This is a deliberate abuse of notation, since $\Lip$-functions are bounded by convention in this work and $f\oplus \sigma$ is not bounded but essentially 'bounded + linear'. This however poses no issues for the remainder of the arguments.} 
        \begin{equation*}
        \begin{aligned}
                ((f \oplus \sigma \oplus 0), (f\oplus \sigma \oplus 0)')&\coloneqq   \left( \left (\begin{array}{c} f \\ \sigma \\ 0 \end{array} \right), \left(\begin{array}{ccc}
            f' & 0 & 0 \\
            0 & \sigma ' & 0\\
            \Id &0 &0 
            \end{array} \right)\right)\\
            &\in L^{0}_{ad}\big(\Omega, \mathscr{V}^{p}_{\mathscr{J}(\mathbf{X}; (N,G))} \operatorname{Lip}^{3}(\mathbb{R}^{d_{Y}};\mathcal{L}(\mathbb{R}^{d_{X}} \oplus \mathbb{R}^{d_{N}} \oplus \mathbb{R}^{d_{G}}; \mathbb{R}^{d_{Y}})\big).
        \end{aligned}\end{equation*}
    \end{enumerate}
\end{definition}

Well-posedness of $L^{0}$-solutions then just boils down to analysing fixed points of the map $\Phi$. Following \cref{remark:commutation_diagramm} it is equivalent to analyse fixed points of a transformed map $\tilde{\Phi}$ defined on the space of random strongly controlled rough paths. Concretely we obtain the following: 
\begin{theorem}
\label{thm:FZK_RSDE}
Suppose $(f, f')$ satisfies \cref{assumption:vector_fields}, $b$ is progressive bounded\footnote{This can easily be relaxed by non-explosion criteria for RDEs at least in the continuous regime; see \cite{RIEDEL2017283, LiYing2025}} Lipschitz and 
$(\sigma, \sigma ')\in L^{0}_{pred}(\Omega, \mathscr{V}^{p}_{\mathbf{N}}\Lip^{3}( \mathbb{R}^{d_{Y}}; \cL(\mathbb{R}^{d_{N}}; \mathbb{R}^{d_{Y}}))$, where $\mathbf{N}\coloneqq (N, \Pi(N; N))$ denotes the It\^{o} rough path lift of $N$. Then there exists a unique $L^{0}$-solution on $[0,T]$ of 
\begin{equation}\label{eq: random RDE}
    Y_{t}= \xi + \int_{0}^{t} b_{s} (Y_{s}) \dd s+\int_{0}^{t} \sigma_{s}(Y_{s-}) \dd N_{s} + \int_{0}^{t} (f_{s}, f'_{s}) (Y_{s}) \dd \mathbf{X}_{s}.
\end{equation}
In particular $Y$ is given as the almost surely defined solution to the (random) RDE 
\begin{enumerate}
    \item $ \d Y_{t}= b_{t}(Y_{t}) \dd t+ (f_{t} \oplus \sigma_{t} )(Y_{t}) \dd \mathscr{J}(\bX; N)_{t}; \quad Y_{0}= \xi$ if $(f,f')$ fulfils \eqref{assumption:item:vector_fields_1}
    \item $\d Y_{t}= b_{t}(Y_{t}) \dd t+ (f_{t} \oplus \sigma_{t} \oplus 0)(Y_{t}) \dd \mathscr{J}(\bX; (N,G))_{t}; \quad Y_{0}= \xi$ else.
\end{enumerate}
\end{theorem}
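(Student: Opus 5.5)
The plan is to follow the commutative diagram of \cref{remark:commutation_diagramm}. We reduce the existence and uniqueness of $L^{0}$-solutions to an $\omega$-wise deterministic càdlàg RDE driven by the random rough path $\mathbf{Z}\coloneqq\mathscr{J}(\mathbf{X};N)$, respectively $\mathscr{J}(\mathbf{X};(N,G))$ in case \eqref{assumption:item:vector_fields_2}. Then we transport the solution back to $\mathscr{H}^{\tilde p}_{\mathbf{X}}$ via \cref{prop:scRPtoscRSM}.

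\emph{Step 1 (the random driver).} By \cref{lemma: joint lift and bracket}, $\mathbf{Z}\in\mathscr{V}^{p}$ almost surely, and it is càdlàg and adapted. By \cref{def:tensorized vf}, the coefficient $(f\oplus\sigma,(f\oplus\sigma)')$ is almost surely a $\mathbf{Z}$-controlled $\Lip^{3}$ vector field. Here one uses \cref{lemma:RSM<->cRP} to see that $X$-controlledness of $(f,f')$ and $\mathbf{N}$-controlledness of $(\sigma,\sigma')$ lift to controlledness with respect to the joint path. In case \eqref{assumption:item:vector_fields_2} the field is linear plus bounded rather than bounded, and the $\Id$ entry in the derivative encodes the martingale part $G$ of $f$.

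\emph{Step 2 (pathwise solution).} For a.e.\ $\omega$ we solve $\d Y=b(Y)\dd t+(f\oplus\sigma)(Y)\dd\mathbf{Z}$ with $Y_0=\xi$. This uses the càdlàg RDE theory of \cite{Friz_Zhang_2018}, with a bounded Lipschitz drift added as a Young/Riemann–Stieltjes term and time-dependent controlled fields. We obtain a unique strongly $\mathbf{Z}$-controlled solution $(Y,Y',Y'')$ in $\mathscr{V}^{p}_{\mathbf{Z}}$. In the linear case we get global existence from a Grönwall-type bound (\cref{lemma: gronwall type result p var}), which gives a priori growth control and hence non-explosion.

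Adaptedness is the delicate point. We obtain the solution as the a.s.\ limit of Picard iterates, or of Euler/Davie schemes, each of which is adapted since $\mathbf{Z}$ and the coefficients are adapted. Because the limit is unique, it is an adapted càdlàg process. Next, \cref{prop:scRPtoscRSM}\eqref{prop:item:SCRP_to_RIP} (with the drift carried as the bounded-variation component $A$) rewrites $Y$ as
\[
Y=\xi+\int b(Y)\dd s+\int\sigma(Y_{-})\dd N+\int (f,f')(Y)\dd\mathbf{X}.
\]
Here the $\partial_M$-derivative is $\sigma(Y)$, giving the martingale $\sigma(Y)\bullet\d N$. The mixed derivative $\partial_{1,2}Y$ yields $Z=Df(Y)\sigma(Y)\bullet\d N$, plus $\d G\bullet Y$ in case \eqref{assumption:item:vector_fields_2}. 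Hence $\mathcal{Y}\in\mathscr{H}^{\tilde p}_{\mathbf{X}}$.

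Two further identifications are needed. First, the pathwise rough integral against $\mathbf{Z}$ must coincide with the RSI of \cref{def:RSI}. For this we use \cref{lemma:sufficient_conditions_RSI}\eqref{lemma:sufficient_conditions_RSI:RSM}: the compensated Riemann sums along deterministic partitions converge, and the left-point Itô integral for the $N$-component is recovered because the second level of $\mathbf{Z}$ consists of Itô-type iterated integrals.

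\emph{Step 3 (uniqueness).} Conversely, any $L^{0}$-solution $\mathcal{Y}\in\mathscr{H}^{\tilde p}_{\mathbf{X}}$ is mapped by \cref{prop:scRPtoscRSM}\eqref{prop:item:RIP_to_SCRP} to a strongly $\mathscr{J}(\mathbf{X};M;Z)$-controlled path. Since $M=\sigma(Y)\bullet\d N$ and $Z$ are stochastic integrals against $N$ (and $G$), the compensated expansion can be re-expressed against $\mathscr{J}(\mathbf{X};N)$. Doing so requires the remainder identities analogous to \eqref{eq:scRPtoscRSMHilfsaussage}, namely $\Pi(\int H\dd N;X)\stackrel{p/3}{=}H\,\Pi(N;X)$ and similarly for $\Pi(N;N)$. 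These follow by the same jump-structure argument as in the proof of \cref{prop:scRPtoscRSM}. Once this is done, $Y(\omega)$ solves the pathwise RDE for a.e.\ $\omega$, so it agrees with the solution from Step 2 by pathwise uniqueness. Both processes are càdlàg, so they are indistinguishable.

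I expect the main obstacle to be this rewriting in Step 3, from an $\mathscr{H}_{\mathbf{X}}$ solution to a genuine $\mathbf{Z}$-controlled RDE solution. It requires care with jumps of the joint lift, and in case \eqref{assumption:item:vector_fields_2} with the unbounded linear field when controlling remainders. As a secondary difficulty, in Step 2 the linear case needs an a priori $p$-variation bound to rule out explosion in finite $p$-variation time.
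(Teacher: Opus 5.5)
Your proposal is correct and follows essentially the same route as the paper. Existence: you solve the random RDE driven by $\mathscr{J}(\mathbf{X};N)$ (resp.\ $\mathscr{J}(\mathbf{X};(N,G))$) pathwise and transfer it via \cref{prop:scRPtoscRSM}. Uniqueness: you rewrite any $L^{0}$-solution as a solution of that same random RDE and invoke pathwise RDE uniqueness, where your Step 3 spells out the controlledness and remainder identities that the paper only asserts.

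One small slip: in the paper's indexing, $Z=Df(Y)\sigma(Y)\bullet \d N$ comes from $\partial_{2,1}Y=Df(Y)\sigma(Y)$, the coefficient of $\Pi(N;X)$, not from $\partial_{1,2}Y=D\sigma(Y)f(Y)$.
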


\begin{proof}We handle the cases \eqref{assumption:item:vector_fields_1} and \eqref{assumption:item:vector_fields_2} of \cref{assumption:vector_fields} separately. First, assume that $(f, f')$ satisfies \eqref{assumption:item:vector_fields_1}.
Then by standard RDE (see e.g. \cite[Theorem 3.9]{Friz_Zhang_2018}) theory there almost surely exists a solution to the corresponding (random) RDE
\begin{equation*}
    Y_{t} = \xi+ \int_{0}^{t} b_{s} (Y_{s}) \dd s + \int_{0}^{t} ((f \oplus \sigma), (f\oplus \sigma)')(Y_s) \dd \mathscr{J}(\mathbf{X}; N)_{s}
\end{equation*} i.e. there is  $(Y- \int b_{s}(Y_{s})\dd s, Y', Y'')\in L^{0}_{ad}(\Omega, \mathscr{V}^{p}_{\mathscr{J}(\mathbf{X}; N)}([0,T]; \mathbb{R}^{d_{Y}}))$ with 
\begin{equation*}
    Y'\coloneqq \left (\begin{array}{c} f(Y) \\ \sigma(Y) \end{array} \right); \quad Y''\coloneqq \left(\begin{array}{cc}
    Df(Y)f(Y)+ f'(Y)  & D\sigma (Y) f(Y) \\
    Df(Y)\sigma(Y) & D\sigma (Y) \sigma(Y) + \sigma'(Y) \end{array} \right).
\end{equation*}
By applying \eqref{prop:item:SCRP_to_RIP} of \cref{prop:scRPtoscRSM}, we obtain the existence of an $L^{0}$-solution. Conversely given an $L^{0}$-solution 
\begin{equation*}
    \mathcal{Y}= (Y, f(Y), Df(Y)f(Y)+ f'(Y); \sigma(Y) \bullet \d N , D f(Y) \sigma(Y) \bullet \d N), 
\end{equation*}
we note that almost surely
\begin{equation*}
\begin{aligned}
   \left( \left(\begin{array}{c} 0 \\ \sigma(Y)\end{array}  \right) \cdot, \left( \begin{array}{cc} 0 & D\sigma (Y) f(Y)  \\
   0&D\sigma(Y) \sigma(Y)+ \sigma'(Y) \end{array}\right) : \right)&\in \mathscr{V}^{p}_{(X, N)}([0,T]; \mathcal{L}(\mathbb{R}^{d_{X}} \oplus \mathbb{R}^{d_{N}}; \mathbb{R}^{d_{Y}}))\\
   \left( \left(\begin{array}{c} f(Y) \\ 0\end{array} \right) \cdot , \left( \begin{array}{cc} Df(Y)f(Y)+ f'(Y) &  0\\
   Df(Y) \sigma (Y)& 0  \end{array}\right) :\right) &\in \mathscr{V}^{p}_{(X, N)}([0,T]; \mathcal{L}(\mathbb{R}^{d_{X}} \oplus \mathbb{R}^{d_{N}}; \mathbb{R}^{d_{Y}})). 
\end{aligned}
\end{equation*}
Therefore we may rewrite $Y$ as 
\begin{equation*}
\begin{aligned}
    Y_{t} &= \xi + \int_{0}^{t} b_{s} (Y_{s})\dd s + \int_{0}^{t} \left(\left(\begin{array}{c} 0 \\ \sigma_{s}(Y_{s})\end{array} \right), \dots \right)\dd \mathscr{J}(\mathbf{X}; N)_{s} + \int_{0}^{t} \left( \left(\begin{array}{c} f_{s}(Y_{s}) \\ 0
    \end{array} \right), \dots \right)\dd\mathscr{J}(\mathbf{X}; N)_{s}\\
    &= \xi + \int_{0}^{t} b_{s}(Y_{s})\dd s + \int_{0}^{t} (Y'_{s}, Y''_{s}) \dd \mathscr{J}(\mathbf{X}; N)_{s},
\end{aligned}
\end{equation*}
with $Y', Y''$ as above. Thus the $L^{0}$-solution is unique, by the uniqueness of the corresponding RDE. \\
Secondly, assume that $(f, f')$ satisfy \eqref{assumption:item:vector_fields_2} i.e. $(f, f'; G) \in \mathscr{H}_{X}^{p}\cL(\mathbb{R}^{d_{Y}}; \cL(\mathbb{R}^{d_{X}};\mathbb{R}^{d_{Y}}))$ for some $\mathbb{R}^{d_{G}}$-valued martingale $G$. Analogously to the above there exists $(Y- \int b_{s}(Y_{s}) \dd s, \tilde{Y}', \tilde{Y}'')\in L^{0}_{ad}(\Omega, \mathscr{V}^{p}_{\mathscr{J}(\mathbf{X}; (N, G))}([0,T]; \mathbb{R}^{d_{Y}}))$ with
\begin{equation}\label{eq: Y', Y'' for random RDE case II}
\begin{aligned}
    \tilde{Y}'_{t}&\coloneqq \left(\left (\begin{array}{c}
    f_{t}(Y_{t}) \\
    \sigma_{t}(Y_{t})\\ 
    0
    \end{array} \right)\cdot \right)\in \mathcal{L}(\mathbb{R}^{d_{X}} \oplus \mathbb{R}^{d_{N}} \oplus \mathbb{R}^{d_{G}}; \mathbb{R}^{d_{Y}})\\
    \tilde{Y}''_{t}&\coloneqq\left(\left(\begin{array}{ccc}
    Df_{t}(Y_{t})f_{t}(Y_{t})+ f'_{t}(Y_{t})  & D\sigma_{t} (Y_{t}) f_{t}(Y_{t}) & 0\\
    Df_{t}(Y_{t})\sigma_{t}(Y_{t}) & D\sigma_{t} (Y_{t}) \sigma_{t}(Y_{t}) + \sigma'_{t}(Y_{t}) & 0 \\
    Y_{t} & 0 & 0\end{array} \right) :\right)\\
    & \quad \quad \in \mathcal{L}\left( (\mathbb{R}^{d_{X}} \oplus \mathbb{R}^{d_{N}} \oplus \mathbb{R}^{d_{G}})^{\otimes 2}; \mathbb{R}^{d{Y}} \right).\end{aligned}
\end{equation}
Using \eqref{prop:item:SCRP_to_RIP} of \cref{prop:scRPtoscRSM}, we obtain the existence of an $L^{0}$-solution of the above form. Conversely, given an $L^{0}$-solution 
\begin{equation*}
    \mathcal{Y}= (Y, f(Y), Df(Y)f(Y)+ f'(Y); \sigma(Y) \bullet \d N , D f(Y) \sigma(Y) \bullet \d N+ \d G \bullet Y), 
\end{equation*}
we note that almost surely 
\begin{equation*}
\begin{aligned}
    &\left( \left(\begin{array}{c} f(Y) \\ 0\\ 0 \end{array} \right) \cdot ,    \left( \begin{array}{ccc} Df(Y)f(Y)+ f'(Y) &  0 & 0\\
   Df(Y) \sigma (Y)& 0 & 0\\
   Y & 0 & 0 \end{array}\right) :\right) \\
   &\in \mathscr{V}^{p}_{(X, N, G)}([0,T]; \mathcal{L}(\mathbb{R}^{d_{X}} \oplus \mathbb{R}^{d_{N}} \oplus \mathbb{R}^{d_{G}}; \mathbb{R}^{d_{Y}})). 
\end{aligned}
\end{equation*}
Similarly one rewrites $\sigma(Y)$ as a controlled rough path in terms of $(X, N, G)$. Analogously as above the existence of a unique $L^{0}$ solution follows. 
\end{proof}
\begin{remark}
We emphasize that \cref{thm:FZK_RSDE} comes with the major drawback of increased regularity requirements on $\sigma$ both in space and time. Most notably we require controlledness of $\sigma$ w.r.t  the martingale driver $N$. This immediately rules out many applications e.g. to stochastic control \cite{FLZ24x, BDDF26} and mean-field games \cite{FGHT26, bayraktar2026meanfieldgamesroughcommon}. 
\end{remark}

\subsection{Well-posedness via martingale fixed points}\label{subsection: Well-posedness via Martingale fixed points}
\addtocontents{toc}
{\protect\setcounter{tocdepth}{2}}In \cref{subsection:The joint lift ansatz for RSDEs} we observed that one can construct solutions to RSDEs by solving an RDE driven by a rough path obtained as the joint lift of the rough and martingale noise. While this leads to increased regularity requirements for general $\sigma$, for trivial coefficients e.g. $\sigma=\Id$ the above ansatz is sound. 
Let us recall the following well-known example from the theory of McKean-Vlasov SDEs, to see how the findings of \cref{subsection:The joint lift ansatz for RSDEs} can be used for the general case.
\subsubsection{Inspiration from McKean-Vlasov SDEs}
\label{inspiration_McKean}
Consider the McKean--Vlasov SDE (short: MKV-SDE)
\begin{equation}
\label{eq:McKean}
Y_t
= \xi +
\int_0^t \sigma\bigl(Y_s, \mathfrak{L} (Y_s)\bigr)\, \dd B_s
\end{equation}
where $B$ is a Brownian motion, $\xi \in L^{2}(\mathfrak{F}_{0}), \sigma: \mathbb{R}^{d_{Y}}\times \mathcal{P}_{2}(\mathbb{R}^{d_{X}})\to \mathcal{L}(\mathbb{R}^{d_{B}}; \mathbb{R}^{d_{Y}})$, $\mathfrak{L}(Y_{s})$ denotes the law of ${Y}_{s}$ and $\mathcal{P}_{2}(\mathbb{R}^{d_{Y}})$ is the space of probability measures on $\mathbb{R}^{d_{Y}}$ with finite second moments. A classical approach to showing well-posedness of MKV-SDEs is by recasting it
as a fixed point problem on $\mathcal{P}_{2}$: For a given measure-curve $\mu \in C([0,T]; \mathcal{P}_2(\mathbb{R}^{d_{Y}}))$, define
\begin{equation*}
Y^\mu_t=\xi + \int_{0}^{t} \sigma(Y^{\mu}_s, \mu_{s}), \dd B_s
\end{equation*}
which are naturally well-posed under standard assumptions and 
\begin{equation*}
\begin{aligned}
\Phi: C([0,T]; \mathcal{P}_{2}(\mathbb{R}^{d_{Y}})) &\to C([0,T]; \mathcal{P}_{2}(\mathbb{R}^{d_{Y}}))\\
(\mu_{t})_{t \in [0,T]} & \mapsto (\mathfrak{L}(Y^{\mu}_{t}))_{t \in [0,T]}
\end{aligned}
\end{equation*}
Well-posedness of the MKV-SDE then amounts to having a unique fixed point to $\Phi$, in the space  $C([0,T]; \mathcal{P}_{2}(\mathbb{R}^{d_{Y}}))$. 
\begin{theorem}[Theorem 4.21 in \cite{CarmonaDelarue2018_I}]
  Assume $\sigma$ is Lipschitz on $\mathbb{R}^{d_{Y}}\times \mathcal{P}_{2}(\mathbb{R}^{d_{Y}})$ (where we equip $\mathcal{P}_{2}$ with the Wasserstein distance $\mathcal{W}_{2}$). Then $\Phi$ admits a unique fixed point for small $T$.
  It follows (by iteration) that the MKV-SDE is well-posed. 
\end{theorem}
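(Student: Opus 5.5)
The statement to prove is the classical result that $\Phi$ has a unique fixed point on $C([0,T];\mathcal{P}_2(\mathbb{R}^{d_Y}))$ for small $T$, and that the MKV-SDE is then well-posed by iteration. I would argue via the Banach fixed point theorem with the metric
\[
d_T(\mu,\nu)\coloneqq \sup_{t\in[0,T]}\mathcal{W}_2(\mu_t,\nu_t),
\]
under which $C([0,T];\mathcal{P}_2)$ is complete because $(\mathcal{P}_2,\mathcal{W}_2)$ is complete.

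First, $\Phi$ is well defined. For fixed $\mu$, the coefficient $y\mapsto\sigma(y,\mu_s)$ is Lipschitz uniformly in $s$. It also has linear growth, since $|\sigma(y,\mu_s)|\le|\sigma(0,\delta_0)|+L(|y|+\mathcal{W}_2(\mu_s,\delta_0))$ and $s\mapsto \mathcal{W}_2(\mu_s,\delta_0)$ is bounded on $[0,T]$. Classical It\^o theory therefore gives a unique strong solution $Y^\mu$ with $\mathbb{E}\sup_t|Y^\mu_t|^2<\infty$. Continuity of $t\mapsto\mathfrak{L}(Y^\mu_t)$ in $\mathcal{W}_2$ follows from
\[
\mathcal{W}_2\big(\mathfrak{L}(Y^\mu_t),\mathfrak{L}(Y^\mu_s)\big)^2\le\mathbb{E}|Y^\mu_t-Y^\mu_s|^2\to0 ,
\]
which holds by continuity of paths and dominated convergence.

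The key step is the contraction estimate. Take $\mu,\nu$ and apply Doob/BDG and the It\^o isometry:
\[
\mathbb{E}\sup_{r\le t}|Y^\mu_r-Y^\nu_r|^2\le 4\,\mathbb{E}\int_0^t\big|\sigma(Y^\mu_s,\mu_s)-\sigma(Y^\nu_s,\nu_s)\big|^2\dd s\le 8L^2\int_0^t\Big(\mathbb{E}|Y^\mu_s-Y^\nu_s|^2+\mathcal{W}_2(\mu_s,\nu_s)^2\Big)\dd s .
\]
Gr\"onwall's lemma then yields $\mathbb{E}\sup_{r\le T}|Y^\mu_r-Y^\nu_r|^2\le 8L^2Te^{8L^2T}\,d_T(\mu,\nu)^2$. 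Since $(Y^\mu_t,Y^\nu_t)$ is a coupling of the two laws, we get
\[
d_T(\Phi\mu,\Phi\nu)^2\le C(T)\,d_T(\mu,\nu)^2 ,\qquad C(T)\coloneqq 8L^2Te^{8L^2T}\xrightarrow{T\downarrow0}0 .
\]
Hence $\Phi$ is a strict contraction for $T$ small, and the small time $T_0$ depends only on $L$, not on $\xi$.

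Finally, a fixed point $\mu^*$ of $\Phi$ gives a solution $Y^{\mu^*}$ of the MKV-SDE on $[0,T_0]$. Conversely, any solution $Y$ with square-integrable supremum satisfies $\Phi(\mathfrak{L}(Y_\cdot))=\mathfrak{L}(Y_\cdot)$, by pathwise uniqueness of the frozen SDE, so uniqueness of the fixed point gives uniqueness of solutions. Because $T_0$ is independent of the initial condition, I would restart on $[T_0,2T_0]$ with initial datum $Y_{T_0}\in L^2(\mathfrak{F}_{T_0})$ and time-shifted Brownian motion, and concatenate finitely many such pieces to cover an arbitrary $T$. Uniqueness on each piece gives global uniqueness.

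The main obstacle is the step that turns the contraction into a genuine fixed-point statement for the SDE. It requires checking that a genuine solution's law curve lies in $C([0,T];\mathcal{P}_2)$, and that the concatenation respects the measure argument, since the law at time $s$ of the concatenated process is the one used on each piece. Both points are straightforward, so the proof is essentially routine. Alternatively, one could avoid the iteration altogether by using the weighted metric $\sup_t e^{-\lambda t}\mathcal{W}_2(\mu_t,\nu_t)$, which is a contraction on all of $[0,T]$ for $\lambda$ large.
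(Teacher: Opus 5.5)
Your proposal is correct and follows essentially the same route as the paper: It\^o isometry plus the Lipschitz bound on $\sigma$, Gr\"onwall's lemma to get a contraction constant of order $\sqrt{T}e^{cT}$ for $\sup_t\mathcal{W}_2$, Banach's fixed point theorem on short intervals, and concatenation using that this constant does not depend on the initial condition. The only cosmetic difference is that you control $\mathbb{E}\sup_{r\le t}|Y^\mu_r-Y^\nu_r|^2$ via Doob, whereas the paper bounds $\|Y^\mu_t-Y^{\tilde\mu}_t\|_{L^2}$ pointwise in $t$. The pointwise bound already suffices, since $\Phi$ only sees one-dimensional marginals.
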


\begin{proof}
  The argument is classical. We have for any $\mu, \tilde{\mu} \in C([0,T]; \mathcal{P}_{2}(\mathbb{R}^{d_{Y}}))$
\begin{equation*}
\begin{aligned}
\mathcal{W}_{2}(\Phi(\mu)_{t}, \Phi(\tilde{\mu})_{t})^{2}&\leq \Vert X^{\mu}_t - X^{\tilde{\mu}}_t \Vert^2_{L^{2}} =  \int_0^t \mathbb{E} (|
  \sigma (X^\mu_s, \mu_s) - \sigma (X^{\tilde{\mu}}_s, \tilde{\mu}_s) |^2) \dd
  s\\
  & \lesssim  \int_0^t \Vert X^\mu_s - X^{\tilde{\mu}}_s \Vert^2_{L^{2}} \dd s + \int_0^t \mathcal{W}_{2}(\mu_{s}, \tilde{\mu}_{s})^{2} \dd s
\end{aligned}
\end{equation*}
uniformly for $t\in [0,T]$. Using Grönwall's inequality we thus see 
\begin{equation*}
\sup_{t \in [0,T]} \mathcal{W}_{2}(\Phi(\mu)_{t}, \Phi(\tilde{\mu})_{t}) \lesssim  e^{\frac{c}{2}T} \sqrt{T} \sup_{t \in [0,T]} \mathcal{W}_{2}(\mu_{t}, \tilde{\mu}_{t}) 
\end{equation*}
which contracts for sufficiently small $T$. Existence and uniqueness follow by applying Banach's fixed point theorem on small intervals and extending the solution successively to $[0,T]$, using the independence of the above bounds from the initial condition.
\end{proof}

Equivalently, when working with the non-linear Fokker-Planck equation (FPE) corresponding to \eqref{eq:McKean} the above principle is known as the \emph{linearization} of the FPE. Analogous arguments can of course also be performed utilizing the Lions-lift of $\mu$ and then performing the fixed point argument on a suitable space of $L^{2}$-processes.

\subsubsection{Main idea:}
Motivated by the considerations of \cref{subsection:The joint lift ansatz for RSDEs} and on McKean-Vlasov-SDEs in \cref{inspiration_McKean}, we derive our main result on existence and uniqueness of RSDEs of the form \eqref{main_ref:RSDE}
with $(f, f')$ satisfying \cref{assumption:vector_fields} and $b, \sigma$ of progressive/predictable It\^{o}-regularity, in this section. \\
The central idea is to consider the RSDE
as a fixed point problem on a suitable martingale space. For a given $M \in
\mathcal{M}^2$, define
\begin{equation} \label{eq: additive rsde}
Y^{M}_{t}= \xi+ \int_{0}^{t} b_{s}(Y^{M}_{s})\dd s+M_{t}+ \int_{0}^{t} (f, f')_{s}(Y^{M}_{s})\dd \mathbf{X}_{s}
\end{equation}
as the unique $L^{0}$-solution given by \cref{thm:FZK_RSDE} ($\sigma=\Id$ here). We further define for $N\in  \mathcal{M}^{2}([0,T])$ as in \eqref{main_ref:RSDE} the map (if well-defined) 
\begin{equation}\label{eq: defn Phi}
    \Phi : \mathcal{M}^{2}([0,T])\to \mathcal{M}^{2}([0,T]); \quad M \mapsto \int_{0}^{\cdot} \sigma_{t}(Y^{M}_{t-}) \dd N_{t}.  
\end{equation}
Given a fixed point $M^{\ast}= \Phi(M^{\ast})$ one immediately verifies that $Y^{M^{\ast}}$ solves \eqref{main_ref:RSDE} in the $L^{0}$-sense of \cref{def:solution_RSDE}.\\
The question is now of course how to obtain such a unique fixed point of $\Phi$. To this end we provide the following result based on Banach's fixed point theorem.

\begin{theorem} \label{thm:fixed_point_Banach}
 Let $\xi \in L^{2}(\mathfrak{F}_{0})$, $b$ be progressive bounded Lipschitz, $(f, f')$ satisfy \cref{assumption:vector_fields} and  $\sigma = \sigma_t (\omega, x)$ be a predictable
  coefficient field, Lipschitz and of linear growth uniformly in $(t, \omega)$. Further, assume that there exists a mild control $\rho : \Delta_{[0,T]} \to \R_{+}$ such that $\|\delta \langle N\rangle_{s,t}\|_{ L^{\infty}} \leq \rho(s+,t)$ for every $(s,t) \in \Delta_{[0,T]}$. Suppose that $\Phi: \mathcal{M}^{2}\to \mathcal{M}^{2}$ is well-defined and that there is $L>0$ such that, for any $(s,t) \in \Delta_{[0,T]}$ and $M, \bar{M}\in \mathcal{M}^{2}([0,T])$ it holds that
 \begin{equation}\label{ref:Key}
    \left \Vert \sup_{v \in [s,t]}|Y^{M}_v - Y^{\bar{M}}_v| \right  \Vert_{L^{2}} \leq L \left \Vert \sup_{v \in [s,t]} |M_v - \bar{M}_v| \right \Vert_{L^{2}}.
 \end{equation}
 Then, there is a unique $M^{\ast} \in \mathcal{M}^{2}$ with $\Phi(M^{\ast})=M^{\ast}$. In particular, there exists a unique $L^{0}$-solution of \eqref{main_ref:RSDE} on $[0,T]$ with martingale part $M^{\ast} \in \cM^{2}$. 
\end{theorem}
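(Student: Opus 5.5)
The plan is to show that $\Phi$ is a contraction on $\mathcal{M}^{2}([s,t])$ for intervals on which $\rho$ is small. Uniqueness of $L^0$-solutions with martingale part in $\cM^2$ then follows from uniqueness of the fixed point, and we patch the small intervals together. Throughout we use $\sigma_{r}(Y^{M}_{r-})$ as integrand, which is predictable since $Y^M$ is càdlàg adapted.

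\emph{Step 1 (contraction estimate).} For $M,\bar M\in\cM^2$ on $[s,t]$ with the same initial data at $s$, Doob's $L^2$ inequality and the Itô isometry in terms of the predictable bracket give
\begin{equation*}
\Big\|\sup_{v\in[s,t]}|\Phi(M)_v-\Phi(\bar M)_v|\Big\|_{L^2}^2\le 4\,\E\Big[\int_{(s,t]}|\sigma_r(Y^M_{r-})-\sigma_r(Y^{\bar M}_{r-})|^2\dd\langle N\rangle_r\Big].
\end{equation*}
We bound the integrand by $\Lip(\sigma)^2\sup_{v\in[s,t]}|Y^M_v-Y^{\bar M}_v|^2$ and pull out $\|\delta\langle N\rangle_{s,t}\|_{L^\infty}\le\rho(s+,t)$. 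Applying \eqref{ref:Key} then yields
\begin{equation*}
\|\Phi(M)-\Phi(\bar M)\|_{\cM^2([s,t])}\le 2\Lip(\sigma)L\,\rho(s+,t)^{1/2}\|M-\bar M\|_{\cM^2([s,t])}.
\end{equation*}
Well-definedness of $\Phi$ into $\cM^2$ is assumed, but it also follows from the same bound using linear growth of $\sigma$.

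\emph{Step 2 (partition).} Here $\rho$ is only a mild control, so $\rho(s+,t)$ need not be small for short intervals: it can carry atoms $\rho(u-,u+)>\delta$, but only finitely many of them. Choose $\delta$ with $2\Lip(\sigma)L\delta^{1/2}<1$. We build a partition $0=t_0<\dots<t_K=T$ such that $\rho(t_i+,t_{i+1})\le\delta$; the open-at-left convention allows an atom to sit at the left endpoint $t_i$. Concretely, put every large-atom time into the partition. Between consecutive atom times, use monotonicity of $\rho$ and the vanishing of $\rho(u-,u+)$ away from atoms (for small enough windows) to cover the remaining stretch by finitely many intervals, via compactness.

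A residual concern is an atom at the right endpoint $t_{i+1}$, which is included in $(t_i,t_{i+1}]$. To handle it, we split that jump off. At $t_{i+1}$ we have $\Delta\Phi(M)_{t_{i+1}}=\sigma(Y^M_{t_{i+1}-})\Delta N_{t_{i+1}}$, and $Y^M_{t_{i+1}-}$ is already determined by the solution on $[t_i,t_{i+1})$. So the fixed point is solved on $[t_i,t_{i+1})$, and the value at $t_{i+1}$ is set explicitly. Equivalently, one applies the contraction on half-open intervals and extends by the jump formula of the RSI (\cref{lemma:RSI_cadlag_jumpstructure}).

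\emph{Step 3 (concatenation).} On each $[t_i,t_{i+1}]$ apply Banach's fixed point theorem on the closed subspace of $\cM^2$ of martingales with prescribed value at $t_i$. The initial condition $Y_{t_i}\in L^2(\mathfrak F_{t_i})$ comes from the previous step, and the additive equation \eqref{eq: additive rsde} is restarted at $t_i$. Since \eqref{ref:Key} holds for arbitrary $(s,t)$ with a constant independent of the initial datum, the contraction constant is uniform across intervals. Gluing the finitely many fixed points gives $M^*\in\cM^2([0,T])$ with $\Phi(M^*)=M^*$. Then $Y^{M^*}$ is an $L^0$-solution by \cref{thm:FZK_RSDE} with $\sigma=\Id$, because its martingale part $M^*=\sigma(Y^{M^*})\bullet\dd N$.

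For uniqueness, take any $L^0$-solution $Y$ with martingale part $M\in\cM^2$. By uniqueness in \cref{thm:FZK_RSDE}, $Y$ coincides with $Y^M$, hence $M$ is a fixed point of $\Phi$. Interval by interval it therefore equals $M^*$.

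The main obstacle is Step 2/3: making the restart compatible with \eqref{ref:Key}, which is stated on $[s,t]$ for solutions started at $0$, and controlling the atoms of the mild control $\rho$.
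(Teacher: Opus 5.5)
Your argument is essentially the paper's proof. Splitting off the jump at the right endpoint is exactly the paper's passage to the stopped martingale $\bar N^{k}=N\1_{[t_k,t_{k+1})}+N_{t_{k+1}-}\1_{\{t_{k+1}\}}$, so that only $\rho(t_k+,t_{k+1}-)$ enters the contraction constant $4\|\sigma\|_{\Lip}^2L^2\rho(t_k+,t_{k+1}-)$; the paper then likewise applies Banach's theorem on each piece, glues through $\xi_k=Y_{t_k}+\sigma_{t_k}(Y_{t_k-})\Delta N_{t_k}$, and proves uniqueness interval by interval.

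On the two points you flag: the paper also applies \eqref{ref:Key} to the restarted equations without comment, and this is legitimate. Extending a martingale on $[t_k,t_{k+1}]$ by the fixed point already built on $[0,t_k)$, plus the jump $\sigma_{t_k}(Y_{t_k-})\Delta N_{t_k}$, turns the restarted solution into some $Y^{\hat M}$ with $\hat M\in\cM^2([0,T])$, so \eqref{ref:Key} applies verbatim on $[s,t]=[t_k,t_{k+1}]$. For the partition, the paper cites \cite[Lemma 1.5]{Friz_Zhang_2018}, but both that step and your compactness sketch need one-sided regularity of $\rho$ at the atoms $u$ (e.g.\ $\rho(u+,u+\epsilon)\to0$). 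Superadditivity provides this, whereas the mild-control axioms alone do not.
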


\begin{proof}
For brevity we will set $b\equiv 0$. Assume further, that we have a deterministic partition $\cP=\{t_k\}_{k=0}^N$ and $\xi_k \in L^2(\rF_{t_k})$,  for any $k\in \{0, \ldots, N\}$ given, which we specify later in the proof.

For $k \in \{0,\ldots, N-1\}$ we further define the maps 
\begin{equation*}
    \bar{\Phi}^{k}: \cM^2([t_k,t_{k+1}]) \to \cM^2([t_k,t_{k+1}]); \quad M \mapsto \int_{t_k}^{\cdot} \sigma_r(Y^{M,k}_{r-}) \dd \bar{N}^{k}_r,
\end{equation*}
where we denote by $Y^{M, k}$ the solution to 
\begin{equation*}
    Y^{M,k} = \xi^k + \int_{t_k}^{\cdot} (f,f')_r(Y^{M,k}_r) \dd \bX_r + M
\end{equation*}
for any $M\in \cM^2([t_k,t_{k+1}])$ on $[t_k,t_{k+1}]$, and $\bar{N}^{k} := \1_{\{t_{k+1}\}} N_{t_{k+1}-} + N \1_{[t_k,t_{k+1})} \in \cM^{2}([t_k,t_{k+1}])$. Especially, $\Delta \bar{N}^{k}_{t_{k+1}}=0$ and thus also $\Delta \langle \bar{N}^{k}\rangle_{t_{k+1}}=0$. By the It\^{o}-isometry, Doob's $L^2$-inequality, the Lipschitz property of $\sigma$ and \eqref{ref:Key}  we find for any $k\in \{0,\ldots, N-1\}$ and any $M, \bar{M} \in \cM^2([t_k,t_{k+1}])$, that
  \begin{equation*}
  \begin{split}
   \Big\| \sup_{v\in [t_k,t_{k+1}]} |\delta \bar{\Phi}^{k} (M)_{t_k, v}& - \delta \bar{\Phi}^{k} (\bar{M})_{t_k, v}| \Big\|^2_{L^{2}} \\
   &\leq 4  \E \bigg[ \int_{t_k}^{t_{k+1}} | \sigma_r (Y_{r-}^{M,k}) - \sigma_r (Y_{r-}^{\bar{M},k})|^{2} \dd \langle \bar{N}^{k} \rangle_{r} \bigg ] \\
    & \leq 4\|\sigma\|^2_{\Lip} \|\delta \langle \bar{N}^{k} \rangle_{t_k,t_{k+1}}\|_{L^{\infty}} \Big\| \sup_{r\in [t_k,t_{k+1}]} |Y^{M,k}_r- Y^{\bar{M},k}_r|  \Big\|_{L^{2}}^{2}\\
    &\leq 4\|\sigma\|^2_{\Lip}L^2 \rho(t_k+,t_{k+1}-) \Big\|M-\bar{M}  \Big\|_{\cM^2([t_k,t_{k+1}])}^{2}.
\end{split}
\end{equation*}

By \cite[Lemma 1.5]{Friz_Zhang_2018} we know that we may choose the partition $\cP := \{t_k\}_{k=0}^{N}$ of $[0,T]$ to be such that 
\begin{equation*}
    \rho(t_k+, t_{k+1}-) < \frac{1}{1+4\|\sigma\|^2_{\Lip}L^2}.
\end{equation*}

Thus, we especially find that
  \begin{equation*}
    \Vert \bar{\Phi}^{k}(M)- \bar{\Phi}^{k}(\bar{M}) \Vert_{\mathcal{M}^{2}([t_k,t_{k+1}])} \leq  C \|M-\bar{M}  \|_{\cM^2([t_k,t_{k+1}])}
  \end{equation*}
  for some $0<C<1$ and, thus, as $\cM^{2}([t_k,t_{k+1}])$ is a Banach space, there is a unique fixed point $\bar{\Phi}^k(M^{\ast,k})=M^{\ast,k} \in \mathcal{M}^{2}([t_k,t_{k+1}])$ by Banach's fixed point theorem. For brevity we will write $Y^{M^{\ast,k}}\coloneqq Y^{M^{\ast,k},k}$ for every $k\in \{0, \ldots, N-1\}$.
  Set $\xi_0\coloneqq \xi$ and iteratively for $k\in \{1, \ldots, N\}$ define
  \begin{equation}
  \label{eq:initial condition iteration}
      \xi_{k}\coloneqq Y^{M^{\ast, k-1},}_{t_{k}}+ \sigma_{t_{k}}(Y^{M^{\ast, k-1}}_{t_{k}-}) \Delta N_{t_k}.
  \end{equation}
  Then, we may define $M^{\ast} \in \cM^{2}([0,T])$, by 
  \begin{equation*}
      M^{\ast}_t \coloneqq \sum_{k=0}^{N-1} \int_{t_k \wedge t}^{t_{k+1}\wedge t} \sigma_r(Y^{M^{\ast,k}}_{r-}) \dd N_r = \sum_{k=0}^{N-1} \Big(\delta M^{*,k}_{t_{k}\wedge t, t_{k+1}\wedge t}+\sigma_{t_{k+1}}(Y^{M^{\ast, k}}_{t_{k+1}-}) \Delta N_{t_{k+1}}\1_{[t_{k+1},T]}(t)\Big)
  \end{equation*}
  for any $t\in [0,T]$. Thus, by induction on $k$ and a standard iterative argument, we have for any $t\in [0,T]$, that 
  \begin{equation}\label{eq: Y^M^* is the sum of small interval solutions plus jump}
      Y^{M^{\ast}}_t = \sum_{k=0}^{N-1} Y^{M^{\ast,k}}_t \1_{[t_k,t_{k+1})}(t) + \xi_N \1_{\{T\}}.
  \end{equation}
 Thus, we verify that
\begin{align*}
    \Phi(M^{\ast}) &=\sum_{k=0}^{N-1} \int_{t_k \wedge \cdot}^{t_{k+1} \wedge \cdot} \sigma_r(Y^{M^{\ast}}_{r-}) \dd N_r =\sum_{k=0}^{N-1} \int_{t_k \wedge \cdot}^{t_{k+1}\wedge \cdot} \sigma_r(Y^{M^{\ast,k}}_{r-}) \dd N_r = M^{\ast}.
\end{align*}
The uniqueness of $M^{\ast}$ follows inductively from the
uniqueness of the local fixed points and from the jump structure of It\^{o} integrals, observing that for any fixed point $\tM$ of $\Phi$ on $[0,T]$ we have that $(\delta\tM_{t_k,t}-\Delta \tM_{t_{k+1}}
\1_{\{t_{k+1}\}}(t))_{t\in[t_k,t_{k+1}]}$
is a fixed point of $\bar{\Phi}^k$ on $[t_{k}, t_{k+1}]$.
 By \cref{thm:FZK_RSDE} one verifies that for $Z^{M^{\ast}}\in \{ Df(Y^{M^{\ast}}) \sigma(Y^{M^{\ast}}) \bullet \d N, Df(Y^{M^{\ast}}) \sigma(Y^{M^{\ast}}) \bullet \d N+ \dd G \bullet Y^{M^{\ast}}\} $ chosen according to \cref{def:solution_RSDE} and 
 \begin{equation*}
  \mathcal{Y}^{M^{\ast}}\coloneqq (Y^{M^{\ast}}, f(Y^{M^{\ast}}), Df(Y^{M^{\ast}})f(Y^{M^\ast})+ f'(Y^{M^{\ast}}), b(Y^{M^\ast}); \Phi(M^{\ast}), Z^{M^{\ast}})
  \end{equation*} 
  it holds $\mathcal{Y}^{M^{\ast}}\in \mathscr{H}_{\mathbf{X}}^{p}([0,T], \mathbb{R}^{d_{Y}})$
  and thus $Y^{M^{\ast}}$ solves the RSDE \eqref{main_ref:RSDE} on $[0,T]$ in $L^{0}$-sense.
\end{proof}

\begin{example}
    Let $N$ be an $\mathbb{R}^{d}$-valued Lévy-martingale with finite second moments (i.e. a martingale with finite second moments, which is also a Lévy process) starting at $0$ and denote its Lévy-Khintchine triplet by $(b, \Sigma, \nu)$. Further, assume that it generates the underlying filtration $(\mathfrak{F}_{t})_{t \in [0,T]}$. Using the Lévy-Khintchine formula we calculate its second moment to be 
    \begin{equation*}
    \mathbb{E}[|N_{t}|^{2}]= t \left(|b|^{2}+ \operatorname{tr}(\Sigma) + \int_{\mathbb{R}^{d}}|x|^{2}\nu (\dd x)\right)\eqqcolon tC. 
    \end{equation*}
    Using the martingale property, independence and stationarity of increments we see that
    \begin{equation*}
        \mathbb{E}_{s}[|N_{t}|^{2}]= |N_{s}|^{2}+ \mathbb{E}_{s}[|N_{t}-N_{s}|^{2}]=|N_{s}|^{2}+ \mathbb{E}[|N_{t-s}|^{2}] = |N_{s}|^{2}+ C(t-s).
    \end{equation*}
    Thus, we see that the predictable covariation bracket is given by $\langle N \rangle_{t}= Ct$
    and that $N$ clearly satisfies the conditions of \cref{thm:fixed_point_Banach}. 
\end{example}
\begin{remark}
    We emphasize that the entire argument in \cref{thm:fixed_point_Banach} hinges on the stability estimate \eqref{ref:Key}. In particular we note that 
    \begin{enumerate}
        \item Given \eqref{ref:Key} we don't impose any further assumptions on $\sigma$ besides uniform Lipschitzness. In particular $\sigma$ is a-priori not assumed to be bounded.
        \item One can easily adjust the above proof to any closed subset of $\mathcal{M}^{2}([0,T])$ as long as it is invariant under $\Phi$; see \cref{corollary: the fixed point argument is applicable for mixed moment assumptions} for an application of this idea. 
        \item One can always also add a Lipschitz drift of linear growth by considering a corresponding fixed point argument on the semimartingale space $\mathscr{H}^{2}$, see \cite[Proposition A.4]{friz2026roughbackwardstochasticdifferential}. Therefore we set $b \equiv 0$ from here on.
        \item One can also show the existence of a fixed point of $\Phi$ using Schauder's fixed point theorem. On the requirements' side this has the benefit of only needing a-priori bounds on the solution to \cref{eq: additive rsde} instead of the stability property \eqref{ref:Key}. However the uniqueness then needs to be obtained in another way for example through a comparison theorem \cite[Theorem 4.3]{friz2026roughbackwardstochasticdifferential} or suitable smoothness assumptions on $\sigma$. It is however unclear how to obtain uniqueness in general.
        \item Under additional integrability assumptions on the solutions $Y^M$, namely that $Y^M\in\sH^{p,1}_X$ exists for every $M\in\cM^2$,\footnote{Or for every $M$ in a suitable subspace of $\cM^2$. Such an assumption essentially reduces to sufficient a priori bounds for RDEs.} and assuming stability with respect to $M$ in $V^pL^1$ in addition to the uniform-norm stability required above, the proof of the previous theorem can be adapted to weaken the assumptions on $N$. More precisely, it suffices to assume the existence of a mild control $\rho:\Delta_{[0,T]}\to\R_+$ such that $\|\E_s[\delta\langle N\rangle_{s,t}]\|_{L^\infty}\leq\rho(s+,t)$ for every $(s,t)\in\Delta_{[0,T]}$.
        This follows from $\langle N\rangle\in V^1 L^1$ and arguments analogous to those in the proof of \cite[Lemma B.3]{AllanPieper2026}. The arguments in the preceding theorem could therefore be adapted to cover the cases considered in \cite{AllanPieper2026} and further examples. In particular, the weakened bracket assumption is satisfied by every VMO martingale.
    \end{enumerate}
\end{remark}

We emphasize that the solutions constructed in \cref{thm:fixed_point_Banach} are a priori different from those in \cite{FHL21, AllanPieper2026} and especially are directly constructed in the space of rough semimartingales. However, almost sure uniqueness of solutions to RDEs from standard RDE theory (see \cite[Theorem 3.9]{Friz_Zhang_2018} and \cite[Theorem 4.1]{AllanPieper2026}) together with \cref{thm:consistency_of_solutions I} imply that every $L^{m,n}$-solution to \eqref{eq: additive rsde} coincides with the $L^0$-solution, whenever $p<m$ or $n=\infty$ and thus also every $L^{m,n}$-solution to \eqref{main_ref:RSDE} agrees with the solution found through \cref{thm:fixed_point_Banach} in these cases. As noted in \cref{remark: restrictions of Kolmogorov consistency}, this still excludes important cases, including semi-linear RSDEs with general Lévy noise. The next result recovers good $L^m$-solutions in certain cases with $m\leq p$, assuming suitable stability and applicability of \cref{thm:fixed_point_Banach}.

\begin{theorem}\label{thm:consistency_of_solutions II}
     Let $p\in (2,3)$, $m\in [2,\infty)$ and $n\in [m,\infty]$ as well as $\mathbf{X}\in \mathscr{V}^{p}$, $N\in \cM^2$, $(f, f')$, $b$ and $\sigma$ such that the assumptions of \cref{thm:fixed_point_Banach} are fulfilled.
     Then, if additionally for every $M\in \cM^{2}$ there exists an $L^2$-solution\footnote{Or for any $M$ in an appropriate closed subset which is invariant under $\Phi$.} to \eqref{eq: additive rsde} as well a sequence of partitions $\{\cP^{n,M}\}_{n\in \N}$ of $[0,T]$ such that there exists a both $L^0$- as well as $L^2$-admissible sequence $((f^{n,M}, (f^{n,M})'))_{n\in \N}$ i.e. a sequence of pairs of vector fields such that both the $L^2$-solution as well as the $L^0$-solution\footnote{Which exists due to \cref{thm:FZK_RSDE}.} are weakly stable along $(\cP^{n,M};(f^{n,M}, (f^{n,M})'))_{n \in \N}$ (as defined in \cref{def: A Davie-type solution}) then the two solutions coincide up to indistinguishability.
\end{theorem}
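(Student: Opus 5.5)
We work with additive-noise equations and compare the two solutions path by path. The strategy has three steps.

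\textbf{Step 1 (reduce to the additive equation).} By \cref{thm:fixed_point_Banach} the $L^{0}$-solution of \eqref{main_ref:RSDE} has the form $Y^{M^{\ast}}$, where $M^{\ast}=\Phi(M^{\ast})$ and $Y^{M}$ denotes the $L^{0}$-solution of \eqref{eq: additive rsde}. For each $M\in\cM^{2}$, write $Y^{M,2}$ for the assumed $L^{2}$-solution of \eqref{eq: additive rsde}. If we show that $Y^{M,2}$ and $Y^{M}$ are indistinguishable for every $M$, the claim follows.

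Indeed, suppose $Y$ is an $L^{2}$-solution of \eqref{main_ref:RSDE} with martingale part $\bar M:=\int\sigma(Y_{-})\dd N\in\cM^{2}$. Then $Y$ is an $L^{2}$-solution of \eqref{eq: additive rsde} with $M=\bar M$, so $Y=Y^{\bar M}$. Consequently $\Phi(\bar M)=\int\sigma(Y^{\bar M}_{-})\dd N=\bar M$. Uniqueness of the fixed point in \cref{thm:fixed_point_Banach} gives $\bar M=M^{\ast}$, hence $Y=Y^{M^{\ast}}$. (If only an invariant closed subset is used, $\bar M$ must lie in it, which holds by the same invariance.)

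\textbf{Step 2 (identify the two solutions of the additive equation).} Fix $M$ and pass to the discretised problems along $(\cP^{n,M};(f^{n,M},(f^{n,M})'))_{n}$ from \cref{def: A Davie-type solution}. I expect those approximating equations to be of Davie/Euler type, with piecewise-frozen vector fields on the partition intervals. On each interval the approximating equation is solved by an explicit recursion (or a classical ODE/Itô equation) driven by $M$ and $\bX$. That recursion has a unique adapted solution, and this solution does not depend on whether it is read pathwise or in moments.

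Hence the $L^{0}$-approximant and the $L^{2}$-approximant are the same process $Y^{n}$. Weak stability of both solutions then gives $Y^{n}\to Y^{M}$ and $Y^{n}\to Y^{M,2}$, in whatever weak mode \cref{def: A Davie-type solution} specifies (say in probability for each fixed $t$). By uniqueness of limits in probability, $Y^{M}_{t}=Y^{M,2}_{t}$ almost surely for each $t$.

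\textbf{Step 3 (upgrade to indistinguishability).} Both processes are càdlàg: $Y^{M}$ by construction, and $Y^{M,2}$ as a solution, being a sum of $M$, a drift and an RSI (\cref{lemma:RSI_cadlag_jumpstructure}). Agreement on a countable dense set of times together with $T$ therefore gives indistinguishability.

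\textbf{Main obstacle.} The delicate point is Step 2, namely checking that the discrete approximants of the $L^{0}$- and $L^{2}$-theories genuinely coincide and converge in a common topology. The admissibility notion must provide convergence for both solution concepts along the same partitions. If the weak stability is only formulated in law or in a weak $L^{2}$ sense, I would first try to strengthen it to convergence in probability at fixed times by testing against bounded $\mathfrak{F}_{t}$-measurable variables, and then conclude by uniqueness of limits. The rest is bookkeeping with the fixed-point uniqueness.
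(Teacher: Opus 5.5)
Your proposal is correct and is essentially the paper's argument. The paper simply applies \cref{lemma: discrete consistency result for random RDEs} with $(\sigma,\sigma')=(\Id,0)$ to \eqref{eq: additive rsde} at $M=M^{\ast}$, and your Steps 2--3 reproduce that lemma's proof; the obstacle you flag is settled there because $\bX^{\cP^n}$ is piecewise constant, so the cross integrals in $\sJ(\bX^{\cP^n};M)$ vanish between grid points and both discretised solutions solve the same classical equation with identical explicit jumps at the grid points, hence agree by pathwise uniqueness.

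Your Step 1 is an add-on the paper does not need, since identification at $M=M^{\ast}$ already shows the $L^0$-solution is the $L^2$-solution. Note that it tacitly assumes the given $L^2$-solution of \eqref{main_ref:RSDE} is the weakly stable one (e.g.\ via uniqueness of $L^2$-solutions of \eqref{eq: additive rsde}), and in the invariant-subset variant you must know beforehand that $\bar M$ lies in that subset, which invariance alone does not give.
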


\begin{proof}
    For simplicity we assume $b\equiv 0$ here but the following can be easily extended to the fully general case. By \cref{thm:fixed_point_Banach} we know that the unique $L^0$-solution to \eqref{main_ref:RSDE} is given by
    \begin{equation}\label{eq: fixed point additive RSDE}
        \d Y_{t} = \dd M^{*}_{t} + (f, f')_{t} (Y_{t}) \dd \mathbf{X}_{t};\quad Y_{0}=\xi \in L^{0}(\mathfrak{F}_{0}).
    \end{equation}
    where $M^{*} \in \cM^2$ is the unique fixed point of $\Phi$. However, by assumption there exists also an $L^2$-solution to \eqref{eq: fixed point additive RSDE} as well as a sequence of partition $\{\cP^{n, M^{*}}\}_{n\in \N}$ as well as a simultaneously $L^0$- and $L^2$-admissible sequence of vector fields $(f^{n, M^{*}}, (f^{n,M^{*}})')_{n\in \N}$. Thus, we may apply \cref{lemma: discrete consistency result for random RDEs} with $(\sigma, \sigma')= (\Id, 0) \in \mathscr{V}^{p}_{M}\mathcal{L}( \mathbb{R}^{d_{Y}}; \mathbb{R}^{d_{Y}})$ and thus indeed the unique $L^0$-solution coincides with the $L^2$-solution to \eqref{eq: fixed point additive RSDE}.
\end{proof}

\section{On semi-linear RSDEs}
\label{section:On semi-linear RSDEs}
In this section we investigate the well-posedness of semi-linear RSDEs of the form 
\begin{equation}
\label{section:ref:semilin_RSDE}
    Y_{t} = \xi + \int_{0}^{t} \sigma_{s}(Y_{s-}) \dd N_{s} + \int_{0}^{t} (A, A')_{s} Y_{s} \dd \bX_{s},
\end{equation}
where $A$ is linear and in particular satisfies one of the following:
\begin{enumerate}[label=\Roman*.]
    \item\label{section:item:semilin_start1} $(A, A') \in \mathscr{V}^{p}_{X}\cL(\mathbb{R}^{d_{Y}};  \cL(\mathbb{R}^d_{X}; \mathbb{R}^{d_{Y}}))$ as introduced in \cref{def:rough_linear_vf} (in particular $(A, A')$ is deterministic).
    \item\label{section:item:semilin_start2} $(A, A')\in \mathscr{H}^{p}_{X}\cL(\mathbb{R}^{d_{Y}}; \cL(\mathbb{R}^{d_{X}}; \mathbb{R}^{d_{Y}}))$ as introduced in \cref{def:rough_stochastic_vf}. 
\end{enumerate}
Following \cref{Section:main_theorem} this is done by deriving stability estimates of the form \eqref{ref:Key} for the \textit{linear RSDE} given by
\begin{equation}
\label{section:ref:linear_RSDE}
    Y^{M}_{t}= \xi + M_{t} +\int_{0}^{t} (A, A')_{s} Y^{M}_{s} \dd \bX_{s.} 
\end{equation}
In both cases, this is done by deriving analogous bounds to \cite[Theorem 3.11]{BCN24} using the rough stochastic Grönwall lemma \cref{lemma: gronwall type result p var}. 
In the case \eqref{section:item:semilin_start1} we additionally derive a suitable \textit{Duhamel's principle} for such equations \eqref{section:ref:linear_RSDE}, which provides an explicit representation for the solution. 

\subsection{Well-posedness \& stability of semi-linear RSDEs}
\label{subsection:Well-posedness and stability of semi-linear RSDEs}
As previously discussed in \cref{Section:main_theorem} establishing the Lipschitz bound \eqref{ref:Key} is far from trivial. The following result establishes the existence, uniqueness and Lipschitz bound \eqref{ref:Key} for the mixed moment case (already contained in the continuous setting in \cite[Theorem 3.11]{BCN24}) by an application of the rough stochastic Grönwall Lemma \cref{lemma: gronwall type result p var}. 
\begin{proposition}\label{proposition: exsitence of additive linear RSDEs with random coeff}
    Let $p \in [2,3)$, $m \in [2,\infty)$, $\bX \in \mathscr{V}^{p}([0,T]; \mathbb{R}^{d_{X}})$ and $M\in \cM^{p,m}$ as well as $(A,A') \in \sV^{p,m,\infty}_{X} \cL(\mathbb{R}^{d_{Y}}; \cL(\R^{d_{X}}, \mathbb{R}^{d_{Y}}))$.
   Then, there exists a unique $L^m$-solution to the equation \eqref{section:ref:linear_RSDE}.
   Moreover, if we have two martingales $M,\tM \in \cM^{p, m}$ and $K>0$ is such that 
    \begin{equation*}
        \|\xi \|_{L^m}+\|A,A'\|_{p,m,\infty, [0,T]}+ \|\bX\|_{p,[0,T]}  \leq K
    \end{equation*}
    then 
    \begin{equation}\label{eq: Lipschitz bound for linear RSDE}
    \begin{split}
        &\Big\| \sup_{t\in[0,T]} |Y^M_t-Y^{\tM}_t| \Big\|_{L^m} +\|Y^M- Y^{\tM}\|_{p,m,[0,T]}+\|\E_\cdot R^{Y^M-Y^{\tM}, (Y^M-Y^{\tM})'}\|_{\frac{p}{2},m,[0,T]}\\
          & \lesssim_K   \|M - \tM\|_{p, m, [0,T]}.
    \end{split} 
    \end{equation}
\end{proposition}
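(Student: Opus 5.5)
We construct the solution by a Picard/contraction argument in the space of stochastic controlled paths $\sV^{p,m}_X([s,t];\R^{d_Y})$ on short intervals, and then concatenate. For a candidate $(Z,Z')$ with $Z'=AZ$, the map is
\[
\Gamma(Z)_\cdot:=\xi+M_\cdot+\int_0^\cdot (A,A')_rZ_r\dd\bX_r .
\]
Here the germ $(AZ,\,A'Z+AZ')$ is again an SCRP in $\sV^{p,m,m}_X$. This uses the product rule for SCRPs: $(A,A')\in\sV^{p,m,\infty}_X$ paired with an element of $\sV^{p,m,m}_X$. The $L^\infty$-integrability of the coefficient is exactly what prevents any loss of integrability, in contrast with \cref{lemma: composition for RSM}(ii). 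The rough stochastic integral then exists by \cref{lemma: existence of rough stochastic integrals}.

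The estimates \eqref{eq: Lqr- estimate for rough stochastic integral} and \eqref{eq: p,q,r,s- estimate for rough stochastic integral} bound
\[
\|\Gamma(Z)-\Gamma(\tilde Z)\|_{p,m,[s,t]}+\|\E_\cdot R\|_{\tfrac p2,m,[s,t]}
\]
by $C_K\,\|\bX\|_{p,[s,t]}$ times the corresponding norm of $Z-\tilde Z$ plus its initial value. Because the equation is linear, no a priori bound on $Z$ is needed. On intervals where $\|\bX\|_{p,[s,t]}$ is small, $\Gamma$ is therefore a contraction.

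Jumps of $\bX$ need separate treatment, since $\|\bX\|_{p,(s,t)}$ is small only away from the big jumps. Using \cite[Lemma 1.5]{Friz_Zhang_2018}, we choose a finite partition with $w(t_k+,t_{k+1}-)$ small, where $w$ controls $\bX$. On each open piece we solve by contraction. At the endpoints $t_{k+1}$ we add the explicit jump
\[
\Delta Y_{t}=\Delta M_t+A_{t-}Y_{t-}\Delta X_t+(A'_{t-}+A_{t-}A_{t-})Y_{t-}\Delta\X_t,
\]
which is consistent with \cref{lemma:RSI_cadlag_jumpstructure}. This jump is linear in $Y_{t-}$ with a bounded coefficient, so $L^m$-norms propagate. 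Uniqueness follows in the same way, piece by piece.

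For the stability bound \eqref{eq: Lipschitz bound for linear RSDE} we use linearity. The difference $D:=Y^M-Y^{\tM}$ solves \eqref{section:ref:linear_RSDE} with $\xi=0$ and martingale $M-\tM$. From the integral estimates we obtain, for all $(s,t)$, an inequality of the form
\[
\|D\|_{p,m,[s,t]}+\|\E_\cdot R^{D,AD}\|_{\tfrac p2,m,[s,t]}\le \|M-\tM\|_{p,m,[s,t]}+C_K\Big(\|D_s\|_{L^m}+\|D\|_{p,m,[s,t)}+\|\E_\cdot R^{D,AD}\|_{\tfrac p2,m,[s,t)}\Big)\|\bX\|_{p,[s,t]} .
\]
We feed this into the rough stochastic Grönwall \cref{lemma: gronwall type result p var}. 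This step is where the main difficulty lies. The jumps of $\bX$ spoil the naive absorption argument, so we have to use the open-interval quantities $w(s+,t-)$ and treat the finitely many large jumps by the explicit jump formula above, each with a constant depending only on $K$. The outcome is
\[
\|D\|_{p,m,[0,T]}+\|\E_\cdot R^{D,AD}\|_{\tfrac p2,m,[0,T]}\lesssim_K\|M-\tM\|_{p,m,[0,T]}.
\]

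It remains to control the supremum term. We write
\[
D_t=\delta(M-\tM)_{0,t}+\int_0^t(A,A')D\dd\bX .
\]
The martingale part is handled by Doob's inequality: $\|\sup_t|M_t-\tM_t|\|_{L^m}\lesssim\|M_T-\tM_T\|_{L^m}\le\|M-\tM\|_{p,m,[0,T]}$. For the integral term we apply the stochastic sewing maximal estimate, i.e. the $\sup$-version of \cref{lemma: existence of rough stochastic integrals} from \cite{AllanPieper2026}, through its Doob–Meyer decomposition into a martingale, again bounded by Doob, plus a finite $\tfrac p2$-variation remainder in $L^m$. Both pieces are bounded by the already controlled norms of $D$. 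Altogether this yields \eqref{eq: Lipschitz bound for linear RSDE}.
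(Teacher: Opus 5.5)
Your proposal is correct and follows essentially the same route as the paper: existence and uniqueness come from a standard short-interval fixed point with the jumps added explicitly (a step the paper omits), and the Lipschitz bound comes from a local estimate for $D=Y^M-Y^{\tM}$ closed with the rough stochastic Grönwall \cref{lemma: gronwall type result p var}. The differences are cosmetic. You recover the supremum term afterwards via Doob and the maximal sewing bound instead of carrying it through the Grönwall iteration. To invoke the lemma verbatim one takes $R^1=AD$, $R^2=X$ and, as the paper does, includes $\|A\|^p_{p,m,\infty,[s,t]}$ in $w_1$, because $\|AD\|_{p,m,[s,t]}$ contains a term $\|A\|_{p,m,\infty,[s,t]}\|D_s\|_{L^m}$ that is not small in $\|\bX\|_{p,[s,t]}$ alone.
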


\begin{proof}
    The existence and uniqueness can be shown by a standard fixed point argument akin to the case of bounded coefficients (see e.g. the proof of \cite[Theorem 5.2]{AllanPieper2026}) under the additional usage of the a-priori estimates provided in \cref{proposition: a priori estimates additive linear RSDE} and is thus omitted here. For the convenience of the reader we, however, show how \eqref{eq: Lipschitz bound for linear RSDE} can be easily verified with the help of \cref{lemma: gronwall type result p var}. 
    More precisely, by \cite[Lemma 4.6 \& Lemma 4.9]{AllanPieper2026} and analogous bounds as the ones found in \cite[Proposition 3.3]{BCN24} we have that 
    \begin{align*}
         &\Big\| \sup_{v\in[s,t]} |\delta Y^{M}_{s,v}-\delta Y^{\tM}_{s,v}| \Big\|_{L^m} +\|Y^{M}- Y^{\tM}\|_{p , m ,[s,t]}+\|\E_\cdot (R^{Y^M, (Y^M)'}-R^{Y^{\tM, (Y^{\tM})'})})\|_{\frac{p}{2},m,[s,t]}\\
         &\lesssim \Big(\|A_s (Y^{M} _s- Y^{\tM}_s)\|_{L^{m}}+ \|A (Y^{M}- Y^{\tM})\|_{p ,m, [s,t)} +\|A' (Y^M-Y^{\tM})\|_{p, m ,[s,t)} \\ 
         &\qquad+ \|\E_{\edot}(R^{A(Y^M-Y^{\tM}}) \|_{\frac{p}{2},m ,[s,t)} +\|A^{\otimes 2} (Y^M-Y^{\tM})\|_{\tp, m ,[s,t)}+ \| A'_s (Y^M_s-Y^{\tM}_s)\|_{L^m} \\
         &\qquad +\|A^{\otimes 2}_s  (Y^M_s-Y^{\tM}_s)\|_{L^m}\Big) \|\bX\|_{p,[s,t]} + \|M- \tM\|_{p,m,[s,t]}\\
         &\lesssim \Big(\Big\| \sup_{v\in[s,t)} | Y^{M}_{v}- Y^{\tM}_{v}| \Big\|_{L^m} +\|Y^{M}- Y^{\tM}\|_{p , m ,[s,t)}+\|\E_\cdot (R^{Y^M, (Y^M)'}-R^{Y^{\tM, (Y^{\tM})'})})\|_{\frac{p}{2},m,[s,t)} \Big)\\
         &\quad \times (\|\bX\|_{p,[s,t]}+ \|A\|_{p,m, \infty,[s,t]})  + \|M - \tM\|_{p, m, [0,T]}
    \end{align*} 
    Therefore, using that also $\|A(Y^m-Y^{\tM})\|_{p,m,[s,t]} \lesssim_{K} \|Y^M-Y^{\tM}\|_{p, m, [s,t]}+ \|A\|_{p,m,\infty, [s,t]} \|Y^M_s- \tY^M_s\|_{L^m}$ we may indeed apply the rough stochastic Grönwall Lemma \cref{lemma: gronwall type result p var} with $R^1= A(Y^M-Y^{\tM})$, $R= R^{(Y^M-Y^{\tM})}$, $w_1(s,t)=\|\bX\|^p_{p,[s,t]}+\|A\|^p_{p,m, \infty,[s,t]}$ as well as $\eta  =\|M-\tM\|^p_{p,m,[0,T]}$, where we omit the multiplicative constants in $\eta$, which indeed implies \eqref{eq: Lipschitz bound for linear RSDE}.
\end{proof}

\begin{remark}
    The attentive reader will notice that \eqref{eq: Lipschitz bound for linear RSDE} is not the same as \eqref{ref:Key} for $m=2$. However, leveraging the fact that $K>0$ in \cref{proposition: exsitence of additive linear RSDEs with random coeff} was chosen independently of $M$ and $\tM$ as well as the fact that $\|M\|_{p,2,[s,t]} \lesssim \||M|^{*}_T\|_{L^2}$ for any martingale $M$ as seen in \cite[Remark 2.14]{AllanPieper2026} one can easily deduce \eqref{ref:Key} from \eqref{eq: Lipschitz bound for linear RSDE}. See also \cref{corollary: the fixed point argument is applicable for mixed moment assumptions} for the details on the analogous results in the case of bounded coefficients.
\end{remark}
We emphasize that \cref{proposition: exsitence of additive linear RSDEs with random coeff} provides stability estimates for the mixed-moment solution to \eqref{section:ref:linear_RSDE}. To be able to apply this in the context of \cref{thm:fixed_point_Banach} we must first also verify that the mixed-moment and $L^{0}$-solution of \eqref{section:ref:linear_RSDE} agree using \cref{corollary: consistency for linear RSM with deterministic jumps}. In total we may infer the existence of good $L^2$-solutions to \eqref{section:ref:semilin_RSDE} whenever $(A-G, A')$ only jumps at deterministic times:

\begin{corollary}\label{cor:good-semilinear-deterministic-jumps}
    Let $p\in (2,3)$, $\mathbf{X}\in \mathscr{V}^{p}$, $N \in \cM^{p,2, \infty}$ which additionally fulfills the assumptions of \cref{thm:fixed_point_Banach} as well as $\sigma =\sigma_t(\omega,x)$ a predictable coefficient field, Lipschitz and of linear growth uniformly in $(t,\omega)$. Further, assume that $(f, f') \in \sV^{p,2,\infty}_{X} \cL$ fulfils additionally \eqref{assumption:item:vector_fields_2} in \cref{assumption:vector_fields} such that $\{t\in [0,T] \ : \ |\Delta (f-G)_t| +|\Delta f'_t| \}$ is a set of deterministic times.
    Then, there exists a unique good $L^2$-solution to \eqref{main_ref:RSDE}.
\end{corollary}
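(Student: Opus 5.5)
The plan is to verify the hypotheses of \cref{thm:fixed_point_Banach} in the semi-linear setting \eqref{assumption:item:vector_fields_2}, working on the closed subset $\cM^{p,2}\subset\cM^2$ rather than on all of $\cM^2$. Two facts are needed: for every $M$ the $L^0$-solution $Y^M$ of \eqref{section:ref:linear_RSDE} coincides with its $L^2$-solution, and $\Phi$ satisfies the Lipschitz estimate \eqref{ref:Key}. Existence and uniqueness of a good $L^2$-solution then follow, because the fixed point $Y^{M^\ast}$ is simultaneously an $L^0$-solution and an $L^2$-solution.

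First, $\Phi$ maps $\cM^{p,2}$ into itself. Since $N\in\cM^{p,2,\infty}$ and $\sigma$ has linear growth, the BDG inequality gives
\[
\|\sigma(Y^M_-)\bullet \d N\|_{p,2,[s,t]} \lesssim \Big\|\sup_{v}|\sigma(Y^M_{v})|\Big\|_{L^2}\,\|N\|_{p,2,\infty,[s,t]},
\]
and $\sup_v|Y^M_v|\in L^2$ by \cref{proposition: exsitence of additive linear RSDEs with random coeff}. Moreover $\cM^{p,2}$ is closed in $\cM^2$, since $\|M\|_{p,2,[0,T]}\lesssim\|M\|_{\cM^2}$ by \cite[Remark 2.14]{AllanPieper2026}. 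This remark also shows that the invariant-subset version of \cref{thm:fixed_point_Banach} suffices.

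Second, for $M\in\cM^{p,2}$, \cref{proposition: exsitence of additive linear RSDEs with random coeff} with $m=2$ yields a unique $L^2$-solution of \eqref{section:ref:linear_RSDE}, while \cref{thm:FZK_RSDE} yields a unique $L^0$-solution. Since $p>2=m$, the Kolmogorov route of \cref{thm:consistency_of_solutions I} is not available. I would instead use \cref{corollary: consistency for linear RSM with deterministic jumps}. That result is where the assumption that $(f-G,f')$ jumps only at deterministic times enters: it allows one to construct the discretised vector fields along partitions containing those jump times, so that both solutions are weakly stable along the same sequence in the sense of \cref{def: A Davie-type solution}. \cref{thm:consistency_of_solutions II} then identifies the two solutions up to indistinguishability.

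Third, with this identification, the estimate \eqref{eq: Lipschitz bound for linear RSDE} applies to the $L^0$-solutions. The constant $K$ there depends only on $\xi$, $(f,f')$ and $\bX$, not on $M$, and is uniform under restriction to subintervals with restarted initial data $\xi_k$; the latter is needed because the proof of \cref{thm:fixed_point_Banach} restarts on small intervals. Combined with $\|M-\tM\|_{p,2,[s,t]}\lesssim\|\sup_{v\in[s,t]}|M_v-\tM_v|\|_{L^2}$, this gives \eqref{ref:Key}. \cref{thm:fixed_point_Banach} then produces a unique fixed point $M^\ast$, and hence a unique $L^0$-solution. This solution is also an $L^2$-solution, so it is good; uniqueness among good solutions follows because any good solution is an $L^0$-solution.

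The main obstacle I expect is the second step, the consistency of the $L^0$- and $L^2$-solutions, uniformly over $M$ and over the restarted intervals. In particular one must check that the admissible approximating sequences exist for every $M$ in the invariant set, and that the restart initial data $\xi_k\in L^2(\mathfrak{F}_{t_k})$ preserve the $L^2$-integrability needed by \cref{proposition: exsitence of additive linear RSDEs with random coeff}.
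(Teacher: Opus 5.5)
Your proposal is correct and follows the paper's own route. The paper likewise combines three ingredients:
\begin{itemize}
\item the $L^2$-stability estimate of \cref{proposition: exsitence of additive linear RSDEs with random coeff}, turned into \eqref{ref:Key} via $\|M\|_{p,2,[s,t]}\lesssim \big\| |M|^{*}_{T}\big\|_{L^2}$;
\item the identification of the $L^0$- and $L^2$-solutions of the additive linear equation through \cref{corollary: consistency for linear RSM with deterministic jumps}, which is exactly where the deterministic jump times of $(f-G,f')$ enter;
\item and finally \cref{thm:fixed_point_Banach}.
\end{itemize}
One small simplification: for $p\geq 2$, orthogonality of martingale increments gives $\|M\|_{p,2,[0,T]}\leq \|M\|_{2,2,[0,T]}=\|M_T-M_0\|_{L^2}$, so $\cM^{p,2}=\cM^2$ and your restriction to an invariant closed subset is automatic.
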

Unfortunately, this does not allow for the composition of a sufficiently smooth functional taking values in the linear maps (think $D f$ for $f\in C^3_b$) with the solution of an RSDE with deterministic bounded coefficients as wanted for the study of derivatives of solution-flows and the Malliavin differentiability of solutions of RSDEs (see \cite{BCN24, BuginiFrizStannat2024}).
Specifically such concatenation will in general not factor into a controlled part with deterministic jump times and a martingale (as e.g. seen in the Itô formula in \cref{prop:rsIto}). However, using, that the latter fulfils well-known stability estimates (see e.g. \cref{theorem: existence and estimates for solutions to RSDEs}) and thus is \textit{weakly stable} (see \cref{def: A Davie-type solution} for details) along any appropriate choice of $(\cP^n, (f^{\cP^n}, (f')^{\cP^n}))_{n\in \N}$ by \cref{lemma: randomised RDE is Davie} we can, nonetheless show the consistency due to the assumed regularity of the functional:

\begin{proposition}\label{proposition: good solutions for Jacobians of flows}
    Let $p\in (2,3)$, $m\in [2,\infty)$, $N\in V^pL^{m,\infty}$ and $b, \sigma$ be, respectively, progressive and predictable bounded Lipschitz functions and $(f,f')\in \sV^p\Lip^3$ Further, let $M\in \cM^m$ and 
    \begin{equation*}
        (\Psi,\Psi') \in  \sV^p_X \Lip^{2}(\mathbb{R}^{d_{Y}}; \mathcal{L}(\mathbb{R}^{d_{Y}} \otimes \mathbb{R}^{d_{X}}; \mathbb{R}^{d_{Y}})).
    \end{equation*}
    Assume that $\cY$ is the unique $L^{m,\infty}$-solution\footnote{provided by \cref{theorem: existence and estimates for solutions to RSDEs}} to 
    \begin{equation}\label{eq: RSDE with det. coeff.}
        Y_{t}=\xi + \int_{0}^{t} b_s(Y_{s}) \dd s+\int_{0}^{t} \sigma_s(Y_{s-}) \dd N_{s} + \int_{0}^{t}(f,f')_s(Y_{s}) \dd \bX_{s}
    \end{equation}
    then there exists a unique good $L^m$-solution to 
    \begin{equation}\label{eq: Psi RSDE}
        \tY_{t}= \tilde{\xi} + M_t + \int_{0}^{t} (\Psi(Y), D \Psi(Y) f(Y) +\Psi'(Y))_s \tY_s \dd \bX_{s}.
    \end{equation}
\end{proposition}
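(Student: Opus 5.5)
We prove the statement by splitting it into existence of an $L^m$-solution, existence of an $L^0$-solution, and identification of the two; the identification is the substantive step.

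\textbf{Setting up the linear field.} We first encode the coefficient of \eqref{eq: Psi RSDE} as a linear rough stochastic vector field. By \cref{theorem: RSDE solution is L^q RSM} (case $n=\infty$), $Y$ is a good solution, i.e.\ $(Y,f(Y),\dots;\sigma(Y)\bullet\d N, Df(Y)\sigma(Y)\bullet \d N)\in\sH^{\tp}_{\bX}$. By \cref{lemma: composition for RSM}, the pair
\[
(\mathcal{A},\mathcal{A}') := \bigl(\Psi(Y),\, D\Psi(Y)f(Y)+\Psi'(Y)\bigr)
\]
is simultaneously an $L^{m,\infty}$-stochastic controlled path and an RSM with martingale part $G:=D\Psi(Y)\sigma(Y)\bullet \d N$. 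Boundedness of $\Psi,\Psi'$ and their derivatives gives the uniform $L^\infty$ bounds. Hence $(\mathcal{A},\mathcal{A}')\in \sV^{p,m,\infty}_X\cL$, and $(\mathcal{A},\mathcal{A}';G)\in\sH^{\tp}_X\cL$ in the sense of \cref{def:rough_stochastic_vf}, so \eqref{assumption:item:vector_fields_2} holds with $\tp$ in place of $p$.

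\textbf{Existence of both solutions.} Existence and uniqueness of the $L^m$-solution then follow from \cref{proposition: exsitence of additive linear RSDEs with random coeff}, applied with $M\in\cM^m$. Note that $M\in\cM^{p,m}$ holds automatically, since $\|M\|_{p,m}\lesssim \||M|^*_T\|_{L^m}$ by \cite[Remark 2.14]{AllanPieper2026}. Existence and uniqueness of the $L^0$-solution follow from \cref{thm:FZK_RSDE} in case II with $\sigma=\Id$, $\sigma'=0$, which gives the random RDE driven by $\mathscr{J}(\bX;(M,G))$.

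\textbf{Identifying the two solutions.} It remains to show that the $L^0$- and $L^m$-solutions coincide; this is the main obstacle. The Kolmogorov route of \cref{thm:consistency_of_solutions I} is unavailable when $m\le p$ and $n=m$, and \cref{cor:good-semilinear-deterministic-jumps} does not apply, because $\mathcal{A}-G$ jumps at the random jump times of $N$. Instead I would use the weak-stability route of \cref{thm:consistency_of_solutions II} and \cref{lemma: discrete consistency result for random RDEs}. The key point is that the random coefficient is a composition of the deterministic field $(\Psi,\Psi')$ with $Y$. I would therefore take partitions $\cP^n$ and freeze/discretise the coefficient as $(\Psi,\Psi')$ evaluated along a discretisation of $Y$, namely $Y^{\cP^n}$ obtained as the solution of \eqref{eq: RSDE with det. coeff.} with rough path discretised along $\cP^n$.

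For this sequence I then have to check two things:
\begin{enumerate}
\item \textbf{$L^m$-admissibility.} The stability estimates of \cref{theorem: existence and estimates for solutions to RSDEs} give convergence of $Y^{\cP^n}$ to $Y$ in the mixed-moment norms. This transfers, via the local Lipschitz estimate for compositions, to convergence of the coefficients in $\llbracket\cdot\rrbracket_{p,m,\infty}$. By the linear stability estimates of \cref{proposition: exsitence of additive linear RSDEs with random coeff}, the $L^m$-solutions along the sequence then converge.
\item \textbf{$L^0$-admissibility.} Pathwise, $Y$ is a.s.\ the solution of a random RDE, so \cref{lemma: randomised RDE is Davie} yields weak stability of $Y$ along $(\cP^n)$. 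Composition with the deterministic $\Lip^2$ field $(\Psi,\Psi')$ preserves this, and continuity of the random RDE solution map in the coefficients gives convergence of the $L^0$-solutions.
\end{enumerate}
The delicate part is matching the jumps at the random times of $N$ in both frameworks. I expect this to be handled by the jump structure of \cref{lemma:RSI_cadlag_jumpstructure}, since jumps of $\mathcal{A}$ that come from $N$ enter only through the martingale part $G$ and not through the rough integral.

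\textbf{Conclusion.} Once the sequence is shown to be both $L^0$- and $L^m$-admissible, \cref{lemma: discrete consistency result for random RDEs} identifies the two solutions up to indistinguishability. Together with the uniqueness in both classes established above, this gives a unique good $L^m$-solution of \eqref{eq: Psi RSDE}.
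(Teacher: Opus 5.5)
Your overall architecture matches the paper's. You encode the coefficient through \cref{lemma: composition for RSM}, obtain the $L^m$-solution from \cref{proposition: exsitence of additive linear RSDEs with random coeff} and the $L^0$-solution from \cref{thm:FZK_RSDE}, and identify the two via weak stability and \cref{lemma: discrete consistency result for random RDEs}.

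\textbf{Main gap: the $L^m$-admissibility step.} \cref{proposition: exsitence of additive linear RSDEs with random coeff} only gives Lipschitz dependence on the additive martingale $M$, for \emph{fixed} $(A,A')$ and $\bX$. Along your sequence it is precisely the coefficient and the rough path that change, and that proposition says nothing about this.

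The coefficient convergence ``in $\llbracket\cdot\rrbracket_{p,m,\infty}$'' that you invoke is also not available:
\begin{itemize}
\item Discretisations converge only in $\tp$-variation for $\tp>p$ (cf.\ \cref{lemma: existence of cP^n}, \cref{corollary: discretisation of vector fields}).
\item \eqref{eq: Lipschitz continuity of solution map} controls $Y^n-Y$ only in $L^m$-$L^m$ norms. So $\Psi(Y^n)-\Psi(Y)$ is not small in the norm with outer $L^\infty$.
\item Any coefficient-Lipschitz estimate for the linear equation then pairs $A^n-A$ with the solution, both merely in $L^m$, and loses integrability. The paper points out that this is exactly what rules out the standard estimates.
\end{itemize}

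The missing ingredient is \cref{lemma: stability of linear RSDEs without integrability loss}. Its hypotheses are:
\begin{itemize}
\item $\P$-a.s.\ uniform convergence $A^n\to A$ and $(A^n)'\to A'$. Along a subsequence this follows from $\|\sup_t|Y^n_t-Y_t|\|_{L^m}\to0$ together with $\llbracket\Psi^{\cP^n},(\Psi')^{\cP^n};\Psi,\Psi'\rrbracket_{\tp,[0,T]}\to0$.
\item $n$-uniform bounds on $\|A^n,(A^n)'\|_{p,m,\infty,[0,T]}$.
\item $\bX^n\to\bX$ in $\tp$-variation.
\end{itemize}
Its proof then closes via dominated convergence, interpolation into $\tp$-variation, and \cref{lemma: gronwall type result p var}.

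\textbf{Second gap: the approximating sequence is not admissible as set up.} You discretise only $\bX$, but $(f,f')$ and $(\Psi,\Psi')$ are time-dependent fields controlled by $X$. On intervals where $X^{\cP^n}$ is constant, the remainder of $\Psi(Y^n)$ relative to $X^{\cP^n}$ (after removing the martingale part) contains $\delta\Psi$ itself. That increment has only finite $p$-variation. Consequently:
\begin{itemize}
\item the approximating coefficients lie in neither $\sV^{p,m,\infty}_{X^{\cP^n}}\cL$ nor $\sH^p_{X^{\cP^n}}\cL$;
\item neither existence theory covers the approximating equations;
\item \cref{def: A Davie-type solution} fails.
\end{itemize}
The paper discretises the fields as well. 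Its coefficients are $\Psi^{\cP^n}(Y^n)$ and $(D\Psi)^{\cP^n}(Y^n)f^{\cP^n}(Y^n)$, with $Y^n$ driven by $(f^{\cP^n},\bX^{\cP^n})$. It works along partitions with
\[
\{t\in[0,T] : |\Delta\bX_t|\vee|\Delta f'_t|\vee|\Delta R^{f,f'}_t|\vee|\Delta\Psi'_t|\vee|\Delta R^{\Psi,\Psi'}_t|>0\}\subseteq\liminf_{n\to\infty}\cP^n .
\]
Without this condition, $\bX^{\cP^n}$ does not even converge uniformly.

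\textbf{Two smaller points.}
\begin{itemize}
\item \cref{lemma: randomised RDE is Davie} requires $\sigma$ to be an $\bN$-controlled $\Lip^3$ field, which is not assumed here. So $Y$ is not a random RDE solution in the sense of \cref{thm:FZK_RSDE}. The convergence $Y^n\to Y$ needed on the $L^0$ side must instead come from \eqref{eq: Lipschitz continuity of solution map}, passing to an a.s.\ convergent subsequence, followed by pathwise stability of the linear random RDE.
\item $M\in\cM^m$ does not imply $M\in\cM^{p,m}$ when $m>p$; the compensated Poisson process in \cref{remark:continuous_case} is a counterexample. Your bound $\|M\|_{p,m}\lesssim\||M|^*_T\|_{L^m}$ holds for $2\le m\le p$. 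For $m>p$ the condition $M\in\cM^{p,m}$ must be assumed.
\end{itemize}
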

Once again due to the technical nature of this result we postpone the proof and provide it instead in \cref{appendix: A discrete approximation approach to consistency} for the convenience of the reader. The prime example of such vector fields is $(\Psi, \Psi')=(Df, (Df)')$ for $(f, f')\in \mathscr{V}^{p}_{X}\Lip^{3}$ and $(Df)'\coloneqq (Df')^{\top}$.

\begin{corollary} 
Let $p \in (2, 3), m \in [2, \infty),  \xi,\tilde{\xi}\in L^{m}(\mathfrak{F}_{0})$, $\bX = (X, \X)\in \sV^p$ and $N \in V^{p}L^{m, \infty}$ with $\Vert \delta \langle N \rangle_{s,t} \Vert_{L^{\infty}}\leq \rho(s+, t)$ for any $(s,t)\in \Delta_{[0,T]}$ and some mild control $\rho$. Let $b$ progressive bounded Lipschitz and $\sigma$ predictable with finite $C^{1}_{b}$-norm uniformly in $(t, \omega)$ as well as $(f, f')\in \mathscr{V}^{p}_{X}\Lip^{3}(\mathbb{R}^{d_{Y}}; \mathcal{L}(\mathbb{R}^{d_{X}}; \mathbb{R}^{d_{Y}}))$. Let $Y$ be the unique $L^{m, \infty}$-solution of \eqref{eq: RSDE with det. coeff.}. Then there exists a unique good $L^{m}$-solution $\tY$ of 
\begin{equation}
\label{eq:formalJacobianequation}
    \tilde{Y}_{t} = \tilde{\xi}+ \int_{0}^{t} D\sigma_{s}(Y_{s-})\tilde{Y}_{s-} \dd N_{s} + \int_{0}^{t} (Df(Y), D^{2} f(Y) f(Y)+ (Df(Y))')_{s} \tilde{Y}_{s} \dd \mathbf{X}_{s}.
\end{equation}
\end{corollary}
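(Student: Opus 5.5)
The plan is to view \eqref{eq:formalJacobianequation} as a semi-linear RSDE of the form \eqref{main_ref:RSDE}, with random linear rough coefficient $(\Psi,\Psi')\circ Y$, where $(\Psi,\Psi')\coloneqq(Df,(Df')^{\top})\in\sV^p_X\Lip^2$ (using $(f,f')\in\sV^p_X\Lip^3$), and with martingale coefficient $\tilde\sigma_s(\omega,y)\coloneqq D\sigma_s(Y_{s-}(\omega))y$. Because $\sigma$ is predictable with uniformly bounded $C^1_b$-norm and $Y_{-}$ is predictable, $\tilde\sigma$ is a predictable coefficient field that is linear in $y$. It is therefore globally Lipschitz with constant $\|D\sigma\|_\infty$ and of linear growth, uniformly in $(t,\omega)$. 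The drift vanishes. The solution will then come from the martingale fixed point \cref{thm:fixed_point_Banach}, with $\Phi(M)=\int D\sigma(Y_{-})Y^M_{-}\dd N$, where $Y^M$ solves the additive linear equation \eqref{eq: Psi RSDE} driven by $M$.

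The steps are as follows.

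\textbf{Step 1.} Show that $Y$ is good. By \cref{theorem: RSDE solution is L^q RSM} with $n=\infty$, $Y$ is an $\sH^{\tp}_{\bX}$ rough semimartingale. \cref{lemma: composition for RSM} then makes $(Df(Y),D^2f(Y)f(Y)+(Df)'(Y);D^2f(Y)\sigma(Y)\bullet\d N)$ an element of $\sH^{\tp}_X\cL$. This places us in case \eqref{assumption:item:vector_fields_2} of \cref{assumption:vector_fields}, with $G=D^2f(Y)\sigma(Y)\bullet \d N$, so \cref{thm:FZK_RSDE} yields a unique $L^0$-solution $Y^M$ for every $M$. The same lemma, part (ii), applied with $n=\infty$, gives $(A,A')\in\sV^{p,m,\infty}_X\cL$, where the $L^\infty$ bounds come from boundedness of $Df$ and $D^2f$.

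\textbf{Step 2.} Obtain mixed-moment well-posedness and stability of the additive equation. \cref{proposition: exsitence of additive linear RSDEs with random coeff} (with $m=2$, or with general $m$) gives a unique $L^m$-solution together with the Lipschitz bound \eqref{eq: Lipschitz bound for linear RSDE}. Combined with $\|M\|_{p,2}\lesssim\||M|^*\|_{L^2}$ as in the subsequent remark, this yields \eqref{ref:Key} on every subinterval, with $L$ independent of $M$ because $K$ does not depend on $M$.

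\textbf{Step 3.} Identify the two solutions. \cref{proposition: good solutions for Jacobians of flows} gives exactly that, for each $M\in\cM^m$, the $L^m$-solution of \eqref{eq: Psi RSDE} is good, i.e.\ it coincides with the $L^0$-solution $Y^M$. Hence \eqref{ref:Key} holds for the $L^0$-solutions entering $\Phi$.

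\textbf{Step 4.} Run the fixed point. We need $\Phi:\cM^2\to\cM^2$ to be well defined. By It\^o isometry, $\|\Phi(M)\|^2\lesssim \|D\sigma\|_\infty^2\,\rho(0,T)\,\|\sup|Y^M|\|^2_{L^2}$, which is finite by Step 2. Then \cref{thm:fixed_point_Banach} produces a unique fixed point $M^*$, and therefore a unique $L^0$-solution $\tY=Y^{M^*}$. To reach $L^m$ with $m>2$, I would run the argument on the $\Phi$-invariant closed subset $\cM^m$, in the spirit of the remark following \cref{thm:fixed_point_Banach}; invariance follows from BDG with $N\in V^pL^{m,\infty}$. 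Then \cref{thm:consistency_of_solutions II}, or directly Step 3 applied with $M=M^*$, shows that $\tY$ is also the $L^m$-solution of \eqref{eq:formalJacobianequation}. The reason is that the $L^m$-solution of the full equation has martingale part $\int D\sigma(Y)\tY\dd N$, and uniqueness of $\Phi$'s fixed point forces this to equal $M^*$. Goodness and uniqueness follow.

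I expect the main obstacle to be Step 4: establishing that $\Phi$ maps into $\cM^m$ with the mixed-moment regularity $M\in\cM^{p,m}$ that \cref{proposition: exsitence of additive linear RSDEs with random coeff} requires, and checking that the $L^m$-solution of the full equation has a martingale part that is a fixed point. For the first part I would bound $\|\int D\sigma(Y_-)Y^M_-\dd N\|_{p,m}$ by conditional BDG, using $\|\delta\langle N\rangle\|_{L^\infty}$ and $N\in V^pL^{m,\infty}$ together with $\sup_t\|Y^M_t\|_{L^m}<\infty$, in the style of \cite[Lemma B.3]{AllanPieper2026}.
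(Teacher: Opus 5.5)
Your overall route is the paper's. You get $(Df(Y),D^2f(Y)f(Y)+(Df)'(Y))\in\sV^{p,m,\infty}_X\cL$, together with its rough-semimartingale structure, from \cref{lemma: composition for RSM} (the paper cites the calculation of \cite[Lemma 3.6]{BCN24}). You obtain \eqref{ref:Key} from the Lipschitz bound of \cref{proposition: exsitence of additive linear RSDEs with random coeff}, transferred to the $L^0$-solutions $Y^M$ via \cref{proposition: good solutions for Jacobians of flows}. It\^o's isometry and the a priori bound make $\Phi$ well defined, and \cref{thm:fixed_point_Banach} gives the fixed point. Goodness then follows from \cref{proposition: good solutions for Jacobians of flows} at $M^*$, which is your ``Step 3 applied with $M=M^*$''.

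The one thing you add for $m>2$ does not work as written. $\cM^m$ is not a closed subset of $(\cM^2,\||\cdot|^*_T\|_{L^2})$: it is dense (truncate $M_T$ and use Doob). So the closed-invariant-subset remark after \cref{thm:fixed_point_Banach} does not apply to it. Moreover, for $m>p$, membership in $\cM^m$ does not give the $\cM^{p,m}$-regularity that \cref{proposition: exsitence of additive linear RSDEs with random coeff} requires; a compensated Poisson process is a counterexample. Your fallback, that uniqueness of the fixed point forces the martingale part of the $L^m$-solution to be $M^*$, only proves uniqueness. It presupposes an $L^m$-solution of the multiplicative equation \eqref{eq:formalJacobianequation}, and nothing in the paper provides one.

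What is actually needed is $M^*\in\cM^{p,m}$, so that the identification can be applied at $M^*$; the paper's proof passes over this point too. One way to close it:
\begin{itemize}
\item Replace $\cM^m$ by a ball $\{M:\|M\|_{p,m,[t_k,t_{k+1}]}\le R\}$. This ball is closed in $\cM^2$ by the same Fatou argument used in \cref{corollary: the fixed point argument is applicable for mixed moment assumptions}.
\item Since $M\mapsto Y^M$ is affine, so is $\Phi$. A conditional BDG bound of the type used in the proof of \cref{lemma: stability in vfs for non-linear RSDEs}, combined with the $L^m$-bound of \cref{proposition: exsitence of additive linear RSDEs with random coeff}, controls the $V^pL^m$-norm of the linear part of $\Phi$ by a multiple of $\|D\sigma\|_\infty\|N\|_{p,m,\infty}$ on the subinterval.
\item Choose the partition so this is at most $\tfrac12$, removing the endpoint jump of $N$ as in the proof of \cref{thm:fixed_point_Banach}. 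The ball is then invariant for $R$ large, and the fixed point found in $\cM^2$ lies in it.
\end{itemize}
Equivalently, you can bound the Picard iterates in $V^pL^m$ and pass to the limit with Fatou.
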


\begin{proof}
    By an analogous calculation to \cite[Lemma 3.6]{BCN24}, we have that 
    $(\Psi(Y), D\Psi(Y) f(Y)+ \Psi'(Y))\in \mathscr{V}_{X}^{p, m, \infty}\mathcal{L}(\mathbb{R}^{d_{Y}}; \cL(\mathbb{R}^{d_{X}}; \mathbb{R}^{d_{Y}}))$.
    Thus, by \cref{proposition: good solutions for Jacobians of flows}, we find for $(\Psi, \Psi')\coloneqq (Df, (Df)')$ that \cref{eq: Psi RSDE} has a unique good $L^{m}$-solution, which we denote by $\tilde{Y}^{M}$. Therefore, by \cref{proposition: good solutions for Jacobians of flows} we know that $\tilde{Y}^{M}$ satisfies the stability estimate \eqref{ref:Key}. Further, by It\^{o} isometry and \cref{proposition: a priori estimates additive linear RSDE}, we have that
    \begin{equation*}
    \Phi: \mathcal{M}^{2}\to \mathcal{M}^{2}; \quad  M \mapsto \int_{0}^{\cdot} D\sigma_{s}(Y_{s-}) \tilde{Y}^{M}_{s-} \dd N_{s}
    \end{equation*}
     is well defined and thus by \cref{thm:fixed_point_Banach} there exists a unique $L^{0}$-solution to \eqref{eq:formalJacobianequation}. Applying \cref{proposition: good solutions for Jacobians of flows} with the martingale fixed point we see that the solution is a good $L^{m}$-solution. 
\end{proof}

\begin{remark}
    For $\xi= y\in \mathbb{R}^{d_{Y}}$ deterministic initial condition, denote by $Y^{y}$ the solution to \eqref{eq: RSDE with det. coeff.} for $b\equiv 0$. Then, whenever it exists, the Jacobian $D_{y}Y^{y}$ satisfies the RSDE
    \begin{equation*}
        D_{y}Y^{y}_{t}= \Id + \int_{0}^{t} D\sigma_{s}(Y_{s-})D_{y}Y^{y}_{s-} \dd N_{s} + \int_{0}^{t} (Df(Y), D^{2} f(Y) f(Y)+ (Df(Y))')_{s} D_{y} Y_{s} \dd \mathbf{X}_{s}, 
    \end{equation*}
    which is of the form \eqref{eq:formalJacobianequation}. However we emphasize that in general the well-posedness of this RSDE does not imply the differentiability of $y \mapsto Y^{y}$. Similar RSDEs of the form \eqref{eq:formalJacobianequation} are also crucial for Hörmander theory of RSDEs; see \cite[Section 5]{BCN24}. 
\end{remark}

\subsection{A càdlàg rough stochastic Duhamel's principle}\label{subsection: Duhamel formula}
In the following we investigate linear RSDEs of the form 
\begin{equation}
\label{eq:inhomogeneous_linear_RSDE}
    Y_{t} = \xi+   M_{t}+ \int_{0}^{t} (A, A')_{s} Y_{s}\dd \mathbf{X}_{s}
\end{equation}
with deterministic linear vector fields $(A, A')\in \mathscr{V}^{p}_{X}\cL(\mathbb{R}^{d_{Y}}; \cL(\mathbb{R}^{d_{X}}; \mathbb{R}^{d_{Y}}))$ and $M\in \mathcal{M}^{2}$. In particular by viewing \eqref{eq:inhomogeneous_linear_RSDE} as a linear RDE  subject to some inhomogeneous random forcing $M$, we want to derive some variational formulation or \textit{Duhamel's principle} for \eqref{eq:inhomogeneous_linear_RSDE}. To this end let us introduce the corresponding RDE semi-flow
\begin{equation}
\label{eq:RDE_flow_Duhamel}
    \phi_{s, t} \coloneqq \Id + \int_{s}^{t} (A_{r} \phi_{s,r}, (A'_{r}+ A^{2}_{r}) \phi_{s,r} )\dd \mathbf{X}_{r} \in \mathcal{L}(\mathbb{R}^{d_{Y}}; \mathbb{R}^{d_{Y}}). 
\end{equation}
In the continuous regime the derivation of a variational formula to \eqref{eq:inhomogeneous_linear_RSDE} is immediate.  
\begin{example}
\label{example:smooth_solution_Duhamel}
    Suppose $\mathbf{X}$, $M$ and $(A, A')$ are as above and continuous in $t$. Then by \cref{lemma:finite_jumps_flow_inversion}, $\phi$ as in \eqref{eq:RDE_flow_Duhamel} is a flow of homeomorphisms i.e for any $(s,t) \in \Delta_{[0,T]}$, $\phi_{s,t} \in \mathbb{R}^{d_{Y}\times d_{Y}}$ is invertible. Then for the process 
    \begin{equation*}
        Y_{t}\coloneqq \phi_{0,t} \xi + \int_{0}^{t} \phi_{r, t}\dd M_{r}= \phi_{0, t} \left( \xi + \int_{0}^{t} \phi_{0,s}^{-1}\dd M_{s}\right)\eqqcolon \phi_{0,t} \tilde{Y}_{t}
    \end{equation*}
    it follows by applying e.g. the rough stochastic  It\^{o}-Wentzell formula \cite[Theorem 4.16]{DauseFrizJentzenSong2026} with respect to the functional $F_{t}(y)\coloneqq \phi_{0,t}y$\footnote{Or rather the canonical controlled field generated by the functional; see \cite[Example 3.16]{DauseFrizJentzenSong2026}.} that 
    \begin{equation*}
        \delta Y_{s,t} = \int_{s}^{t} (F'_{r}(\tilde{Y}_{r}), F''_{r}(\tilde{Y}_{r}))\dd \mathbf{X}_{r}+ \int_{s}^{t} D F_{r}(\tilde{Y}_{r}) \dd \tilde{Y}_{r}= \int_{s}^{t} (A_{r} Y_{r}, (A'_{r} + A^{2}_{r} ) Y_{r})\dd \mathbf{X}_{r} + \delta M_{s,t}. 
    \end{equation*}
\end{example}
Following the exposition in \cref{appendix:section_flow_transform} the main difficulty of establishing such a result in the càdlàg-setting is that in general $\phi$ is not invertible. In order to get a grasp on what a generalisation of \cref{example:smooth_solution_Duhamel} to the càdlàg regime should look like, we first examine the case of finitely many jumps. 
\begin{example}
\label{example:Duhamel}
Suppose $\mathbf{X}$ has finitely many jumps denoted by $(t_{i})_{i=1}^{N}\subset (0, T)$. Thus by \cref{lemma:finite_jumps_flow_inversion} we know that the RDE flow $\phi$ is invertible on any half-open intervall $[t_{i}, t_{i+1})$ i.e. $\forall i=0, \dots, N$ and $\forall t \in [t_{i}, t_{i+1})$, $\phi_{t_{i}, t}: \mathbb{R}^{d_{Y}}\to \mathbb{R}^{d_{Y}}$ is invertible, where $t_{0}=0, t_{N+1}=T$. In particular this yields by classical variation of constants arguments that the process 
\begin{equation}
\label{eq:example_Duhamel_pre_jump}
\begin{aligned}
Y_{t}&= \phi_{0, t} \xi + \int_{0}^{t} \phi_{s, t}\dd M_{s}- \sum_{0 \leq s \leq t} 
\Delta(\phi_{0, \cdot})_{s} \phi^{-1}_{0,s-} \Delta M_{s}\\
&= \phi_{0, t} \xi + \int_{0}^{t} \phi_{s, t}\dd M_{s}
\end{aligned}
\end{equation}
is the unique solution of \eqref{eq:inhomogeneous_linear_RSDE} on $[0,t_{1})$. Now at $t_{1}$ we make the ansatz \eqref{eq:example_Duhamel_pre_jump} and then suitably adjust the jump following the dynamic \eqref{eq:inhomogeneous_linear_RSDE}. In particular we see that 
\begin{equation}
\label{eq:example_Duhamel_at_jump}
\begin{aligned}
    Y_{t_{1}}=& \phi_{0, t_{1}} \xi + \int_{0}^{t_{1}} \phi_{s,t_{1}}\dd M_{s} + \Delta M_{t_{1}} + A_{t_{1}-} Y_{t_{1}-} \Delta X_{t_{1}}+ (A'_{t_{1}-}+ A^{2}_{t_{1}-})Y_{t_{1}-} \Delta \mathbb{X}_{t_{1}} \\
    &- \Delta (\phi_{0, \cdot})_{t_{1}}\xi- \Delta \left(\int_{0}^{\cdot} \phi_{s,\cdot}\dd M_{s}\right)_{t_{1}}
\end{aligned}
\end{equation}
solves \eqref{eq:inhomogeneous_linear_RSDE} 'at $t_{1}$'. Now note that for any $u \in [0,T]$ it holds 
\begin{equation}
\label{eq:jump_Volterra}
    \begin{aligned}
        \Delta \left(\int_{0}^{\cdot} \phi_{s,\cdot}\dd M_{s}\right)_{t_{1}}=& \lim_{u \uparrow t_{1}} \int_{u}^{t_{1}} \phi_{s, t_{1}}\dd M_{s} + \int_{0}^{u} (\phi_{s, t_{1}}- \phi_{s, u})\dd M_{s}\\
        =&\phi_{t_{1}-, t_{1}} \Delta M_{t_{1}} + \lim_{u \uparrow t_{1}}(\phi_{u, t_{1}}- \operatorname{Id}) \int_{0}^{u} \phi_{s,u}\dd M_{s}\\
        =&\phi_{t_{1}-, t_{1}} \Delta M_{t_{1}} + \lim_{u \uparrow t_{1}}(\phi_{u, t_{1}}- \operatorname{Id}) (Y_{u}- \phi_{0, u} \xi)\\
        =&\phi_{t_{1}-, t_{1}} \Delta M_{t_{1}} + A_{t_{1-}} Y_{t_{1}-}\Delta X_{t_{1}}+  (A'_{t_{1}-}+ A^{2}_{t_{1}-})Y_{t_{1}-} \Delta \mathbb{X}_{t_{1}}\\
        &-\Delta (\phi_{0,\cdot})_{t_{1}}\xi,
    \end{aligned}
\end{equation}
using \cref{flow:RDE} in the last line. Inserting this into \eqref{eq:example_Duhamel_at_jump} yields 
\begin{equation}
    \label{eq:example_Duhamel_after_jump}
    Y_{t} = \phi_{0, t} \xi + \int_{0}^{t} \phi_{s,t}\dd M_{s} + \sum_{0 \leq r\leq t} (\phi_{r, t}- \phi_{r-, t}) \Delta M_{r}
\end{equation}
solves \eqref{eq:inhomogeneous_linear_RSDE} on $[0, t_{1}]$. Now on $[t_{1}, t_{2})$ the flow is again invertible and thus by Markovianity of \eqref{eq:inhomogeneous_linear_RSDE} and the semi-flow property of $\phi$ we see that the unique solution on $[t_{1}, t_{2})$ needs to be of the form
\begin{equation*}
\begin{aligned}
    Y_{t}= \phi_{t_{1}, t}Y_{t_{1}} + \int_{t_{1}}^{t} \phi_{s, t}\dd M_{s}=
    \phi_{0, t} \xi + \int_{0}^{t} \phi_{s, t}\dd M_{s} + \sum_{0\leq r \leq t_{1}} (\phi_{r, t} - \phi_{r-, t}) \Delta M_{r}.
\end{aligned}
\end{equation*}
Adding an analogous jump correction as above at $t_{2}$, yields that the unique solution on $[0,t_{2}]$ is again of the form \eqref{eq:example_Duhamel_after_jump}. Iterating this argument over all (finitely many) jumps yields that \eqref{eq:example_Duhamel_after_jump} is the unique solution of \eqref{eq:inhomogeneous_linear_RSDE} on $[0,T]$. 
\end{example}

In general the rough driver $\mathbf{X}$ can have countably many jumps that possibly accumulate. Therefore verifying the formula \eqref{eq:example_Duhamel_after_jump} is not possible through a naive limiting argument built on \cref{example:Duhamel}. Instead the proof will follow by our Doob-Meyer decomposition \cref{corollary: Doob-Meyer for strongly controlled} in combination with variational estimates on the covariation bracket à \cite[Theorem 5.1]{fzk23}.
\begin{theorem} \label{lemma: Duhamel Y is strongly stochastically controlled}
Let $p \in [2, 3)$, $m\in [2,\infty)$, $\bX:= (X,\X) \in \sV^{p}([0,T]; \R^{d_{X}})$ and $(A, A')\in \sV^{p}_{X}\cL(\R^{d_{Y}}; \cL(\mathbb{R}^{d_{X}};\R^{d_{Y}}))$ deterministic. Further, let $M \in \cM^{p,m}$ and $\xi \in L^{m}(\mathfrak{F}_{0})$. Then, for any $t \in [0,T]$ it holds $\sum_{0\leq s\leq t} \Delta (\phi_{\cdot, t})_{s}\Delta M_{s} \in L^{m}$ and the process 
    \begin{equation}
    \label{eq: definition Y for Duhamel}
        Y_{t} := \phi_{0, t}\xi + \int_{0}^{t} \phi_{s, t} \dd M_{s} + \sum_{0 \leq s \leq t} \Delta(\phi_{\cdot, t})_{s} \Delta M_{s}. 
    \end{equation}
    then $(Y, AY, (A^2+A') Y) \in \sV^{p,m}_{\bX}$ is the unique $L^m$-solution to \eqref{eq:inhomogeneous_linear_RSDE}.
\end{theorem}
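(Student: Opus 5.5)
Uniqueness comes for free from \cref{proposition: exsitence of additive linear RSDEs with random coeff}: deterministic $(A,A')\in\sV^p_X\cL$ lies in $\sV^{p,m,\infty}_X\cL$, and $M\in\cM^{p,m}$, so there is at most one $L^m$-solution. It therefore suffices to show that $Y$ defined in \eqref{eq: definition Y for Duhamel} is an $L^m$-solution. I would first rewrite $Y$ without the jump sum. Write $\Delta(\phi_{\cdot,t})_s=\phi_{s,t}-\phi_{s-,t}$ and use that $s\mapsto\phi_{s-,t}$ is left-continuous, hence predictable. Then
\[
\int_0^t\phi_{s,t}\dd M_s+\sum_{0\le s\le t}\Delta(\phi_{\cdot,t})_s\Delta M_s
=\int_0^t\phi_{s-,t}\dd M_s+\Big[\int_0^\cdot(\phi_{s,t}-\phi_{s-,t})\dd M_s-\sum_{s\le \cdot}(\phi_{s,t}-\phi_{s-,t})\Delta M_s\Big]_t .
\]
The bracket is interpreted via the $(X,M)$ covariation estimates of \cite[Theorem 5.1]{fzk23}. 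Since $\phi_{\cdot,t}$ is a deterministic $V^p$ path (with $\|\phi_{\cdot,t}\|_p$ bounded uniformly in $t$ by the RDE flow estimates of \cref{appendix:deterministic_rough_path_theory}), BDG gives the $L^m$-bound on $\sum_s\Delta(\phi_{\cdot,t})_s\Delta M_s$. Concretely, $\sum_s|\Delta\phi_s|^2|\Delta M_s|^2\le\sup_s|\Delta\phi_s|^2[M]_T$, combined with a Young/Lépingle-type bound $\sum_s|\Delta\phi_s||\Delta M_s|\le\|\phi\|_{p}\|M\|_{p'}$ for the pathwise variant. The cleanest formulation is
\[
Y_t=\phi_{0,t}\xi+\int_0^t\phi_{s-,t}\dd M_s ,
\]
which I would verify as the primary identity and which gives integrability directly by BDG.

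Next I show that $(Y,AY,(A^2+A')Y)$ is a strongly controlled $L^m$ process and identify its Doob--Meyer decomposition via \cref{corollary: Doob-Meyer for strongly controlled}. Using the semi-flow property $\phi_{s,t}=\phi_{u,t}\phi_{s,u}$ for $s\le u\le t$, I obtain for $u\le t$:
\[
Y_t=\phi_{u,t}Y_u+\int_u^t\phi_{s-,t}\dd M_s .
\]
I need the analogue of this with $\phi_{s-,t}$ on $(0,u]$. This comes from $\phi_{s-,t}=\phi_{u,t}\phi_{s-,u}$ for $s\le u$, which follows by passing to the limit in the flow property. Hence
\[
\delta Y_{u,t}=(\phi_{u,t}-\Id)Y_u+\int_u^t\phi_{s-,t}\dd M_s .
\]
Taking $\E_u$ kills the martingale integral, since the integrand is deterministic. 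Since $\phi_{u,t}-\Id=A_u\delta X_{u,t}+(A'_u+A_u^2)\X_{u,t}+O(w^{3/p})$ by the deterministic RDE expansion, this yields
\[
\E_u\big[\delta Y_{u,t}-A_uY_u\delta X_{u,t}-(A'_u+A_u^2)Y_u\X_{u,t}\big]\stackrel{p/3}{=}0
\]
in $L^m$, which is exactly \eqref{eq: remainder assumption on strongly stochastically controlled} with $r=p/3<1$. I also need $(AY,(A^2+A')Y)$ and $(Y,AY)$ to be in $\sV^{p,m}_X$. This follows by the same representation together with BDG, giving $\|\delta Y_{u,t}\|_{L^m}\lesssim w(u,t)^{1/p}(\|Y_u\|_{L^m})+\|M\|_{p,m,[u,t]}$, and products with deterministic controlled $A$.

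The corollary then gives $Y=\xi+\tilde M+\int(AY,(A^2+A')Y)\dd\bX$ with $A\equiv0$. The main obstacle is identifying the martingale $\tilde M$ as $M$. For this I would compute $\E_u[\delta Y_{u,t}]$-free parts, namely $\delta Y_{u,t}-\E_u\delta Y_{u,t}=\int_u^t\phi_{s-,t}\dd M_s$. I then sum over a partition: $\sum(\phi_{s-,v}-\Id)\dd M$ pieces give $\sum_{[u,v]}\int_u^v(\phi_{s-,v}-\Id)\dd M_s$. Since $\phi_{s-,v}-\Id$ is bounded by $w(s-,v)^{1/p}\le w(u,v)^{1/p}$ (using the left limit to avoid the jump at $s$, which is why the $s-$ version is essential), BDG yields an $L^m$ norm $\lesssim(\sum_{[u,v]}w(u,v)^{2/p}\|\delta\langle\text{or }[M]\rangle\|)^{1/2}$. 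To show this vanishes in the RRS sense, I would use the mild-control argument of \cref{lemma:expansion_notation}\eqref{lemma:expansion_notation_item4}. By uniqueness of the martingale in the stochastic sewing, this forces $\tilde M=M$. Thus $Y$ solves \eqref{eq:inhomogeneous_linear_RSDE} in the $L^m$ sense, and uniqueness concludes.
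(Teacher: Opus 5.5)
The step that fails is the reduction of $Y$ to a single stochastic integral. Consider the deterministic, bounded (hence predictable) integrand $s\mapsto\phi_{s,t}-\phi_{s-,t}$, which is supported on the countable jump set of $\phi_{\cdot,t}$. Its Itô integral is exactly the sum of jumps, $\int_0^t(\phi_{s,t}-\phi_{s-,t})\dd M_s=\sum_{s\le t}\Delta(\phi_{\cdot,t})_s\Delta M_s$. So the bracket in your display vanishes identically, and the display simply discards the jump sum. The resulting ``cleanest formulation'' $Z_t:=\phi_{0,t}\xi+\int_0^t\phi_{s-,t}\dd M_s$ is \emph{not} a solution once $\bX$ and $M$ jump at a common time.

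Here is a counterexample. Take $d_X=d_Y=1$, $t_0\in(0,T)$, $X=c\1_{[t_0,T]}$, $\X\equiv0$ (this satisfies Chen's relation), $A\equiv a$ and $A'\equiv0$. Let $M=\epsilon\1_{[t_0,T]}$, where $\epsilon$ is a Rademacher variable that is $\mathfrak F_{t_0}$-measurable and independent of $\mathfrak F_{t_0-}$, so $M\in\cM^{p,m}$. Then $\phi_{s,t}=1+ac$ if $s<t_0\le t$ and $\phi_{s,t}=1$ otherwise. By the jump structure of the rough stochastic integral, any solution satisfies $\Delta Y_{t_0}=\Delta M_{t_0}+acY_{t_0-}$. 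On the other hand, $\phi_{t_0-,t}=1+ac$ gives $\Delta Z_{t_0}=\Delta M_{t_0}+ac\,(Z_{t_0-}+\Delta M_{t_0})$: your formula pushes $\Delta M_{t_0}$ through the jump of the flow at $t_0$. The correct compact form is
\[
Y_t=\phi_{0,t}\xi+\int_{(0,t]}\phi_{s,t}\dd M_s ,
\]
with the integrand $\phi_{s,t}$ itself, which is deterministic and therefore predictable, rather than its left limit. In the example this equals $(1+ac)(\xi+M_{t_0-})+M_t-M_{t_0-}$ for $t\ge t_0$, which is the true solution. In the paper, $\int_0^t\phi_{s,t}\dd M_s$ in \eqref{eq: definition Y for Duhamel} is the left-point integral with integrand $\phi_{s-,t}$; compare \cref{example:Duhamel}, where $\lim_{u\uparrow t_1}\int_u^{t_1}\phi_{s,t_1}\dd M_s=\phi_{t_1-,t_1}\Delta M_{t_1}$. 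The jump sum is precisely what converts $\phi_{s-,t}$ into $\phi_{s,t}$.

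Your later steps cannot detect this, because the martingale identification is flawed in the same way. You bound $|\phi_{s-,v}-\Id|$ by $w(u,v)^{1/p}$ for $s\in(u,v]$ and pair it with $\delta[M]_{u,v}$. A common jump inside $(u,v]$ then enters both factors. Hence the sums $\sum_{[u,v]\in\cP}\int_u^v(\phi_{s-,v}-\Id)\dd M_s$ do not vanish but converge to $\sum_s(\phi_{s-,s}-\Id)\Delta M_s$ (in the example, to $ac\,\Delta M_{t_0}$), which is exactly the spurious martingale part of $Z$. The left limit does not avoid the jump at $s$; it is what picks it up. Your bound also lacks the one-sided form $w_1(u,v-)^{\alpha_1}w_2(u+,v)^{\alpha_2}$ that \cref{lemma:expansion_notation} requires.

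With the corrected representation your plan does go through, and it is somewhat more direct than the paper's:
\begin{itemize}
\item The exact semi-flow property gives $\delta Y_{u,t}=(\phi_{u,t}-\Id)Y_u+\int_{(u,t]}\phi_{s,t}\dd M_s$, so $\E_u$ removes the integral and the Doob--Meyer step works as you describe.
\item The germ $\int_{(u,v]}(\phi_{s,v}-\Id)\dd M_s$ has vanishing integrand at $s=v$. BDG and Doob therefore bound its $L^m$-norm by $\sup_{s\in(u,v)}|\phi_{s,v}-\Id|\,\|\delta M_{u,v-}\|_{L^m}$, a product of a control at $(u+,v)$ and one at $(u,v-)$. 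Then \cite[Lemma B.2]{AllanPieper2026} gives $M^Y=M$.
\item The $L^m$-integrability of the jump sum is likewise just BDG for the integrand $\phi_{s,t}-\phi_{s-,t}$. The pathwise bound $\sum_s|\Delta\phi_s||\Delta M_s|\le\|\phi\|_p\|M\|_{p'}$ you mention is not available, since martingales need not have finite $p'$-variation for $p'<2$.
\end{itemize}
The paper instead keeps the left-point integral and the jump sum separate, controls the sum via \cite[Theorem 5.1]{fzk23}, and computes the jump of the Volterra integral explicitly to reach the same one-sided structure for $\Xi_{u,v}$.
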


\begin{proof}
By definition of the semi-flow and a priori bounds on (deterministic) linear RDEs (as e.g. implicitly included in \cref{corollary: a priori control bound linear additive RSDE})  we have that 
\begin{equation}\label{eq: uniform p-var bound on semi-flow and remainder}
  \|(\phi_{\cdot, \cdot}-\Id)\|_{p,[0,T]}  \leq C \quad \text{and} \quad \|R\|_{\frac{p}{2},[0,T]} \leq C,
\end{equation}
where we write for $R_{s,t}:= R^{\phi_{s, \cdot}-\Id}_{s,t})$ for any $(s,t)\in \Delta_{[0,T]}$ and $0\leq C < \infty$ is a constant which only depends on $p, T, \|A,A'\|_{\infty; \cL},\llbracket A,A'\rrbracket_{p, [0,T]}$ and $\|\bX\|_{p,[0,T]}$. Indeed, we notice that for any partition $\cP$ of $[0,T]$ we have by \cref{corollary: a priori control bound linear additive RSDE} that 
\begin{equation*}
    \sum_{[u,v]\in \cP} |(\phi_{u,v}-\Id)|^p \lesssim_{p,m,K} \sum_{[u,v]\in \cP} \|\bX\|_{p,[u,v]}^p\leq \|\bX\|^p_{p,[0, T]}.
\end{equation*}
Similarly, again by \cref{corollary: a priori control bound linear additive RSDE} we further have that 
\begin{equation*}
    \sum_{[u,v]\in \cP}  \big|R^{(\phi_{u,\cdot}-\Id)}_{u,v}\big|^{\frac{p}{2}} \lesssim_{p,m,K} \sum_{[u,v]\in \cP} \big(\|\bX\|_{p,[u,v]}+ \|A,A'\|_{p,[u,v]} \big)^p \lesssim \|\bX\|^p_{p,[0, T]}+ \|A,A'\|^p_{p,[0,T]}.
\end{equation*}
Therefore, also
\begin{align}\label{eq:Duhamel_proof_boundphi}
    \sup_{(s,t) \in [0,T]} |\phi_{s,t}|\leq 1+C \quad \text{and} \quad \bigg(\sum_{s\leq r\leq t} |\Delta (\phi_{\cdot,t })_{r}|^p\bigg)^{\frac{1}{p}} \lesssim C .  
\end{align}
By \cite[Theorem 5.1]{fzk23} with $r=\infty$ and $p_1=p$, we thus may verify that 
\begin{equation}\label{eq: bound jumps semi-flow times martingale}
\begin{split}
     \bigg\|\sup_{(s,t) \in \Delta_{[0,T]}} \bigg|\sum_{s \leq r \leq t} \Delta( \phi_{\cdot, t})_{r} \Delta M_{r} \bigg| \bigg\|_{L^m} &\lesssim \bigg(\sum_{s\leq u\leq t} |\Delta (\phi_{\cdot,t })_{s}|^p\bigg)^{\frac{1}{p}} \bigg\|\bigg(\sum_{0\leq s\leq T} |\Delta M_s|^2\bigg)^{\frac{1}{2}} \bigg\|_{L^m}\\
    &\lesssim \|\bX\|_{p,[0,T]} \|[M]_T^{\frac{1}{2}}\|_{L^m}   <\infty
\end{split}
\end{equation}
and therefore also by another application of the BDG inequality that 
\begin{equation}\label{eq: Duhamel Lm bound}
\begin{split}
    \sup_{t \in [0,T]} \| Y_{t} \|_{L^{m}} &\lesssim
    \sup_{t \in [0,T]}|\phi_{0,t}| \|\xi\|_{L^{m}} + \sup_{(s, t) \in \Delta_{[0,T]}} |\phi_{s,t}| \|M_T \|_{L^m}  \\
    &+ \sup_{t\in [0,T]}\bigg\|\sum_{0 \leq r \leq t} \Delta( \phi_{\cdot, t})_{r} \Delta M_{r}  \bigg\|_{L^m} < \infty.
\end{split}
\end{equation}
By the semi-flow property of $\phi$ one verifies directly, that 
\begin{equation}\label{eq: Markov property Duhamel}
    Y_{t} = \phi_{s,t} Y_{s} + \int_{s}^{t} \phi_{r, t} \dd M_{r} + \sum_{s< r\leq t}\Delta(\phi_{\cdot, t})_{r} \Delta M_{r}
\end{equation}
and thus especially that 
\begin{equation} \label{eq: Duhamel proof id_1}
\begin{split}
    \delta (Y-M)_{s,t}= \int_{s}^{t} (\phi_{r,t}-\Id) \dd M_{r} + (\phi_{s,t} - \Id) Y_{s} + \sum_{s < r \leq t}\Delta(\phi_{\cdot, t})_{r} \Delta M_{r}. 
\end{split}
\end{equation}
Thus, we find that 
\begin{equation*}
    \begin{aligned}
    \| \delta (Y-M)_{u,v} \|_{L^{m}} \leq &|(\phi_{u,v}-\Id)|\| Y_{u}\|_{L^{m}}+ \bigg \| \int_{u}^{v} (\phi_{r, v}-\Id) \dd M_{r} \bigg \|_{L^{m}}+ \bigg \| \sum_{u < r \leq v} |\Delta (\phi_{\cdot, v})|_{r}  \Delta M_{r} \bigg \|_{L^{m}}\\
    \lesssim & \|(\phi-\Id)\|_{p,[u,v]}+ \| M \|_{p,m,[u,v]}+ \|\bX\|_{p,[u,v]}\|M\|_{p,m,[u,v]}
    \end{aligned}
\end{equation*}
and similarly 
\begin{equation*}
    \begin{split}
        \Big \|\E_{u}\Big[\delta Y_{u,v}- A_{u}Y_{u}\delta X_{u,v}\Big] \Big\|_{L^{m}} &= \Big \|\E_{u}\Big[\delta (Y-M)_{u,v}- A_{u}Y_{u}\delta X_{u,v}\Big] \Big\|_{L^{m}}  \\
        &\leq  |(\phi_{u,v}-\Id) -A_{u}\delta X_{u,v}| \|Y_{u}\|_{L^{m}}\\
        &\lesssim_{C, \|A,A'\|_{\infty; \cL}} \Big\|\sup_{r\in [\cdot,\cdot]} |R^{(\phi_{r,\cdot}-\Id)}| \Big\|_{\frac{p}{2}, [u,v]}
    \end{split}
\end{equation*}
This shows that indeed $(Y,AY) \in \sV^{p,m}_X$. Let us for simplicity write 
\begin{equation*}
  Y^{\sharp}_{s,t}\coloneqq \delta (Y-M)_{s,t}- A_{s} Y_{s} \delta X_{s, t}- (A'_{s} + A^{2}_{s})Y_{s}\mathbb{X}_{s,t} 
\end{equation*}
and define $(\phi-\Id)^{\sharp}$ analogously.
Then, by \eqref{eq: Duhamel proof id_1} we have for any $(s,t)\in \Delta_{[0,T]}$ that  
\begin{equation*}
\begin{split}
    \| \E_{s}[Y^{\sharp}_{s,t}] \|_{L^{m}}\lesssim &|(\phi- \Id)^{\sharp}_{s,t}| \| Y_{s}\|_{L^{m}}
\end{split}
\end{equation*}
and thus indeed $\E_{\cdot}[Y^{\sharp}] \stackrel{\frac{p}{3}}{=} 0$ in $L^m$, which implies that $(Y, AY, (A^2+A') Y) \in \sV^{p,m}_{\bX}$ as wanted. By \cref{corollary: Doob-Meyer for strongly controlled} we know that 
\begin{equation*}
    Y_t = \xi + M^{Y}_t +\int_0^t (A,A')_r Y_r \dd \bX_r,
\end{equation*}
where $M^Y$ is the martingale defined by the RRS convergence 
\begin{equation*}
    M^Y_t = \lim_{|\cP| \to 0} \sum_{(u,v) \in \cP} \delta Y_{u,v}- \E_{u}[\delta Y_{u,v}].
\end{equation*}
Thus, all we have left to show is that $M^Y\equiv M$.
To this end we find that analogously to \eqref{eq:jump_Volterra}, using \eqref{eq: Markov property Duhamel} we have that
\begin{equation}\label{eq:Duhamel_forward_jump_Volterra}
\begin{split}
    &\Delta \bigg( \int_{s}^{\cdot} (\phi_{r, \cdot}-\Id)  \dd M_{r} \bigg)_{t} = \lim_{u \uparrow t} \bigg(\int_{u}^{t}  \phi_{r, t} \dd M_{r}- \Delta M_{t} + (\phi_{u,t} - \Id) \int_{s}^{u} \phi_{r, u} \dd M_{r} \bigg)\\
    &=(\phi_{t-, t}-\Id) \Delta M_{t} +(\phi_{t-,t} - \Id) Y_{t-} + (\phi_{s, t-} - \phi_{s, t})Y_{s}- (\phi_{t-,t} -\Id) \lim_{u \uparrow t}\sum_{s< r \leq u} \Delta( \phi_{\cdot, u})_{r} \Delta M_{r}\\
    &= (\phi_{t-,t}-\Id)\bigg(\Delta M_t - (\phi_{s,t-}-\Id) Y_s+ \delta Y_{s,t-} - \sum_{s<r<t} \Delta( \phi_{\cdot, t-})_{r} \Delta M_{r}\bigg)
\end{split}
\end{equation}
where we used, that by \cite[Theorem 5.1]{fzk23} and dominated convergence, which is applicable due to \eqref{eq:Duhamel_proof_boundphi}, we know that
\begin{equation}\label{eq: limit to 0 of jump differences in Duhmael}
\begin{aligned}
    \lim_{u \uparrow t} &\bigg\| \sum_{s< r \leq u} \Delta( \phi_{\cdot, u})_{r} \Delta M_{r} - \sum_{s< r < t} \Delta( \phi_{\cdot, t-})_{r} \Delta M_{r} \bigg \| _{L^{m}}^{m}\\
    &\lesssim \lim_{u \uparrow t}   \bigg(\bigg(\sum_{u< r < t} |\Delta( \phi_{\cdot, t-})_{r}|^{p}  + \sum_{s < r < t} |\Delta( \phi_{\cdot, t-})_{r}- \Delta( \phi_{\cdot, t-})_{r}|^p\bigg)^{\frac{1}{p}} \|\delta M_{s,t}\|_{L^{m}}^{m}=0.
\end{aligned}
\end{equation}
By \eqref{eq: Duhamel proof id_1} we clearly have $\E_{s}[\delta Y_{s,t}]= (\phi_{s,t}- \Id) Y_s$ for any $(s,t)\in \Delta_{[0,T]}$ and therefore, we find again by \eqref{eq: Duhamel proof id_1} for the germ
\begin{align*}
   &\Xi_{u,v} :=\delta (Y-M)_{u,v}- \E_{u}[\delta Y_{u,v}] \\
    &=\int_u^{v-} (\phi_{r,v-}-\Id)\dd M_r +  \sum_{u<r \leq v} \Delta (\phi_{\cdot,v})_r \Delta M_r 
    \\&= \int_u^{v-} (\phi_{r,v-}-\Id)\dd M_r+ (\phi_{v-,v}-\Id)\bigg(\sum_{u<r < v} \Delta (\phi_{\cdot,v-})_r \Delta M_r+ (\phi_{u,v-}-\Id) Y_u +\delta Y_{u,v-}\bigg)
\end{align*}
together with \eqref{eq: Duhamel Lm bound} and \cite[Theorem 5.1]{fzk23} that for any $(u,v) \in \Delta_{[0,T]}$, clearly $\E_{u}[\Xi_{u,v}]=0$ and 
\begin{equation*}
    \|\Xi_{u,v}\|_{L^m} \lesssim \big(\|\phi-\Id\|_{p,[u,v)}+ \|Y\|_{p,m,[u,v)}\big) \Big(\|M\|_{p,m,[u,v]}(1+\|\phi-\Id\|_{p,[u,v]}) + \|\phi-\Id\|_{p,[u,v]}\Big)
\end{equation*}
and thus by \cite[Lemma B.2]{AllanPieper2026} that indeed $M^Y-M \equiv 0$.
\end{proof}

\begin{remark}
    A variation of the above proof together with using either Kolmogorov-criteria ($p < m$) or the weak stability property ($m \leq p$) could have also been used to establish the stability estimate \eqref{ref:Key} without relying on \cref{subsection:Well-posedness and stability of semi-linear RSDEs}. The next two theorems would follow analogously. 
\end{remark}

\begin{theorem}
    Let $p \in (2, 3), m\in [2, \infty)$, $\bX \in \sV^{p}([0,T]; \R^{d_{X}})$ and
    \\
    $(A, A')\in \sV^{p}_{X} \cL(\mathbb{R}^{d_{Y}}; \cL(\mathbb{R}^{d_{X}}; \mathbb{R}^{d_{Y}})) $. Further let $M \in V^{p}\cM^{m}$ and $\xi \in L^{m}(\mathfrak{F}_{0})$. Define $(Y_{t})_{t \in [0,T]}$  as
    \begin{equation*}
        Y_{t}\coloneqq \phi_{0, t}\xi + \int_{0}^{t} \phi_{s, t}\dd M_{s} + \sum_{0 \leq s \leq t} \Delta(\phi_{\cdot, t})_{s} \Delta M_{s}
    \end{equation*} and 
        $\mathcal{Y}\coloneqq (Y, AY, (A^{2}+A')Y; M, A \bullet \d M)$. Then $\mathcal{Y}$ is the unique \emph{good} $L^m$-solution to \eqref{eq:inhomogeneous_linear_RSDE}.
\end{theorem}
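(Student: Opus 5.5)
By \cref{lemma: Duhamel Y is strongly stochastically controlled}, $Y$ is already the unique $L^m$-solution to \eqref{eq:inhomogeneous_linear_RSDE}, with $(Y,AY,(A^2+A')Y)\in\sV^{p,m}_{\bX}$ and martingale part exactly $M$. It therefore remains to show that $\mathcal{Y}=(Y,AY,(A^2+A')Y;M,A\bullet\d M)\in\mathscr{H}^{\tp}_{\bX}$, i.e.\ that $Y$ is also an $L^0$-solution. Uniqueness of good solutions then follows from uniqueness of the $L^m$-solution, since every good solution is in particular an $L^m$-solution. Note that the drift component vanishes, and that in the linear deterministic case $Z=Df(Y)\sigma(Y)\bullet\d N$ reduces to $A\bullet\d M$ (here $\sigma=\Id$).

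\textbf{Strategy.} I would avoid the moment-based Kolmogorov route, which fails for $m\le p$. Instead I would use \cref{thm:FZK_RSDE}: with $(A,A')$ deterministic, hence satisfying \eqref{assumption:item:vector_fields_1} up to the linear-growth caveat, and with $\sigma=\Id$ (so $\sigma'=0$), there is a unique $L^0$-solution $\bar Y$. It is obtained $\omega$-wise by solving the linear RDE driven by $\mathscr{J}(\bX;M)$, and it comes with $\bar{\mathcal{Y}}=(\bar Y,A\bar Y,(A^2+A')\bar Y;M,A\bullet\d M)\in\mathscr{H}^{\tp}_{\bX}$. Linearity gives global existence without boundedness, since linear RDEs do not explode. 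The task thus reduces to showing $\bar Y=Y$ up to indistinguishability.

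\textbf{Identification.} I would show pathwise that $\bar Y$ satisfies the Duhamel formula \eqref{eq: definition Y for Duhamel}. Since the driver $\mathscr{J}(\bX;M)$ is a genuine càdlàg rough path $\omega$-wise, the deterministic variation-of-constants argument applies. Concretely, I would approximate by truncating the jumps of $\mathscr{J}(\bX;M)$ to the finitely many of size $>\varepsilon$, as in \cref{example:Duhamel}, use flow invertibility between jumps (\cref{lemma:finite_jumps_flow_inversion}), and then pass to the limit using RDE stability in $p$-variation. An alternative I would try first is to define $\tilde Y$ as the right-hand side of \eqref{eq: definition Y for Duhamel} built from the \emph{random} flow, and then verify the equation directly via the rough stochastic Itô formula (\cref{prop:rsIto}), since $\phi$ is deterministic. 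In either form, the target is $\delta \bar Y_{s,t}=\phi_{s,t}\bar Y_s-\bar Y_s+\int_s^t\phi_{r,t}\dd M_r+\sum_{s<r\le t}\Delta(\phi_{\cdot,t})_r\Delta M_r$, i.e.\ the Markov identity \eqref{eq: Markov property Duhamel}.

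A cleaner option, if the above is messy, is to exploit that both $Y$ and $\bar Y$ have martingale part $M$. Then $D:=Y-\bar Y$ solves the homogeneous linear equation $D_t=\int_0^t(A,A')D\,\dd\bX$ with $D_0=0$. For $\bar Y$ the rough integral is pathwise, whereas for $Y$ it is an $L^m$-limit of left-point sums; both are u.c.p.\ limits of the same Riemann sums by \cref{lemma:sufficient_conditions_RSI}, so they agree as RSIs. Hence $D$ solves the random linear RDE with zero data. If $D$ admits a pathwise controlled structure it must vanish; to get that structure I would invoke the weak stability framework of \cref{thm:consistency_of_solutions II} / \cref{lemma: discrete consistency result for random RDEs} with $(\sigma,\sigma')=(\Id,0)$, checking that the $L^m$-solution is weakly stable along discretisations of $(A,A')$. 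Weak stability should follow from \cref{proposition: exsitence of additive linear RSDEs with random coeff} applied to piecewise-constant approximations.

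\textbf{Main obstacle.} I expect the main difficulty to be this identification step for $m\le p$. There we cannot a priori assert that $Y$ has finite pathwise $\tp$-variation, and hence cannot directly deduce pathwise controlledness. The Duhamel representation should bypass this: $\phi$ is deterministic with $\|\phi-\Id\|_{p}<\infty$, $\int_0^t\phi_{s,t}\dd M_s$ can be written as $\phi_{0,t}$-weighted integrals when the flow is invertible, and the jump sum is controlled by \cite[Theorem 5.1]{fzk23}. So the pathwise regularity of $Y$ should be readable directly from its formula, which settles membership in $\mathscr{H}^{\tp}_{\bX}$ via \cref{lemma:RSM<->cRP} and \cref{prop:scRPtoscRSM}.
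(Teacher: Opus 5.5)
Your skeleton is the one the paper intends. \cref{lemma: Duhamel Y is strongly stochastically controlled} makes $Y$ the unique $L^m$-solution with martingale part $M$, which also gives uniqueness of good solutions. \cref{thm:FZK_RSDE} gives an $L^0$-solution $\bar Y$; note that linear $(A,A')$ falls under \eqref{assumption:item:vector_fields_2} with $G\equiv0$, not \eqref{assumption:item:vector_fields_1}. \cref{lemma: discrete consistency result for random RDEs} with $(\sigma,\sigma')=(\Id,0)$, $N=M$ then identifies the two. The paper uses Kolmogorov for $p<m$ and weak stability for $m\le p$. Since $(A,A')$ is deterministic, weak stability works for every $m$, so using it throughout is fine.

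The gap is that weak stability, which carries the whole argument, is not delivered by what you cite. \cref{proposition: exsitence of additive linear RSDEs with random coeff} gives existence of the discretised $L^m$-solutions and Lipschitz dependence on the martingale for \emph{fixed} $(A,A')$ and $\bX$. It says nothing about convergence as $(\bX^{\cP^n},A^{\cP^n},(A')^{\cP^n})\to(\bX,A,A')$. For that you need stability in the rough path (in $\tp$-variation) and in the coefficients without integrability loss, namely \cref{lemma: stability of linear RSDEs without integrability loss}.

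That lemma's hypotheses hold only if the partitions satisfy \eqref{eq: assumption on cPn for Davie type}, i.e.\ eventually contain every jump time of $\bX$, $A'$ and $R^{A,A'}$. The results of \cref{appendix: A discrete approximation approach to consistency} plus interpolation then give the required convergence. If some jump time of $X$ is not eventually in $\cP^n$, \cref{lemma: AKLP approximation result} shows $X^{\cP^n}_t\not\to X_t$ for some $t$, so arbitrary piecewise-constant approximations do not work.

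Moreover, \cref{lemma: discrete consistency result for random RDEs} needs admissibility for \emph{both} solutions, so you also need $\bar Y^n\to\bar Y$. This follows from $\|\mathscr{J}(\bX^{\cP^n};M)-\mathscr{J}(\bX;M)\|_{\tp}\to0$ in $L^1$ \cite[Theorem 1.8]{fzk23}, a subsequence, and continuity of the RDE solution map. Both halves are packaged in \cref{lemma: randomised RDE is Davie}, which is the result to invoke; deterministic fields trivially have deterministic jump times.

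The other routes you list do not work as stated.
\begin{itemize}
\item Removing jumps of size $\le\varepsilon$ is not in general a well-defined approximation converging in $p$-variation for $p>2$, since jump sizes are only $\ell^p$-summable.
\item Verifying the formula via the rough stochastic It\^o formula, or writing $\int_0^t\phi_{s,t}\dd M_s$ through $\phi_{0,t}$, needs invertibility of $\phi$. That fails in the c\`adl\`ag setting (\cref{example:counterexample_flow}), which is exactly why \cref{lemma: Duhamel Y is strongly stochastically controlled} uses the Markov identity and Doob--Meyer instead.
\item The closing claim that pathwise $\tp$-variation can be read off the formula is unsupported. The map $t\mapsto\int_0^t\phi_{s,t}\dd M_s$ is a Volterra-type integral defined for each $t$ only up to a null set, and pathwise regularity for it is precisely what Kolmogorov ($p<m$) or weak stability supplies.
\item The reduction to $D=Y-\bar Y$ adds nothing, because deciding $D\equiv0$ is the identification problem itself. \cref{lemma: discrete consistency result for random RDEs} does not give $D$ pathwise structure. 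It shows the discretised $L^m$- and $L^0$-solutions coincide, since both reduce to the same SDE between partition points, and then passes to the limit in probability.
\end{itemize}
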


In combination with \cref{thm:fixed_point_Banach} and the results of \cref{subsection:Well-posedness and stability of semi-linear RSDEs} this yields the following mild representation of the semi-linear RSDE \eqref{section:ref:semilin_RSDE}:
\begin{theorem}
\label{thm:semilinearDuhamel}
   Let $p \in (2,3), m\in [2, \infty)$, $\bX \in \sV^{p}([0,T]; \R^{d_{X}}), \\
    (A, A')\in \sV^{p}_{X} \cL(\mathbb{R}^{d_{Y}}; \cL(\mathbb{R}^{d_{X}};\mathbb{R}^{d_{Y}}))$ and $\xi \in L^{m}(\mathfrak{F}_{0})$. Further assume $\sigma = \sigma_t (\omega, x)$ be a predictable
  coefficient field, Lipschitz and of linear growth uniformly in $(t, \omega)$ and $\langle N\rangle_{T}\in L^{\infty}$. Then there is  $(Y_{t})_{t \in [0,T]}$  such that 
    \begin{equation*}
        Y_{t}\coloneqq \phi_{0, t}\xi + \int_{0}^{t} \phi_{s, t} \sigma_{s}(Y_{s-})\dd N_{s} + \sum_{0 \leq s \leq t} \Delta(\phi_{\cdot, t})_{s} \sigma_{s}(Y_{s-}) \Delta N_{s}.
    \end{equation*} Let
        $\mathcal{Y}\coloneqq (Y, AY, (A^{2}+A')Y; \sigma(Y) \bullet \d N, A \sigma(Y) \bullet \d N)$. Then $\mathcal{Y}$ is the unique \emph{good} $L^m$-solution to \eqref{section:ref:semilin_RSDE}. 
\end{theorem}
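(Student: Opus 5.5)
We obtain \cref{thm:semilinearDuhamel} by combining the martingale fixed point argument of \cref{thm:fixed_point_Banach} with the linear Duhamel representation just established. Fix $M\in\cM^{2}$ (more generally $M\in\cM^{p,m}$). By \cref{lemma: Duhamel Y is strongly stochastically controlled} and the subsequent theorem, the linear RSDE \eqref{eq:inhomogeneous_linear_RSDE} with forcing $M$ has a unique good $L^m$-solution
\begin{equation*}
Y^M_t=\phi_{0,t}\xi+\int_0^t\phi_{s,t}\dd M_s+\sum_{0\le s\le t}\Delta(\phi_{\cdot,t})_s\Delta M_s .
\end{equation*}
Since $(A,A')$ is deterministic, it lies in $\sV^{p,m,\infty}_X\cL$. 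Hence \cref{proposition: exsitence of additive linear RSDEs with random coeff}, together with the remark following it (which uses $\|M\|_{p,2,[s,t]}\lesssim\||M|^*_T\|_{L^2}$), yields the stability estimate \eqref{ref:Key} for $M\mapsto Y^M$, with a constant uniform in $M$. The $L^m$- and $L^0$-solutions of the additive equation coincide: for $p<m$ this follows from \cref{thm:consistency_of_solutions I}, and otherwise from the linear consistency result \cref{cor:good-semilinear-deterministic-jumps}, since deterministic $(A,A')$ trivially has deterministic jump times.

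The second step checks the remaining hypotheses of \cref{thm:fixed_point_Banach}. Because $\langle N\rangle_T\in L^\infty$, the function $\rho(s,t):=\|\delta\langle N\rangle_{s,t}\|_{L^\infty}$ is dominated by a mild control; if necessary, one replaces it by the deterministic control $\|\langle N\rangle\|$ evaluated on intervals. The map $\Phi(M)=\int\sigma(Y^M_{-})\dd N$ sends $\cM^2$ into itself by the Itô isometry, the linear growth of $\sigma$, and the a priori bound $\sup_t\|Y^M_t\|_{L^2}\lesssim\|\xi\|_{L^2}+\|M\|_{\cM^2}$ coming from \eqref{eq: Duhamel Lm bound}. \cref{thm:fixed_point_Banach} then produces a unique fixed point $M^*$, and $Y:=Y^{M^*}$ is the unique $L^0$-solution of \eqref{section:ref:semilin_RSDE}. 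Substituting $M^*=\int\sigma(Y_-)\dd N$ into the Duhamel formula, and using the linearity of the stochastic integral and $\Delta M^*_s=\sigma_s(Y_{s-})\Delta N_s$, gives exactly the stated mild representation. Moreover, the martingale part of $\cY$ is $A\,\sigma(Y)\bullet\d N$ (i.e.\ $A\bullet\d M^*$), as in the linear theorem.

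The third step, upgrading the solution from good $L^2$ to good $L^m$, is the main obstacle. The fixed point is only obtained in $\cM^2$, whereas goodness in $L^m$ requires $M^*\in\cM^{p,m}$. I would first attempt to run the contraction on the closed subset $\{M\in\cM^2:\|M\|_{\cM^m}\le R\}$, which is permitted by the remark after \cref{thm:fixed_point_Banach}. Invariance of this set under $\Phi$ follows on short intervals from BDG and $\langle N\rangle_T\in L^\infty$:
\begin{equation*}
\|\Phi(M)\|_{\cM^m}\lesssim\|\langle N\rangle_T\|_{L^\infty}^{1/2}\big(1+\sup_t\|Y^M_t\|_{L^m}\big),
\end{equation*}
together with the $L^m$ Duhamel bound. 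Here one has to control the jump part of $N$ in BDG; if this fails, I would instead use $\|\Phi(M)\|_{p,m}\lesssim\|\delta\langle N\rangle\|$-type estimates via \cite[Remark 2.14]{AllanPieper2026}. Once $M^*\in\cM^{p,m}$ holds, the linear Duhamel theorem shows that $\cY$ is a good $L^m$-solution. Uniqueness among good $L^m$-solutions then follows, because any such solution is in particular an $L^0$-solution, whose martingale part is a fixed point of $\Phi$ and hence equals $M^*$.
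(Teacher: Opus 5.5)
Your architecture is the paper's. The paper gives no separate proof of \cref{thm:semilinearDuhamel}: it obtains it, exactly as in your first two steps, by feeding the stability estimate of \cref{proposition: exsitence of additive linear RSDEs with random coeff} and the linear Duhamel theorem into \cref{thm:fixed_point_Banach}, then substituting $M^*=\int\sigma(Y_-)\dd N$. That substitution is fine, since $s\mapsto\phi_{s,t}$ is deterministic and hence predictable. What fails is your attempt to derive, from $\langle N\rangle_T\in L^\infty$ alone, the hypotheses of that theorem and the $L^m$-integrability of $M^*$; the route (yours and the paper's) tacitly uses more than that printed hypothesis.

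First, $\langle N\rangle_T\in L^\infty$ does not give the mild control $\rho$ of \cref{thm:fixed_point_Banach}. Nor is there a deterministic control ``$\|\langle N\rangle\|$'' to fall back on, since $\langle N\rangle$ is random. Take $\mathfrak{F}_0$-measurable $U$ uniform on $[0,T/2]$, $K$ with $\P(K=k)>0$ for every integer $k\geq 2/T$, a Brownian motion $B$ independent of $\mathfrak{F}_0$, and $N=\int_0^{\cdot}\sqrt{K}\,\1_{[U,U+1/K]}(r)\dd B_r$. Then $\langle N\rangle_T=1$, but $\|\delta\langle N\rangle_{s,t}\|_{L^\infty}=1$ for all $s<t$ in $[0,T/2]$. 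Hence $\rho(u-,u+)\geq 1$ for uncountably many $u$, and the factor $\rho(t_k+,t_{k+1}-)$ in the contraction estimate cannot be made small on any partition. The weaker conditional condition in the remark after \cref{thm:fixed_point_Banach} fails too. The bound $\|\delta\langle N\rangle_{s,t}\|_{L^\infty}\leq\rho(s+,t)$ must be assumed, as it is in \cref{cor:good-semilinear-deterministic-jumps} and in the corollary on \eqref{eq:formalJacobianequation}. Relatedly, because $\langle N\rangle$ is random, the It\^o-isometry bound giving $\Phi(\cM^2)\subset\cM^2$ needs $\E\sup_t|Y^M_t|^2$. That comes from \cref{proposition: a priori estimates additive linear RSDE}, not from the sup-outside bound \eqref{eq: Duhamel Lm bound}.

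Second, you rightly flag the passage to $L^m$ as the main obstacle, but an $\cM^m$-ball cannot resolve it. For $m>2$ the $L^m$ claim is not provable from $\langle N\rangle_T\in L^\infty$ at all, because BDG for $m>2$ involves $[N]$ rather than $\langle N\rangle$. Concretely, take $\sigma\equiv\Id$, $A=A'=0$ (so $\phi\equiv\Id$), and $N$ a compensated compound Poisson process with square-integrable but not $m$-integrable jumps. Then $\langle N\rangle_t=ct$, so all hypotheses including the mild-control condition hold, yet $Y=\xi+N\notin L^m$. Moreover, $M^*\in\cM^m$ would not suffice for $p<m$: the linear Duhamel theorem needs $M^*\in\cM^{p,m}$, and $\cM^m\not\subset V^pL^m$ (the compensated Poisson process is a counterexample).

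The missing ingredient is an extra hypothesis such as $N\in\cM^{p,m,\infty}$, as used in the neighbouring results. With it, you can run the argument of \cref{thm:fixed_point_Banach} on the sets $\{M\in\cM^2:\|M\|_{p,m,[t_k,t_{k+1}]}\leq R\}$, which are closed in $\cM^2$ by Fatou, in the spirit of \cref{corollary: the fixed point argument is applicable for mixed moment assumptions}. Invariance on short intervals follows from three ingredients:
\begin{itemize}
\item the conditional BDG estimate in the proof of \cref{lemma: stability in vfs for non-linear RSDEs}, which bounds $\|\int_s^{\cdot}H_{r-}\dd N_r\|_{p,m,[s,t]}$ by $L^m$-norms of $H=\sigma(Y^M)$ times $\|N\|_{p,m,\infty,[s,t]}$;
\item the linear growth of $\sigma$;
\item the $L^m$ a priori bound of \cref{proposition: a priori estimates additive linear RSDE}.
\end{itemize}
Then $M^*\in\cM^{p,m}$, the linear Duhamel theorem applies to $Y^{M^*}$, and your uniqueness argument goes through.
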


\section{Well-posedness of RSDEs with bounded coefficients }
\label{section:Well-posedness of fully non-linear RSDEs}
In this section we recall the existence and stability properties of $L^{m, \infty}$-solutions to RSDEs with bounded coefficients  of the form 
\begin{equation}\label{eq: non-linear RSDE}
Y_{t}= \xi + \int_{0}^{t} b_{s}(Y_{s}) \dd s+ \int_{0}^{t} \sigma_{s} (Y_{s-}) \dd N_{s} + \int_{0}^{t} (f, f')_{s} (Y_{s}) \dd \mathbf{X}_{s}.
\end{equation}
We then show the validity of the fixed-point approach described in \cref{Section:main_theorem} in this mixed-moment setting by showing that the needed bound \eqref{ref:Key} is indeed fulfilled due to the known local Lipschitz-properties of such solutions.

\begin{theorem}[Theorem 5.2 in \cite{AllanPieper2026}]\label{theorem: existence and estimates for solutions to RSDEs}
Let $p \in [2,3)$ and $m \in [2,\infty)$. Let $b$ be progressive bounded Lipschitz, let $\sigma$ be predictable bounded Lipschitz, and let $(f,f') \in \sV^{p,m,\infty}_X \Lip^3$ be a stochastic controlled vector field. Let $\xi \in L^m(\mathfrak{F}_{0})$,  $N \in \cM^{p, m, \infty}$ be a c\`adl\`ag martingale, and  $\bX = (X,\X) \in \sV^p$ be a c\`adl\`ag rough path.

Then there exists a process $Y$, which is unique up to indistinguishability, such that $Y$ has almost surely c\`adl\`ag sample paths, $(Y,Y') \in \sV^{p,m,\infty}_X$ is a stochastic controlled path, where $Y' = f(Y)$, and such that, almost surely, \eqref{eq: non-linear RSDE} holds for every $t \in [0,T]$.

Moreover, if $\xi, \tilde{\xi} \in L^m(\mathfrak{F}_{0})$, $N, \tN \in \cM^{p,m,\infty}$ are two c\`adl\`ag martingales and $\bX, \tbX \in \sV^p$ are two c\`adl\`ag rough paths, such that for some constant $K>0$
\begin{equation*}
   \|N\|_{p,m,\infty,[0,T]} \vee \|\tN\|_{p,m,\infty,[0,T]}\vee \|\bX\|_{p,[0,T]}\vee \|\tbX\|_{p,[0,T]} \leq K 
\end{equation*} and if $Y, \tY$ are the solutions to \eqref{main_ref:RSDE} corresponding to the data $(\xi,N,\bX)$ and $(\tilde{\xi},\tN,\tbX)$ respectively, then we have that
\begin{equation}\label{eq: Lipschitz continuity of solution map}
\begin{split}
\|&Y - \tY\|_{p,m,[0,T]} + \|Y' - \tY'\|_{p,m,[0,T]} + \|\E_{\edot} (R^{Y,Y'} - R^{\tY,\tY'})\|_{\frac{p}{2},m,[0,T]}\\
&\leq C \big( \|\xi - \tilde{\xi}\|_{L^m} + \|N - \tN\|_{p,m,[0,T]} + \|\bX - \tbX\|_{p,[0,T]} \big),
\end{split}
\end{equation}
where the constant $C$ depends only on $p, q, T, K$, $\|b\|_{C^1_b}$, $\|\sigma\|_{C^1_b}$, $\|f,f'\|_{\infty;3}$, $\llbracket f, f' \rrbracket_{p,m,\infty,[0,T]}$ and $\llbracket D f, D f' \rrbracket_{p,m,\infty,[0,T]}$.
\end{theorem}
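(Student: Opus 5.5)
The plan follows the mixed-moment fixed-point scheme of \cite{FHL21}, carried out in the càdlàg $p$-variation norms. Let $\mathscr{B}_{[s,t]}$ denote the space of stochastic controlled paths $(Z,Z')\in\sV^{p,m,\infty}_X([s,t];\R^{d_Y})$ with $Z_s$ prescribed. Equip it with the norm
\[
\|Z\|_{p,m,\infty,[s,t]}+\|Z'\|_{p,m,\infty,[s,t]}+\|\E_\edot R^{Z,Z'}\|_{\frac p2,\infty,[s,t]}+\sup_{u}\|Z'_u\|_{L^\infty}.
\]
On this space define the solution map
\[
\Gamma(Z,Z')\coloneqq \Big(\xi+\int b(Z)\dd r+\int \sigma(Z_{-})\dd N+\int (f,f')(Z)\dd\bX,\; f(Z)\Big).
\]
The first step is to check that $\Gamma$ maps $\mathscr{B}$ into itself. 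Composition with the stochastic controlled vector field follows \cite[Lemma 4.11]{AllanPieper2026}, i.e.\ \cref{lemma: composition for RSM}(ii) with $n=\infty$, so no integrability is lost. Combined with the rough stochastic integral bounds \eqref{eq: Lqr- estimate for rough stochastic integral}--\eqref{eq: p,q,r,s- estimate for rough stochastic integral} of \cref{lemma: existence of rough stochastic integrals}, this controls the rough part. The martingale part $\int\sigma(Z_-)\dd N$ is handled by the conditional BDG inequality: since $\sigma$ is bounded, it gives $\|\int_u^v\sigma(Z_-)\dd N\|_{m,\infty,u}\lesssim\|\sigma\|_\infty\|N\|_{p,m,\infty,[u,v]}$. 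The drift contributes a Lipschitz-in-time term. The resulting a priori bound has the form
\[
\|\Gamma(Z)\|_{[s,t]}\lesssim 1+w(s,t)^{1/p}\,\|Z\|_{[s,t]},
\]
where $w$ is a control built from $\|\bX\|_{p}^p$, $\|N\|_{p,m,\infty}^p$ and the brackets $\llbracket f,f'\rrbracket$. Here boundedness of $f,Df,\sigma$ is essential: the bound is affine rather than polynomial, so invariant balls exist on small intervals.

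The second step is contraction. For two inputs $Z,\tilde Z$, estimate the difference of compositions $f(Z)-f(\tilde Z)$ in $\sV^{p,m,\infty}_X$. This requires $\Lip^3$ regularity: to control the $\E_\edot R$-part of the difference one needs $D^2f$, together with $\llbracket Df,Df'\rrbracket$. The differences of the rough integrals are then bounded linearly by \cref{lemma: existence of rough stochastic integrals}, and the martingale parts by conditional BDG with the Lipschitz constant of $\sigma$. This yields
\[
\|\Gamma(Z)-\Gamma(\tilde Z)\|\le C\,w(s,t)^{\theta}\,\|Z-\tilde Z\|
\]
for some $\theta>0$. The main obstacle is that $w$ may have atoms, since $\bX$ and $N$ jump, so smallness cannot be achieved merely by shrinking intervals. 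I would handle this as in the proof of \cref{thm:fixed_point_Banach}. Choose a finite partition, via \cite[Lemma 1.5]{Friz_Zhang_2018}, such that $w(t_k+,t_{k+1}-)$ is small. On each open piece, solve by Banach's fixed point theorem, using a modified driver $\bar N^k$ (and the analogous left-limit truncation of $\bX$) with no jump at $t_{k+1}$. Then add the jump at $t_{k+1}$ explicitly through the jump structure of \cref{lemma:RSI_cadlag_jumpstructure}. This jump map is $y\mapsto y+f(y)\Delta X+(Df f+f')(y)\Delta\X+\sigma(y)\Delta N$, which preserves $L^{m,\infty}$-type bounds because the coefficients are bounded. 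Concatenating the pieces gives existence. Uniqueness follows piecewise from uniqueness of each local fixed point. Càdlàg sample paths follow from \cite[Lemma 4.6]{AllanPieper2026}, or alternatively from \cref{prop: Doob--Meyer}.

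For the stability estimate \eqref{eq: Lipschitz continuity of solution map}, I would write the equation for $Y-\tY$ and repeat the difference estimates, now also tracking $\bX-\tbX$ and $N-\tN$. The rough stochastic integral stability in the driver, the analogue of \cref{lemma:stability_rough_integration_Hpq} in mixed moments, produces the terms $(1+\|\tY\|)\|\bX-\tbX\|_p$, and $\|\tY\|$ is bounded in terms of $K$ by the a priori bound. This yields a local inequality of the form
\[
D_{[s,t]}\lesssim \|\Delta_s\|_{L^m}+w(s,t)^{\theta}D_{[s,t]}+\|N-\tN\|_{p,m,[s,t]}+\|\bX-\tbX\|_{p,[s,t]},
\]
where $D_{[s,t]}$ denotes the difference norm on $[s,t]$ and $\Delta_s$ the difference of the solutions at time $s$. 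I would close it globally with the rough stochastic Grönwall lemma \cref{lemma: gronwall type result p var}, which absorbs the atoms of $w$. The constant then depends only on the listed quantities, since all local constants depend only on $K$ and on the coefficient norms.
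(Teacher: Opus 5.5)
Your overall architecture (Picard iteration on short intervals, explicit treatment of the finitely many large atoms, Gr\"onwall lemma for the global stability bound) is the right one. The gap is in the contraction step: you prove invariance of $\mathscr{B}_{[s,t]}$ and then claim contraction in the \emph{same} norm, whose outer integrability is $n=\infty$, and that claim fails.

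Why it fails: every difference term carries an $\mathfrak{F}_u$-measurable factor $Z_u-\tilde Z_u$. Under $\|\cdot\|_{m,\infty,u}$ or $\|\E_u[\cdot]\|_{L^\infty}$, this factor has to be estimated in $L^\infty$. Examples:
\begin{itemize}
\item $\sup_u\|\Gamma(Z)'_u-\Gamma(\tilde Z)'_u\|_{L^\infty}=\sup_u\|f_u(Z_u)-f_u(\tilde Z_u)\|_{L^\infty}$;
\item the terms $(Df_u(Z_u)-Df_u(\tilde Z_u))\,\E_u[R^{Z}_{u,v}]$ and $(D^2f_u(Z_u)-D^2f_u(\tilde Z_u))\,\E_u[\delta Z_{u,v}^{\otimes 2}]$ in $\E_u[R^{f(Z)}_{u,v}-R^{f(\tilde Z)}_{u,v}]$;
\item the conditional BDG bound for $\int_u^v(\sigma(Z_{r-})-\sigma(\tilde Z_{r-}))\dd N_r$.
\end{itemize}
But $\|Z_u-\tilde Z_u\|_{L^\infty}=\|\delta(Z-\tilde Z)_{t_k,u}\|_{L^\infty}$ is not controlled by any component of your norm. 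You also cannot add it to the norm, since solutions are unbounded ($\xi\in L^m$, $N$ unbounded).

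A concrete counterexample: take $f=0$, $b=0$, $N=B$ a Brownian motion, and $\sigma$ bounded Lipschitz with $\sigma(x)=x$ for $|x|\le 1$. Let $\tilde Z\equiv 0$ and $Z=\epsilon\,\delta B_{t_k,\cdot\wedge s}$, with $Z'=\tilde Z'=0$. Then $\|Z-\tilde Z\|=O(\epsilon)$. On the other hand, $\delta(\Gamma(Z)-\Gamma(\tilde Z))_{s,t}=\sigma(\epsilon\,\delta B_{t_k,s})\,\delta B_{s,t}$ has $\|\cdot\|_{m,\infty,s}$-norm $\asymp (t-s)^{1/2}$, uniformly in $\epsilon$. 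So $\Gamma$ is not even Lipschitz in your norm on any ball. This is the same loss of integrability that appears in \cref{lemma: composition for RSM}, and it is why \eqref{eq: Lipschitz continuity of solution map} is asserted only in the $n=m$ norms.

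What is needed is the two-norm argument of the mixed-moment theory \cite{FHL21}:
\begin{itemize}
\item Keep your invariance step, to get a ball $\mathscr{B}_R$ of radius $R$ in the $L^{m,\infty}$ norm that $\Gamma$ maps into itself on short intervals.
\item Prove the contraction in the weaker metric $\|\cdot\|_{p,m}+\|\E_\edot R\|_{\frac p2,m}+\sup_u\|\cdot\|_{L^m}$. There, the conditional H\"older inequality for the mixed norms splits the problematic products as $\|Z_u-\tilde Z_u\|_{L^m}\,\|\E_u[\cdots]\|_{L^\infty}$. The $L^\infty$-factor involves only one input and is therefore bounded in terms of $R$.
\item Obtain completeness from the fact that $\mathscr{B}_R$ is closed under convergence in this weaker metric. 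This is a Fatou-type property, \cite[Lemma 2.10]{AllanPieper2026}, which the paper invokes in exactly this way in \cref{corollary: the fixed point argument is applicable for mixed moment assumptions}.
\end{itemize}
Your local-to-global handling of the jumps and your stability/Gr\"onwall argument then go through, provided they are carried out in the $n=m$ norms, with the $L^{m,\infty}$ a priori bounds supplying the $L^\infty$-factors.

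Two smaller points. First, the local fixed point is unique only inside $\mathscr{B}_R$, so uniqueness among all $L^{m,\infty}$-solutions should be deduced from the stability estimate with identical data. Second, your invariance bound is not affine: the Taylor term $\E_u[D^2f(\delta Z)^{\otimes 2}]$ is quadratic in $\|Z\|_{p,m,\infty}$. This is harmless for a fixed radius on short intervals.
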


 Using the estimates from \cite[Section 4]{AllanPieper2026} we can formulate the following a priori estimates for RSDEs with bounded coefficients. This can be deducted using  similar arguments as in \cite[Proposition 4.5]{FHL21}. We omit its proof for brevity:

\begin{proposition}\label{proposition: a priori estimates non-linear RSDE} 
    Let $p \in [2,3)$ and $m \in [2,\infty)$. Let $b$ be progressive bounded Lipschitz, $\sigma$ be predictable bounded Lipschitz, and $(f,f') \in \sV^{p,m,\infty}_X \Lip^3$ be a stochastic controlled vector field such that $(D f,D f') \in \sV^{p,m,\infty}_X \Lip^2$.
    Let $K>0$ be a constant so that 
    \begin{equation*}
        \|b\|_{\infty}\vee \|\sigma\|_{\infty}\vee \llbracket f,f'\rrbracket_{p,m, \infty, [0,T]} \vee \|f,f'\|_{\infty;3}\vee \|\bX\|_{p,[0,T]}\vee \|N\|_{p,m,\infty, [0,T]} \leq K.
    \end{equation*}
    Then, for any $L^{m,\infty}$-solution $(Y,Y') \in \sV^{p,m,\infty}_X$ of \eqref{eq: non-linear RSDE} we have that 
    \begin{equation} \label{eq: priori bound for linear RSDE}
    \begin{split}
       \|Y\|_{p,m,\infty,   [0,T]}+\|\E_\cdot R^{Y}\|_{\frac{p}{2},\infty,[s,t]} \leq C
    \end{split}
    \end{equation}
    for a constant $C>0$ which only depends on $p,m$ and $K$.
\end{proposition}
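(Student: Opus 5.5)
We prove the a priori bound along the lines of \cite[Proposition 4.5]{FHL21}: local estimates on short intervals, closed by a rough Grönwall/greedy-partition argument. Throughout, $(Y,Y')=(Y,f(Y))$ is an $L^{m,\infty}$-solution. Since $Y'=f(Y)$ is bounded by $\|f,f'\|_{\infty;3}\le K$, we have $\sup_s\|Y'_s\|_{L^\infty}\le K$ for free. Hence only three quantities must be controlled: $\|Y\|_{p,m,\infty}$, $\|Y'\|_{p,m,\infty}$ and $\|\E_\cdot R^Y\|_{\frac p2,\infty}$.

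\textbf{Step 1 (local estimates).} Fix $(s,t)\in\Delta_{[0,T]}$ and decompose $\delta Y_{s,t}$ into its drift, martingale and rough-integral parts.
\begin{itemize}
\item The drift contributes at most $K|t-s|$ in $\|\cdot\|_{m,\infty,s}$.
\item For the martingale part, conditional BDG gives a bound by $K\|N\|_{p,m,\infty,[s,t]}$. This uses that $\sigma$ is bounded, and the definition of the $\|\cdot\|_{p,m,\infty}$ seminorm for $N$.
\item For the rough integral, \cref{lemma: existence of rough stochastic integrals} together with the composition estimate \cite[Lemma 4.11]{AllanPieper2026} (cf.\ \cref{lemma: composition for RSM}(ii)) bounds it by $K\|X\|_{p,[s,t]}+K^2\|\X\|_{\frac p2,[s,t]}$, plus a term
\begin{equation*}
C(K)\big(\|Y\|_{p,m,\infty,[s,t)}+\|\E_\cdot R^Y\|_{\frac p2,\infty,[s,t)}+\llbracket f,f'\rrbracket_{p,m,\infty,[s,t]}+1\big)\|\bX\|_{p,[s,t]}.
\end{equation*}
\end{itemize}
In the composition estimate, $\|f(Y)\|_{p,m,\infty}\lesssim K\|Y\|_{p,m,\infty}+\llbracket f\rrbracket$, and $\E_\cdot R^{f(Y),(f(Y))'}$ is controlled by $\|Y\|^2_{p,2m,\infty}$ (the quadratic Taylor term) plus $\E_\cdot R^Y$ and $\llbracket \E R^f\rrbracket$.

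For the remainder we use that $\E_s[\delta N_{s,t}]=0$. Then $\E_s R^Y_{s,t}$ only sees the drift and the rough integral, and \eqref{eq: Lqr- estimate for rough stochastic integral} gives a bound of the same shape with $\|\X\|_{\frac p2}$ in place of $\|\bX\|$.

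\textbf{Step 2 (the quadratic term).} The Taylor term makes the estimate superlinear in $\|Y\|$. We plan to handle it with a second-moment bound. Since $n=\infty$, the $\|\cdot\|_{p,2m,\infty}$ norm of $Y$ is controlled by $\|Y\|_{p,m,\infty}$ through the BMO/John–Nirenberg-type equivalence for $L^{m,\infty}$ increments used in \cref{prop: Doob--Meyer}, namely \cite[Proposition 4.3]{AllanPieperTeichmann2025}. This equivalence holds up to a constant depending on $K$ and the jump size, which is where $\sigma$ and $b$ bounded enter. Alternatively we can follow \cite{FHL21} directly: the quadratic term there is multiplied by $\|\bX\|_{p,[s,t]}$, so on intervals where $\|\bX\|_{p,[s,t]}$ is small a bootstrap/continuity argument absorbs it.

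\textbf{Step 3 (closing).} Set $\Psi(s,t)$ to be the sum of the three seminorms over $[s,t]$. Steps 1–2 give
\begin{equation*}
\Psi(s,t)\le C(K)\,w(s,t)^{1/p}+C(K)\,\Psi(s,t-)\,\|\bX\|_{p,[s,t]}+\text{jump terms},
\end{equation*}
where $w$ is the control built from $\|\bX\|^p_p$, $\llbracket f,f'\rrbracket^p$, $\|N\|^p_{p,m,\infty}$ and $|t-s|$.

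Using \cite[Lemma 1.5]{Friz_Zhang_2018}, we choose a partition of $[0,T]$ into $N(K)$ intervals on which $C(K)\,w(t_k+,t_{k+1}-)^{1/p}<1/2$. On each interval the open parts $[t_k,t_{k+1})$ are absorbed, and the jump at $t_{k+1}$ is added separately; it is bounded by $C(K)$ because $\Delta Y$ is bounded by $K(|\Delta N|+|\Delta X|+|\Delta\X|)$. Gluing over the intervals gives $\Psi(0,T)\le N(K)\,C(K)$. This is exactly what the rough stochastic Grönwall lemma \cref{lemma: gronwall type result p var} packages, and we would invoke it directly with $R^1=f(Y)$ and $R=R^Y$.

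\textbf{Main obstacle.} We expect the main difficulty to be the superlinear Taylor term in Step 2, together with the fact that the number of partition intervals must depend only on $K$. Accumulating jumps of $\bX$ and $N$ are what force the use of mild controls, as opposed to the continuous case.
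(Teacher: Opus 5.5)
The paper itself gives no proof here; it only points to \cite[Proposition 4.5]{FHL21}. Your template (local estimates, endpoint jumps added by hand, gluing over a partition whose size depends only on $K$) matches that pointer, and your jump bookkeeping is fine. But the step that makes the argument work is missing, and the inequality in Step 3 does not follow from Steps 1--2.

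The Taylor term in the composition estimate is $\|\E_s[f_t(Y_t)-f_t(Y_s)-Df_t(Y_s)\delta Y_{s,t}]\|_{L^\infty}\le \tfrac K2\|\delta Y_{s,t}\|^2_{2,\infty,s}$. Hence $\|\E_\cdot R^{f(Y)}\|_{\frac p2,\infty,[s,t)}$ contains $\|Y\|^2_{p,m,\infty,[s,t)}$, and through \eqref{eq: Lqr- estimate for rough stochastic integral} the local bound acquires a term $C(K)\,\Psi(s,t-)^2\,\|X\|_{p,[s,t]}$. Your Step 3 inequality is linear in $\Psi(s,t-)$, and \cref{lemma: gronwall type result p var} only accepts linear hypotheses. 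So it cannot be applied ``directly with $R^1=f(Y)$, $R=R^Y$''. As stated, that lemma would also leave $\|Y_0\|_{L^m}=\|\xi\|_{L^m}$ in the final constant, which is not among the admissible parameters. With bounded coefficients every local bound should involve increments only.

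Neither remedy in Step 2 closes the gap.
\begin{enumerate}
\item Your first remedy addresses the moment exponent, not the power. No $2m$-moments are needed: for $m\ge 2$ and $n=\infty$, conditional Jensen gives $\|\E_s|\delta Y_{s,t}|^2\|_{L^\infty}\le\|\delta Y_{s,t}\|^2_{m,\infty,s}$. This is exactly where $n=\infty$ is used (cf.\ the loss $n\mapsto n/2$ in \cref{lemma: composition for RSM}).
\item The claimed upgrade $\|Y\|_{p,2m,\infty}\lesssim\|Y\|_{p,m,\infty}$ is false in general. The jumps of $Y$ contain $\sigma(Y_{t-})\Delta N_t$, and $N\in V^pL^{m,\infty}$ need not have bounded jumps. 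For instance, take $f\equiv 0$, $\sigma\equiv 1$, $m=2$, and $N$ a compensated compound Poisson process with marks in $L^2\setminus L^4$. Even if the upgrade held, the term would remain quadratic.
\item Your second remedy, a bootstrap, is the right idea. But it is only named, and it is not what \cref{lemma: gronwall type result p var} does.
\end{enumerate}

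What is needed is an explicit bootstrap with thresholds depending only on $K$. Let $w$ be the data control built from $\|X\|^p_p$, $\|\X\|^{p/2}_{p/2}$, $\|N\|^p_{p,m,\infty}$, the vector-field brackets and $t-s$. Steps 1--2 actually give $\Psi(s,t)\le A\,w(s,t)^{1/p}+C\big(\Psi(s,t-)+\Psi(s,t-)^2\big)w(s,t)^{1/p}$, with $A,C$ depending only on $p,m,K$. Then proceed as follows.
\begin{enumerate}
\item Take the greedy partition for $w(\cdot,\cdot-)$ with $w(t_k,t_{k+1}-)\le\delta:=(2A+4C)^{-p}$, so that $(A+2C)\,w(t_k,t_{k+1}-)^{1/p}\le\frac12$.
\item Set $\tau:=\inf\{t\in[t_k,t_{k+1}):\Psi(t_k,t)>1\}$. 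For $t\le\tau$ you have $\Psi(t_k,t-)\le 1$, hence $\Psi(t_k,t)\le\frac12$. Right-continuity of $t\mapsto\Psi(t_k,t)$ then rules out $\tau<t_{k+1}$.
\item Only now add the jump at $t_{k+1}$. It is bounded by data, because $\Delta Y_t=\sigma(Y_{t-})\Delta N_t+f(Y_{t-})\Delta X_t+(f(Y))'_{t-}\Delta\X_t$ and, for the remainder, $\E[\Delta N_t\mid\mathfrak F_{t-}]=0$.
\item Glue over at most $1+w(0,T)/\delta$ intervals. It is the superadditivity of $w$, not the mere existence of such a partition, that makes this number depend on $K$ only.
\end{enumerate}
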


In the continuous setting the following local-Lipschitzness result in the coefficient fields has already been shown in \cite[Theorem 4.9]{FHL21}. As, however, in the corresponding work for càdlàg settings \cite{AllanPieper2026}, the analogous result is missing we give a proof here for the convenience of the reader:

\begin{lemma}\label{lemma: stability in vfs for non-linear RSDEs}
  Assume $p,m, \xi, b, \sigma$ and $(f,f')$ as well as  $(\tf,\tf')$ are as in \cref{theorem: existence and estimates for solutions to RSDEs} and denote by $(Y,Y') \in \sV^{p,m,\infty}_X$ and $(\tY,\tY')\in \sV^{p,m,\infty}_X$ the two solutions to \eqref{eq: non-linear RSDE} with initial data $(\xi,b,\sigma, (f,f'),M, \bX)$ and $(\xi,b,\sigma, (\tf, \tf'), M, \bX)$ respectively provided by \cref{theorem: existence and estimates for solutions to RSDEs}. Then, we have that 
  \begin{equation}\label{eq: lipschitz in vector fields}
  \begin{split}
       \Big\|& \sup_{v\in [0,T]} | Y_{v}  -  \tY_{v}| \Big\|_{L^m}+ \|Y - \tY\|_{p,m,[0,T]} + \|Y' - \tY'\|_{p,m,[0,T]} + \|\E_{\edot} (R^{Y,Y'} - R^{\tY,\tY'})\|_{\frac{p}{2},m ,[0,T]}\\
       &\leq C \Big(\|f-\tf,f'-\tf'\|_{\infty; 2} + \llbracket f,f'; \tf, \tf' \rrbracket_{p,m, \infty, [0,T]}\Big)
  \end{split}
  \end{equation}
  for a constant $C>0$ which only depends on $p,m, T, \|M\|_{p,m,\infty, [0,T]}, \|\bX\|_{p,[0,T]}$, $\|f,f'\|_{\infty;3}$,\\ $\llbracket f,f' \rrbracket_{p,m, \infty, [0,T]},\|\tf,\tf'\|_{3,\infty}$ and $\llbracket \tf,\tf' \rrbracket_{p,m, \infty, [0,T]}$.
\end{lemma}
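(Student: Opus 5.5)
The estimate will come from linearising the difference equation and closing it with the rough stochastic Grönwall lemma \cref{lemma: gronwall type result p var}. The argument follows the template of \cref{proposition: exsitence of additive linear RSDEs with random coeff}. Set $Z\coloneqq Y-\tY$ and $Z'\coloneqq Y'-\tY'=f(Y)-\tf(\tY)$. The martingale, the initial datum and the rough path are the same for both equations, so
\begin{equation*}
\delta Z_{s,t}=\int_s^t \big(b_r(Y_r)-b_r(\tY_r)\big)\dd r+\int_s^t\big(\sigma_r(Y_{r-})-\sigma_r(\tY_{r-})\big)\dd M_r+\int_s^t \big((f,f')_r(Y_r)-(\tf,\tf')_r(\tY_r)\big)\dd\bX_r .
\end{equation*}
For the rough integrand we split $f(Y)-\tf(\tY)=\big(f(Y)-f(\tY)\big)+\big(f-\tf\big)(\tY)$, and do the same for the Gubinelli derivative $Df(Y)f(Y)+f'(Y)$. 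The first bracket is handled as in \cite[Lemma 4.9]{AllanPieper2026}. Writing $f(Y)-f(\tY)=\int_0^1 Df(\tY+\theta Z)\dd\theta\, Z$ expresses it as a linear stochastic controlled vector field applied to $Z$. Its $V^pL^{m}$-, $\|\cdot\|_{p,m}$- and $\E_\cdot R$-norms are bounded by a constant times the corresponding norms of $(Z,Z')$ on $[s,t]$, plus $\sup$-moments of $Z$. The constants depend on $\|f,f'\|_{\infty;3}$, on $\llbracket f,f'\rrbracket$, and on the a priori bounds of \cref{proposition: a priori estimates non-linear RSDE} for $Y$ and $\tY$. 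This is where $n=\infty$ for the solutions matters: the mixed $L^{m,\infty}$ norms of $Y,\tY$ multiply the $L^m$ norms of $Z$ through the Hölder inequality \eqref{eq: Holder's inequality for q,r,s norm}.

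The second bracket, $(f-\tf,f'-\tf')\circ\tY$, contains no $Z$. It is the composition of the stochastic controlled field $(f-\tf,f'-\tf')$ with the $L^{m,\infty}$-SCRP $\tY$, so by \cite[Lemma 4.11]{AllanPieper2026} (cf.\ \cref{lemma: composition for RSM}(ii)) its SCRP norms are bounded by $\|f-\tf,f'-\tf'\|_{\infty;2}+\llbracket f,f';\tf,\tf'\rrbracket_{p,m,\infty,[0,T]}$. The constant involves the a priori bounds on $\tY$. We then apply the rough stochastic integral estimates \eqref{eq: Lqr- estimate for rough stochastic integral} and \eqref{eq: p,q,r,s- estimate for rough stochastic integral} from \cref{lemma: existence of rough stochastic integrals}. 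The drift and martingale terms are bounded by BDG and Doob using the Lipschitz constants of $b,\sigma$ and $\|M\|_{p,m,\infty}$; here $\|\E_s\delta\langle M\rangle_{s,t}\|_{L^\infty}\lesssim\|M\|^2_{p,m,\infty,[s,t]}$. This yields, for all $(s,t)$, a local inequality
\begin{equation*}
\Theta_{[s,t]}\lesssim \big(\Theta_{[s,t)}+\|Z_s\|_{L^m}\big)\,w(s,t)^{1/p}+\eta,
\end{equation*}
where $\Theta_{[s,t]}\coloneqq\|\sup_{v\in[s,t]}|\delta Z_{s,v}|\|_{L^m}+\|Z\|_{p,m,[s,t]}+\|Z'\|_{p,m,[s,t]}+\|\E_\cdot R^{Z,Z'}\|_{\frac p2,m,[s,t]}$. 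Here $w(s,t)=\|\bX\|^p_{p,[s,t]}+\|M\|^p_{p,m,\infty,[s,t]}+\llbracket f,f'\rrbracket^p_{p,m,\infty,[s,t]}+\llbracket\tf,\tf'\rrbracket^p_{p,m,\infty,[s,t]}$ is a control, and $\eta$ is the right-hand side of \eqref{eq: lipschitz in vector fields}. The $\sup$-term is controlled by Doob for the martingale part and by the sewing bounds for the rest.

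Since $Z_0=0$, feeding this into \cref{lemma: gronwall type result p var} (with $R^1=Z'$ and $R=R^{Z,Z'}$) gives the global bound \eqref{eq: lipschitz in vector fields}. The Grönwall lemma handles the half-open intervals $[s,t)$ that arise from jumps of $w$. I expect the main obstacle to be the first step. The difference $Df(Y)-Df(\tY)$ inside the remainder produces terms quadratic in $(Y,\tY)$, and these must be bounded with the $L^m$-norm falling only on $Z$. That needs the $L^{m,\infty}$ control of $(Y,Y')$ and $(\tY,\tY')$, together with the third derivative of $f$ for the $\E_\cdot R$-estimate of $Df(\cdot)Z$. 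The remaining steps are routine adaptations of the continuous proof of \cite[Theorem 4.9]{FHL21}.
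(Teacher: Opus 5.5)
Your proposal is correct and follows essentially the same route as the paper. The paper also proceeds by:
\begin{itemize}
\item local estimates for the drift, for the stochastic integral (BDG together with \cite[Lemma B.3]{AllanPieper2026}) and for the rough integral (\cite[Lemmas 4.6 \& 4.9]{AllanPieper2026});
\item a bound on the composition difference $f(Y)-\tf(\tY)$, which the paper takes from \cite[Lemma 4.13]{AllanPieper2026} plus \cref{proposition: a priori estimates non-linear RSDE}, and which you reproduce by hand via the split $(f(Y)-f(\tY))+(f-\tf)(\tY)$;
\item closing the resulting local inequality with \cref{lemma: gronwall type result p var}.
\end{itemize}
The only slip is that your control $w$ must also contain $(t-s)^p$, as the paper's $w_1$ does. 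This term is needed to absorb the drift contribution, including its share of $\|Y\|_{p,m,\infty,[s,t]}$ and $\|\tY\|_{p,m,\infty,[s,t]}$, which enter through your linearised coefficient.
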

\begin{proof}
    Clearly, we have that 
    \begin{align*}
        &\bigg \|\sup_{v\in [s,t]} \bigg|\int_s^v b_r(Y_r)- b_r(\tY_r) \dd r \bigg| \bigg\|_{L^m} + \bigg \| \int_0^{\cdot} b_r(Y_r)- b_r(\tY_r) \dd r \bigg\|_{p,m, [s,t]}\\
        &\lesssim \Big(\|Y_s-\tY_s\|_{L^m} + \|Y-\tY \|_{p,m, [s,t)} \Big)(t-s)
    \end{align*}
    and similarly by \cite[Lemma B.3]{AllanPieper2026} and BDG we also have that 
    \begin{align*}
        &\bigg \|\sup_{v\in [s,t]} \bigg|\int_s^v \sigma_r(Y_{r-})- \sigma_r(\tY_{r-}) \dd r \bigg\|_{L^m} + \bigg \| \int_0^{\cdot} \sigma_r(Y_{r-})- \sigma_r(\tY_{r-}) \dd M_r \bigg| \bigg\|_{p,m, [s,t]} \\
        &\lesssim  \Big(\|Y_s-\tY_s\|_{L^m} + \|Y-\tY \|_{p,m, [s,t)}\Big) \|M\|_{p,m,\infty, [s,t]}.
    \end{align*}
    Finally, by \cite[Lemma 4.6 \& Lemma 4.9]{AllanPieper2026} we further know that 
    \begin{align*}
        &\bigg\|\sup_{v\in [s,t]} \bigg|\int_s^{v} (f,f')_r(Y_r)-(\tf,\tf')_r(\tY_r) \dd \bX_r\bigg| \bigg\|_{L^m} +  \bigg\|\int_0^{\cdot} (f,f')_r(Y_r)-(\tf,\tf')_r(\tY_r) \dd \bX_r\bigg\|_{p,m,[s,t]}\\
        & \lesssim \Big(\|f(Y)-\tf(\tY)\|_{p,m,[s,t)} + \|D f(Y)f(Y)-D \tf(\tY)\tY +f'(Y)-\tf'(\tY)\|_{p,m,[s,t)} \\
        &\quad + \|\E_{\edot} R^{(f(Y)-\tf(\tY))}\|_{\frac{p}{2},m,[s,t)} +\|f_s(Y_s)-\tf_s(\tY_s)\|_{L^{m}} \\
        & \quad+ (1+\|X\|_{p,[s,t)}) \sup_{u \in [s,t)} \|D f_u(Y_u)f(Y_u)- D \tf_u(\tY_u)\tf(\tY_u) +f'_u(Y_u)-\tf'_u(\tY_u)\|_{L^{m}}\Big) \|\bX\|_{p,[s,t]}
    \end{align*}
    thus by \cite[Lemma 4.13]{AllanPieper2026} together with \cref{proposition: a priori estimates non-linear RSDE} we can conclude that 
    \begin{align*}
        \Big\|& \sup_{v\in [s,t]} |\delta Y_{s,v}  - \delta \tY_{s,v}| \Big\|_{L^m}+ \|Y - \tY\|_{p,m,[s,t]} +\|f(Y)-\tf(\tY)\|_{p,m,[s,t]}\\
        &\quad + \|Y' - \tY'\|_{p,m,[s,t]} + \|\E_{\edot} (R^{Y,Y'} - R^{\tY,\tY'})\|_{\frac{p}{2},m ,[s,t]}\\
        &\leq C\Big(\|Y_s-\tY_s\|_{L^m}+ \|Y - \tY\|_{p,m,[s,t)} + \|Y' - \tY'\|_{p,m,[s,t)} + \|\E_{\edot} (R^{Y,Y'} - R^{\tY,\tY'})\|_{\frac{p}{2},m ,[s,t)}\Big)w_1(s,t)^{\frac{1}{p}}\\
        &\quad+ \eta
    \end{align*}
    where we set 
    \begin{equation*}
       w_1(s,t) := \|\bX\|^p_{p,[s,t]}+ \|M\|^p_{p,m, [s,t]}+ (t-s)^p
    \end{equation*}
    \begin{equation*}
    \begin{split}
         \eta &:= C\Big(\|(f-\tf), (f'-\tf')\|_{\infty;2} + \llbracket f,f ; \tf, \tf' \rrbracket_{p,m,[0,T]}\Big) \big(\|\bX\|_{p,[0,T]}+\|Y\|_{p,m,\infty, [0,T]}\big)
    \end{split}
    \end{equation*}
    where we used that 
    \begin{equation*}
    \begin{split}
        \|f(Y)-\tf(\tY)\|_{p,m,[s,t]} &\lesssim \|Y_s-\tY_s \|_{L^m}+\|Y-\tY\|_{p,m,[s,t]} \\
        &\quad + \Big(\llbracket f-\tf \rrbracket_{p,m,[s,t]} +\|(f-\tf), (f'-\tf')\|_{\infty; 2}\Big)\|Y\|_{p,m,\infty, [s,t]}
    \end{split}
    \end{equation*}
    and where the constant $C>0$ only depends on $p,m, T, \|M\|_{p,m,\infty, [0,T]}, \|\bX\|_{p,[0,T]}$, $\|f,f'\|_{\infty;3}$,\\ $\llbracket f,f' \rrbracket_{p,m, \infty, [0,T]},\|\tf,\tf'\|_{\infty;3}$ and $\llbracket \tf,\tf' \rrbracket_{p,m, \infty, [0,T]}$. Thus, the claim follows by an application of \cref{lemma: gronwall type result p var} and using the uniform bound on $\|Y\|_{p,m,\infty,[0,T]}$ provided by \cref{proposition: a priori estimates non-linear RSDE}.
\end{proof}

\begin{corollary}\label{corollary: the fixed point argument is applicable for mixed moment assumptions}
    Let $p \in (2,3)$, let $\bX = (X,\X) \in \sV^p$ be a c\`adl\`ag rough path, $\xi \in L^2$ be $\mathfrak{F}_0$-measurable and $b$ be progressive bounded Lipschitz, let $\sigma$ be predictable bounded Lipschitz, and let $(f,f') \in \sV^{p}_X \Lip^{3}$ be a deterministic controlled vector field. Then, for every $N\in V^p L^{2,\infty}$ which additionally fulfils that there exists a mild control $\rho$ such that $\|\delta \langle N \rangle_{s,t}\|_{L^{\infty}} \leq \rho(s+,t)$ for any $(s,t) \in \Delta_{[0,T]}$, there exists a non-empty closed subset $\cB \subset \cM^2$ which is invariant under $\Phi$ as defined in \eqref{eq: defn Phi} such that for any $M\in \cB$ the equation \eqref{ref:Key} is fulfilled and the solution found through \cref{thm:fixed_point_Banach} is the unique good solution to \eqref{main_ref:RSDE}.
\end{corollary}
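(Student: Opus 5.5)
The plan is to run the martingale fixed point of \cref{thm:fixed_point_Banach} on a subset of martingales that are bounded in mixed moments, so that \cref{theorem: existence and estimates for solutions to RSDEs} and \cref{lemma: stability in vfs for non-linear RSDEs} apply to the additive equation \eqref{eq: additive rsde}. Afterwards I will identify the resulting $L^{0}$-solution with the $L^{2,\infty}$-solution via \cref{thm:consistency_of_solutions I}.

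\textbf{Choice of $\cB$.} Let $K_0 \coloneqq \|\sigma\|_\infty \|N\|_{p,2,\infty,[0,T]}$ and set
\[
\cB \coloneqq \bigl\{ M \in \cM^2 : M_0 = 0,\ \|M\|_{p,2,\infty,[0,T]} \le C_0 K_0 \bigr\},
\]
where $C_0$ is the constant of the BDG-type estimate \cite[Lemma B.3]{AllanPieper2026}. That estimate gives, for any bounded predictable integrand $H$ with $\|H\|_\infty \le \|\sigma\|_\infty$,
\[
\Bigl\| \int H \dd N \Bigr\|_{p,2,\infty,[0,T]} \le C_0 \|\sigma\|_\infty \|N\|_{p,2,\infty,[0,T]}.
\]
\begin{itemize}
\item \emph{Nonempty:} $0 \in \cB$.
\item \emph{Invariant:} $\Phi(M) = \int \sigma(Y^{M}_{-}) \dd N$ satisfies the bound above for every $M$, since $\sigma$ is bounded.
\item \emph{Closed in $\cM^2$:} if $M^k \to M$ in $\cM^2$, then $\E_s[|\delta M^k_{s,t}|^2] \to \E_s[|\delta M_{s,t}|^2]$ in $L^1$ along a subsequence almost surely. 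The conditional-moment Fatou lemma \cite[Proposition 2.2 (v)]{AllanPieper2026} then preserves the $\|\cdot\|_{p,2,\infty}$ bound on each partition sum.
\end{itemize}
The condition on $\langle N \rangle$ is exactly the bracket hypothesis of \cref{thm:fixed_point_Banach}, and $\Phi$ maps $\cM^2$ into $\cM^2$ since $\sigma$ is bounded. By the remark following that theorem, it then remains to verify \eqref{ref:Key} on $\cB$.

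\textbf{Stability estimate \eqref{ref:Key}.} For $M \in \cB$, \cref{theorem: existence and estimates for solutions to RSDEs} with $\sigma = \Id$, $N = M$ and $m = 2$, $n = \infty$ provides a unique $L^{2,\infty}$-solution $Y^M$. By \cref{theorem: RSDE solution is L^q RSM}(ii), with $n = \infty$, this solution is an $L^{0}$-solution. By uniqueness in \cref{thm:FZK_RSDE}, it coincides with the pathwise solution used to define $\Phi$. For $M, \bar M \in \cB$, all data are bounded by $K \coloneqq \max\{C_0K_0, \|\bX\|_p\}$, so \eqref{eq: Lipschitz continuity of solution map} holds with a uniform constant.

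That estimate controls $\|Y^M - Y^{\bar M}\|_{p,2,[s,t]}$ only in the $p,2$-variation sense, so I still need the supremum norm. I would obtain it as in the proof of \cref{lemma: stability in vfs for non-linear RSDEs}, where the left-hand side already contains $\|\sup_v |\delta Y_{s,v} - \delta \tY_{s,v}|\|_{L^m}$, with the martingale difference now entering additively. Running the rough Grönwall lemma \cref{lemma: gronwall type result p var} on $[s,t]$ with $\eta = \|M - \bar M\|_{p,2,[s,t]} + \|\sup_{v\in[s,t]} |\delta(M - \bar M)_{s,v}|\|_{L^2}$ should give
\[
\Bigl\| \sup_{v\in[s,t]} |Y^M_v - Y^{\bar M}_v| \Bigr\|_{L^2} \lesssim_K \|Y^M_s - Y^{\bar M}_s\|_{L^2} + \eta .
\]
Two further ingredients close the estimate. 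First, $\|M - \bar M\|_{p,2,[s,t]} \lesssim \||M - \bar M|^*\|_{L^2}$ by \cite[Remark 2.14]{AllanPieper2026}. Second, the initial term is handled by applying the same bound on $[0,s]$, using $M_0 = \bar M_0$, which yields \eqref{ref:Key}. I expect this step to be the main obstacle: I must make sure the constant depends only on $K$, and that the sup-in-time estimate truly follows from the mixed-moment bounds, since \cref{theorem: existence and estimates for solutions to RSDEs} is stated only in $p$-variation norms.

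\textbf{Goodness and uniqueness.} \cref{thm:fixed_point_Banach}, applied on $\cB$, yields a unique fixed point $M^*$ and hence the $L^{0}$-solution $Y^{M^*}$. The $L^{2,\infty}$-solution $Y$ of \eqref{main_ref:RSDE} exists by \cref{theorem: existence and estimates for solutions to RSDEs}, since $N \in V^pL^{2,\infty}$ and $\sigma$ is bounded. Its martingale part $\int \sigma(Y_-) \dd N$ lies in $\cB$, and $Y$ solves the additive equation with that martingale, so that martingale is a fixed point in $\cB$ and equals $M^*$ by uniqueness. Equivalently, \cref{thm:consistency_of_solutions I} with $n = \infty$ identifies $Y$ with $Y^{M^*}$. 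Hence $Y^{M^*}$ is both an $L^{0}$- and an $L^{2,\infty}$-solution, i.e.\ good. Any other good solution is in particular an $L^{2,\infty}$-solution and therefore agrees with it.
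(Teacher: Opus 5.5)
Your proposal is correct and follows essentially the same route as the paper: a closed $\Phi$-invariant ball of martingales in $V^pL^{2,\infty}$, and the Lipschitz bound of \cref{theorem: existence and estimates for solutions to RSDEs}, upgraded to a supremum bound via the maximal estimate for the rough stochastic integral and \cref{lemma: gronwall type result p var}. It then uses the comparison $\|M\|_{p,2,[s,t]}\lesssim \||M|^{*}_T\|_{L^2}$ from \cite[Remark 2.14]{AllanPieper2026}, and finally \cref{thm:fixed_point_Banach} together with \cref{thm:consistency_of_solutions I}, exactly as the paper does.

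The remaining differences are cosmetic: you normalise $M_0=0$ so that $0\in\cB$ rather than $N\in\cB$, you prove closedness by the conditional Fatou lemma instead of \cite[Lemma 2.10]{AllanPieper2026}, and you identify the fixed point directly with the martingale part of the $L^{2,\infty}$-solution. One small point: controlling the initial term $\|Y^M_s-Y^{\bar M}_s\|_{L^2}$ by $\|\sup_{v\in[s,t]}|M_v-\bar M_v|\|_{L^2}$ additionally requires Doob's inequality $\|\sup_{v\le s}|M_v-\bar M_v|\|_{L^2}\le 2\|M_s-\bar M_s\|_{L^2}$.
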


\begin{proof}
    For brevity we set $b\equiv 0$ but the general case can easily be concluded by a direct adaptation of the arguments presented here. Define, $\nu := 4 (1+\|\sigma\|_{\infty}^2) \|N\|_{p,2,\infty, [0,T]}$ and set 
    \begin{equation*}
        \cB^{p,\infty}(\nu):= \{Y \in V^p L^{2,\infty} : \|Y\|_{p,2,\infty,[0,T]} \leq \nu\},
    \end{equation*}
    then we know by \cite[Lemma 2.10]{AllanPieper2026} that $\cB^{p,\infty}(\nu)$ is a closed subset of $V^p L^2$. Set
    \[\cB := \cB^{p,\infty}(\nu) \cap \cM.\]
   then one easily checks that $N\in \cB$ by the choice of $\nu$ and $\cB \subset \cM^2$ is a closed subset with respect to $\||\cdot|^{*}_T\|_{L^2}$. Indeed, the latter follows by the previously mentioned closedness of $\cB^{p,\infty}$ and by the fact that for any martingale $M\in \cB$ we have by \cite[Remark 2.14]{AllanPieper2026} that
   \begin{equation}\label{eq: equivalence p,p-norm with cM^p norm}
       \|M\|_{p,2,[0,T]}\leq \|M\|_{2,2,[0,T]} \sim \big\||M|^{*}_{T} \big\|_{L^2}.
   \end{equation}
   To see that $\cB$ is indeed invariant under $\Phi$ we apply the (conditional) BDG inequality and find that for any $(s,t) \in \Delta_{[0,T]}$ we have that 
   \begin{equation*}
       \bigg\|\int_s^t \sigma_s(Y^M_{s-}) \dd N_s\bigg\|_{2,\infty, s} \leq 4 \|\sigma\|^2_{\infty} \|\delta [N]_{s,t}\|^{\frac{1}{2}}_{1, \infty, s}
   \end{equation*}  
   which implies that even for any $M\in \cM^2$ we have that $\|\Phi(M)\|_{p,2,\infty,[0,T]} \leq \nu$ and thus especially that $\cB$ is invariant under $\Phi$.
   We further, need to show that
   \begin{equation*}
       \Big\| \sup_{s\in [0,T]} |Y^M_{s}-Y^{\tM}_s| \Big\|_{L^2} \lesssim \big\| |M-\tM|^{*}_T \big\|_{L^2}. 
   \end{equation*}
   where the multiplicative constant is independent of $M$ and $\tM$ as well as $Y^M$ and $Y^{\tM}$ are defined as in \eqref{eq: additive rsde}. Firstly, notice that by \cref{thm:consistency_of_solutions I} we especially know that $Y^M$ and $Y^{\tM}$ are the unique good solutions to \eqref{eq: additive rsde}. Thus, we know by \cref{theorem: existence and estimates for solutions to RSDEs} that for any $(s,t) \in \Delta_{[0,T]}$ we have that 
   \begin{align*}
       &\big\|Y^M - \tY^{\tM} \big\|_{p,2,[s,t]} + \big \|(Y^M)' - (\tY^{\tM})' \big\|_{p,2,[s,t]} + \big\|\E_{\edot} R^{Y^M-\tY^{\tM},(Y^M-\tY^{\tM})'} \big \|_{\frac{p}{2},2,[s,t]}\\
       &\lesssim \|M-\tM\|_{p,2,[s,t]}
   \end{align*}
   and thus, by \cite[Lemma 4.6]{AllanPieper2026} as well as \cite[Lemma 4.13]{AllanPieper2026} and because $\|M-\tM\|_{p,2,[s,t]} \leq \|M-\tM\|_{2,2,[s,t]}$ also 
   \begin{align*}
           &\Big\| \sup_{v \in [s,t] } |\delta Y^M_{s,v} - \tY^{\tM}_{s,v}| \Big\|_{L^2}\\
           &\leq \bigg\|\sup_{v\in [s,t]}\int_{s}^{v} (f,f')_r(Y^M_r) -(f,f')_r(Y^{\tM}_r) \dd \bX_r \bigg\|_{L^2} + \big\||M-\tM|^{*}_T \big\|_{L^2}\\
           &\lesssim \Big(\|Y^M_s-Y^{\tM}_s\|_{L^2}+ \|(Y^M)'_s- (Y^{\tM})'_s\|_{L^2} + \big\|Y^M - \tY^{\tM} \big\|_{p,2,[s,t]} \\
           &\qquad + \big \|(Y^M)' - (\tY^{\tM})' \big\|_{p,2,[s,t]} + \big\|\E_{\edot} R^{Y^M-\tY^{\tM},(Y^M-\tY^{\tM})'} \big \|_{\frac{p}{2},2,[s,t]}\Big)\|\bX\|_{p,[s,t]}\\
           &\quad +\big\|M-\tM \big\|_{2,2,[s,t]}.
   \end{align*}
   Thus, we may apply \cref{lemma: gronwall type result p var}\footnote{with $Y= Y^M- Y^{\tM}$, $R=R^{(Y^M - \tY^{\tM})}$ $R^1= (Y^M)'-(Y^{\tM})'$ and $R^2=X$ and $w_1(s,t)=\|\bX\|^p_{p,[s,t]}$ as well as $\eta \lesssim \|M-\tM\|_{2,2,[0,T]}$.} to conclude the claimed Lipschitz estimate \eqref{ref:Key}. Thus, we may apply \cref{thm:fixed_point_Banach} and \cref{thm:consistency_of_solutions I} and thus know indeed that the found solution is a good $L^{2,\infty}$-solution.
\end{proof}

\begin{remark}
    Notice that we of course already knew by \cref{theorem: RSDE solution is L^q RSM} that for any $b$ progressive bounded Lipschitz, any $\sigma$ predictable bounded Lipschitz, and $(f,f') \in \sV^{p}_X \Lip^{3}$ any deterministic controlled vector field such that $(D f,D f') \in \sV^{p}_X \Lip^2$, that the $L^{2, \infty}$-solution to \eqref{eq: mixed moment RSDE} is also a good $L^{2, \infty}$-solution of almost sure $V^{\tilde{p}}$-regularity for any $p<\tp$. Nonetheless, \cref{corollary: the fixed point argument is applicable for mixed moment assumptions} confirms that even in mixed moments with bounded coefficients as previously studied in \cite{AllanPieper2026, FHL21} the fixed point approach laid out in \cref{Section:main_theorem} is a viable alternative proof strategy and lets us recover the goodness of the solution even in $p$-Var.
\end{remark}
\addtocontents{toc}{\protect\setcounter{tocdepth}{1}}
\appendix
\crefalias{section}{appendix}
\section{Some aspects of càdlàg rough path theory}
\label{appendix:deterministic_rough_path_theory}
Here we collect some results on the theory of deterministic càdlàg rough path, that are not yet available in the literature.
\subsection{Rough It\^{o} formula for strongly controlled rough paths}
The following is a càdlàg rough It\^{o} formula \cite[Theorem 2.12]{Friz_Zhang_2018} on the level of strongly controlled rough paths. 
\begin{theorem}
\label{theorem:roughIto}
    Let $p \in [2, 3), \mathbf{X}\in \mathscr{V}^{p}([0,T]; V)$ and $\mathcal{Y}\coloneqq(Y, Y', Y'')\in \mathscr{V}^{p}_{\mathbf{X}}([0,T]; W)$. Let $F \in \Lip^{3}(W; U)$ for some $\delta >0$. Then $(D(F)Y', D^{2}F(Y)(Y', Y')+ DF(Y)Y'')\in \mathscr{V}_{\mathbf{X}}^{p}([0, T]; \mathcal{L}(V; W))$ and 
    \begin{equation*}
    \begin{aligned}
        F(Y_{t})= & F(Y_{0})+ \int_{0}^{t} (DF(Y_{s}) Y'_{s}, DF(Y_{s}) Y''_{s}+ D^{2} F(Y_{s}) (Y'_{s}, Y'_{s}))\dd \mathbf{X}_{s}\\
        &+ \frac{1}{2} \int_{0}^{t} D^{2}F(Y_{s})(Y'_{s}, Y'_{s}) \dd [\mathbf{X}]_{s}\\
        &+ \sum_{0 \leq s\leq t} F(Y_{s}) - F(Y_{s-}) - DF(Y_{s-}) Y'_{s-} \Delta X_{s}- DF(Y_{s-}) Y''_{s-} \Delta \mathbb{X}_{s} \\
        & \quad - \frac{1}{2} D^{2} F(Y_{s-})(Y'_{s-}, Y'_{s-}) (\Delta X_{s})^{\otimes 2}, 
    \end{aligned}    
    \end{equation*}
    where the terms on the r.h.s. are rough integrals, Young integrals or sums converging in RRS sense.
    \begin{proof}
    First note that by \cref{def:scRP}  and Taylor's formula it holds that
    \begin{equation*}
    \begin{aligned}
       F(Y_{t-})- F(Y_{s})\stackrel{\frac{p}{3}}{=} &DF(Y_{s}) \delta Y_{s, t-}+ \frac{1}{2} D^{2} F(Y_{s
       }) (\delta Y_{s,t-}, \delta Y_{s, t-})\\
       \stackrel{\frac{p}{3}}{=}& DF(Y_{s})Y'_{s} \delta X_{s,t-}+ DF(Y_{s}) Y''_{s} \mathbb{X}_{s, t-} +\frac{1}{2} D^{2}F(Y_{s}) (Y'_{s}, Y'_{s}) (\delta X_{s,t-})^{\otimes 2}\\
       = &  DF(Y_{s})Y'_{s} \delta X_{s,t}+ DF(Y_{s}) Y''_{s} \mathbb{X}_{s, t} +\frac{1}{2} D^{2}F(Y_{s}) (Y'_{s}, Y'_{s}) (\delta X_{s,t})^{\otimes 2} + A^{1}_{s,t}\\
       =& DF(Y_{s})Y'_{s} \delta X_{s,t}+ DF(Y_{s}) Y''_{s} \mathbb{X}_{s, t} +D^{2}F(Y_{s}) (Y'_{s}, Y'_{s}) \mathbb{X}_{s,t}\\
       &+ \frac{1}{2} D^{2}F(Y_{s}) (Y'_{s}, Y'_{s}) \delta [\mathbf{X}]_{s,t} + A^{1}_{s,t}, 
    \end{aligned} 
    \end{equation*}
    where by Chen's relation
    \begin{equation*}
    \begin{aligned}
        -A^{1}_{s, t} =& DF(Y_{s}) Y'_{s} \Delta X_{t} + DF(Y_{s}) Y''_{s} (\Delta \mathbb{X}_{t}+ \delta X_{s,t-} \otimes \Delta X_{t})+ D^{2} F(Y_{s}) (Y'_{s} , Y'_{s}) \delta X_{s, t-} \otimes \Delta X_{t} \\
        &+ \frac{1}{2}D^{2} F(Y_{s}) (Y'_{s} , Y'_{s}) (\Delta X_{t})^{\otimes 2}
    \end{aligned}
    \end{equation*}
    Now note that by Taylor's formula it follows for $A^{2}_{s}\coloneqq DF(Y_{s-}) \Delta Y_{s} + \frac{1}{2} D^{2}F(Y_{s-})(\Delta Y_{s})^{\otimes 2}$ that for every $\epsilon > 0$ there is a subset $J^{\epsilon}\subset [0,T]$ it holds
    \begin{equation*}
       \bigg|\sum_{s \in (J^{\epsilon})^{c}} F(Y_{s})-F(Y_{s-}) - A^{2}_{s} \bigg|\lesssim \bigg|\sum_{s \in (J^{\epsilon})^{c}}  |\Delta Y_{s}|^{3} \bigg|\leq \epsilon
    \end{equation*}
    which yields 
    \begin{equation*}
        \lim_{|\mathcal{P}| \downarrow 0} \sum_{[t_{i}, t_{i+1}] \in \mathcal{P}} F(Y_{t_{i+1}}) - F(Y_{t_{i+1}-}) - A^{2}_{t_{i+1}}= \sum_{s \in [0,T]} F(Y_{s})-F(Y_{s-}) - A^{2}_{s}
    \end{equation*}
    in RRS-sense by taking refinements of partitions in $J^{\epsilon}$ as $\epsilon \downarrow 0$. 
      Now using \cref{remark:scRPandRI} and the jump-structure of the rough integral it follows
      \begin{equation*}
      \begin{aligned}
          A^{1}_{s, t}+ A^{2}_{t}=& \left (DF(Y_{t-}) Y'_{t-} -DF(Y_{s}) Y'_{s} - DF(Y_{s}) Y''_{s}\delta X_{s,t-}- D^{2} F(Y_{s}) (Y'_{s}, Y'_{s}) \delta X_{s,t-} \right) \Delta X_{t} \\
          &+ (DF(Y_{t-}) Y''_{t-} - DF(Y_{s}) Y''_{s}) \Delta \mathbb{X}_{t} \\
          &+ \frac{1}{2} ( D^{2} F(Y_{t-}) (Y'_{t-}, Y'_{t-}) - D^{2} F(Y_{s}) (Y'_{s}, Y'_{s}))(\Delta X_{t})^{\otimes 2}\stackrel{\frac{p}{3}}{=}0.
      \end{aligned}    
      \end{equation*}
      Hence the claim follows by applying \cref{lemma:expansion_notation}. 
    \end{proof}
\end{theorem}
The advantage of \cref{theorem:roughIto} over \cite[Theorem 2.12]{Friz_Zhang_2018} is that it directly implies  a càdlàg version of the rough product rule \cite[Exercise 7.4]{FRIZHAIRER}, which is included as a special case of \cref{corollary:rs_product_formula}.
\begin{remark}
   Note that \cref{theorem:roughIto} can easily be adjusted to $Y$ being of the form $
       Y_{t}= Y_{0}+
       \int_{0}^{t}(Y'_{s} , Y''_{s}) \dd \mathbf{X}_{s} + A_{t}$ for a bounded variation  drift $A\in V^{1}([0,T]; W)$. The corresponding composition rule then reads as in \cref{theorem:roughIto} with the additional terms $ \int_{0}^{t} DF(Y_{s}) \dd A_{s}$ and $(-DF(Y_{s-})\Delta A_{s})_{s \in [0,t]}$ added in the jump correction.
\end{remark}
\subsection{On càdlàg RDEs and solution semi-flows}
\label{appendix:càdlàg_RDEs}
We start this subsection by investigating an interesting property when working with flows of ordinary/ rough differential equations driven by a càdlàg signal. 
\begin{example}
\label{example:counterexample_flow}
    Let $(x_t)_{t\in [0,T]} \subset \mathbb{R}^{d\times d}$ be a a càdlàg finite variation path. Consider for $\xi \in \mathbb{R}^{d}$ the  ODE
    \begin{equation}
    \label{example:non_invertible_ODE}
         y_t^{s, \xi} = \xi + \int_{s}^{t}  \dd x_r y_{r-}
    \end{equation}
    and note that $y_{t}= y_{t-} +  \Delta x_{t} y_{t-} $. Setting $d =1$ and $x \equiv -\mathbbm{1}_{\cdot \in [t_{0}, T]}$ then implies for the solution semi-flow $\phi_{s,t}(\xi)\coloneqq y^{s, \xi}_{t}$ of \eqref{example:non_invertible_ODE} that
    \begin{align*}
        \phi_{s,t}=\begin{cases}
            1 &; \text{ if } s<t<t_0 \text{ or } t_0\leq s\\
            0 &; \text{ if } s< t_0\leq t
        \end{cases},
    \end{align*}
    which is clearly a non-invertible semi-flow. For $d=2, 3$ one can set for instance $\Delta x_{t_{0}}=\mathcal{O}$, with $\mathcal{O}$ being a rotation matrix by $\pi$ radians. See \cite[Example 1.12]{Friz_Zhang_2018} for a similar example. 
\end{example}
\cref{example:counterexample_flow} implies that in general also solution semi-flows of RDEs driven by càdlàg rough paths are not invertible. This poses a significant technical caveat in \cref{subsection: Duhamel formula}.
The following is a collection of useful properties of solution semi-flows of RDEs driven by càdlàg rough paths along càdlàg rough vector fields.
\begin{proposition}
\label{prop:càdlàg_rough_flows}
    Let $p \in [2,3)$ and $\mathbf{X}=(X, \mathbb{X})\in \mathscr{V}^{p}([0,T]; V)$. Let either $(f, f')\in \mathscr{V}^{p}_{X}\Lip^{3}_{x}(W; \mathcal{L}(V;W))$ or $(f, f')\in \mathscr{V}^{p}_{X}\mathcal{L}(W; \mathcal{L}(V; W))$. Then for any $y \in W$ there exists $(Y^{s, y}, (Y^{s, y})')\in \mathscr{V}^{p}_{X}([s,T]; W)$, where $(Y^{s, y})'=f(Y^{s, y})$ and for any $t\in [0, T]$\footnote{Extending the solution onto the whole interval might seem strange at first, but of course is always possible. The reason we do this is to ensure that the left-limit $\lim_{r \uparrow s}\phi_{r, t}$ in \eqref{item:rough_flow_3} makes sense.} it holds
    \begin{equation}
    \label{flow:RDE}
        Y_{t}^{s,y} = y+ \int_{s}^{t\vee s} (f_{r\vee s}(Y_{r}^{s,y}), Df_{r\vee s}(Y_{r}^{s,y}) f_{r \vee s}(Y^{s, y}) + f'_{r\vee s}(Y^{s,y}_{r})\dd \mathbf{X}_{r}.
    \end{equation}
    We define $\phi: 
    \Delta_{[0,T]}\times W\to W$ by $
    \phi_{s,t}(y)\coloneqq Y^{s,y}_{t}$.Then it holds
    \begin{enumerate}
        \item \label{item:rough_flow_1} $\phi$ is a semi-flow on $[0,T]$ i.e. for any $(s,u,t)\in 
        \Delta_{[0,T]}^{(3)}$ it holds $\phi_{s,t}= \phi_{u,t} \circ \phi_{s,u}$.
        \item \label{item:rough_flow_2} for any $s\in [0,T)$ and $y \in W$ the map $[s, T]\ni t\mapsto \phi_{s,t}(y)\in W$ is càdlàg and $(t\mapsto \phi_{s,t}(y) \in V^{p}([0,T]; W))$ for any $s \in [0,T]$
        \item \label{item:rough_flow_3} for any $t \in (0, T]$ and $y \in W$ the map $[0,t]\ni s \mapsto \phi_{s,t}(y) \in W$ is càdlàg 
        and $(s\mapsto \phi_{s,t}(y) \in V^{p}([0,T]; W))$ for any $t \in [0,T]$
        \item \label{item:rough_flow_4}
        If $\gamma > k + (p-1)$ for $k \in \mathbb{N}$, then for any $(s, t) \in \Delta_{[0,T]}$ it holds $\phi_{s,t}\in C^{k}(W; W)$. 
    \end{enumerate}
\end{proposition}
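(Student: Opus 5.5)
Existence of $(Y^{s,y},(Y^{s,y})')\in\mathscr{V}^p_X([s,T];W)$ solving \eqref{flow:RDE} is standard càdlàg RDE theory: for $(f,f')\in\mathscr{V}^p_X\Lip^3$ we invoke \cite[Theorem 3.9]{Friz_Zhang_2018}, restarted at time $s$. For the linear case $(f,f')\in\mathscr{V}^p_X\mathcal{L}$ we run the same contraction on $\mathscr{V}^p_X([s,t];W)$ on small intervals chosen via the control $w(s,t)=\|\mathbf{X}\|^p_{p,[s,t]}+\|A,A'\|^p_{p,[s,t]}$, and concatenate using \cite[Lemma 1.5]{Friz_Zhang_2018}. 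Linearity gives a priori bounds with no growth issue, namely $|Y^{s,y}_t|\le C|y|$. For $t<s$ we set the solution equal to $y$, which is consistent with the $r\vee s$ convention. Throughout we record the uniform estimate
\[
\|Y^{s,y},f(Y^{s,y})\|_{X;p;[s,T]}\le C(1+|y|),
\]
where $C$ depends only on $p$, $\|\mathbf{X}\|_p$ and the norm of $(f,f')$.

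Item \eqref{item:rough_flow_1} follows from uniqueness. Both $t\mapsto\phi_{s,t}(y)$ restricted to $[u,T]$ and $t\mapsto\phi_{u,t}(\phi_{s,u}(y))$ solve the RDE on $[u,T]$ with initial value $\phi_{s,u}(y)$. Here we use additivity of the rough integral, $\int_s^t=\int_s^u+\int_u^t$, which holds by sewing. For item \eqref{item:rough_flow_2}, càdlàg-ness and finite $p$-variation in $t$ are immediate because $(Y^{s,y},f(Y^{s,y}))$ is a càdlàg controlled path and rough integrals are càdlàg. Finiteness of $\|\phi_{s,\cdot}(y)\|_{p}$ follows from $|\delta Y_{u,v}|\le|f|_\infty|\delta X_{u,v}|+|R_{u,v}|$.

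Item \eqref{item:rough_flow_3}, regularity in the backward variable, is the main work. We write $\phi_{s,t}(y)-\phi_{s',t}(y)$ for $s<s'\le t$ via the semi-flow property as $\phi_{s',t}(\phi_{s,s'}(y))-\phi_{s',t}(y)$. Two ingredients then give
\[
|\phi_{s,t}(y)-\phi_{s',t}(y)|\le L\,|\phi_{s,s'}(y)-y|.
\]
The first is a Lipschitz estimate $|\phi_{u,t}(a)-\phi_{u,t}(b)|\le L|a-b|$, uniform in $u,t$. It comes from the RDE stability estimate \cite[Theorem 3.9]{Friz_Zhang_2018} in the $\Lip^3$ case and is trivial by linearity in the linear case. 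The second is $|\phi_{s,s'}(y)-y|\le C(1+|y|)\|X\|_{p,[s,s']}$, up to a remainder term. Since $\|X\|^p_{p,[s,s']}$ is dominated by a control, this yields finite $p$-variation of $s\mapsto\phi_{s,t}(y)$.

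Right-continuity in $s$ follows because $\|\mathbf{X}\|_{p,[s,s']}\to0$ as $s'\downarrow s$, since $\mathbf{X}$ is càdlàg. For left limits as $r\uparrow s$, the same bound shows the family is Cauchy, because $w(r,s-)\to0$. The limit equals $\phi_{s-,t}:=\phi_{s,t}\circ(\mathrm{Id}+\text{jump map at }s)$. This is precisely where the convention of extending the solution to $[0,T]$ is needed.

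For item \eqref{item:rough_flow_4} we differentiate in $y$ inductively. The formal derivative $D\phi_{s,t}(y)$ solves the linear RDE with vector field $(Df(\phi_{s,\cdot}(y)),\dots)$, which is of the form covered by the linear case. We then show that difference quotients converge using the stability estimates of \cite[Theorem 3.9]{Friz_Zhang_2018} in the vector field, which requires one more degree of regularity each time. Accounting for the Lipschitz-continuity of $D^jf$ composed along controlled paths gives exactly the condition $\gamma>k+(p-1)$, as in the continuous case \cite[Chapter 11]{Friz_Victoir_2010}. The main obstacle is item \eqref{item:rough_flow_3}, because the flow is not invertible (\cref{example:counterexample_flow}). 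Any argument that passes through $\phi^{-1}$ is therefore unavailable, and everything must go through forward composition together with uniform Lipschitz bounds.
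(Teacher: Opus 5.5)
Your proof is correct. The existence part, items (1), (2), (4), and the $p$-variation half of item (3) follow the paper's own argument. In particular, the paper also writes $\phi_{u,t}(y)-\phi_{v,t}(y)=\phi_{v,t}(\phi_{u,v}(y))-\phi_{v,t}(y)$, uses Lipschitz dependence on the initial point from \cite[Theorem 3.9]{Friz_Zhang_2018}, and bounds $|\phi_{u,v}(y)-y|$ by the rough integral over $[u,v]$.

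The genuine difference is how you get the càdlàg property in the backward variable $s$.

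\emph{The paper's route.} It uses continuity of the solution map in the driver. It compares $Y^{s,y}$ and $Y^{s_k,y}$ as solutions on all of $[0,T]$ driven by the stopped paths $\mathbf{X}^{\cdot\vee s}$ and $\mathbf{X}^{\cdot\vee s_k}$. For $s_k\uparrow s$ it uses a frozen driver $\mathbf{X}^{\cdot\vee s-}$ that retains the jump at $s$. It checks convergence of these drivers in $p$-variation by a case analysis over partition intervals and then applies \cite[Theorem 3.8]{Friz_Zhang_2018}. This is exactly what the extension of $Y^{s,y}$ to $[0,T]$ is for.

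\emph{Your route.} You reuse the forward-composition estimate $|\phi_{s,t}(y)-\phi_{s',t}(y)|\le L\,|\phi_{s,s'}(y)-y|\lesssim(1+|y|)\|\mathbf{X}\|_{p,[s,s']}$. Right-continuity follows from $\|\mathbf{X}\|_{p,[s,s']}\to0$ as $s'\downarrow s$. Left limits follow from a Cauchy argument using $w(r,s-)\to0$ as $r\uparrow s$.

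\emph{What each buys.} Your approach is more economical in three ways:
\begin{itemize}
\item one estimate gives both càdlàg regularity and finite $p$-variation;
\item the vector field stays fixed, whereas in the paper's route the fields $(f,f')_{\cdot\vee s_k}$ change together with the driver;
\item nothing about the second level of modified drivers has to be checked.
\end{itemize}
For the same reason, your argument does not actually need the extension convention. Your closing remark that this is ``precisely where'' the extension is needed describes the paper's route, not yours. The paper's approach in turn yields more information: convergence of the whole solution paths $Y^{s_k,y}$ in $p$-variation, and an explicit description of the left limit as the solution driven by $\mathbf{X}^{\cdot\vee s-}$.

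Your identification of that left limit as $\phi_{s,t}\circ\Psi_s$, with $\Psi_s(y)=y+f_{s-}(y)\Delta X_s+(Df_{s-}(y)f_{s-}(y)+f'_{s-}(y))\Delta\mathbb{X}_s$, is right. However, it does not follow from the Cauchy property, which only gives existence of the limit. It follows from $\phi_{r,t}(y)=\phi_{s,t}(\phi_{r,s}(y))$, together with $\phi_{r,s}(y)\to\Psi_s(y)$ as $r\uparrow s$ and continuity of $\phi_{s,t}$.
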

\begin{proof}
The existence and uniqueness of solutions of \eqref{flow:RDE} follows analogously to \cite[Theorem 3.9]{Friz_Zhang_2018}.
    The claim \eqref{item:rough_flow_1} follows directly by the uniqueness of solutions. The claim \eqref{item:rough_flow_2} follows by the càdlàgness and finite $p$-Var property of the rough integral from \cite[Lemma 2.9]{Friz_Zhang_2018}. The claim \eqref{item:rough_flow_4} can be shown by carefully following the proof of \cite[Theorem B.4]{DauseFrizJentzenSong2026} using the stability of controlled rough paths and integrals in the càdlàg setting à \cite[Lemma 3.4.-3.6]{Friz_Zhang_2018}. Thus we are left to verify \eqref{item:rough_flow_3}. First we show the càdlàgness of paths. Denoting $\mathbf{X}^{\cdot \vee s}\coloneqq (X_{\cdot \vee s}, \mathbb{X}_{\cdot \vee s, \cdot \vee s})\in \mathscr{V}^{p}([0,T]; V)$ it holds $(f, f')_{\cdot \vee s}\in \mathscr{V}^{p}_{X^{\cdot \vee s}}\Lip^{2}$ and  $(Y^{s, y}, f(Y^{s,y}))\in \mathscr{V}^{p}_{X^{\cdot \vee s}}([0,T_{0}]; W)$ and for any $t \in [0,T_{0}]$
    \begin{equation}
    \label{eq:extended_RDE}
        Y^{s,y}_{t} = y + \int_{0}^{t} (f, f')_{r\vee s} (Y_{r}^{s,y})\dd \mathbf{X}^{\cdot \vee s}_{r}.
    \end{equation}
    Hence by the proceeding claims $Y^{s,y}$ is the unique solution of \eqref{eq:extended_RDE} on $[0,T_{0}]$ w.r.t. the driver $\mathbf{X}^{\cdot \vee s}$. Now let $(s_{k})_{k \in \mathbb{N}}\subset [0,T_{0}]$ and suppose first that $s_{k}\downarrow s$. Then we note that for any partition $\Pi$ of $[0,T_{0}]$ it holds
    \begin{equation*}
    \begin{aligned}
       &\sum_{[u,v]\in \Pi} |\delta X_{u,v}^{\cdot \vee s}- \delta X_{u,v}^{\cdot \vee s_{k}}|^{p} \\
      &= \sum_{[u, v]\in \Pi} \left(\mathbbm{1}_{u \leq s < v < s_{k}} + \mathbbm{1}_{u \leq s < s_{k} \leq v} + \mathbbm{1}_{s< u <v < s_{k}}+ \mathbbm{1}_{s< u< s_{k}\leq v}\right)|\delta X_{u,v}^{\cdot \vee s}- \delta X_{u,v}^{\cdot \vee s_{k}}|^{p}\\
      &\eqqcolon \sum_{[u,v]\in \Pi} I + II + III+ IV,
    \end{aligned}
    \end{equation*}
    with 
    \begin{equation*}
    \begin{aligned}
        I&= | \delta X_{s,v}|^{p} \mathbbm{1}_{u \leq s < v < s_{k}}; \quad II = | \delta X_{s,s_{k}}|^{p} \mathbbm{1}_{u \leq s < s_{k} \leq v}; \\ III&=|\delta X_{u,v}|^{p} \mathbbm{1}_{s< u <v < s_{k}}; \quad IV= |\delta X_{u,s_{k}}|^{p} \mathbbm{1}_{s< u< s_{k}\leq v}.
    \end{aligned}
    \end{equation*}
    We note that the term $III$ is the only term that can be non-zero more than once in the above sum and thus 
    \begin{equation*}
    \begin{aligned}
       \sum_{[u,v]\in \Pi} |\delta X_{u,v}^{\cdot \vee s}- \delta X_{u,v}^{\cdot \vee s_{k}}|^{p}&\leq 3 \operatorname{Osc}(X; [s, s_{k}])^{p} + \sum_{[u,v]\in \Pi\cap (s, s_{k})} |X_{u,v}|^{p}\\
       &\leq 3 \operatorname{Osc}(X; [s, s_{k}])^{p} + \Vert X \Vert_{p; (s, s_{k})}^{p}
    \end{aligned}
    \end{equation*}
    In combination with analogous arguments for $\mathbb{X}$ it follows 
    \begin{equation*}
      \Vert \mathbf{X}^{\cdot \vee s}; \mathbf{X}^{\cdot \vee s_{k}} \Vert_{p; [0,T]}\coloneqq \Vert X^{\cdot \vee s} - X^{\cdot \vee s_{k}} \Vert_{p; [0,T]}+ \Vert \mathbb{X}^{\cdot \vee s}- \mathbb{X}^{\cdot \vee s_{k}}\Vert_{\frac{p}{2}; [0,T]} \stackrel{k \to \infty}{\longrightarrow}0. 
    \end{equation*}
    Therefore by the local Lipschitz-continuity of the solution map \cite[Theorem 3.8]{Friz_Zhang_2018} it follows 
    \begin{equation*}
        \Vert Y^{s, y}- Y^{s_{k}, y}\Vert_{p; [0,T]} \stackrel{k \to \infty}{\longrightarrow}0
    \end{equation*}
    and therefore $[0,t]\ni s\mapsto \phi_{s,t}(y)$ is continuous from the right. Now for $s_{k}\uparrow s$ we introduce 
    \begin{equation*}
    \mathbf{X}^{\cdot \vee s-}\coloneqq (X^{\cdot \vee s-}, \mathbb{X}^{\cdot \vee s-}); \quad X^{\cdot \vee s-}_{t}\coloneqq X_{s-}\mathbbm{1}_{s< t}+ X_{t} \mathbbm{1}_{s\geq t}; \quad \mathbb{X}^{\cdot \vee s-}_{u,v} \coloneqq X_{s-, v}\mathbbm{1}_{u< s\leq v}+ \mathbb{X}_{u,v}\mathbbm{1}_{s\leq u\leq v}. 
    \end{equation*}
    One directly checks that $\mathbf{X}^{\cdot \vee s-}$ defines a càdlàg rough path. Analogously to the above one verifies that $\Vert \mathbf{X}^{\cdot \vee s_{k}}- \mathbf{X}^{\cdot \vee s-}\Vert_{p; [0,T]}\to 0$ as $k \to \infty$ Thus denoting the solution w.r.t. $\mathbf{X}^{\cdot \vee s-}$ by $Y^{s-, y}$ we get convergence of $Y^{s_{k}, y}\to Y^{s-, y}$ in $p$-Var. To see that $(s \mapsto \phi_{s,t}(y)\in V^{p}([0,T]; W)$ for any $t \in [0,T], y \in W$ note that by the semi-flow property, Lipschitzness of the solution map w.r.t. the initial condition \cite[Theorem 3.9]{Friz_Zhang_2018} and \cite[Proposition 2.16]{Friz_Zhang_2018} for any $(u, v)\in \Delta_{[0,T]}$ it holds
    \begin{equation*}
    \begin{aligned}
        |\phi_{u,t}(y)-\phi_{v, t}(y)|&\leq |\phi_{v, t}(\phi_{u,v}(y))-\phi_{v, t}(y)|\lesssim \bigg |\int_{u}^{v} (f_{r}, f'_{r})(Y^{u, y}_{r}) \dd \mathbf{X}_{r} \bigg|\\
        &\lesssim \Vert R^{f(Y^{u,y}), f'(Y^{u, y})}\Vert_{\frac{p}{2}; [u,v)}\Vert X \Vert_{p; [u, v]}+ \Vert f(Y^{u, y})\Vert_{p; [u, v)} \Vert \mathbf{X}\Vert_{p; [u,v]},
    \end{aligned}
    \end{equation*} where the implicit constants can be chosen independently of $(u,v)$. This yields the claim.
\end{proof}
Considering \cref{example:counterexample_flow} we also note that in the presence of 'only' finitely many jumps, one can circumvent the global-in-time non-invertibility of the flow by inverting on fixed intervals dictated by the jumps of $\mathbf{X}$. 
\begin{lemma}
\label{lemma:finite_jumps_flow_inversion}
 Let $p \in [2,3)$ and $\mathbf{X}=(X, \mathbb{X})\in \mathscr{V}^{p}([0,T]; V)$. Let $(f, f')\in \mathscr{V}^{p}_{X}\Lip^{3}(W; \mathcal{L}(V: W))$. Then for any $y \in W$ and $s\in [0,T]$, we have that $(Y^{s, y}, (Y^{s, y})')\in \mathscr{V}^{p}_{X}([s,T]; W)$, where $(Y^{s, y})'=f(Y^{s, y})$ and $Y^{s,y}$ solves \eqref{flow:RDE} for any $s \in [0, T_{0}]$. We denote the semi-flow as above by $\phi_{s,t}(y)\coloneqq Y^{y, s}_{t}$. Suppose that $X$ and $\mathbb{X}$ have at most finitely many jump-times $(t_{i})_{i=1}^{N} \subset [0,T]$. Then for any $i=1, \dots N-1$ and any $t \in [t_{i}, t_{i+1})$ the map $\phi_{t_{i}, t}:\mathbb{R}^{d_{Y}}\to \mathbb{R}^{d{Y}}$ is a homeomorphism. 
\end{lemma}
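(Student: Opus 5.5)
The plan is to show that on a jump-free window the RDE is driven by a continuous rough path, for which flows are invertible. Then the inverse is built explicitly by solving a time-reversed RDE. Fix $i$ and $t\in[t_i,t_{i+1})$. Since $\mathbf{X}$ has no jumps on $(t_i,t_{i+1})$, its restriction to $[t_i,t]$ is continuous on $(t_i,t]$. At $t_i$ itself only the right-continuity at the left endpoint matters, and $\phi_{t_i,t}$ depends on $\mathbf{X}$ only through increments $\delta X_{u,v},\mathbb{X}_{u,v}$ with $t_i\le u\le v\le t$. Because $X$ is right-continuous at $t_i$, the restricted path $\mathbf{X}|_{[t_i,t]}\in\mathscr{V}^{p,c}([t_i,t];V)$ is a continuous $p$-rough path. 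Likewise $(f,f')$ restricted to $[t_i,t]$ lies in $\mathscr{V}^{p}_{X}\Lip^{3}$ relative to this continuous path; its own jumps, if any, occur only at jump times of $X$ or are harmless for the argument below. Continuity of $\phi_{t_i,t}$ in $y$ is already given by \cref{prop:càdlàg_rough_flows} \eqref{item:rough_flow_4} (or by the Lipschitz dependence on initial data from \cite[Theorem 3.9]{Friz_Zhang_2018}). It therefore remains to construct a continuous two-sided inverse.

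For the inverse I will use time reversal. Define on $[0,t-t_i]$ the reversed path $\overleftarrow{X}_r:=X_{t-r}$ with second level $\overleftarrow{\mathbb{X}}_{a,b}:=\mathbb{X}_{t-b,t-a}^{\top}-\ldots$, namely the standard reversal obtained from Chen's relation, $\overleftarrow{\mathbb{X}}_{a,b}=-\mathbb{X}_{t-b,t-a}+\delta X_{t-b,t-a}^{\otimes 2}$. Since $\mathbf{X}|_{[t_i,t]}$ is continuous, this is again a continuous $p$-rough path; this is exactly where the absence of jumps is used, because reversal of a càdlàg path produces a càglàd one. Set the reversed vector field $\overleftarrow{f}_r:=-f_{t-r}$ with the appropriately signed Gubinelli derivative, chosen so that $(\overleftarrow{f},\overleftarrow{f}')$ is $\overleftarrow{X}$-controlled; its $\Lip^3$ regularity is inherited. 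Let $\psi$ be the solution flow of the reversed RDE on $[0,t-t_i]$, which exists by \cref{prop:càdlàg_rough_flows}.

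The core step is to verify $\psi_{0,t-t_i}\circ\phi_{t_i,t}=\Id$ and $\phi_{t_i,t}\circ\psi_{0,t-t_i}=\Id$. I would first reduce to the smooth case. By the Lipschitz continuity of the solution map in the driver (\cite[Theorem 3.8]{Friz_Zhang_2018}), it suffices to approximate the continuous geometric part. The complication is that $\mathbf{X}$ need not be geometric. To handle this, I write $\mathbb{X}=\mathbb{X}^{g}+\tfrac12[\mathbf{X}]$-type decomposition and absorb the bracket as a continuous bounded-variation drift $\tfrac12\int Df\,f\,\dd[\mathbf{X}]$, which is reversible for ODE reasons. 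For smooth drivers, invertibility is classical, since running the ODE backwards undoes it. Passing to the limit in the $p$-variation rough path metric gives the identities for $\mathbf{X}$. Alternatively, one can argue directly via uniqueness: the reversed path $r\mapsto\phi_{t-r,t}^{-1}$ would need to be controlled, which is circular, so the approximation route is the one I commit to.

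The main obstacle is the approximation and bracket step. Continuous non-geometric $p$-rough paths are only approximable after splitting off $[\mathbf{X}]$, and the $t$-dependent controlled field $(f,f')$ must be approximated compatibly with $X$. Approximating $(f,f')$ jointly while preserving controlledness requires care; I would try to freeze the field along the time parametrization of $X$, that is, treat $(t,X_t)$ as an enlarged driver, to reduce to autonomous vector fields. Once both compositions equal $\Id$, the map $\phi_{t_i,t}$ is a continuous bijection with continuous inverse $\psi_{0,t-t_i}$, hence a homeomorphism.
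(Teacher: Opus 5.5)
Your reduction to the continuous rough path $\mathbf{X}|_{[t_i,t]}$ is exactly the paper's. The paper then simply imports the continuous flow-of-homeomorphisms result, an adaptation of \cite[Theorem B.6]{DauseFrizJentzenSong2026}. What you add is a proof of that continuous result by time reversal, and it does not go through as written.

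First, there is a sign error. With $\overleftarrow{X}_r=X_{t-r}$ the increments are already reversed, $\delta\overleftarrow{X}_{a,b}=-\delta X_{t-b,t-a}$. The reversed field must therefore be $\overleftarrow{f}_r=f_{t-r}$, with $\overleftarrow{f}'_r=f'_{t-r}$. With your choice $-f_{t-r}$, already $f\equiv c$ constant gives $\psi_{0,t-t_i}\circ\phi_{t_i,t}(y)=y+2c\,\delta X_{t_i,t}$, so the identities you set out to verify are false.

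Second, and more seriously, verifying the two identities is the whole content, and the approximation route you commit to leaves it open. A general $(f,f')\in\mathscr{V}^p_X\Lip^{3}$ is not of the form $F(t,X_t,\cdot)$ with $F$ regular; take $f'\equiv 0$ and $f$ any $\Lip^{3}$-valued path of finite $\frac{p}{2}$-variation. So enlarging the driver by $(t,X_t)$ does not produce smooth approximants that remain controlled by the approximating paths. Less importantly, $[\mathbf{X}]$ is in general only of finite $\frac{p}{2}$-variation, not bounded variation. Also, smooth approximations of the geometric part converge only in $p'$-variation for $p'>p$. As it stands, the core step is unproved.

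The missing observation is that the direct uniqueness argument you dismissed is not circular if you reverse a forward \emph{solution} rather than the inverse flow. Let $y_u=\phi_{t_i,u}(y)$, $F=f(y)$, $F'=Df(y)f(y)+f'(y)$ and $z_r:=y_{t-r}$ for $r\in[0,t-t_i]$. For $u=t-b\le v=t-a$, insert $F_u=F_v-F'_u\delta X_{u,v}-R^{F,F'}_{u,v}$ into $\delta y_{u,v}\stackrel{\frac{p}{3}}{=}F_u\delta X_{u,v}+F'_u\mathbb{X}_{u,v}$, then replace $F'_u$ by $F'_v$. This gives
\[ \delta z_{a,b}=-\delta y_{u,v}\stackrel{\frac{p}{3}}{=}-F_v\,\delta X_{u,v}+F'_v\big(\delta X_{u,v}^{\otimes 2}-\mathbb{X}_{u,v}\big)=\overleftarrow{f}_a(z_a)\,\delta\overleftarrow{X}_{a,b}+\big(D\overleftarrow{f}_a\,\overleftarrow{f}_a+\overleftarrow{f}'_a\big)(z_a)\,\overleftarrow{\mathbb{X}}_{a,b}. \]
So $z$ solves the reversed RDE, for non-geometric $\mathbf{X}$ and time-dependent controlled $(f,f')$ alike, with no bracket splitting and no approximation.

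Continuity of $\mathbf{X}$ on $[t_i,t]$ is what makes $z$ càdlàg and the reversed error terms admissible. At a jump you would instead have to invert $y\mapsto y+f(y)\Delta X+\dots$, which is exactly what fails in \cref{example:counterexample_flow}.

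Uniqueness for the reversed RDE then gives $\psi_{0,t-t_i}\circ\phi_{t_i,t}=\Id$. The same argument with the two equations exchanged gives $\phi_{t_i,t}\circ\psi_{0,t-t_i}=\Id$. Lipschitz dependence on the initial datum makes both maps continuous, which yields the homeomorphism property.
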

\begin{proof}
Note that for any $i=1, \dots N-1$ and any $t \in [t_{i}, t_{i+1})$ $\mathbf{X}$ is continuous on $[t_{i}, t]$. Thus the homeomorphism property of the flow on this interval follows by suitably adapting the proof of \cite[Theorem B.6]{DauseFrizJentzenSong2026} to the setting of continuous $p$-Var, which we leave to the reader.
\end{proof}
We emphasize that in the càdlàg setting we can however have countably many jumps that possibly accumulate. 
\section{Flow-transform methods for RSDEs}
\label{appendix:section_flow_transform}
We summarize some well-known results on the method of \emph{(rough) flow-transform} for the well-posedness of RSDEs and reframe them in the context of the present manuscript. Initiated in \cite{CDFO13} and continued in an intrinsic manner in \cite[Section 4]{DauseFrizJentzenSong2026} flow-transform methods for RSDEs are applied in the following setting:
\\
Let $\mathbf{X}\in \mathscr{V}^{p, c}([0,T]; \mathbb{R}^{d_{X}})$ for $p \in [2,3)$ and let $(f, f')\in \mathscr{V}^{p, c}_{X}\Lip^{2}$. Then suppose that for any $s\in [0,T]$ and $y \in \mathbb{R}^{d_{Y}}$ there  are unique paths $[s,T] \ni t \mapsto (\phi_{s, t}(y), \phi'_{s,t}(y))$ with $(\phi_{s, \cdot}(y), \phi'_{s,\cdot}(y))\in \mathscr{V}^{p, c}_{X}([s,T]; \mathbb{R}^{d_{Y}})$ such that $\phi_{s, t}'(y)= f_{t}(\phi_{s, t}(y))$ and 
    \begin{equation*}
        \phi_{s,t}(y) = y+ \int_{s}^{t} (f, f')_{r}(\phi_{s,r}(y))\dd \mathbf{X}_{r}.
    \end{equation*}
We denote by $\phi: \Delta_{[0,T]} \times \mathbb{R}^{d_{Y}}\to \mathbb{R}^{d_{Y}}$ the corresponding semi-flow with the short-hand notation $\phi_{t}\coloneqq \phi_{0, t}$. Rough flow-transform methods now rely on the following crucial assumption: 
\begin{assumption}
\label{assumptipon:flow}
$\phi$ is a flow of $C^{3}$-diffeomorphisms i.e. it holds
\begin{enumerate}[label=(\roman*)]
    \item \label{assumptions:flow_1} For any $(s,t) \in \Delta_{[0,T]}$ $\phi_{s,t}: \mathbb{R}^{d_{Y}}\to \mathbb{R}^{d_{Y}}$ is a homeomorphism with its inverse denoted by $\phi_{t,s}\coloneqq \phi^{-1}_{s,t}$.
    \item \label{assumptions:flow_2} For any $(s,t) \in [0,T]^{2}$ it holds  $\phi_{s,t} \in \Lip^{3}(\mathbb{R}^{d_{Y}}; \mathbb{R}^{d_{Y}})$.
\end{enumerate}
\end{assumption}
 \begin{proposition}[Proposition 4.20 in \cite{DauseFrizJentzenSong2026}]\label{prop:flow_transform} Suppose \cref{assumptipon:flow} holds and $M\in \mathcal{M}^{2}$ is continuous with absolutely continuous covariance bracket $\langle M \rangle$. Then the following claims are equivalent:
    \begin{enumerate}
        \item \label{ex:RSDE:item_1}$\mathcal{Y}^{M}=(Y^{M}, f(Y^{M}), Df(Y^{M}) f(Y^{M}) + f'(Y^{M}), b(Y^{M}); M , Df(Y^{M}) \bullet \d M) $ is an $\mathbf{X}$-scRSM. In particular, $Y^{M}$ is an $L^{0}$-solution of the RSDE
        \begin{equation*}
        Y^{M}_{t}= Y^{M}_{0} + \int_{0}^{t} b_{s}(Y^{M}_{s}) \dd s+ M_{t} + \int_{0}^{t} (f, f')_{s}(Y^{M}_{s})  \dd \mathbf{X}_{s},
        \end{equation*}
        \item \label{ex:RSDE:item_2} Define $\tilde{Y}^{M}_{t} \coloneqq \phi^{-1}_{t}(Y_{t}^{M})$. Then $\tilde{Y}^{M}$ solves the SDE
    \begin{equation}
    \label{ex:RSDE:SDE}
    \begin{aligned}
        \tilde{Y}^{M}_{t} =&Y_{0}^{M} + \int_{0}^{t} D\phi^{-1}_{s}(\phi_{s}(\tilde{Y}^{M}_{s})) b_{s}(\phi_{s}(\tilde{Y}^{M}_{s}))\dd s+ \int_{0}^{t}  D \phi^{-1}_{s}(\phi_{s}( \tilde{Y}^{M}_{s}))\dd M_{s} \\
        &+ \frac{1}{2} \int_{0}^{t} D^{2} \phi^{-1}_{s}( \phi_{s}(
        \tilde{Y}^{M}_{s})) \dot{\langle M \rangle_{s}} \dd s
    \end{aligned}
    \end{equation}
    \end{enumerate}
    \end{proposition}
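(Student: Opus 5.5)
The plan is to prove both implications with the rough stochastic Itô--Wentzell formula \cite[Theorem 4.16]{DauseFrizJentzenSong2026}, i.e.\ the continuous analogue of \cref{prop:rsIto} for time-dependent controlled fields $F_t(y)$. In the continuous setting considered here all jump sums vanish. The first step is to make the random-field structure of the flow and its inverse explicit.

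\textbf{Step 1 (controlled fields).} By \cref{assumptipon:flow}, $\phi_t=\phi_{0,t}$ is a $C^3$-diffeomorphism, and the forward equation shows that $(\phi_t, f_t\circ\phi_t)$ is an $X$-controlled field; equivalently $(F,F')=(\phi, f(\phi))$ with $F''=(Df f+f')(\phi)$. Differentiating $\phi_t^{-1}\circ\phi_t=\Id$ along the rough path gives the backward (inverse-flow) equation
\begin{equation*}
\phi^{-1}_t(y)=y-\int_0^t \big(D\phi^{-1}_r(y) f_r(y),\ \ast\big)\dd\mathbf X_r .
\end{equation*}
Here the Gubinelli derivative $\ast$ is obtained by composing the controlled structures of $D\phi^{-1}$ and $f$. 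Hence $G_t:=\phi_t^{-1}$ is a controlled $\Lip^3$-field, uniformly on compacts, with $G'=-DG\,f$. I would verify this by differentiating the identity $\phi_t(\phi_t^{-1}(y))=y$ in the controlled sense.

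\textbf{Step 2 ((1)$\Rightarrow$(2)).} Apply Itô--Wentzell to $G_t(Y^M_t)$, where $Y^M$ is the given scRSM with $\partial_XY=f(Y)$, $\partial_X^2Y=Df(Y)f(Y)+f'(Y)$, drift $b(Y)$ and martingale $M$. The first-order rough terms are
\begin{equation*}
G'_t(Y_t)+DG_t(Y_t)f(Y_t)=0,
\end{equation*}
so they cancel. At second order, the $\mathbb X$-coefficient collects $G''$, $D G'\,f$ (twice, by symmetry), $DG\,\partial_X^2 Y$ and $D^2G(f,f)$. This sum vanishes by differentiating the relation $G'=-DG\,f$ in $y$ and along $X$; the rough bracket term cancels in the same way. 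What remains is
\begin{equation*}
\dd\tilde Y=DG(Y)b(Y)\dd t+DG(Y)\dd M+\tfrac12 D^2G(Y)\dd\langle M\rangle .
\end{equation*}
With $Y=\phi_t(\tilde Y)$ and absolute continuity of $\langle M\rangle$, this is exactly \eqref{ex:RSDE:SDE}.

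\textbf{Step 3 ((2)$\Rightarrow$(1)).} Now $\tilde Y$ is a classical continuous semimartingale, hence trivially a scRSM with zero rough part. Apply Itô--Wentzell with the field $F=\phi$. The rough part gives $\int (f,f')(\phi(\tilde Y))\dd\mathbf X$. The $\dd t$ term gives $D\phi\, D\phi^{-1} b=b$. The martingale part is $D\phi(\tilde Y)D\phi^{-1}(Y)\dd M=\dd M$. The Itô terms $\tfrac12\big(D^2\phi(D\phi^{-1},D\phi^{-1})+D\phi\,D^2\phi^{-1}\big)\dd\langle M\rangle$ vanish by the second-order chain rule for $\phi\circ\phi^{-1}=\Id$. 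Reading off the strongly controlled structure through \cref{prop:scRPtoscRSM} identifies $\mathcal Y^M$, including the martingale $Df(Y)\bullet\dd M$ attached to $\partial_XY=f(Y)$, since $f(Y)$ is a composition as in \cref{lemma: composition for RSM}.

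\textbf{Main obstacle.} The main obstacle is Step 1 together with the second-order cancellation in Step 2: one has to produce the correct Gubinelli derivative $\ast$ of the inverse flow and check its joint regularity in $(t,y)$ required by Itô--Wentzell. I would handle the cancellation by first writing everything in terms of $\phi$ and then using the derivative identities of $\phi\circ\phi^{-1}=\Id$, rather than expanding $G''$ directly.
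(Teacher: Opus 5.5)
Your overall strategy (rough stochastic Itô--Wentzell in both directions) is the natural one. Step 3 is fine, since $\tilde Y^M$ has no rough component and so no $[\mathbf X]$-terms arise there. In Step 2 the first-order and $\mathbb X$-cancellations are also correct. The gap is the bracket term.

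Your backward equation $\phi^{-1}_t(y)=y-\int_0^t(D\phi^{-1}_r(y)f_r(y),\ast)\dd\mathbf X_r$ is only valid for geometric $\mathbf X$. Here $\mathbf X\in\mathscr V^{p,c}$ is a general continuous rough path, and then the inverse flow carries an additional drift
\begin{equation*}
\int_0^t\Big(\tfrac12 D^2\phi^{-1}_r(y)\big(f_r(y)\,\cdot\,,f_r(y)\,\cdot\,\big)+D\phi^{-1}_r(y)\,Df_r(y)f_r(y)\Big)\dd[\mathbf X]_r .
\end{equation*}
For example, take $d_X=d_Y=1$, $f(y)=y$, $f'=0$, and any continuous rough path with $[\mathbf X]_t=t$ (e.g. the Itô lift of a Brownian path). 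Then $\phi_t(y)=ye^{X_t-t/2}$ and $\psi_t:=\phi^{-1}_t(y)=ye^{-X_t+t/2}$. The rough Itô formula gives $\dd\psi=-\psi\dd\mathbf X+\psi\dd[\mathbf X]$, not $\dd\psi=-\psi\dd\mathbf X$.

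Consequently, your claim that the bracket term in Step 2 ``cancels in the same way'' is false. With $G'=-DG\,f$, Itô--Wentzell produces the $[\mathbf X]$-coefficient $\tfrac12D^2G(f,f)$ plus the cross term $DG'(f)$. Using $DG'=-D^2G(\cdot,f)-DG\,Df$, this sum equals $-\tfrac12D^2G(f,f)-DG\,Df\,f\neq 0$; in the example it leaves $-\tilde Y^M_t\dd[\mathbf X]_t$ in the equation for $\tilde Y^M$. This term is cancelled only by the bracket drift of $\phi^{-1}$ itself. Your Step 1 omits that drift, and Itô--Wentzell would also have to accommodate it as an extra $[\mathbf X]$-drift of the field.

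Relatedly, ``differentiating $\phi_t\circ\phi^{-1}_t=\Id$ in the controlled sense'' presupposes the time-controlledness of $\phi^{-1}$ that it is meant to establish. You would need an implicit expansion of $\phi_t(\phi^{-1}_t(y))-\phi_t(\phi^{-1}_s(y))=-\delta\phi_{s,t}(\phi^{-1}_s(y))$ up to order $3/p$, and that is exactly where the bracket drift appears.

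A cleaner repair avoids the inverse-flow dynamics entirely. Your Step 3 already gives (2)$\Rightarrow$(1) using only the forward flow. For (1)$\Rightarrow$(2), proceed as follows:
\begin{enumerate}
\item Under \cref{assumptipon:flow} the SDE \eqref{ex:RSDE:SDE} has Lipschitz coefficients, and $\int_0^T|\dot{\langle M\rangle}_s|\dd s<\infty$, so it has a unique solution $\hat Y$ started at $Y^M_0$.
\item By Step 3, $\phi(\hat Y)$ is an $L^0$-solution of the additive RSDE with the same data.
\item By the uniqueness part of \cref{thm:FZK_RSDE} (with $\sigma=\Id$), $\phi(\hat Y)=Y^M$, i.e.\ $\phi^{-1}_t(Y^M_t)=\hat Y_t$.
\end{enumerate}
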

This approach however comes with two considerable drawbacks:
\begin{enumerate}[label=(\roman*)]
    \item When $\mathbf{X}$ is only assumed to be càdlàg , $\phi$ will in general only be a semi-flow i.e. non-invertible. See \cref{example:counterexample_flow} for a situation where the homeomorphic property \cref{assumptipon:flow}\ref{assumptions:flow_1} fails. 
    \item The $C^{3}$-property \cref{assumptipon:flow}\ref{assumptions:flow_2} requires additional regularity on the vector fields $(f, f')$ beyond the one's necessary to ensure well-posedness. In particular it would require $f_{t}\in \Lip^{5}, f_{t}'\in \Lip^{4}$(see \cite[Appendix B]{DauseFrizJentzenSong2026} for details). 
\end{enumerate}
While these two restrictions rule out utilizing flow-transform methods both in the semi-linear \cref{section:On semi-linear RSDEs} and non-linear \cref{section:Well-posedness of fully non-linear RSDEs} in this work, we emphasize that the conditions collected in  \cref{assumptipon:flow} are natural in the theory of rough PDEs \cite{BuginiFrizStannat2024, BFS25, DauseFrizJentzenSong2026}. 
\begin{corollary}
\label{corollary:flow transform good solution}
   Suppose $\xi \in L^{m}(\mathfrak{F}_{0}), N\in \mathcal{M}^{m, \infty}$ for $m \in [2, \infty)$ is continuous with absolutely continuous covariance bracket $\langle N \rangle$. Further let $b, \sigma$ be progressive  resp. predictable bounded Lipschitz and suppose \cref{assumptipon:flow} holds. Then there exists a unique good $L^{m,\infty}$-solution to \eqref{eq: non-linear RSDE} denoted by $\mathcal{Y}\coloneqq (Y, f(Y), Df(Y)f(Y)+ f'(Y), b(Y); \sigma(Y) \bullet \d N, DF(Y) \sigma(Y) \bullet \d N)$, where $Y_{t}= \phi_{t}(\tilde{Y}_{t})$ and $\tilde{Y}$ denotes the unique solution of the SDE
   \begin{equation}
   \label{eq:flow_transform_full_SDE}
    \begin{aligned}
        \tilde{Y}_{t} =&\xi  + \int_{0}^{t} D\phi^{-1}_{s}(\phi_{s}(\tilde{Y}_{s})) b_{s}(\phi_{s}(\tilde{Y}_{s}))\dd s+ \int_{0}^{t}  D \phi^{-1}_{s}(\phi_{s}( \tilde{Y}_{s}))\sigma_{s} (\phi_{s}(\tilde{Y}_{s})) \dd N_{s} \\
        &+ \frac{1}{2} \int_{0}^{t} D^{2} \phi^{-1}_{s}( \phi_{s}(
        \tilde{Y}_{s})) (\sigma_{s}\sigma_{s}^{\top})(\phi_{s}(\tilde{Y}_{s}))\dd \langle N\rangle_{s}
    \end{aligned}
    \end{equation}
\end{corollary}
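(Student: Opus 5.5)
The plan is to reduce to \cref{prop:flow_transform} via the martingale fixed point of \cref{thm:fixed_point_Banach}. The SDE \eqref{eq:flow_transform_full_SDE} is handled by working directly with its coefficients: by \cref{assumptipon:flow}\ref{assumptions:flow_2} the maps $x\mapsto D\phi^{-1}_s(\phi_s(x))$, $D^2\phi^{-1}_s(\phi_s(x))$ and $\phi_s$ are Lipschitz (indeed $C^2_b$-type, uniformly in $s$ by continuity of the flow on the compact $[0,T]$). Since $b,\sigma$ are bounded Lipschitz and $\langle N\rangle$ is absolutely continuous with density bounded (because $N\in\mathcal{M}^{m,\infty}$ gives bounded conditional increments of the bracket), all coefficients of \eqref{eq:flow_transform_full_SDE} are progressive/predictable, Lipschitz and of linear growth. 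Classical It\^o theory therefore gives a unique strong solution $\tilde Y$ with $\|\sup_t|\tilde Y_t|\|_{L^m}<\infty$. Continuity of $N$ makes this solution continuous.

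Next, set $Y_t\coloneqq\phi_t(\tilde Y_t)$ and $M\coloneqq\int_0^\cdot\sigma_s(Y_{s})\dd N_s\in\mathcal{M}^2$, which is continuous with absolutely continuous bracket $\int\sigma\sigma^\top\dd\langle N\rangle$. By construction $\tilde Y$ solves \eqref{ex:RSDE:SDE} with this $M$: the integrands coincide, since $D\phi^{-1}_s(\phi_s(\tilde Y_s))\sigma_s(\phi_s(\tilde Y_s))\dd N_s=D\phi_s^{-1}(\phi_s(\tilde Y_s))\dd M_s$ and $\dot{\langle M\rangle}_s=\sigma\sigma^\top\dot{\langle N\rangle}_s$. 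Applying \cref{prop:flow_transform} (the direction \ref{ex:RSDE:item_2}$\Rightarrow$\ref{ex:RSDE:item_1}) yields that $\mathcal{Y}^M$ is an $\mathbf{X}$-scRSM solving the additive RSDE with martingale $M$. Because $M=\int\sigma(Y)\dd N$, it follows that $Y$ is an $L^0$-solution of \eqref{eq: non-linear RSDE}, with $Z=Df(Y)\sigma(Y)\bullet\d N$. For uniqueness of $L^0$-solutions, take any $L^0$-solution $\bar Y$ with martingale part $\bar M=\sigma(\bar Y)\bullet \d N$. By the converse direction of \cref{prop:flow_transform}, $\phi^{-1}_t(\bar Y_t)$ solves \eqref{eq:flow_transform_full_SDE}, so it coincides with $\tilde Y$ by strong uniqueness of that SDE.

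It remains to show that $Y$ is also an $L^{m,\infty}$-solution. Here I would invoke \cref{theorem: existence and estimates for solutions to RSDEs} with deterministic $(f,f')\in\mathscr{V}^p_X\Lip^3\subset\sV^{p,m,\infty}_X\Lip^3$ and $N\in\cM^{p,m,\infty}$. The condition $N\in\cM^{p,m,\infty}$ should follow from $\mathcal{M}^{m,\infty}$ together with the bounded bracket density via the conditional BDG inequality, which gives $\|\delta N_{s,t}\|_{m,\infty,s}\lesssim (t-s)^{1/2}$. That theorem produces an $L^{m,\infty}$-solution. By \cref{thm:consistency_of_solutions I}, applicable since $n=\infty$, this solution is indistinguishable from the unique $L^0$-solution $Y$, so $Y$ is good; uniqueness among good solutions is then inherited from $L^0$-uniqueness.

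The main obstacle is the first step: verifying uniform-in-$s$ Lipschitz and boundedness of $D\phi_s^{-1}\circ\phi_s$ and $D^2\phi^{-1}_s\circ\phi_s$ from \cref{assumptipon:flow}, together with their progressive measurability in $s$. \cref{assumptipon:flow}\ref{assumptions:flow_2} gives $\Lip^3$ for each fixed pair $(s,t)$ but not explicitly uniformly. I would try to obtain uniformity from continuity of $s\mapsto\phi_{s}^{\pm1}$ in $C^3_{loc}$ (as in \cite[Appendix B]{DauseFrizJentzenSong2026}) combined with compactness of $[0,T]$; if only local bounds are available, I would fall back on a localization argument for the SDE.
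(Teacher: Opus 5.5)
Your argument is correct in substance, but it obtains the mixed-moment property by a different route than the paper. The paper stays with the explicitly constructed process. It notes that $\tilde Y$ is (trivially) an $L^{m,\infty}$ rough It\^o process, uses \cref{prop:flow_transform} to make $\mathcal{Y}$ an $L^0$-solution, and then checks the $L^{m,\infty}$-property of this same $\mathcal{Y}$ directly via \cref{lemma: composition for RSM} and the mixed-moment estimates of \cref{lemma: existence of rough stochastic integrals}.

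You instead take an $L^{m,\infty}$-solution from \cref{theorem: existence and estimates for solutions to RSDEs} and identify it with the flow-transform solution through \cref{thm:consistency_of_solutions I}. The $L^0$-uniqueness this requires comes from the converse direction of \cref{prop:flow_transform} together with strong uniqueness of the SDE. This buys two things. Uniqueness is argued explicitly, whereas the paper leaves it implicit. And you never have to show by hand that $\phi_t(\tilde Y_t)$ has $V^pL^{m,\infty}$-regularity with mixed-moment remainder bounds; the paper's composition argument presupposes this, since part (ii) of \cref{lemma: composition for RSM} starts from $(Y,Y')\in\sV^{p,m,n}_X$. The price is the heavier Doob--Meyer/Kolmogorov machinery behind \cref{theorem: RSDE solution is L^q RSM}. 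Note also that in your route the existence of a good solution no longer needs the flow transform at all; the transform only supplies $L^0$-uniqueness and the representation $Y=\phi(\tilde Y)$.

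A few points to tidy up.

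\begin{enumerate}
\item \cref{theorem: existence and estimates for solutions to RSDEs} needs $(f,f')\in\sV^{p,m,\infty}_X\Lip^3$ and $N\in\cM^{p,m,\infty}$. The standing assumption of \cref{appendix:section_flow_transform} is only $(f,f')\in\mathscr{V}^{p,c}_X\Lip^2$, and \cref{assumptipon:flow} does not imply more (e.g.\ it holds trivially for $\bX\equiv 0$, whatever $f$ is). So state $\Lip^3$ as an extra hypothesis. This is harmless in practice, since the $C^3$-flow property is typically only available for $f\in\Lip^5$.
\item Your justification of a bounded density for $\langle N\rangle$ is wrong. A $V^pL^{m,\infty}$-bound only controls $\|\E_s[\delta\langle N\rangle_{s,t}]\|_{L^\infty}$ by a power of a control. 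For instance, $N=\int\sqrt{g}\,\d B$ with deterministic $g\in L^1\setminus L^\infty$ lies in $\cM^{p,m,\infty}$ and has an unbounded bracket density. You do not need the claim, though. Read $N\in\cM^{p,m,\infty}$ directly off the hypothesis rather than deriving it from a bracket density. Then solve the SDE treating $\langle N\rangle$ as a continuous finite-variation driver; Lipschitz coefficients suffice for that.
\item Your opening sentence announces a reduction via \cref{thm:fixed_point_Banach}, but the argument never uses it. The SDE already encodes the self-consistency $M=\sigma(Y)\bullet\d N$.
\end{enumerate}
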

\begin{proof}
  We note that by using the absolute continuity of $\langle N \rangle$, \eqref{eq:flow_transform_full_SDE} can be seen as a standard RSDE as considered in \cite{AllanPieper2026}. By standard well-posedness criteria for SDEs, \eqref{eq:flow_transform_full_SDE} has a unique solution denoted by $\tilde{Y}$ and therefore $\tilde{Y}$ is an $L^{m, \infty}$- rIp in the sense of \cref{def:rIp}. By \cref{prop:flow_transform} we see that $\mathcal{Y}$ is indeed an $L^{0}$-solution to \eqref{eq: non-linear RSDE} and using \cref{lemma: composition for RSM}, \cref{lemma: existence of rough stochastic integrals} we see that $\mathcal{Y}$ is actually also an $L^{m, \infty}$-solution.
\end{proof}
\begin{remark}
    Alternatively one can proof \cref{corollary:flow transform good solution} by \cref{Section:main_theorem} using that for $Y^{M}$ as defined in \eqref{eq: additive rsde} it holds $Y^{M}_{t}= \phi_{t}(\tilde{Y}^{M}_{t})$ with 
    \begin{equation*}
     \begin{aligned}   \tilde{Y}^{M}_{t} =&\xi  + \int_{0}^{t} D\phi^{-1}_{s}(\phi_{s}(\tilde{Y}^{M}_{s})) b_{s}(\phi_{s}(\tilde{Y}^{M}_{s}))\dd s+ \int_{0}^{t}  D \phi^{-1}_{s}(\phi_{s}( \tilde{Y}_{s})) 
     \dd M_{s} \\
        &+ \frac{1}{2} \int_{0}^{t} D^{2} \phi^{-1}_{s}( \phi_{s}(
        \tilde{Y}_{s})) \dd \langle M \rangle_{s},
    \end{aligned}
    \end{equation*}
    which allows us to easily verify the stability condition \eqref{ref:Key} using the global Lipschitzness of the RDE-flow $\phi$ \cite[Theorem 8.15]{FRIZHAIRER}.
\end{remark}
\section{A discrete approximation approach to consistency}\label{appendix: A discrete approximation approach to consistency}
In this section we prove the consistency result \cref{thm:consistency_of_solutions II} under additional knowledge of the continuity of the solutions to RSDEs and the existence of an appropriate discretisation. 
\subsection{Consistency under existence of appropriate discretisations}
We first recall the following definition of the piecewise constant discretisation of a $p$-rough path given in \cite[Chapter 5.3]{Friz_Zhang_2018}:
\begin{definition}\label{def: discretisation of rp}
    For any càdlàg path $X: [0,T] \to V$ as well as  $\cP= \{t_i\}_{i=0}^{N}$ any finite partition of $[0,T]$, we define its \emph{discretisation along $\cP$} to be defined by 
    \begin{equation}
        X^{\cP}_{t}:= \sum_{i=0}^{N-1} X_{t_{i}} \1_{[t_i,t_{i+1})}(t) + X_{T}\1_{\{T\}}(t)
    \end{equation}
    for any $t\in [0,T]$. Analogously, for vector-fields $f: [0,T]\times W \to U$ we define the discretisation $f^{\cP}$ by discretising the path $([0,T]\ni t \mapsto f_{t}(y))$ as above for any $y \in W.$
    Especially, for $p\in [2,3)$, $\bX=(X,\X) \in \mathscr{V}^{p}([0,T]; V)$ as $\cP= \{t_i\}_{i=0}^{N}$ a finite partition of $[0,T]$ we define $\bX^{\cP}=(X^{\cP}, \mathbb{X}^{\cP})\in \mathscr{V}^{p}([0,T]; V)$ by setting $\mathbb{X}_{0, s}^{\cP}= \mathbb{X}^{\cP}_{0,t_{i}}$ if $s\in [t_{i}, t_{i+1})$, $\mathbb{X}_{0,T}^{\cP}=\mathbb{X}_{0,T}$ and $\mathbb{X}_{s,t}^{\cP}$ by enforcing the Chen relation w.r.t. $X^{\cP}$ i.e. for $(s,t) \in \Delta_{[0,T]}$
    \begin{equation*}
        \mathbb{X}^{\cP}_{s,t}\coloneqq \mathbb{X}^{\cP}_{0,t}- \mathbb{X}^{\cP}_{0,s}- \delta X^{\cP}_{0, s}\otimes \delta X_{s, t}^{\cP}.
    \end{equation*}
\end{definition}

Equipped with this notion of discretisation we can formulate the specific notion of ``stability'' of solutions to RSDEs that we will need to show the wanted consistency between solutions found through \cref{thm:fixed_point_Banach} and mixed-moment solution:

\begin{definition}\label{def: A Davie-type solution}
    Let $p\in [2,3)$, $m\in \{0\}\cup [1,\infty)$ and $\bX$ be a given $p$-rough path as well as $N\in \cM^m$ and $b, \sigma$ be given progressively measurable and respectively predictable vector fields as well as $(f,f')$ fulfil \cref{assumption:vector_fields}. Suppose there exists an $L^m$-solution $\cY$ to
    \begin{equation}
    \label{eq: RSDE for Davie type}
        Y_{t}=Y_{0}+ \int_{0}^{t} b_{s}(Y_{s}) \dd s + \int_{0}^{t} \sigma_{s}(Y_{s-}) \dd N_{s} + \int_{0}^{t}(f, f')_{s}(Y_{s}) \dd \bX_{s}.
    \end{equation}
    We say that $\cY$ is a \emph{weakly stable solution} along $(\cP^n; (f^n, (f^n)'))_{n\in \N}$ if $\{\cP^n\}_{n\in \N}$ is a sequence of partitions of $[0,T]$ with vanishing mesh-size,  and $((f^n, (f^n)'))_{n\in \N}$ is a sequence of pairs of vector fields satisfying 
    \begin{enumerate}
        \item $(f^n, (f^n)')\in L^{0}_{\operatorname{prog}}(\Omega, \sV^{p}_{X^{\cP^n}}\Lip^{3}(; \cL(\R^{d_{X}}; \R^{d_{Y}})))$ if $(f,f')$ fulfils \eqref{assumption:item:vector_fields_1}
        \item $(f^n, (f^n)'; G^n)\in \sH_{X^{\cP^n}}^{p}\cL(\R^{d_{Y}}; \cL(\mathbb{R}^{d_{X}}; \R^{d_{Y}}))$ for some sequence $(G^n)_{n \in \mathbb{N}} \subset \mathcal{M}$ if $(f, f')$ fulfils \eqref{assumption:item:vector_fields_2}
    \end{enumerate}
    such that the $L^m$-solutions $\cY^n$ to the rough SDE
    \begin{equation}\label{eq: RSDE with discrete rough path}
        Y^n_t = Y_0+ \int_0^t b_s(Y^n_s) \dd s+ \int_0^t \sigma_s(Y^n_s) \dd N_s + \int_0^t (f^{n},(f^{n})')_s(Y^n_s) \dd \bX^{\cP^n}_s
    \end{equation}
    exist for every $n\in \N$ and  such that 
    \begin{equation*}
        \lim_{n\to \infty} |Y^{n}_t- Y_t| =0 
    \end{equation*}
    as limit in probability for every $t\in [0,T]$. In this case we call $(f^n, (f^n)')$ an \emph{($L^m$-)admissible sequence for $Y$.}
\end{definition}
\begin{lemma}\label{lemma: discrete consistency result for random RDEs}
    Let $p\in [2,3)$ and $m\in [2,\infty)$  as well as $\bX \in \sV^p$ and $N \in \cM$. Further, assume that $ (f, f')$ satisfies \cref{assumption:vector_fields} and $(\sigma, \sigma ')\in L^{0}_{pred}(\Omega, \mathscr{V}^{p}_{\bN}\Lip^{3}$ for $\bN:= (N, \Pi(N; N))$ the Itô-lift of $N$.
    Then suppose there exists an $L^m$-solution $Y$ to \eqref{eq: RSDE for Davie type} and an $L^{0}$-solution to  \eqref{eq: RSDE for Davie type} denoted by $\tY$ such that there exists a sequence $(\mathcal{P}^{n};(f^n, (f^n)'))_{n\in \N}$, which is $L^m$-admissible for $Y$ and $L^{0}$-admissible for $\tilde{Y}$ in the sense of \cref{def: A Davie-type solution}. Then $Y$ and $\tilde{Y}$ coincide up to indistinguishability.
\end{lemma}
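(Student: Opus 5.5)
The plan is to show that, for each fixed $n$, the two discretised solutions $Y^n$ (the $L^m$-solution of \eqref{eq: RSDE with discrete rough path} approximating $Y$) and $\tY^n$ (the $L^0$-solution approximating $\tY$) coincide up to indistinguishability. Given this, weak stability lets us pass to the limit: for every $t\in[0,T]$ we have $Y^n_t\to Y_t$ and $\tY^n_t=Y^n_t\to\tY_t$ in probability, so $Y_t=\tY_t$ almost surely. Both processes are càdlàg, the RSI being càdlàg by \cref{lemma:RSI_cadlag_jumpstructure}. It then follows that they agree a.s. simultaneously on a countable dense set containing $T$, and hence everywhere by right-continuity, i.e. up to indistinguishability.

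The core step is the identification at fixed $n$. The driver $\bX^{\cP^n}$ is piecewise constant on $[t_i,t_{i+1})$ with finitely many jumps at the points of $\cP^n$. Its sewing germs therefore vanish on every interval inside $[t_i,t_{i+1})$. As a consequence, for any admissible integrand (RSM-type via \cref{lemma:sufficient_conditions_RSI}, or SCRP-type), the RSI $\int (f^n,(f^n)')_s(Z_s)\dd\bX^{\cP^n}_s$ is a pure jump process whose jumps are located at the partition points. By \cref{lemma:RSI_cadlag_jumpstructure} these jumps are
\[
\Delta\mathcal I_{t_{i}} = f^n_{t_i-}(Z_{t_i-})\Delta X^{\cP^n}_{t_i}+ \bigl(Df^n f^n+(f^n)'\bigr)_{t_i-}(Z_{t_i-})\Delta\X^{\cP^n}_{t_i}.
\]
Hence both $Y^n$ and $\tY^n$ solve the same hybrid equation. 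On each open interval $[t_i,t_{i+1})$ this equation is the classical Itô SDE
\[
\d Z = b(Z)\dd t+\sigma(Z_-)\dd N ,
\]
and at each $t_{i+1}$ it prescribes the deterministic update rule
\[
Z_{t_{i+1}} = Z_{t_{i+1}-}+\sigma(Z_{t_{i+1}-})\Delta N_{t_{i+1}}+\text{(the rough jump above)}.
\]

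We then argue by induction over $i$. Suppose $Y^n_{t_i}=\tY^n_{t_i}$ a.s. Both processes solve the classical SDE on $[t_i,t_{i+1})$ with the same $\mathfrak F_{t_i}$-measurable initial datum. Pathwise (strong) uniqueness for that SDE gives indistinguishability on $[t_i,t_{i+1})$. We would obtain this uniqueness from Lipschitzness of $b$ and $\sigma$ by a localisation argument. Alternatively, in the generality of $(\sigma,\sigma')\in\mathscr V^p_{\bN}\Lip^3$, we can view the SDE as a random RDE driven by the Itô lift $\bN$ and invoke uniqueness as in \cref{thm:FZK_RSDE}. Once the left limits at $t_{i+1}$ agree, the update rule gives equality at $t_{i+1}$, which closes the induction.

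For case \eqref{assumption:item:vector_fields_2} the same argument works: the RSM-structure $(f^n,(f^n)';G^n)$ only affects how the integral is justified, not its jump representation. The step I expect to be the main obstacle is justifying that the RSI against $\bX^{\cP^n}$ is exactly this finite jump sum for both notions of solution. This requires checking that the left-point Riemann sums of \cref{def:RSI} reduce, in u.c.p. sense, to the finite sum. For the $L^m$-solution this is only a statement in probability, so it needs care, since $f^n_{t_i-}$ versus $f^n_{t_i}$ and the left limits $Z_{t_i-}$ must be handled carefully. Here \cref{lemma:RSI_cadlag_jumpstructure}, together with the fact that the RSI is constant on intervals where the driver is constant, should suffice.
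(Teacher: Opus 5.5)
Your proposal is correct and is essentially the paper's proof. Both show $Y^n=\tY^n$ for each fixed $n$ by induction over the intervals $[t_i,t_{i+1})$ of $\cP^n$: classical SDE uniqueness holds on each interval, and at the partition points there is the explicit jump $\sigma(Z_{t_i-})\Delta N_{t_i}+f^n_{t_i-}(Z_{t_i-})\delta X_{t_{i-1},t_i}+(Df^nf^n+(f^n)')_{t_i-}(Z_{t_i-})\X_{t_{i-1},t_i}$. Both then conclude from weak stability, uniqueness of limits in probability and càdlàg paths.

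The only minor difference is how the piecewise dynamics of $\tY^n$ are obtained. The paper uses its random-RDE representation driven by $\sJ(\bX^{\cP^n},N)$ and checks that $\Pi(X^{\cP^n},N)$ and $\Pi(N,X^{\cP^n})$ vanish on each $[t_i,t_{i+1})$. You read them off directly from the RSI against the piecewise-constant $\bX^{\cP^n}$, which treats both solution notions at once. The step you flag as the main obstacle is harmless: for a driver with finitely many jumps, the left-point sums converge pathwise and uniformly to the finite jump sum for any càdlàg integrand.
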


\begin{proof}
   Let $(Y_t)_{t\in [0,T]}$ be the $L^m$-solution to \eqref{eq: RSDE for Davie type} and write $(Y^n)_{n\in \N}$ for the $L^m$-solutions to \eqref{eq: RSDE with discrete rough path} which exist by assumption.
   Write $\cP^n := \{t^n_i\}_{i=0}^{N_n}$, then  by definition we know that $\bX^{\cP^n}$ is constant on any of the intervals $[t^n_i, t^n_{i+1})$. 
   Therefore, using \cite[Lemma 4.7]{AllanPieper2026}one easily shows by induction  that for every $i\leq N_n-1$ and any $t\in [t^n_i, t^{n}_{i+1})$ we have that 
   \begin{equation}\label{eq: form of Y^n}
       Y^n_{t} = Y^{n,i}_{t}
   \end{equation}
   as well as
   $Y^n_T= \xi_{N_n}$ where we recursively define $(Y^{n,i}_{t})_{t\in [t_i^n, t^n_{i+1})}$ to be the unique solution to the SDE given by
   \begin{equation}\label{eq: SDE for discretisation}
       Y^{n,i}_t= \xi_i + \int_{t^n_i}^{t} b_s(Y^{n,i}_s) \dd s + \int_{t^n_i}^{t} \sigma_{s}(Y^{n,i}_{s-}) \dd N_s.
   \end{equation}
   for $\xi_0 = \xi$ and for any $i \in {1, \ldots, N_n}$ we set 
   \begin{align*}
       \xi_i &= Y^{n,i-1}_{t^n_{i}-}+ \sigma_{t^n_{i}-}(Y^{n,i-1}_{t^n_{i}-})\Delta N_{t^n_i} \\
       & \quad + f^n_{t^n_{i}-}(Y^{n,i-1}_{t^n_{i}-}) \delta X_{t^n_{i-1},t^n_i} +(D f^n_{t^n_{i}-}(Y^{n,i-1}_{t^n_i-})f^n_{t^n_{i}-}(Y^{n,i-1}_{t^n_i-})+ (f^n)'_{t^n_{i}-}(Y^{n,i-1}_{t^n_{i}-})) \X_{t^n_{i-1} , t^n_{i}} \in L^m.
   \end{align*}
   We emphasise, that by \cite[Lemma 4.7]{AllanPieper2026} $\xi_i- Y^{n,i-1}_{t^n_{i}-}$ is defined to be the jump of the RSDE \eqref{eq: RSDE with discrete rough path} at $t^n_{i}$.
   We first show the claim in the case of $(f,f')$ fulfilling \eqref{assumption:item:vector_fields_1} of \cref{assumption:vector_fields}.
   For that consider $(\tY, \tY')$ the $L^0$-solution to \eqref{eq: RSDE for Davie type} which is almost surely defined as the solution to the randomised RDE
    \begin{equation*}
        \tY_t= Y_0 + \int_0^t ((f\oplus \sigma), (f\oplus \sigma)')_r(\tY_r) \dd \sJ(\bX, N)_r
    \end{equation*}
    by \cref{thm:FZK_RSDE} and similarly $(\tY^n, (\tY^n)')$ the, again by \cref{thm:FZK_RSDE} almost surely defined, solution to 
    \begin{equation*}
        \tY^n_t= Y_0+ \int_0^t ((f^{n} \oplus \sigma), (f^{n}\oplus \sigma)')_r(\tY^n_r) \dd \sJ\big(\bX^{\cP^n}, N \big)_r.
    \end{equation*}
    Now notice, however, that  by definition for any $(s,t)\in \Delta_{[0,T]}$
    \begin{equation*}
        \Pi(X^{\cP^n}, N)_{s,t} = \int_{s}^{t} \delta X^{\cP^n}_{s ,r-} \dd N_r
    \end{equation*}
    where the integral is considered as a classical Itô integral and thus as $\delta X^{\cP^n}_{s,r-}=0$ for any $(s,r) \in \Delta_{[t^n_i,t^n_{i+1}]}$ we indeed may conclude that $\Pi(X^{\cP^n}, N)_{s,t}=0$ for every $(s,t) \in \Delta_{[t^n_i,t^n_{i+1}]}$. 
   Similarly, for the term $\Pi(N, X^{\cP^n})$ we may use that by \cite[Theorem 1.3]{fzk23} it holds for any $(s, t) \in \Delta_{[0,T]}$ that
    \begin{equation*}
        \Pi(N, X^{\cP^n})_{s,t}= \int_{s}^{t} \delta N_{s, r-} \dd X^{\cP^n}_r,
    \end{equation*}
    where the integral on the RHS is considered as a Riemann-Stieltjes integral, but as $X^{\cP^n}$ is constant on $[t^n_i,t^n_{i+1})$ we may conclude that $\Pi(N, X^{\cP^n})_{s,t}=0$ for any $(s,t) \in \Delta_{[t^n_i,t^n_{i+1})}$. Further, using the previous expression, one easily checks, that for any $t\in [0,T]$ we have that
    \begin{equation}
    \begin{split}
         \Delta \Pi(N, X^{\cP^n})_{t} := \lim_{s\uparrow t} \int_{s}^{t} N_{s, r-} \dd X^{\cP^n}_r = \Delta \bigg(\int_{0}^{\cdot}  N_{r-} \dd X^{\cP^n}_r\bigg)_{t} - N_{t-} \Delta X^{\cP^n}_{t}=0
    \end{split}
    \end{equation}
    and analogously one also finds that $\Delta \Pi(N,N)_t =0$ for every $t\in [0,T]$.\\    
    Thus, we have for any $0\leq i\leq N_n-1$ and for any $(s,t) \in [t^n_i,t^n_{i+1})$ that
    \begin{equation*}
         \sJ \big(\bX^{\cP^n}, N \big)_{s,t}= \bigg(
        \bigg(\begin{array}{c}
             0 \\
              \delta N_{s,t}
        \end{array}\bigg), 
        \bigg(\begin{array}{cc}
              0  & 0 \\
              0  & \Pi(N,N)_{s,t}
        \end{array}\bigg) 
    \end{equation*}
    as well as for any $0\leq i\leq N_n-1$
    \begin{equation*}
        \Delta \sJ \big(\bX^{\cP^n}, N\big)_{t^n_{i+1}} = \bigg(
        \bigg(\begin{array}{c}
              \delta X_{t^n_i, t^n_{i+1}} \\
              \Delta N_{t^n_{i+1}}
        \end{array}\bigg), 
        \bigg(\begin{array}{cc}
             \X_{t^n_i, t^n_{i+1}} & 0 \\
              0  &0
        \end{array}\bigg)
        \bigg).
    \end{equation*}
    Thus, again arguing by induction, using that we know the canonical jump structure of rough integrals (see \cite[Lemma 2.9]{Friz_Zhang_2018}) as well as the classical consistency of rough integrals against (Itô-)lifted semimartingales with their corresponding Itô counter parts (see e.g. \cite[Lemma 4.35]{ChevyrevFriz2019}) one can easily verify that $\tY^n$ also fulfils the equality \eqref{eq: form of Y^n} and thus by the uniqueness of the solution to the SDE \eqref{eq: SDE for discretisation} (see e.g. \cite[Theorem 16.3.11]{Cohen2015}) we find 
    \begin{equation*}
        \tY^n\equiv Y^n
    \end{equation*}
    up to indistinguishability.
   Therefore, we indeed know by uniqueness of limits in probability for any $t\in [0,T]$ that almost surely $Y_t = \tY_t $.
    As we assume that both $Y$ and $\tY$ are càdlàg almost surely, this implies that indeed $Y \equiv \tY$ up to indistinguishability and thus the claim under the assumption that $(f,f')$ fulfils \eqref{assumption:item:vector_fields_1} in \cref{assumption:vector_fields}.\\
    Finally, assume that $(f,f')$ fulfils \eqref{assumption:item:vector_fields_2} in \cref{assumption:vector_fields} instead meaning that there exists a martingale $(G_t)_{t\in [0,T]}$ of appropriate dimensions such that $(f,f'; G) \in \sH_{X}^{p}\cL$. Then once again we have that the $L^0$-solutions to \eqref{eq: RSDE for Davie type} and \eqref{eq: RSDE with discrete rough path} are given by the almost surely defined solutions to randomised RDEs, in this case also including the corresponding martingale parts of the respective vector fields. More precisely, we know by \cref{thm:FZK_RSDE} that $(\tY, \tY')$ the $L^0$-solution to \eqref{eq: RSDE for Davie type} is given by the almost sure solution to
    \begin{align*}
        \tY_t = \xi + \int_0^t b_s(\tY_s) \dd s + \int_0^t \big((f\oplus \sigma \oplus 0),(f \oplus \sigma \oplus 0)'\big)(\tY_s) \dd \sJ(\bX, (N, G))_s.
    \end{align*}
    and analogously the $L^0$-solution $\tY^n$ to \eqref{eq: RSDE with discrete rough path} is given by the almost surely defined solution to the (random) RDE given by 
    \begin{align*}
        \tY^n_t &= \xi + \int_0^t b_s(\tY^n_s) \dd s + \int_0^t \big((f^n \oplus \sigma \oplus 0),(f^n \oplus \sigma \oplus 0)'\big)(\tY^n_s) \dd \cJ(\bX^{\cP^n}, (M, G^n))_s
    \end{align*}
    for every $n\in \N$. Thus, we may once again check in the same way as before, that $\tY^n$ and $Y^n$ solve the same SDEs on $[t^n_i,t^n_{i+1})$ for every $n\in \N$ and $i\leq N$ and thus once again may conclude the claim by uniqueness of such solutions and the assumed weak stability property.
\end{proof}

\subsection{Existence of weakly stable solutions}
Given \cref{def: A Davie-type solution} we discuss in this subsection which type of choices we typically have in mind for $\{\cP^n\}_{n\in \N}$ and the corresponding admissible sequences $(f^n, (f^n)')_{n\in \N}$.
The following result, which is a direct extension of \cite[Lemma B.2]{AllanKwossekLiuPromel2025} will be the main tool to establish the existence of $\{\cP^n\}_{n\in \N}$ and corresponding admissible sequences $((f^n,(f^n)'))_{n\in \N}$ by a direct discretisation of both the rough path and the vector fields in the deterministic jump case. \\
As the proof therein can be word for word extended to allow paths in arbitrary Banach spaces we refer to said reference for a proof:

\begin{lemma}\label{lemma: AKLP approximation result}
Let $W$ be an arbitrary Banach space and $F\colon [0,T] \mapsto W$ a càdlàg path and $\{\cP^n\}_{n\in \N}$ a sequence of partitions of $[0,T]$ with vanishing mesh size. Further, let 
\begin{equation*}
    J_F:=\{t\in [0,T] \ : \ F_t \neq F_{t-}\}.
\end{equation*}
Then the following are equivalent 
\begin{enumerate}
    \item $J_F \subseteq \liminf_{n\to \infty} \cP^n$
    \item $F^{\cP^n}_t \longrightarrow F_t$ for every $t\in [0,T]$  
    \item $F^{\cP^n} \longrightarrow F$ uniformly in $t$.
\end{enumerate}
\end{lemma}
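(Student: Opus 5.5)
We prove the cycle (1)$\Rightarrow$(3)$\Rightarrow$(2)$\Rightarrow$(1). The middle step is trivial, since uniform convergence implies pointwise convergence. Throughout, write $\lfloor t\rfloor_n:=\max\{u\in\cP^n: u\le t\}$, so that $F^{\cP^n}_t=F_{\lfloor t\rfloor_n}$ for $t<T$ and $F^{\cP^n}_T=F_T$. The discretisation is fully determined by this, and the argument only uses the metric on $W$, not finite dimensionality. It therefore works verbatim in an arbitrary Banach space.

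For (2)$\Rightarrow$(1) we argue by contraposition. Suppose $t\in J_F$ but $t\notin\liminf_n\cP^n$. Then along a subsequence $t\notin\cP^{n_k}$, so $\lfloor t\rfloor_{n_k}<t$. Vanishing mesh forces $\lfloor t\rfloor_{n_k}\uparrow t$, hence $F^{\cP^{n_k}}_t=F_{\lfloor t\rfloor_{n_k}}\to F_{t-}\neq F_t$, contradicting (2). The case $t=T$ does not arise, because $T\in\cP^n$ always; the case $t=0$ is excluded by the convention $\Delta F_0=0$.

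The main step is (1)$\Rightarrow$(3). Fix $\epsilon>0$. By the standard càdlàg oscillation lemma we obtain a finite set $J^\epsilon\subset J_F$ containing all jumps of size at least $\epsilon$, together with a finite partition $0=s_0<\dots<s_K=T$ with $J^\epsilon\subset\{s_j\}$. On each $[s_j,s_{j+1})$ we arrange $\operatorname{Osc}(F;[s_j,s_{j+1}))\le 2\epsilon$; this is the usual construction of \cite[Lemma 1.5]{Friz_Zhang_2018}-type, cutting only at large jumps plus finitely many extra points chosen where $F$ is continuous.

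By (1), since $J^\epsilon$ is finite, there is $n_0$ with $J^\epsilon\subset\cP^n$ for all $n\ge n_0$. Also, the mesh of $\cP^n$ is eventually smaller than $\min_j |s_{j+1}-s_j|$ and than the continuity moduli needed near the extra cut points $s_j\notin J_F$; at such points $F$ is continuous, so there we can fall back on uniform closeness.

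Now take $t\in[0,T)$. If $[\lfloor t\rfloor_n,t]$ lies inside a single block $[s_j,s_{j+1})$, then $|F_t-F_{\lfloor t\rfloor_n}|\le 2\epsilon$. Otherwise the interval straddles some $s_j$. If $s_j\in J^\epsilon\subset\cP^n$, then $s_j\le\lfloor t\rfloor_n$, which is impossible. So $s_j$ is a continuity point, and the short interval around it contributes only $O(\epsilon)$ once the mesh is small enough. Hence $\sup_t|F^{\cP^n}_t-F_t|\lesssim\epsilon$ for large $n$.

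The delicate point is the bookkeeping at these non-jump cut points. To avoid it, we can instead choose the partition so that its extra points are irrelevant: bound $|F_t-F_u|$ for $u<t$ with $[u,t]$ of small length containing no point of $J^\epsilon$ directly. Suppose no such uniform bound held, i.e.\ there were sequences $u_k<t_k$ with $t_k-u_k\to0$, $[u_k,t_k]\cap J^\epsilon=\emptyset$ and $|F_{t_k}-F_{u_k}|>3\epsilon$. Passing to a convergent subsequence with limit $\tau$, the left and right limits of $F$ at $\tau$ would force a jump of size at least $\epsilon$ at $\tau$, which would then have to lie in $J^\epsilon$ — a contradiction, because the intervals $[u_k,t_k]$ avoid $J^\epsilon$. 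This compactness argument is the cleanest route and is what we would write out.
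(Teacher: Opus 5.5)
Your overall scheme (1)$\Rightarrow$(3)$\Rightarrow$(2)$\Rightarrow$(1) is right, and your contrapositive proof of (2)$\Rightarrow$(1) is complete as it stands, including the remarks on $t=0$ and $t=T$.

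There is one slip in the compactness step, which is the step you say you would write out. You state the uniform bound for pairs $u<t$ with $[u,t]\cap J^\epsilon=\emptyset$. But you must apply it with $u=\lfloor t\rfloor_n$, and once $J^\epsilon\subset\cP^n$ this left endpoint is often itself in $J^\epsilon$: for any $s\in J^\epsilon$ and any $t$ between $s$ and the next point of $\cP^n$, one has $\lfloor t\rfloor_n=s$. What $J^\epsilon\subset\cP^n$ actually gives is only $(\lfloor t\rfloor_n,t]\cap J^\epsilon=\emptyset$. So as formulated, your bound fails to cover exactly the situations in which the partition has captured a large jump.

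The repair costs nothing: prove the bound under the weaker hypothesis $(u,t]\cap J^\epsilon=\emptyset$. Take $u_k<t_k$ with $t_k-u_k\to0$, $u_k,t_k\to\tau$ and $|F_{t_k}-F_{u_k}|>3\epsilon$. Pass to a subsequence on which one of three cases holds for all $k$:
\begin{itemize}
\item $t_k<\tau$: both values tend to $F_{\tau-}$, so $|F_{t_k}-F_{u_k}|\to0$, contradicting $|F_{t_k}-F_{u_k}|>3\epsilon$.
\item $u_k\ge\tau$: both values tend to $F_\tau$ by right-continuity, with the same contradiction.
\item $u_k<\tau\le t_k$: here $|F_{t_k}-F_{u_k}|\to|\Delta F_\tau|$, which forces $|\Delta F_\tau|\ge3\epsilon$. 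Hence $\tau\in J^\epsilon\cap(u_k,t_k]$, a contradiction.
\end{itemize}
A large jump sitting at the left endpoint $u_k=\tau$ falls into the harmless second case, which is why the half-open interval is the right condition. With this, choosing $n$ so large that $J^\epsilon\subset\cP^n$ and $|\cP^n|<\delta$ gives $\sup_{t}|F^{\cP^n}_t-F_t|\le3\epsilon$, as you intend. Your block-partition sketch can then be dropped.

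As for the comparison: the paper does not prove this lemma itself. It quotes \cite[Lemma B.2]{AllanKwossekLiuPromel2025} and asserts that the proof carries over verbatim to arbitrary Banach spaces. Your corrected argument is a self-contained substitute for that citation. It also makes the Banach-space claim transparent, since it uses only compactness of $[0,T]$, existence of one-sided limits, and finiteness of $\{t:|\Delta F_t|\ge\epsilon\}$, never any compactness in $W$.
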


An immediate consequence is the following result for the discretisation of rough paths:
\begin{lemma}\label{lemma: existence of cP^n}
    Let $p\in [2,3)$ and $\bX\in \mathscr{V}^{p}$, then for any sequence of partitions $\{\cP^n\}_{n\in \N}$ of $[0,T]$ with vanishing mesh-size we have that 
    \begin{equation*}
        \lim_{n \to \infty} \|\bX^{\cP^n}- \bX\|_{\infty, [0,T]} =0,
    \end{equation*}
    if and only if $\{\cP^n\}_{n\in \N}$ fulfills 
    \begin{equation}\label{eq: Davie type partition for X}
        \{t\in [0,T] \ : \ \Delta \bX_t \neq 0\} \subseteq \liminf_{n\to \infty} \cP^n.
    \end{equation}
\end{lemma}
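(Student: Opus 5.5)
The plan is to reduce the statement to \cref{lemma: AKLP approximation result}, applied componentwise to the path $X$ and to the second-level path $t\mapsto \mathbb{X}_{0,t}$. By \cref{def: discretisation of rp}, $X^{\cP^n}$ is exactly the discretisation of the càdlàg path $X$ along $\cP^n$. Likewise, $t \mapsto \mathbb{X}^{\cP^n}_{0,t}$ is the discretisation of the càdlàg path $t\mapsto \mathbb{X}_{0,t}$ along $\cP^n$. I read $\|\bX^{\cP^n}-\bX\|_{\infty,[0,T]}$ as the uniform distance of both levels on $\Delta_{[0,T]}$, i.e. $\sup_t|X^{\cP^n}_t-X_t| + \sup_{(s,t)}|\mathbb{X}^{\cP^n}_{s,t}-\mathbb{X}_{s,t}|$.

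The first step is to identify the jump set. Since $\Delta \mathbb{X}_t = \lim_{s\uparrow t}\mathbb{X}_{s,t}$, Chen's relation gives
\[
\mathbb{X}_{0,t}-\mathbb{X}_{0,t-} = \Delta\mathbb{X}_t + X_{0,t-}\otimes \Delta X_t .
\]
Hence $J_X \cup J_{\mathbb{X}_{0,\cdot}} = \{t : \Delta X_t\neq 0 \text{ or } \Delta \mathbb{X}_t \neq 0\} = \{t:\Delta\bX_t\neq 0\}$. Indeed, if $\Delta X_t=0$, the two jumps of the second level coincide.

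For the ``if'' direction, assume \eqref{eq: Davie type partition for X}. Then \cref{lemma: AKLP approximation result}, applied with $F=X$ and with $F=\mathbb{X}_{0,\cdot}$, gives uniform convergence $X^{\cP^n}\to X$ and $\mathbb{X}^{\cP^n}_{0,\cdot}\to\mathbb{X}_{0,\cdot}$. For general $(s,t)$ we use that both the discrete and the original second level satisfy Chen's relation:
\[
\mathbb{X}^{\cP^n}_{s,t}-\mathbb{X}_{s,t} = (\mathbb{X}^{\cP^n}_{0,t}-\mathbb{X}_{0,t}) - (\mathbb{X}^{\cP^n}_{0,s}-\mathbb{X}_{0,s}) - \delta X^{\cP^n}_{0,s}\otimes\delta X^{\cP^n}_{s,t} + \delta X_{0,s}\otimes \delta X_{s,t}.
\]
The last difference is bounded by $4\|X\|_\infty \sup|X^{\cP^n}-X|$ plus similar terms. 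This is the routine part, since $\sup|X^{\cP^n}|\le \sup|X|$. It yields uniform convergence on $\Delta_{[0,T]}$.

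For the ``only if'' direction, uniform convergence of both levels implies in particular uniform convergence of $X^{\cP^n}\to X$ and of $\mathbb{X}^{\cP^n}_{0,\cdot}\to\mathbb{X}_{0,\cdot}$. By \cref{lemma: AKLP approximation result} this gives $J_X\cup J_{\mathbb{X}_{0,\cdot}}\subseteq\liminf_n\cP^n$, which is \eqref{eq: Davie type partition for X} by the jump identification above.

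The only mildly delicate step is this jump-set identification, and in particular checking that $\mathbb{X}^{\cP^n}_{0,\cdot}$ really is the piecewise constant discretisation in the sense of the lemma, including the endpoint convention at $T$. Both follow directly from the definitions.
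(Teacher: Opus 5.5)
Your proposal is correct and follows essentially the paper's route. The paper states this lemma as an immediate consequence of \cref{lemma: AKLP approximation result}, and the intended argument is spelled out in the proof of \cref{lemma: discretisation of controlled paths}: apply that lemma to the one-parameter path $t\mapsto(X_t,\mathbb{X}_{0,t})$ and pass to general $(s,t)$ via Chen's relation, using the jump identity $\mathbb{X}_{0,t}-\mathbb{X}_{0,t-}=\Delta\mathbb{X}_t+\delta X_{0,t-}\otimes\Delta X_t$ exactly as you do.
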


 Similarly, we can also conclude a corresponding discrete approximation result for controlled paths and vector fields:
\begin{lemma}\label{lemma: discretisation of controlled paths}
    Let $p\in [2,3)$ and $(Y,Y')\in \sV^{p}_X([0,T],W)$ some controlled path taking values in some Banach space $W$. Further, let $\cP= \{t_i\}_{i=0}^{N}$ be a partition of $[0,T]$ then $(Y^{\cP}, (Y')^{\cP}) \in \sV^{p}_{X^{\cP}}$. Further, if $\{\cP^n\}_{n\in \N}$ is a sequence of partitions of $[0,T]$ such that 
    \begin{equation}\label{eq: assumption on cPn for convergence of controlled paths}
        \{t\in [0,T] \ : \ |\Delta X_t| \vee |\Delta Y'_t| \vee |\Delta R^{Y,Y'}_t| > 0\} \subseteq \liminf_{n\to \infty} \cP^n, 
    \end{equation}
    then also
    \begin{equation*}
        \sup_{t\in [0,T]}\big(|Y^{\cP^n}_t-Y_t|+ |(Y')^{\cP^n}_t-  Y'_t| \big) + \sup_{(s,t) \in \Delta_{[0,T]}} \Big|R^{Y^{\cP^n},(Y')^{\cP^n}}_{s,t}-R^{Y,Y'}_{s,t}\Big| \longrightarrow 0
    \end{equation*}
    as $n\to \infty$.
\end{lemma}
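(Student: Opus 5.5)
The plan is to reduce both statements to the pathwise convergence $Y^{\cP^n}\to Y$, $(Y')^{\cP^n}\to Y'$ provided by \cref{lemma: AKLP approximation result}, plus an explicit identity for the discretised remainder.

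\textbf{Step 1: the discretisation is controlled.} Fix $\cP=\{t_i\}_{i=0}^N$ and write $\lfloor s\rfloor\coloneqq\lfloor s\rfloor_{\cP}$. Since $X^{\cP}_s=X_{\lfloor s\rfloor}$ and likewise for $Y^{\cP}$ and $(Y')^{\cP}$, we have for $(s,t)\in\Delta_{[0,T]}$ (with the convention $\lfloor T\rfloor=T$)
\begin{equation*}
  R^{Y^{\cP},(Y')^{\cP}}_{s,t}=\delta Y_{\lfloor s\rfloor,\lfloor t\rfloor}-Y'_{\lfloor s\rfloor}\delta X_{\lfloor s\rfloor,\lfloor t\rfloor}=R^{Y,Y'}_{\lfloor s\rfloor,\lfloor t\rfloor}.
\end{equation*}
For any partition $\tilde\cP$, the image $\{\lfloor u\rfloor:u\in\tilde\cP\}$ is again a partition, after dropping degenerate intervals on which all increments vanish; this is the same trick as in the proof of \cref{prop: Doob--Meyer}. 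Hence $\|R^{Y^{\cP},(Y')^{\cP}}\|_{\frac p2}\le\|R^{Y,Y'}\|_{\frac p2}$ and $\|(Y')^{\cP}\|_p\le\|Y'\|_p$. Piecewise constant paths are càdlàg, so $(Y^{\cP},(Y')^{\cP})\in\sV^p_{X^{\cP}}$.

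\textbf{Step 2: uniform convergence of the paths.} By Chen's relation, $\Delta Y_t=\Delta R^{Y,Y'}_t+Y'_{t-}\Delta X_t$. Therefore $J_Y\subseteq\{|\Delta X|\vee|\Delta R^{Y,Y'}|>0\}$, and also $J_{Y'}$ is contained in the set appearing in \eqref{eq: assumption on cPn for convergence of controlled paths}. \Cref{lemma: AKLP approximation result} then gives $Y^{\cP^n}\to Y$ and $(Y')^{\cP^n}\to Y'$ uniformly, and likewise $X^{\cP^n}\to X$ uniformly.

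\textbf{Step 3: uniform convergence of the remainders.} By Step 1,
\begin{equation*}
  R^{Y^{\cP^n},(Y')^{\cP^n}}_{s,t}-R^{Y,Y'}_{s,t}=\big(\delta Y^{\cP^n}_{s,t}-\delta Y_{s,t}\big)-\big((Y')^{\cP^n}_s-Y'_s\big)\delta X^{\cP^n}_{s,t}-Y'_s\big(\delta X^{\cP^n}_{s,t}-\delta X_{s,t}\big).
\end{equation*}
Each term is bounded by a product of a sup-distance from Step 2 and a bounded quantity, namely $\sup|Y'|$ or $2\sup|X|$. The supremum over $\Delta_{[0,T]}$ therefore tends to zero.

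The main obstacle is the jump condition. It has to be confirmed that the stated set controls $J_Y$; this is handled by the Chen-relation identity for $\Delta Y$ in Step 2. One also has to make sure that the reparametrised sums in Step 1 are not mishandled at the endpoint $T$, where $\lfloor T\rfloor=T$.
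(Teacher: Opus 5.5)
Your argument is correct and is essentially the paper's proof. Both rest on \cref{lemma: AKLP approximation result} plus an elementary algebraic bound on the remainder. The only difference is that the paper applies that lemma to the augmented path $(Y,Y',R^{Y,Y'}_{0,\cdot})$ and then uses Chen's relation to pass to $(s,t)$. You instead apply it to $X$, $Y$ and $Y'$ separately and expand $R^{Y^{\cP^n},(Y')^{\cP^n}}_{s,t}-R^{Y,Y'}_{s,t}$ directly, which also makes explicit the uniform convergence of $X^{\cP^n}$ that the paper uses tacitly.

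As in the paper, your appeal to that lemma silently requires the partitions $\cP^n$ to have vanishing mesh size. This is not written into the statement, but without it the conclusion fails.
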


\begin{proof}
    One immediately checks that indeed $(Y^{\cP}, (Y')^{\cP}) \in \sV^{p}_{X^{\cP}}$.
    Consider the path 
    \begin{align*}
        \mathscr{Y} \colon  [0,T] \ni t \mapsto (Y_t, Y'_t, R^{Y,Y'}_{0,t})
    \end{align*}
    which is càdlàg and takes values in some Banach space. Further, notice that 
    \begin{align*}
        \mathscr{Y}^{\cP^n}_t = \Big(Y^{\cP^n}_t, (Y')^{\cP^n}_t, R^{Y^{\cP^n}, (Y')^{\cP^n}}_{0,t} \Big).
    \end{align*}
    Once again, by \cref{lemma: AKLP approximation result} as well as Chen's relation for remainders (see  \eqref{eq: Chen for remainders}), we find that $\mathscr{Y}^{\cP^n} \to \mathscr{Y}$ uniformly in time as $n\to \infty$ if and only if 
    \begin{equation*}
         \{t\in [0,T] \ : \ |\Delta \mathscr{Y}_t|> 0\} \subseteq \{t\in [0,T] \ : \ |\Delta X_t| \vee |\Delta Y'_t| \vee |\Delta R^{Y,Y'}_t| > 0\} \subseteq \liminf_{n\to \infty} \cP^n.
    \end{equation*}
    To conclude the claim we notice, that for any $(s,t) \in \Delta_{[0,T]}$ we have that 
    \begin{equation*}
        |R^{Y^{\cP^n}, (Y')^{\cP^n}}_{s,t}-R^{Y,Y'}_{s,t}|\leq 2 \sup_{t\in [0,T]} |R^{Y^{\cP^n}, (Y')^{\cP^n}}_{0,t}-R^{Y}_{0,t}|+ |\delta Y^{\cP^n}_{0,s}\otimes \delta X^{\cP^n}_{s,t}-\delta Y_{0,s}\otimes \delta X_{s,t}|.
    \end{equation*}
\end{proof}

By treating controlled vector fields as controlled paths in the Banach space of bounded continuous functions endowed with the uniform topology\footnote{Notice that in \cref{lemma: AKLP approximation result} we did not assume the Banach space to be finite dimensional} we may also conclude the following

\begin{corollary}\label{corollary: discretisation of vector fields}
Let $p\in [2,3)$, $E, W,\bar{W}$ be some finite dimensional Banach spaces, $X$ some $E$-valued càdlàg path of finite $p$-Var and $(f,f') \in \sV^p_X\Lip^3([0,T]; \cL(W,\bar{W}))$, then for any sequence of partitions $\{\cP^n\}_{n\in \N}$ with 
 \begin{equation*}
     \Big\{t\in [0,T] \ : \ |\Delta X_t|_{E} \vee \sup_{x\in W}|\Delta f'_t(x)|_{\cL(E; \cL(W, \bar{W}))} \vee \sup_{x\in  W}|\Delta R^{Y,Y'}_t(x)|_{\cL(W,\bar{W})} > 0 \Big\} \subseteq \liminf_{n\to \infty} \cP^n
 \end{equation*}
 we have for any $\tp>p$ that 
 \begin{align*}
     &\Big\| \sup_{x\in W} |f^{\cP^n} (x)- f(x)|_{\bar{W}} \Big\|_{\tp, [0,T]} \vee \Big\| \sup_{x\in W} |(f')^{\cP^n} (x)- f'(x)|_{\cL(E; \bar{W})} \Big\|_{\tp, [0,T]} \\
     &\vee \Big\| \sup_{x\in W} \big|R^{f^{\cP^n}, (f')^{\cP^n}} (x)- R^{f,f'}(x) \big|_{\bar{W}} \Big\|_{\frac{\tp}{2}, [0,T]} \longrightarrow 0
 \end{align*}
 as $n\to \infty$.
\end{corollary}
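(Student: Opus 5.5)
The plan is to run the argument of \cref{lemma: discretisation of controlled paths} with the finite-dimensional target replaced by the Banach space $C_b(W;\bar W)$ of bounded continuous functions with the uniform norm, and then upgrade the resulting uniform convergence to $\tp$-variation convergence by interpolation. Concretely, view $t\mapsto f_t$ as a càdlàg path in $\mathcal{C}:=C_b(W;\cL(W,\bar W))$ and $t\mapsto f'_t$ as a càdlàg path in $\mathcal{C}':=C_b(W;\cL(E;\cL(W,\bar W)))$; both are càdlàg in the sup norm by item (2) of \cref{Def:controlled Lip3 fields} together with the $\Lip^3$ bounds. The definition of $\sV^p_X\Lip^3$ gives uniformly in $x$ the bounds $\|f'(x)\|_{p}$, $\|R^{f,f'}(x)\|_{p/2}$, $\|Df(x)\|_p<\infty$. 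Some care is needed here: the sup over $x$ of $p$-variations is not the $p$-variation of the sup-norm path. I would therefore work with the triple $\mathscr{F}_t:=(f_t,f'_t,R^{f,f'}_{0,t})\in \mathcal{C}\times\mathcal{C}'\times\mathcal{C}$ and only use that it is càdlàg in the uniform topology, which follows from the controlled structure and the assumption on jumps.

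The first step is pointwise (in $t$), uniform (in $x$) convergence. By Chen's relation \eqref{eq: Chen for remainders} for remainders, $\Delta f_t = f'_{t-}\Delta X_t+\Delta R^{f,f'}_t$, so the jump set of $\mathscr{F}$ is contained in the set displayed in the hypothesis. \cref{lemma: AKLP approximation result} (which is valid in arbitrary Banach spaces) then gives $\sup_t\|\mathscr{F}^{\cP^n}_t-\mathscr{F}_t\|\to0$. Exactly as at the end of the proof of \cref{lemma: discretisation of controlled paths}, writing $R_{s,t}=R_{0,t}-R_{0,s}-\delta f'_{0,s}\delta X_{s,t}$ shows that $\sup_{(s,t)}\sup_x|R^{f^{\cP^n},(f')^{\cP^n}}_{s,t}(x)-R^{f,f'}_{s,t}(x)|\to0$, using also $\sup|X^{\cP^n}-X|\to0$ from \cref{lemma: existence of cP^n}.

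The second step upgrades to variation norms by interpolation. For any two-parameter $A$ one has $\|A\|_{\tp}\le \|A\|_{p}^{p/\tp}\,(\sup|A|)^{1-p/\tp}$, and the analogous bound with $p/2,\tp/2$. It therefore suffices to show uniform-in-$n$ bounds on $\|\sup_x|\delta f^{\cP^n}(x)-\delta f(x)|\|_{p}$ and the corresponding quantities, with $p/2$-variation for the remainders. Discretisation does not increase variation: the $p$-variation of $f^{\cP^n}$ is the $p$-variation of $f$ sampled on $\cP^n$. The same holds for remainders, since $R^{f^{\cP},(f')^{\cP}}_{s,t}=R^{f,f'}_{\lfloor s\rfloor,\lfloor t\rfloor}$. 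Hence the difference is bounded by twice the corresponding quantity for $f$.

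\textbf{Main obstacle.} The step I expect to be hardest is making the sup-over-$x$ $p$-variation of $f$ itself finite. The definition only controls $\sup_x\|f'(x)\|_p$, not $\|\sup_x|\delta f'(x)|\|_p$, and for nonseparable control in $x$ these differ. I would try to obtain it from the spatial $\Lip^3$ regularity via localisation to compacts plus equicontinuity. If that fails, I would either read the definition's seminorms as taken in $\mathcal{C}$, as the footnote preceding the corollary suggests, or settle for the weaker statement with $\sup_x$ outside the variation norms, which interpolates in the same way.
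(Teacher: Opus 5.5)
Your argument is the paper's: its one-line proof treats $(f,f')$ as a controlled path with values in the space $C_b$ of bounded continuous functions with the uniform norm. It then reruns \cref{lemma: AKLP approximation result} and \cref{lemma: discretisation of controlled paths} on the triple $(f,f',R^{f,f'}_{0,\cdot})$, exactly as you do. Your interpolation step uses the $n$-uniform variation bounds from $R^{f^{\cP},(f')^{\cP}}_{s,t}=R^{f,f'}_{\lfloor s\rfloor,\lfloor t\rfloor}$, and it supplies the upgrade from uniform to $\tp$-variation convergence that the paper leaves implicit.

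What needs correcting is how you handle the obstacle. It cannot be removed. Your first-paragraph claim, that $t\mapsto f_t$ is càdlàg in the sup norm ``by item (2) together with the $\Lip^3$ bounds'', is false for $\sV^p_X\Lip^3$ as literally defined in \cref{Def:controlled Lip3 fields}, where the supremum over $x$ sits outside the path norms.

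Here is a counterexample. Take $W=\R$, $X\equiv0$, $f'\equiv0$, $0\neq\phi\in C_c^\infty((-\tfrac12,\tfrac12))$ and $t_0\in(0,T)$. Set $a_k(t)=\min\{1,k(t-t_0)^+\}$ and $f_t(x)=\sum_{k\geq1}\phi(x-k)a_k(t)$.
\begin{itemize}
\item Disjoint supports give $\Lip^3$ bounds uniform in $t$.
\item Each $t\mapsto D^jf_t(x)$ is continuous and monotone with total variation at most $\|\phi^{(j)}\|_\infty$.
\item Hence $(f,0)\in\sV^p_X\Lip^3$, and the jump set in the hypothesis is empty.
\end{itemize}
Now let $[t_i,t_{i+1})$ be the cell of $\cP^n$ containing $t_0$, and take $t\in(t_0,t_{i+1})$. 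Then $f^{\cP^n}_t=f_{t_i}=0$, so the increment of $f^{\cP^n}(x)-f(x)$ over $[t_i,t]$ is $-f_t(x)$. Moreover $\sup_x|f_t(x)|=\|\phi\|_\infty\sup_k a_k(t)=\|\phi\|_\infty$, and this holds for every $n$.

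So the conclusion fails whether $\sup_x$ is taken inside or outside the variation norm. Your option (c) therefore does not help, and equicontinuity (option (a)) only gives uniformity over compact subsets of $W$. In this example even $\|\sup_x|\delta f(x)|\|_{p'}=\infty$ for every $p'$, because $\sup_x|\delta f_{s,t}(x)|\geq\|\phi\|_\infty/2$ on each interval $[t_0+2^{-j-1},t_0+2^{-j}]$.

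Option (b) is therefore the only route. You must assume that $t\mapsto(f_t,f'_t)$ is a controlled path in $C_b$, i.e.\ càdlàg in the uniform norm with $\|\sup_x|\delta f'(x)|\|_{p}$ and $\|\sup_x|R^{f,f'}(x)|\|_{\frac p2}$ finite. This is the sup-inside convention of \cref{definition: stochastic controlled vector fields}. It is also what is needed where the corollary is applied in \cref{lemma: randomised RDE is Davie}, and it is the unstated hypothesis behind the paper's proof. State it as a hypothesis rather than trying to derive it; with it, your argument is complete.
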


Using these approximation results we can show that in the case where the vector fields only have deterministic jumps a straightforward discretisation of both the rough path and the vector fields along a sequence of partitions which exhaust all the jump times of both the vector field and the rough path produces an $L^m$-admissible sequence:

\begin{lemma}\label{lemma: randomised RDE is Davie}
     Let $p\in [2,3)$, $m\in [2,\infty) \cup \{0\}$, $\mathbf{X} \in \mathscr{V}^{p}$ and $N \in \cM^m$. Further, assume that $(\sigma, \sigma') \in L^{0}_{pred}(\Omega, \sV^{p}_{\bN} \operatorname{Lip}^{3})$ as well as that $(f, f')$ is a pair of vector fields fulfilling \cref{assumption:vector_fields} with \textbf{deterministic jump times}. Then, any $L^{0}$-solution $\cY$ to \eqref{eq: RSDE for Davie type} is weakly stable along any sequence $(\cP^n; (f^{\cP^n}, (f')^{\cP^n}))_{n\in \N}$ where the sequence of partitions $\{\cP^n\}_{n\in \N}$  fulfils
    \begin{equation}\label{eq: assumption on cPn for Davie type}
          \{t\in [0,T] \ : \ |\Delta \bX_t| \vee |\Delta f'_t| \vee |\Delta R^{f,f'}_t| > 0\} \subseteq \liminf_{n\to \infty} \cP^n.
    \end{equation}
    If additionally, $(f,f') \in \sV^{p,m, \infty}_{X}\Lip^{3}$  or $(f,f') \in \sV^{p,m,\infty}_{X} \cL$ respectively, then also any $L^m$-solution to \eqref{eq: RSDE for Davie type} is weakly stable along any sequence $(\cP^n; (f^{\cP^n}, (f')^{\cP^n}))_{n\in \N}$ such that the sequence of partitions $\{\cP^n\}_{n\in \N}$  fulfils \eqref{eq: assumption on cPn for Davie type} and thus the $L^m$-solutions is especially a good $L^m$-solution.
\end{lemma}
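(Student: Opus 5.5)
We must show that the discretised solutions $Y^n$ of \eqref{eq: RSDE with discrete rough path}, driven by $\bX^{\cP^n}$ and $(f^{\cP^n},(f')^{\cP^n})$, converge to $Y$ in probability for each fixed $t$. The approach is to reduce everything to a stability statement for RDEs and RSDEs under uniform convergence of the data, using \cref{lemma: existence of cP^n}, \cref{lemma: discretisation of controlled paths} and \cref{corollary: discretisation of vector fields}.

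The first step is to fix the data-level convergences. Since the jump times of $(f,f')$ are deterministic, condition \eqref{eq: assumption on cPn for Davie type} can be imposed by a single deterministic sequence of partitions. Under it, \cref{lemma: existence of cP^n} gives $\bX^{\cP^n}\to\bX$ uniformly. Moreover, $\|\bX^{\cP^n}\|_{p}\le \|\bX\|_{p}$ because discretisation only coarsens partitions. Interpolation then gives convergence in $\tp$-variation for every $\tp>p$. \cref{corollary: discretisation of vector fields}, applied $\omega$-wise, gives the analogous $\tp$-variation convergence for $f^{\cP^n},(f')^{\cP^n}$ and their remainders. In case \eqref{assumption:item:vector_fields_2}, the martingale part $G$ is simply carried along as $G^n:=G$; it then has to be checked that $(f^{\cP^n}-G,(f')^{\cP^n})$ is still controlled by $X^{\cP^n}$.

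The second step treats the $L^0$-solution. By \cref{thm:FZK_RSDE}, $\tY$ and $\tY^n$ are $\omega$-wise solutions of RDEs driven by $\sJ(\bX;N)$ and $\sJ(\bX^{\cP^n};N)$, respectively $\sJ(\bX;(N,G))$ and $\sJ(\bX^{\cP^n};(N,G))$. It remains to show that the joint lifts converge in $\tp$-variation in probability. The entries $\Pi(X^{\cP^n};N)$ and $\Pi(N;X^{\cP^n})$ are Itô and Riemann–Stieltjes integrals. Their convergence follows from the BDG-type estimates of \cite[Theorem 5.1]{fzk23} applied to the difference $X^{\cP^n}-X$, combined with uniform $p$-variation bounds and interpolation. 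This works because the estimates are linear in the deterministic path. The local Lipschitz continuity of the RDE solution map \cite[Theorem 3.8]{Friz_Zhang_2018}, in the driver and in the controlled vector field, then gives $\tY^n\to\tY$ almost surely along subsequences. Hence $\tY^n_t\to\tY_t$ in probability for each $t$.

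The third step treats the $L^m$ case. Here I would use the stability of mixed-moment solutions in the coefficients, namely \cref{theorem: existence and estimates for solutions to RSDEs} and \cref{lemma: stability in vfs for non-linear RSDEs}, or \cref{proposition: exsitence of additive linear RSDEs with random coeff} in the linear case. One needs $\|\bX^{\cP^n}-\bX\|_{\tp}\to0$ together with $\llbracket f,f';f^{\cP^n},(f')^{\cP^n}\rrbracket_{\tp,m,\infty}\to 0$, working at the slightly larger exponent $\tp<3$. These give $\|\sup_t|Y^n_t-Y_t|\|_{L^m}\to0$, provided the constants stay bounded, which holds by the uniform $p$-variation bounds on the discretised data.

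The main obstacle is the mixed-moment convergence of the discretised vector fields. \cref{corollary: discretisation of vector fields} is purely pathwise, while the $\llbracket\cdot\rrbracket_{\tp,m,\infty}$ norms involve conditional expectations of remainders. I would deduce it by dominated convergence: use the uniform $\|\cdot\|_{p,m,\infty}$ bounds on the discretisations, which are again obtained by partition coarsening, and interpolate against the uniform-in-time convergence. The deterministic jump-time assumption is what makes the limit partition independent of $\omega$. Once $Y^n\to Y$ is established in both settings, the goodness claim follows from \cref{lemma: discrete consistency result for random RDEs}.
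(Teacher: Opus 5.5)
In case \eqref{assumption:item:vector_fields_1}, your treatment of the $L^0$-solution coincides with the paper's proof: random-RDE representation via \cref{thm:FZK_RSDE}, $\tp$-variation convergence of the discretised data by interpolation, convergence of the joint lifts from the estimates of \cite{fzk23} (you argue via linearity and Theorem 5.1, the paper cites Theorem 1.8), and RDE continuity along a.s.\ convergent subsequences. The gaps are in the mixed-moment part and in case \eqref{assumption:item:vector_fields_2}.

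In the linear $L^m$ case, \cref{proposition: exsitence of additive linear RSDEs with random coeff} is Lipschitz only in the martingale $M$. It gives no control when $(A,A')$ and $\bX$ are perturbed, which is exactly what discretisation does. A Lipschitz estimate in some coefficient norm would not rescue this either. Bounding the forcing $(A-A^n)Y$ in $L^m$ needs $\esssup_\omega\sup_t|A_t-A^n_t|$, which does not tend to zero for random coefficients, or else higher moments of $Y$; this is the integrability loss the paper warns about. The missing ingredient is \cref{lemma: stability of linear RSDEs without integrability loss}, a purely sequential statement. It assumes only a.s.\ uniform convergence of $(A^n,(A^n)')$, $\tp$-variation convergence of $\bX^n$ and uniform $\|\cdot\|_{p,m,\infty}$-bounds. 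Its proof applies \cref{lemma: gronwall type result p var} with the perturbed Chen relation ($R^1=AY-A^nY^n$, $S^1=A^nY^n$, $S^2=X-X^n$). The error $\eta_n$ is then sent to zero by interpolation and dominated convergence applied to $L^m$-quantities such as $\|\sup_s|(A-A^n)_sY_s|\|_{L^m}$. These involve only the fixed limit $Y$, with $\sup_t|Y_t|\in L^m$.

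Your fix for the ``main obstacle'' also fails as stated. The outer norm in $\llbracket\cdot\rrbracket_{\tp,m,\infty}$ is $L^\infty(\Omega)$, so dominated convergence yields nothing. Partition coarsening gives no uniform bound in conditional norms either. For $u\in[t_i,t_{i+1})$ and $v\in[t_j,t_{j+1})$ one has $\delta f^{\cP^n}_{u,v}=f_{t_j}-f_{t_i}$, which contains the $\rF_u$-measurable piece $f_u-f_{t_i}$. So $\|\delta f^{\cP^n}_{u,v}\|_{m,\infty,u}$ essentially sees $\|f_u-f_{t_i}\|_{L^\infty}$, which need not be small for random fields, and these norms can blow up under refinement. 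The same mechanism affects the hypothesis \eqref{eq: uniform boundedness of A^ns} when random coefficients are discretised. For deterministic $(f,f')$ all these norms are pathwise and \cref{corollary: discretisation of vector fields} applies directly; the paper's one-line appeal to \eqref{eq: lipschitz in vector fields} implicitly relies on this.

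Finally, the check you defer in case \eqref{assumption:item:vector_fields_2} cannot succeed. On each cell $[t_i,t_{i+1})$ both $f^{\cP^n}$ and $X^{\cP^n}$ are constant, so with $G^n=G$ the remainder equals $-\delta G$. This has infinite $\frac p2$-variation whenever $G$ has a non-trivial continuous martingale part. With $G^n=0$ instead, $\sJ(\bX^{\cP^n};(N,0))$ does not converge to $\sJ(\bX;(N,G))$, so the RDE-continuity argument breaks down. The workable choice is to discretise only the controlled part, $\big((f-G)^{\cP^n}+G,(f')^{\cP^n}\big)$ with $G^n=G$, as the paper does in \cref{corollary: consistency for linear RSM with deterministic jumps}.
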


\begin{proof}
We start with the case $m=0$ and $(f,f')$ fulfills \eqref{assumption:item:vector_fields_1} then let
    \begin{equation*}
        \begin{split}
            & \cY = (Y, f(Y), D f(Y)f(Y)+ f'(Y), b(Y); \sigma(Y) \bullet \d N, D f(Y) \sigma(Y) \bullet \d N)\\
            &\cY^n=  (Y^n, f^{\cP_n}(Y^n), D f(Y^n) f^{\cP_n}(Y^n)+ (f')^{\cP_n}(Y^n), b(Y^n);\\
            &\hspace{40pt} \sigma(Y^n) \bullet \d N, D f^{\cP_n}(Y^n) \sigma(Y^n) \bullet \d N)
        \end{split}
    \end{equation*}
    denote $L^{0}$-solutions to \eqref{eq: RSDE for Davie type} and \eqref{eq: RSDE with discrete rough path} respectively. We know by \cref{thm:FZK_RSDE} that $(Y,f(Y))$ is almost surely a solution to the (random) RDE given by
    \begin{equation*}
            Y_{t}\coloneqq \xi + \int_{0}^{t} (f\oplus \sigma)_{s}(\omega) (Y_{s}(\omega)) \dd \sJ(\bX ; N)_{s},
    \end{equation*}
    and analogously $(Y^n,f^{\cP^n}(Y^n))$ is almost surely a solution to the corresponding random RDE with $\bX$ replaced by $\bX^{\cP^n}$ as well as $(f,f')$ replaced by $(f^{\cP^n}, (f')^{\cP^n})$.
    By a standard interpolation argument (see e.g. \cite[Lemma 5.2]{Friz_Zhang_2018}) for any $p<\tp$, we have that 
    \begin{align*}
        \|\bX^{\cP^n}- \bX\|_{\tp,[0,T]} \vee \big\llbracket f^{\cP^n},(f')^{\cP^n}; f,f' \big\rrbracket_{\tp, [0,T]} \longrightarrow 0 \quad \P\text{-a.s.}
    \end{align*}
    as $n\to \infty$, where we also used that $\|\bX^{\cP^n}\|_{p,[0,T]}\leq \|\bX\|_{p,[0,T]}$ and $\|f^{\cP^n},(f')^{\cP^n}\|_{p,[0,T]} \leq \|f,f'\|_{p,[0,T]}$ and thus also 
    \begin{equation*}
       \big\llbracket f^{\cP^n}\oplus \sigma, (f^{\cP^n}\oplus \sigma)';   f\oplus \sigma, (f\oplus \sigma)' \big\rrbracket_{\tp,[0,T]} \longrightarrow 0 \quad \P\text{-a.s.}
    \end{equation*}
    as $n\to \infty$. Therefore, by \cite[Theorem 1.8]{fzk23} we know that 
    \begin{equation*}
        \bigg\| \Big\|\sJ \big(\bX^{\cP^n}, N \big)- \sJ(\bX, N) \Big\|_{\tp,[0,T]} \bigg\|_{L^1} \longrightarrow 0
    \end{equation*}
    as $n\to \infty$ and, thus, there exists a subsequence $(n_k)_{k\in \N}$ such that this convergence also holds almost surely. Hence, by the continuous dependence on initial data of rough differential equations (implicitly included in \cref{lemma: stability in vfs for non-linear RSDEs}), we indeed have that 
    \begin{equation*}
        \sup_{t\in [0,T]} |\tY^{n_k}_t- \tY_t| \longrightarrow 0
    \end{equation*}
    as $k\to \infty$ almost surely. The case $m=0$ but $(f,f')$ fulfills \eqref{assumption:item:vector_fields_2} may be handled analogously using e.g. the deterministic version of \cref{lemma: stability of linear RSDEs without integrability loss} (implicitly included therein) instead. 
    Thus, assume now that $2<m$ and that $(f,f')$ fulfills \eqref{assumption:item:vector_fields_1} and $(f,f')\in V^{p,m, \infty}_{X}\Lip^{3}_X$. Then, the claim follows immediately by \eqref{eq: lipschitz in vector fields}.
    Finally, if $2\leq m$ and $(f,f')$ fulfils  \eqref{assumption:item:vector_fields_2} as well as $(f,f')\in \sV^{p,m, \infty}_{X} \cL$ then we may apply \cref{lemma: stability of linear RSDEs without integrability loss} to conclude once again.
\end{proof}

\begin{remark}
    In light of the previous result, one might try to generalise the theorem by replacing the deterministic sequence of partitions with a random one. However, discretising $\bX$ along random partitions produces a random rough path which may be correlated with the underlying martingale. In the mixed-moment approach of \cite{FHL21, AllanPieper2026}, this dependence might prevent the martingale cancellations in the remainder which are essential for the stochastic sewing argument.\footnote{See also Remark 3.14 in the first arXiv version of \cite{FHL21} for a related discussion.} The integration-by-parts approach of \cite{fzk23} faces an analogous problem, since the bracket term in \cref{lemma: joint lift and bracket} need not converge when the rough path and martingale are correlated; see \cite[Example 1.2]{fzk23}.
\end{remark}

With the linear RSDE solved by the Jacobian of an RSDE solution in mind we are specifically interested in the case of (semi)-linear RSDEs with random coefficients, for which \cref{corollary: discretisation of vector fields} is not applicable. As a first result in the direction of random coefficient fields, the previous result can be immediately improved to the following 

\begin{corollary}\label{corollary: consistency for linear RSM with deterministic jumps}
    Let $p\in [2,3)$, $m\in [2,\infty)$, $M\in \cM$, $\bX= (X,\X) \in \sV^p$ and  assume that $(f,f'; G)$ fulfils \eqref{assumption:item:vector_fields_2} of \cref{assumption:vector_fields} such that $f-G$ and $f'$ only have \textbf{deterministic jump times}. Further, assume that $(\sigma, \sigma') \in L^{0}_{pred}(\Omega, \sV^{p}_{\bM} \operatorname{Lip}^{3})$. Then any $L^0$-solution to \eqref{eq: RSDE for Davie type} is weakly stable along $(\cP^n, ((f-G)^{\cP^n} + G, (f')^{\cP^n}))_{n\in \N}$ for any sequence of partitions $\{\cP^n\}_{n\in \N}$ of $[0,T]$ fulfilling \eqref{eq: assumption on cPn for Davie type} for $(A,A'):= (f-G,f')$.
    If further $(f,f') \in \cV^{p,m,\infty}_{X}\cL$ and $N \in \cM^{p,m,\infty}$ then the $L^0$-solution is the unique good $L^m$-solution to \eqref{eq: RSDE for Davie type}.
\end{corollary}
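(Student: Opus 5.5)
The proof splits into two parts: weak stability of the $L^0$-solution, and identification of that solution with the $L^m$-solution. Throughout, write $(A,A')\coloneqq (f-G,f')$. By assumption this pair has only deterministic jump times. Since $(f,f';G)\in\sH^p_X\cL$, the pair $(A,A')$ is almost surely an $X$-controlled linear vector field, with remainder $R^{A,A'}=R^{f,f'}$ in the sense of \cref{def:rough_stochastic_vf}.

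\textbf{Step 1: choice of partitions and convergence of the drivers.} Fix partitions $\cP^n$ satisfying \eqref{eq: assumption on cPn for Davie type} for $(A,A')$; this is possible because the set of jump times is deterministic and countable. Discretise only the controlled part, setting $f^n\coloneqq A^{\cP^n}+G$ and $(f^n)'\coloneqq (A')^{\cP^n}$. Then $(f^n,(f^n)';G)\in\sH^p_{X^{\cP^n}}\cL$, since \cref{lemma: discretisation of controlled paths} applied $\omega$-wise gives $(A^{\cP^n},(A')^{\cP^n})\in\sV^p_{X^{\cP^n}}$. The martingale part $G$ is left untouched.

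By \cref{lemma: existence of cP^n}, \cref{lemma: discretisation of controlled paths} and interpolation as in the proof of \cref{lemma: randomised RDE is Davie}, for every $\tp>p$ we get $\|\bX^{\cP^n}-\bX\|_{\tp}\to0$ and $\llbracket A^{\cP^n},(A')^{\cP^n};A,A'\rrbracket_{\tp}\to0$ almost surely. The key point is that the convergence of $A^{\cP^n}$ is pathwise uniform, even though $A$ is random. This holds because its jump times are contained in $\liminf_n\cP^n$ almost surely, so \cref{lemma: AKLP approximation result} applies $\omega$-wise.

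\textbf{Step 2: weak stability of the $L^0$-solution.} By \cref{thm:FZK_RSDE}, the $L^0$-solutions $\tY$ and $\tY^n$ are the almost sure solutions of the random RDEs driven by $\sJ(\bX;(N,G))$ and $\sJ(\bX^{\cP^n};(N,G))$, with vector fields $f\oplus\sigma\oplus0$ and $f^n\oplus\sigma\oplus0$. By \cite[Theorem 1.8]{fzk23}, $\sJ(\bX^{\cP^n};(N,G))\to\sJ(\bX;(N,G))$ in $L^1(V^{\tp})$, hence almost surely along a subsequence. The vector fields are linear, with controlled parts converging by Step 1. So the deterministic stability of linear RDEs, i.e. the pathwise version of \cref{lemma: stability of linear RSDEs without integrability loss}, yields $\sup_t|\tY^{n_k}_t-\tY_t|\to0$ almost surely. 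This is enough for convergence in probability at each $t$, which proves the first claim.

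\textbf{Step 3: identification with the $L^m$-solution.} Assume now $(f,f')\in\sV^{p,m,\infty}_X\cL$ and $N\in\cM^{p,m,\infty}$. The unique $L^m$-solution $Y$ exists by \cref{proposition: exsitence of additive linear RSDEs with random coeff}, combined with a fixed point in $\sigma$ as in \cref{cor:good-semilinear-deterministic-jumps}. It remains to show that $Y$ is weakly stable along the same sequence. We need $(f^n,(f^n)')\in\sV^{p,m,\infty}_{X^{\cP^n}}\cL$ with uniformly bounded norms. Discretisation does not increase mixed-moment $p$-variation norms, and $\E_s$ of the remainder of $G$ vanishes. We then invoke the $L^m$ stability estimate \cref{lemma: stability of linear RSDEs without integrability loss}, which gives $Y^n_t\to Y_t$ in $L^m$ once
\[
\llbracket f^n,(f^n)';f,f'\rrbracket_{\tp,m,\infty,[0,T]}+\|\bX^{\cP^n}-\bX\|_{\tp,[0,T]}\to0 .
\]

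\textbf{Main obstacle.} The hardest step is establishing this mixed-moment convergence of the discretised random vector fields, since only almost sure $\tp$-variation convergence comes for free. The first attempt would be dominated convergence for the conditional $L^m$ norms, using the uniform $L^\infty$ bounds on $A,A'$ and the uniform $\sV^{p,m,\infty}$ bounds, together with an interpolation between $p$ and $\tp$ at the level of mixed moments. Once this convergence is in hand, \cref{lemma: discrete consistency result for random RDEs} shows that $Y$ and $\tY$ are indistinguishable. Goodness then follows from \cref{thm:FZK_RSDE}, and uniqueness among good $L^m$-solutions from the uniqueness of the $L^0$-solution.
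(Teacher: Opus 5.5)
Your Steps 1 and 2 follow the paper's argument. You discretise only $f-G$ and $f'$, keep $G$, get almost sure $\tp$-variation convergence of the tensorised fields and of $\sJ(\bX^{\cP^n};(N,G))$, and then use pathwise stability of linear RDEs.

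\textbf{The gap is in Step 3.} You read \cref{lemma: stability of linear RSDEs without integrability loss} as requiring $\llbracket f^n,(f^n)';f,f'\rrbracket_{\tp,m,\infty,[0,T]}\to0$, and you leave this as the open ``main obstacle''. That convergence is not a hypothesis of the lemma. For random $f'$ it is also false in general, so the dominated-convergence route you sketch cannot close the step. Take consecutive points $t_i<t_{i+1}$ of $\cP^n$ and $u\in(t_i,t_{i+1})$. Then $\delta\big((f')^{\cP^n}-f'\big)_{u,t_{i+1}}=f'_u-f'_{t_i}$, which is $\mathfrak{F}_u$-measurable, so its $\|\cdot\|_{m,\infty,u}$-norm is $\|f'_u-f'_{t_i}\|_{L^\infty}$. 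Now let $f'$ be $\tanh$ of a Brownian motion times a fixed linear map, with $G=0$, $X$ smooth and $f=\int_0^\cdot f'_r\,\mathrm{d}X_r$. This example satisfies all hypotheses, yet the quantity above stays bounded away from $0$ however fine the mesh. Dominated convergence upgrades almost sure convergence to $L^m$-convergence, never to $L^\infty$-convergence of conditional moments.

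\textbf{What the lemma actually needs.} It only requires three things: $\|\bX^{\cP^n}-\bX\|_{\tp,[0,T]}\to0$, $\sup_t\big(|f_t-f^n_t|+|f'_t-(f^n)'_t|\big)\to0$ almost surely, and a uniform bound $K$. The almost sure uniform convergence is exactly what your Step 1 already gives via \cref{lemma: AKLP approximation result} applied $\omega$-wise, since $f^n-f=(f-G)^{\cP^n}-(f-G)$. The upgrade to $L^m$ happens inside the lemma. There, interpolation and dominated convergence are applied only to $L^m$-quantities such as $\|\sup_s|(f_s-f^n_s)Y_s|\|_{L^m}$, with the majorant coming from the a priori bound of \cref{proposition: a priori estimates additive linear RSDE}. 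This is how the paper concludes.

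Two smaller points.
\begin{itemize}
\item Citing \cref{cor:good-semilinear-deterministic-jumps} for existence of the $L^m$-solution is circular, because that corollary is deduced from the present one. The paper simply presupposes an $L^m$-solution; in the application $\sigma=\Id$ it comes from \cref{proposition: exsitence of additive linear RSDEs with random coeff}.
\item Your claim that discretisation does not increase the $\sV^{p,m,\infty}$-norms is not automatic. The same $\mathfrak{F}_u$-measurable offset $f'_u-f'_{t_i}$ enters $\|\delta(f')^{\cP^n}_{u,t_{i+1}}\|_{m,\infty,u}$. In the example above, $\|(f')^{\cP^n}\|_{p,m,\infty,[0,T]}$ actually grows with the number of partition points. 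So the uniform bound $K$ required by the lemma needs its own justification; the paper's proof also asserts it without one.
\end{itemize}
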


\begin{proof}
We follow a similar line of argument to the one presented in \cref{lemma: randomised RDE is Davie}. We denote $(f^{n}, (f^{n})')\coloneqq ((f-G)^{\mathcal{P}^{n}}+ G, (f')^{\mathcal{P}^{n}})$ and $(A^n, (A^n)')$ accordingly and emphasize that by \cref{lemma:RSM<->cRP} this corresponds explicitly to a $L^0_{ad}(\Omega; \mathscr{V}^{p}_{(X^{\mathcal{P}^{n}}, G)}\cL)$ vector field. Now define $(f^{n}\oplus, \sigma \oplus 0), (f^{n}\oplus \sigma \oplus 0)'$ analogously to \cref{def:tensorized vf}. It holds that
\begin{equation}\label{eq: convergence of discretised vf with det jumps}
    \begin{split}
         &\llbracket f \oplus \sigma \oplus 0,(f \oplus \sigma \oplus 0)';   f^{n}\oplus \sigma \oplus 0,(f^{n}\oplus \sigma \oplus 0)'\rrbracket_{\tp,[0,T]}\\
         &= \llbracket A \oplus \sigma \oplus 0,(A \oplus \sigma \oplus 0)';   A^{n}\oplus \sigma \oplus 0,(A^{n}\oplus \sigma \oplus 0)'\rrbracket_{\tp,[0,T]}\longrightarrow 0 \quad \P\text{-a.s.}
    \end{split}
\end{equation}
and therefore especially, that $(\cP^n,(f^n,(f^n)'))$ is indeed an $L^0$-admissible sequence.
If additionally $(f,f') \in \sV^{p,m,\infty}_{X}\cL$ and $M\in \cM^{p,m,\infty}$, then also $(f^n,(f^n)') \in \sV^{p,m,\infty}_{X}\cL$ and one may easily check using \eqref{eq: convergence of discretised vf with det jumps}, that the assumptions of \cref{lemma: stability of linear RSDEs without integrability loss} are fulfilled, which again implies that$(\cP^n,(f^n,(f^n)'))$ is also an $L^m$-admissible sequence and thus the claim follows as in the proof of \cref{lemma: randomised RDE is Davie}.
\end{proof}

\begin{proof}[Proof of \cref{proposition: good solutions for Jacobians of flows}]
    We first prove the existence of both the $L^m$- as well as the $L^0$-solution to \eqref{eq: Psi RSDE}.
    Notice, that by \cref{theorem: existence and estimates for solutions to RSDEs} together with \cref{theorem: RSDE solution is L^q RSM} we indeed know that there exists a unique good $L^{m,\infty}$-solution $Y$ to \eqref{eq: RSDE with det. coeff.}. Further, by the same arguments as in \cite[Lemma 3.6]{BCN24} one easily shows that $(\Psi(Y), D \Psi(Y)f(Y) + \Psi'(Y)) \in \sV^{p,m,\infty}_{X}\cL$ and therefore by \cref{proposition: exsitence of additive linear RSDEs with random coeff} we know that there exists a unique $L^m$-solution to \eqref{eq: Psi RSDE} which we will denote by $\tY$. By \cref{lemma: composition for RSM} we also know that $(\Psi(Y), D \Psi(Y) f(Y) + \Psi'(Y); \D\Psi(Y)\sigma(Y_{\cdot-})\bullet \d N) \in \sH^p_X \cL$ and thus by \cref{thm:FZK_RSDE} we know that there also exists a unique $L^0$-solution $\overline{Y}$ to \eqref{eq: Psi RSDE}.  By \cref{lemma: discrete consistency result for random RDEs}, in order to show that these two solutions indeed coincide, it suffices to show that both of these solutions are weakly stable along any sequence of partitions $\{\cP^n\}_{n\in \N}$ of $[0,T]$ with
    \[ \{t\in [0,T] \ : \ |\Delta \bX_t|\vee |\Delta f'_t|\vee |\Delta R^{f,f'}_t| \vee |\Delta \Psi'_t| \vee |\Delta R^{\Psi,\Psi'}_t| > 0  \} \subseteq \liminf_{n\to \infty} \cP^n\] 
    with admissible sequence given by
    \[\big(\Psi^{\cP^n} (Y^n), (D \Psi)^{\cP^n}(Y^n) f^{\cP^n}(Y^n) \big)_{n\in \N}\]
    where $(Y^n, f(Y^n))$ is the unique good $L^{m,\infty}$-solution, which exists due to \cref{theorem: existence and estimates for solutions to RSDEs} and \cref{thm:consistency_of_solutions I}, to 
    \begin{equation*}
        Y^n_{t}=\xi + \int_{0}^{t} b_s(Y^n_{s}) \dd s+\int_{0}^{t} \sigma_s(Y^n_{s-}) \dd N_{s} + \int_{0}^{t}f^{\cP^n}_s(Y^n_{s}) \dd \bX^{\cP^n}_{s}.
    \end{equation*}
  By \cref{lemma: composition for RSM} together with again the same arguments as in \cite[Lemma 3.6]{BCN24}, we know that $(\Psi^{\cP^n}(Y^n), (D \Psi)^{\cP^n}(Y^n) Y^n + (\Psi')^{\cP^n}(Y^n), (\D\Psi)^{\cP^n}(Y^n)\sigma(Y^n_{\cdot-})\bullet \d N) \in \sH^{p}_X\cL \cap \sV^{p,m,\infty}_{X} \cL$. Further, we know by \cref{lemma: existence of cP^n} and \cref{corollary: discretisation of vector fields} that indeed for any $p<\tp$ we have that 
    \begin{equation*}
        \big \llbracket f^{\cP^n}, (f')^{\cP^n}; f,f' \big\rrbracket_{\tp,[0,T]} \vee \big\|\bX^{\cP^n}- \bX \big\|_{\tp, [0,T]} \longrightarrow 0
    \end{equation*}
    as $n \to \infty$ and thus by \eqref{eq: Lipschitz continuity of solution map} also that 
    \begin{equation*}
        \Big\|\sup_{t\in [0,T]} |Y^n_t-Y_t| \Big\|_{L^m} \longrightarrow 0
    \end{equation*}
    as $n\to \infty$. Again by \cref{corollary: discretisation of vector fields} we further find that also 
    \begin{equation*}
        \big \llbracket \Psi^{\cP^n}, (\Psi')^{\cP^n} ;  \Psi, \Psi' \big \rrbracket_{\tp, [0,T]} \longrightarrow 0 
    \end{equation*}
    as $n\to \infty$ and thus we have up to choosing an appropriate subsequence, that 
    \begin{align*}
        &\sup_{t\in [0,T]} |\Psi^{\cP^n}_t(Y^n_t)-\Psi_t(Y_t)|_{\cL} \\
        & \leq  \sup_{t\in [0,T]} \Big(|\Psi^{\cP^n}_t(Y^n_t) -\Psi_t(Y^n_t)|_{\cL} +  \sup_{t\in [0,T]}  |\Psi_t(Y^n_t) - \Psi_t(Y_t)|_{\cL}\Big)\\
        &\leq \llbracket \Psi^{\cP^n},(\Psi')^{\cP^n} ; \Psi, \Psi' \rrbracket_{\tp, [0,T]} + \|(\Psi, \Psi')\|_{\infty; 2} \sup_{t\in [0,T]} |Y^n_t-Y_t| \longrightarrow 0 \qquad \P\text{-a.s.}
    \end{align*}
    as $n\to \infty$ and similarly 
    \begin{align*}
        |D \Psi^{\cP^n}_t(Y^n_t)f^{\cP^n}(Y^n_t)-\D\Psi_t(Y_t)f(Y_t)| \longrightarrow 0
    \end{align*}
    as $n\to \infty$. Hence, by \cref{lemma: stability of linear RSDEs without integrability loss} we know that also 
    \begin{equation*}
        \Big\|\sup_{t\in [0,T]} |\tY^n_{t}- \tY_{t}| \Big\|_{L^m} \longrightarrow 0 \qquad \P\text{-a.s.}
    \end{equation*}
    as $n\to \infty$ which shows that $(\Psi(Y^n), D \Psi(Y^n) Y^n)_{n\in \N}$ is indeed an $L^m$-admissible sequence for $\tilde{Y}$.
    Finally, one may verify, similarly, that the implicitly included deterministic version of \cref{lemma: stability of linear RSDEs without integrability loss}, applied to $(\overline{Y}^n, \Psi^{\cP^n}(Y^n) \overline{Y}^n)$ the unique $L^0$-solution to \eqref{eq: Psi RSDE} with initial condition $y_0$, the vector fields $b, (\Psi^{\cP^n}(Y^n) , D \Psi^{\cP^n}_t(Y^n)f^{\cP^n}(Y^n))$ and the stochastic and rough drivers $M, \bX^{\cP^n}$, respectively, implies that also 
    \begin{align*}
       \sup_{t\in[0,T]} |\overline{Y}_t - \overline{Y}^n_t| \longrightarrow 0 \quad \P\text{-a.s.} 
    \end{align*}
    as $n\to \infty$, which implies the claim as $\tilde{Y}^{n}=\overline{Y}^{n}$ for any $n$ see \cref{lemma: discrete consistency result for random RDEs}.
\end{proof}
\section{Further technical results}
\label{section:Further technical results}
\subsection{Proof of a Kolmogorov-result for Remainders}

\begin{proof}[Proof of \cref{thm: Kolmogorov for some remainders}]
    We follow the proof of \cref{theorem: a p-variation kolmogorov result} and define the control
    \begin{align*}
        w(s,t) := (t-s) + \|R\|^{\frac{p}{2}}_{\frac{p}{2}, m, [s.t]}+ \|R^{2}\|^p_{p, m, [s,t]}
    \end{align*}
    and consider $\cD$ the $w(\cdot, \cdot-)$-dyadics. Assume without loss of generality that $\Delta R_T= \Delta R^{2}_T=0$. As before we find for any $(s,t) \in \Delta_{[0,T]}$ a sequence of indices $\{I^{s,t}_n\}_{n\in \N}$ with the same properties as described in the proof of \cref{theorem: a p-variation kolmogorov result}. Using \eqref{eq: Chen for remainders} we may thus bound
    \begin{equation}\label{eq: split for R_s,t}
    \begin{split}
        |R_{s,t}|^{\frac{\tp}{2}}  &\lesssim   \bigg(\sum_{n=1}^\infty \sum_{i \in I^{s,t}_n} |R_{t^n_i,t^n_{i+1}-}| \bigg)^{\hspace{-2pt}\frac{\tp}{2}} +\bigg(\sum_{n=1}^\infty \sum_{i \in I^{s,t}_n} |\Delta R_{t^n_{i+1}}| \bigg)^{\hspace{-2pt}\frac{\tp}{2}}\\
        &\quad + \bigg(\sum_{n=1}^\infty \sum_{i \in I^{s,t}_n} |\delta R^{(1)}_{t^n_{i},t^n_{i+1}-}| |\Delta R^{(2)}_{t^n_{i+1}}|  \bigg)^{\hspace{-2pt}\frac{\tp}{2}}+ \bigg(\sum_{n=1}^\infty \sum_{i \in I^{s,t}_n} |\delta R^{(1)}_{s,t^n_{i+1}}| |\delta R^{(2)}_{t^n_i,t^n_{i+1}}| \bigg)^{\hspace{-2pt}\frac{\tp}{2}},
    \end{split}
    \end{equation}
    where we write $\Delta R_{t} := \lim_{s \uparrow t} R_{s,t}$ which one can easily check exists as a limit in $L^m$ and the multiplicative constant only depends on $\tp$.
    Thus, by Young's inequality, the bound $|R^{(1)}_{u,v}| \leq \|R^{(1)}\|_{\tp, [s,t]}$ for any $(u,v) \subset [s,t]$ and $(\sum_{n=1}^\infty a_n)^{\tp} \leq C \sum_{n=1}^\infty n^\gamma a_n^{\tp}$ for any $\gamma >\tp-1$ (see the proof of \cite[Lemma 2]{LedouxLyonsQian2002}) we find that 
    \begin{align*}
         |R_{s,t}|^{\frac{\tp}{2}}&\lesssim \sum_{n=1}^\infty n^\gamma \sum_{i \in I^{s,t}_n} |R_{t^n_i,t^n_{i+1}-}|^{\frac{\tp}{2}} +\sum_{n=1}^\infty n^\gamma  \sum_{i \in I^{s,t}_n} |\Delta R_{t^n_{i+1}}|^{\frac{\tp}{2}} \\
         &\quad + \|R^{(1)}\|_{\tp,[s,t]}^{\tilde{p}}+\sum_{n=1}^\infty n^\gamma  \sum_{i \in I^{s,t}_n} |\delta R^{(2)}_{t^n_i, t^n_{i+1}-}|^{\tp} +\sum_{n=1}^\infty n^\gamma  \sum_{i \in I^{s,t}_n} |\Delta R^{(2)}_{t^n_{i+1}}|^{\tp}.
    \end{align*}
    Hence, for any partition $\{0 = t_0 < t_1 < \cdots < t_k = T\} \subset \cD$ of the interval $[0,T]$ we find that almost surely 
    \begin{equation*}
    \begin{aligned}
        \sum_{j=0}^{k-1}|R_{t_j,t_{j+1}}|^{\frac{\tp}{2}} \lesssim & \sum_{n=1}^\infty n^\gamma \sum_{i=0}^{2^n-1} |R_{t^n_i,t^n_{i+1}-}|^{\frac{\tp}{2}} +\sum_{n=1}^\infty n^\gamma  \sum_{i =0}^{2^n-1} |\Delta R_{t^n_{i+1}}|^{\frac{\tp}{2}} \\ 
        & +\sum_{n=1}^\infty n^\gamma  \sum_{i=0}^{2^n-1} |\delta R^{(2)}_{t^n_i, t^n_{i+1}-}|^{\tp} +\sum_{n=1}^\infty n^\gamma  \sum_{i=0}^{2^n-1} |\Delta R^{(2)}_{t^n_{i+1}}|^{\tp} +\|R^{(1)}\|_{\tp,[0,T]}^{\tp}\\
        \eqqcolon &  Z +\|R^{(1)}\|_{\tp,[0,T]}^{\tp}.
    \end{aligned}
    \end{equation*}
    Finally, using that $\frac{\tp}{2} < q$ as well as $\tp <n$ we can conclude as in the proof of \cref{theorem: a p-variation kolmogorov result} that $\|Z\|^{\tp}_{L^{\tq}}< \infty$ which indeed implies that almost every realisation of $R$ has finite $\frac{\tp}{2}$-variation on $\cD$. Arguing again as in  the proof of \cref{theorem: a p-variation kolmogorov result} and using the assumed right-continuity in probability we may conclude the first claim.\\
    To show the second claim, we may assume without loss of generality that we work with the càdlàg modification found in the first claim. Following the same line of argument as before we find by \eqref{eq: split for R_s,t} for any partition $\{0 = t_0 < t_1 < \cdots < t_k = T\} \subset \cD$ of the interval $[0,T]$, that almost surely 
    \begin{align*}
        &\bigg(\sum_{j=0}^{k-1}|R_{t_j,t_{j+1}}|^{\frac{\tp}{2}}\bigg)^{\frac{2}{\tp}} \lesssim  \bigg(\sum_{n=1}^\infty n^\gamma \sum_{i=0}^{2^n-1} |R_{t^n_i,t^n_{i+1}-}|^{\frac{\tp}{2}}\bigg)^{\frac{2}{\tp}} +\bigg(\sum_{n=1}^\infty n^\gamma  \sum_{i =0}^{2^n-1} |\Delta R_{t^n_{i+1}}|^{\frac{\tp}{2}}\bigg)^{\frac{2}{\tp}} \\ 
        & \quad +\bigg(\sum_{j=0}^{m-1} \bigg(\sum_{n=1}^\infty \sum_{i \in I^{t_j,t_{j+1}}_n} |\delta R^{(1)}_{t^n_{i},t^n_{i+1}-}| |\Delta R^{(2)}_{t^n_{i+1}}|  \bigg)^{\hspace{-2pt}\frac{\tp}{2}}\bigg)^{\frac{2}{\tp}}+ \bigg(\sum_{j=0}^{m-1} \bigg(\sum_{n=1}^\infty \sum_{i \in I^{t_j,t_{j+1}}_n} |\delta R^{(1)}_{t_j,t^n_{i+1}}| |\delta R^{(2)}_{t^n_i,t^n_{i+1}}| \bigg)^{\hspace{-2pt}\frac{\tp}{2}}\bigg)^{\frac{2}{\tp}}\\
        &\leq \bigg(\sum_{n=1}^\infty n^\gamma \sum_{i=0}^{2^n-1} |R_{t^n_i,t^n_{i+1}-}|^{\frac{\tp}{2}}\bigg)^{\frac{2}{\tp}} +\bigg(\sum_{n=1}^\infty n^\gamma  \sum_{i =0}^{2^n-1} |\Delta R_{t^n_{i+1}}|^{\frac{\tp}{2}}\bigg)^{\frac{2}{\tp}} \\ 
        & \quad +\bigg(\sum_{j=0}^{m-1}\| R^{1}\|^{\frac{\tp}{2}}_{\tp,[t_j,t_{j+1}]} \bigg(\sum_{n=1}^\infty \sum_{i \in I^{t_j,t_{j+1}}_n}  |\Delta R^{(2)}_{t^n_{i+1}}|  \bigg)^{\hspace{-2pt}\frac{\tp}{2}}\bigg)^{\frac{2}{\tp}}+ \bigg(\sum_{j=0}^{m-1}\| R^{(1)}\|^{\frac{\tp}{2}}_{\tp, [t_j,t_{j+1}]} \bigg(\sum_{n=1}^\infty \sum_{i \in I^{t_j,t_{j+1}}_n}  |\delta R^{(2)}_{t^n_i,t^n_{i+1}}| \bigg)^{\hspace{-2pt}\frac{\tp}{2}}\bigg)^{\frac{2}{\tp}}\\
        &\lesssim \bigg(\sum_{n=1}^\infty n^\gamma \sum_{i=0}^{2^n-1} |R_{t^n_i,t^n_{i+1}-}|^{\frac{\tp}{2}}\bigg)^{\frac{2}{\tp}} +\bigg(\sum_{n=1}^\infty n^\gamma  \sum_{i =0}^{2^n-1} |\Delta R_{t^n_{i+1}}|^{\frac{\tp}{2}}\bigg)^{\frac{2}{\tp}} \\ 
        &\quad +\|R^{(1)}\|_{\tp, [0,T]}\bigg( \sum_{n=1}^\infty n^{\gamma} \sum_{i=0}^{2^n-1}  |\Delta R^{(2)}_{t^n_{i+1}}|^{\tp} \bigg)^{\frac{1}{\tp}} + \|R^{(1)}\|_{\tp, [0,T]}\bigg( \sum_{n=1}^\infty n^{\gamma} \sum_{i=0}^{2^n-1}  |\delta R^{(2)}_{t^n_i,t^n_{i+1}-}|^{\tp} \bigg)^{\frac{1}{\tp}}\\
        &=: Z_1+Z_2+Z_3+Z_4.
    \end{align*}
    The terms $Z_1$ and $Z_2$ can be treated completely analogous to the proof of \cref{theorem: a p-variation kolmogorov result} which especially shows that
    \begin{equation*}
        \|Z_1\|_{L^m} + \|Z_2\|_{L^m} \lesssim \|R\|_{\frac{p}{2},m, [0,T]}
    \end{equation*}
    For $Z_3$ and $Z_4$ we find similarly that 
    \begin{align*}
        \|Z_4\|_{L^m}\leq \Big\|\|R^{(1)}\|_{p,[0,T]}\Big\|_{L^{m_1}} \bigg( \sum_{n=1}^\infty n^{\gamma} \sum_{i=0}^{2^n-1} \| \delta R^{(2)}_{t^n_i,t^n_{i+1}-} \|_{L^{m_2}}^{\tp} \bigg)^{\frac{1}{\tp}}\lesssim \Big\|\|R^{(1)}\|_{p,[0,T]}\Big\|_{L^{m_1}} \|R^{(2)}\|_{p,m,[0,T]}
    \end{align*}
    By a similar argument we find $\|Z_3\|_{L^m} \lesssim \|\|R^{(1)}\|_{\tp,[0,T]}\|$ and thus the claim\footnote{Because we can work with the càdlàg modification it suffices to show the bound for partitions in $\cD$.}.
\end{proof}

\subsection{A càdlàg rough stochastic Grönwall lemma}
The following result may be interpreted as a form of Grönwall's lemma for $p$-variations of controlled paths. The result is implicitly already included in the extension results from local to global stability estimates of R(S)DEs with jumps (see e.g. the proofs of \cite[Theorem 3.9]{Friz_Zhang_2018} and \cite[Theorem 5.2]{AllanPieper2026}):

\begin{lemma}\label{lemma: gronwall type result p var}
Let $p,m\in[2,\infty)$, let $Y,R^1,S^1\in V^pL^m$, and let
$R^2,S^2\in V^pL^\infty$. Let $R$ be an integrable two-parameter
process with $\|\mathbb E_{\cdot}R\|_{\frac{p}{2},m,[0,T]}<\infty$ satisfying
\begin{equation}\label{eq: Chen for remainders perturbed}
\delta R_{s,u,t}
=\delta R^1_{s,u}\otimes\delta R^2_{u,t}
+\delta S^1_{s,u}\otimes\delta S^2_{u,t}.
\end{equation}
Suppose that $w_1$ is a c\`adl\`ag control and $\eta\geq 0$ some constant such that
\begin{equation}
\|R^2\|_{p,\infty,[s,t]}\leq w_1(s,t)^{1/p} \qquad \text{and} \qquad
    \|S^1\|_{p,m,[0,T]}\|S^2\|_{p,\infty,[0,T]}
    \leq \eta
\end{equation}
and that, for some $C>0$, $\eta\geq 0$ and all $0\leq s<t\leq T$,
\begin{equation}\label{eq: assumption Grönwall type lemma}
\begin{aligned}
&\Big\|\sup_{v\in[s,t]}|\delta Y_{s,v}|\Big\|_{L^m}
+\|Y\|_{p,m,[s,t]}+\|R^1\|_{p,m,[s,t]}
+\|\E_{\cdot} R\|_{\frac{p}{2},m,[s,t]}\\
&\quad\leq C\Big(\|Y_s\|_{L^m}+\|R^1_s\|_{L^m}
+\|Y\|_{p,m,[s,t)}+\|R^1\|_{p,m,[s,t)}
+\|\E_{\cdot}R\|_{\frac{p}{2}, m ,[s,t)}\Big)
w_1(s,t)^{1/p}
+\eta.
\end{aligned}
\end{equation}
Set $\alpha = (2C)^{-p}$. Then, there exist constants $\Gamma, \beta >0$ which only depend on $C$ and $p$ such that
\begin{equation}\label{eq: gronwall inequality}
\begin{aligned}
&\Big\|\sup_{v\in[0,T]}|\delta Y_{0,v}|\Big\|_{L^m}
+\|Y\|_{p,m,[0,T]}+\|R^1\|_{p,m,[0,T]}
+\|\mathbb E_{\cdot}R\|_{\frac{p}{2},m,[0,T]}\\
&\quad\leq \Gamma \exp\Big(\beta \Big(1+ N_\alpha(w_1(\cdot, \cdot-))\Big)\Big(1+\sup_{0< t \leq T}
w_1(t-,t)^{\frac{1}{p}} \Big)\Big)
\Big(\|Y_0\|_{L^m}+\|R^1_0\|_{L^m}
+\eta\Big),
\end{aligned}
\end{equation}
where $N_{\alpha,[0,T]}(w_1(\cdot,\cdot-))$ is the size of the greedy partition as defined as in \cite[Definition 4.6]{AllanPieperTeichmann2025}.
\end{lemma}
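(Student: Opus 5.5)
The plan is the standard local-to-global argument behind \cite[Theorem 3.9]{Friz_Zhang_2018} and \cite[Theorem 5.2]{AllanPieper2026}, done in three moves: absorb on intervals of small $w_1$, handle the big jumps separately, and iterate along the greedy partition.

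Throughout we write
\[
\Theta_{[s,t]}\coloneqq\|Y\|_{p,m,[s,t]}+\|R^1\|_{p,m,[s,t]}+\|\E_\cdot R\|_{\frac p2,m,[s,t]},
\]
and $\Theta_{[s,t)}$ for the analogous quantity on the half-open interval.

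\textbf{Step 1: absorption on short intervals.} Take an interval $[s,t]$ with $w_1(s,t-)\le\alpha=(2C)^{-p}$. Then $Cw_1(s,t-)^{1/p}\le\frac12$, although on $[s,t]$ itself only the weight $w_1(s,t)$ appears and this still contains the jump at $t$.

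\begin{itemize}
\item First apply \eqref{eq: assumption Grönwall type lemma} on $[s,u]$ for every $u<t$. Since $w_1(s,u)\le w_1(s,t-)$, the half-open terms $\Theta_{[s,u)}\le\Theta_{[s,u]}$ can be absorbed on the left. This gives $\Theta_{[s,u]}\le 2C(\|Y_s\|_{L^m}+\|R^1_s\|_{L^m})+2\eta$.
\item Let $u\uparrow t$. Lower semicontinuity of the $p$-variation norms gives $\Theta_{[s,t)}\lesssim\|Y_s\|_{L^m}+\|R^1_s\|_{L^m}+\eta$.
\item Apply \eqref{eq: assumption Grönwall type lemma} once more on $[s,t]$, which now has only $[s,t)$-quantities on the right. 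This yields
\[
\Theta_{[s,t]}+\Big\|\sup_{v\in[s,t]}|\delta Y_{s,v}|\Big\|_{L^m}\le\kappa\bigl(1+w_1(t-,t)^{1/p}\bigr)\bigl(\|Y_s\|_{L^m}+\|R^1_s\|_{L^m}+\eta\bigr),
\]
with $\kappa$ depending only on $C,p$. We used that $w_1(s,t)\lesssim w_1(s,t-)+w_1(t-,t)$ up to a constant depending on $p$.
\end{itemize}

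\textbf{Step 2: propagating along a partition.} Take the greedy partition $0=\tau_0<\dots<\tau_K=T$ for $w_1(\cdot,\cdot-)$ at level $\alpha$, so that $K\le N_\alpha+1$.

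\begin{itemize}
\item The starting values satisfy $\|Y_{\tau_{k+1}}\|_{L^m}\le\|Y_{\tau_k}\|_{L^m}+\|\delta Y_{\tau_k,\tau_{k+1}}\|_{L^m}$, and likewise for $R^1$. By Step 1 the quantity $a_k\coloneqq\|Y_{\tau_k}\|_{L^m}+\|R^1_{\tau_k}\|_{L^m}+\eta$ therefore obeys $a_{k+1}\le(1+2\kappa(1+\sup_t w_1(t-,t)^{1/p}))\,a_k$.
\item Iterating gives $a_k\le\exp\bigl(\beta(1+N_\alpha)(1+\sup_t w_1(t-,t)^{1/p})\bigr)a_0$.
\end{itemize}

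\textbf{Step 3: gluing the local bounds.} The $p$-variation norms satisfy $\|Y\|_{p,m,[0,T]}\le K^{1-1/p}\sum_k\|Y\|_{p,m,[\tau_k,\tau_{k+1}]}$, and the same holds for $R^1$ (control superadditivity combined with Hölder over at most $K$ pieces). The supremum $\|\sup_v|\delta Y_{0,v}|\|_{L^m}$ is bounded by the sum of the local suprema. The factor $K$ is polynomial and is absorbed into the exponential by enlarging $\beta$.

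\textbf{Main obstacle: the remainder $\E_\cdot R$.} The term $\|\E_\cdot R\|_{\frac p2,m,[0,T]}$ is not additive across partition points. For $s<\tau_k<t$, Chen's relation \eqref{eq: Chen for remainders perturbed} followed by conditioning at $s$ gives
\[
\E_sR_{s,t}=\E_sR_{s,\tau_k}+\E_s\bigl[\E_{\tau_k}R_{\tau_k,t}\bigr]+\E_s\bigl[\delta R^1_{s,\tau_k}\otimes\delta R^2_{\tau_k,t}+\delta S^1_{s,\tau_k}\otimes\delta S^2_{\tau_k,t}\bigr].
\]
The cross terms are then controlled as follows.

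\begin{itemize}
\item Since $R^2,S^2$ lie in $V^pL^\infty$, the conditional Hölder inequality \eqref{eq: Holder's inequality for q,r,s norm} bounds the $L^m$-norm of the cross terms by $\|\delta R^1_{s,\tau_k}\|_{L^m}\,w_1(\tau_k,t)^{1/p}+\|S^1\|\,\|S^2\|$.
\item The second piece is at most $\eta$.
\item By Young-type superadditivity, both pieces fit into a $\frac p2$-variation sum, at the cost of a further factor polynomial in $K$.
\end{itemize}

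I expect this bookkeeping, together with making the limit $u\uparrow t$ in Step 1 rigorous for mixed-moment norms via \cite[Lemma B.1]{AllanPieper2026}, to be the only delicate part.
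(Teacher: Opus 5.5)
Your overall architecture matches the paper's: absorption on the greedy intervals of $w_1(\cdot,\cdot-)$ at level $\alpha=(2C)^{-p}$, a separate treatment of the jump at the right endpoint, geometric iteration, and gluing via Chen's relation. The gap is in how you treat the jump, which does not work for the class of weights allowed in the statement.

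In the third bullet of Step~1 you apply the hypothesis on the closed interval $[s,t]$ and then use $w_1(s,t)\lesssim_p w_1(s,t-)+w_1(t-,t)$. Superadditivity only gives the reverse inequality $w_1(s,t-)+w_1(t-,t)\le w_1(s,t)$, and your inequality fails in general. For example, $w(s,t)=K(t-s)\mathbf{1}_{\{t_0\in(s,t]\}}$ is a superadditive càdlàg (mild) control with $w(s,t_0-)=0=w(t_0-,t_0)$ but $w(s,t_0)=K(t_0-s)$. Take $w_1=w$ and, say, $R^2$ constant. For $Kt_0>\alpha$ the point $t_0$ belongs to the greedy partition, $N_\alpha$ does not depend on $K$, and $J:=\sup_t w_1(t-,t)^{1/p}=0$. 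So the lemma asserts a bound independent of $K$, whereas your Step~1 on $[0,t_0]$ produces the factor $1+C(Kt_0)^{1/p}$.

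The same issue reappears in your remainder gluing. Bounding the cross terms by $\|\delta R^1_{s,\tau_k}\|_{L^m}\,w_1(\tau_k,t)^{1/p}$ and summing brings in $w_1$ over closed greedy intervals, or even $w_1(0,T)$, and neither is controlled by $N_\alpha$ and $J$ alone. For $p$-variation type weights such as $\|\bX\|^p_{p,[s,t]}$ your inequality does hold, so the gap concerns the generality of the statement rather than its later applications.

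The fix, which is what the paper does, is never to let the jump at $t$ enter through $w_1(s,t)$. Apply the hypothesis on $[u,t]$ and let $u\uparrow t$. The $[u,t)$-norms tend to $0$ and the weight tends to $w_1(t-,t)^{1/p}\le J$, which gives $\|\Delta Y_t\|_{L^m}+\|\Delta R^1_t\|_{L^m}+\limsup_{u\uparrow t}\|\E_u R_{u,t}\|_{L^m}\le CJ(\|Y_{t-}\|_{L^m}+\|R^1_{t-}\|_{L^m})+\eta$.

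Then rebuild the closed-interval quantities from the $[s,t)$-quantities of your first two bullets plus these jump terms. For $\E_\cdot R$ this uses Chen's relation at $v\uparrow t$ and produces the additional terms $\|R^1\|_{p,m,[s,t)}\|\Delta R^2_t\|_{L^\infty}+\eta$.

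In the remainder gluing, bound the increments of $R^2$ through the $p$-variation of $R^2$ itself, which does split at jumps: $\|R^2\|^p_{p,\infty,[\tau_k,\tau_{k+1}]}\lesssim_p w_1(\tau_k,\tau_{k+1}-)+\|\Delta R^2_{\tau_{k+1}}\|^p_{L^\infty}\le\alpha+J^p$. Here Fatou's lemma gives $\|\Delta R^2_t\|_{L^\infty}\le w_1(t-,t)^{1/p}$. Hence $\|R^2\|_{p,\infty,[0,T]}\lesssim_p(1+N_\alpha)(\alpha^{1/p}+J)$, which is the role of \cite[Lemma A.5]{AllanPieperTeichmann2025} in the paper. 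With these two changes your Steps~2 and~3 go through essentially as written.
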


\begin{remark}
    On a first glance the perturbed Chen's relation in \eqref{eq: Chen for remainders perturbed} might seem unusual. The example one should have in mind is $R= R^Y- R^{\tY}$ for some $Y\in \sV^{p,m}_X$ and $\tY \in \sV^{p,m}_{\tX}$. Then, we find that 
    \begin{equation*}
        \delta R_{s,u,t}= \delta (Y-\tY)_{s,u} \otimes \delta X_{u,t} + \delta \tY_{s,u} \otimes \delta (X-\tX)_{u,t}.
    \end{equation*}
    Thus, we would choose $R^1= Y-\tY$, $R^2= X$ and $S^1= \tY$ as well as $S^2= X-\tX$.
\end{remark}

\begin{proof}[Proof of \cref{lemma: gronwall type result p var}]
Write 
\begin{equation*}
J:= \sup_{0< t \leq T}
w_1(t-,t)^{\frac{1}{p}}
\end{equation*}
and choose $\{t_i(\alpha)\}_{i=0}^{N_{\alpha}(w_1(\cdot, \cdot-))+1}$ the greedy partition of $w_1(\cdot,\cdot-)$ as defined in \cite[Definition 4.6]{AllanPieperTeichmann2025} and write $N_{\alpha}$ and $\{t_i\}_{i=0}^{N_{\alpha}+1}$ for brevity.
Then, we know that especially $w_1(s, t)^{\frac{1}{p}} \leq \frac{1}{2C}$ for every $(s,t) \subset [t_i,t_{i+1})$ and we have that 
\begin{equation}\label{eq: first bound Grownall on small time interval}
\begin{split}
&\Big\|\sup_{v\in [s,t]} |\delta Y_{s,v}|\Big\|_{L^m} +\|Y\|_{p,m, [s,t]} + \|R^1\|_{p, m, [s,t]} + \|\E_{\cdot}R\|_{\frac{p}{2},m, [s,t]} \\
&\leq \frac{1}{2} \big( \|Y_s \|_{L^m}+ \|R^1_s \|_{L^m}+ \|Y\|_{p,m,[s,t)} + \|R^1\|_{p,m,[s,t)}+ \|\E_{\cdot} R\|_{\frac{p}{2}, m, [s,t)} \big) + \eta.
\end{split}
\end{equation}
Therefore it also holds that
\begin{equation*}
\Big\|\sup_{v\in [s,t]} |\delta Y_{s,v}|\Big\|_{L^m} +\|Y\|_{p,m, [s,t]} + \|R^1\|_{p, m, [s,t]} + \|\E_{\cdot} R\|_{\frac{p}{2},m, [s,t]}\leq   \|Y_s\|_{L^m}+ \| R^1_s \|_{L^m} + 2 \eta
\end{equation*}
and thus especially
\begin{equation*}
\begin{split}
&\Big\|\sup_{v\in [t_{i-1},t_i)} |\delta Y_{t_{i-1},v}|\Big\|_{L^m} +\|Y\|_{p,m, [t_{i-1},t_i)} + \|R^1\|_{p, m,[t_{i-1},t_i)} + \|\E_{\cdot} R\|_{\frac{p}{2},m, [t_{i-1},t_i)}\\
&\leq   \|Y_{t_{i-1}}\|_{L^m}+ \| R^1_{t_{i-1}} \|_{L^m} + 2 \eta.
\end{split}
\end{equation*}
Further, we know that for every $i$ we have by \eqref{eq: assumption Grönwall type lemma} and as
\begin{equation*}
\lim_{s\uparrow t_i} \big(\|Y\|_{p,m,[s,t_i)} + \|R^1\|_{p,m,[s,t_i)}+ \|\E_{\cdot} R\|_{\frac{p}{2}, m, [s,t_i)} \big)=0
\end{equation*} 
that 
\begin{align*}
\|\Delta Y_{t_i} \|_{L^m} &+ \|\Delta R^1_{t_i} \|_{L^m} + \limsup_{s\uparrow t_i}\|\E_s R_{s,t_i}\|_{L^m}\\
&\leq C\big(\|Y_{t_i-}\|_{L^m}+\|R^1_{t_i-}\|_{L^m} \big) w_1(t_i-, t_i)^{\frac{1}{p}}+\eta\\
&\leq C J \big(\|Y_{t_{i-1}}\|_{L^m}+\|R^1_{t_{i-1}}\|_{L^m} +\|Y\|_{p,m, [{t_{i-1}},t_i)}+\|R^1\|_{p,m, [{t_{i-1}}, t_i)} \big) +\eta
\end{align*}
and therefore applying \eqref{eq: first bound Grownall on small time interval} on $[t_{i-1},t_i)$ we find that
\begin{equation}\label{eq: bound  Y_p,[s,t] contraction}
\begin{split}
&\Big\|\sup_{v\in [t_{i-1},t_i]} |\delta Y_{t_{i-1},v}|\Big\|_{L^m}+ \|Y\|_{p,m,[t_{i-1},t_i]}+\|R^1\|_{p, m, [t_{i-1},t_i]} \\
&\lesssim_p (1+2CJ)\Big(\|Y_{t_{i-1}}\|_{L^m}+ \|R^1_{t_{i-1}}\|_{L^m} +\eta\Big)
\end{split}
\end{equation}
for some constant $c_p>0$ which only depends on $p$.
Similarly, using \eqref{eq: Chen for remainders perturbed} we find that 
\begin{equation}\label{eq: bound  Remainder,[s,t] contraction}
\begin{split}
\|\E_{\cdot} R\|_{\frac{p}{2},m,[t_{i-1},t_i]} &\lesssim_p \|\E_{\cdot} R\|_{\frac{p}{2},m,[t_{i-1},t_i)}+ \limsup_{s\uparrow t_{i}}\|\E_{s} R_{s,t_i}\|_{L^m}  \\
&\quad + \|R^1\|_{p,m,[t_{i-1}, t_i)} \|\Delta R^2_{t_i}\|_{L^{\infty}} +  \|S^1\|_{p,m,[t_{i-1}, t_i)} \|\Delta S^2_{t_i}\|_{L^{\infty}}\\
&\lesssim_{p,C} \Big(1+ 2CJ\Big)\Big(\|Y_{t_{i-1}}\|_{L^m}+ \|R^1_{t_{i-1}}\|_{L^m}+\eta\Big),
\end{split}
\end{equation}
where we especially used that by assumption we have that
\begin{equation*}
    \|S^1\|_{p,m,[t_{i-1}, t_i)} \|\Delta S^2_{t_i}\|_{L^{\infty}} \leq \|S^1\|_{p,m,[0,T]}\|S^2\|_{p,\infty,[0,T]} \leq \eta.
\end{equation*}
Piecing these bounds together we find a constant $c_{p,C}\geq 1$ only depending on $p$ and $C$ such that 
\begin{align*}
    &\Big\|\sup_{v\in [t_{i-1},t_i]} |\delta Y_{t_{i-1},v}|\Big\|_{L^m}+ \|Y\|_{p,m,[t_{i-1},t_i]}+\|R^1\|_{p, m, [t_{i-1},t_i]}+ \|\E_{\cdot} R\|_{\frac{p}{2},m,[t_{i-1},t_i]} \\
    &\lesssim_{p,C}  \Big(1+ 2CJ\Big)\Big(\|Y_{t_{i-1}}\|_{L^m}+ \|R^1_{t_{i-1}}\|_{L^m}+\eta+\|S^1\|_{p,m,[0,T]}\|S^2\|_{p,\infty,[0,T]}\Big)\\
    &\leq  i c_{p,C}^i (1+J)^{i} \Big(\|Y_{0}\|_{L^m}+ \|R^1_{0}\|_{L^m}+\eta\Big)
\end{align*}

Thus, by iteratively applying \eqref{eq: bound  Y_p,[s,t] contraction}
\begin{equation}\label{eq: Gronwall bound without remainder}
\begin{split}
& \Big\|\sup_{v\in [0,T]} |\delta Y_{0,v}|\Big\|_{L^m} + \|Y\|_{p,m,[0,T]}+ \|R^1\|_{p,m,[0,T]} \\
&\leq  \|Y_0\|_{L^m} + (1+N_{\alpha})^{\frac{p-1}{p}} \bigg(\sum_{i=0}^{N_{\alpha}}\Big(\Big\|\sup_{v\in [t_i,t_{i+1}]} |\delta Y_{t_i,v}|\Big\|_{L^m}+  \|Y\|_{p,m,[t_i,t_{i+1}]}+\|R^1\|_{p,m,[t_i,t_{i+1}]}\Big)^p\bigg)^{\frac{1}{p}}\\
&\leq\|Y_0\|_{L^m} +  (1+N_{\alpha})^{\frac{p-1}{p}} c_{p,C}^{N_{\alpha}+1} (1+J)^{N_{\alpha}+1}\Big(\|Y_{0}\|_{L^m}+ \|R^1_{0}\|_{L^m}+\eta\Big) \bigg(\sum_{i=1}^{N_{\alpha}+1} i^p\bigg)^{\frac{1}{p}}\\
&\leq  \exp\Big(\beta (1+N_{\alpha})(1+J)\Big)\Big(\|Y_{0}\|_{L^m}+ \|R^1_{0}\|_{L^m}+\eta\Big)
\end{split}
\end{equation}
for $\beta = 3+\log(c_{p,C})$.
Similarly, by \eqref{eq: Chen for remainders perturbed}  and \eqref{eq: bound  Remainder,[s,t] contraction} we find that
\begin{align*}
&\|\E_{\cdot}R\|_{\frac{p}{2},m, [0,T]} \\
&\lesssim_p (1+N_{\alpha})^{\frac{p-2}{p}} \bigg(\sum_{i=0}^{N_{\alpha}}  \|\E_{\cdot}R\|^{\frac{p}{2}}_{\frac{p}{2},m,[t_i,t_{i+1}]} + \|R^1\|^{\frac{p}{2}}_{p,m,[t_i,t_{i+1}]}\|R^2\|^{\frac{p}{2}}_{p,\infty,[t_{i+1},T]}  + \|S^1\|^{\frac{p}{2}}_{p,m,[t_i,t_{i+1}]}\|S^2\|^{\frac{p}{2}}_{p,\infty,[t_{i+1},T]}\bigg)^{\frac{2}{p}}\\
&\lesssim_p (1+N_{\alpha})^{\frac{p-1}{p}}\bigg(\bigg(\sum_{i=0}^{N_{\alpha}}  \|R^1\|^{p}_{p,m,[t_i,t_{i+1}]}\bigg)^{\frac{1}{p}} \|R^2\|_{p,\infty,[0,T]}+ \|S^1\|_{p,m,[0,T]}\|S^2\|_{p,\infty,[0,T]}\bigg)\\
&\quad + (1+N_{\alpha})^{\frac{p-2}{p}}\bigg(\sum_{i=0}^{N_{\alpha}}  \|\E_{\cdot}R\|^{\frac{p}{2}}_{\frac{p}{2},m,[t_i,t_{i+1}]}\bigg)^{\frac{2}{p}}\\
&\lesssim_{p,C} \Big(1+ J\Big)^2 (1+N_{\alpha})^{\frac{4p-1}{p}}\bigg(\bigg(\sum_{i=0}^{N_{\alpha}}  \|R^1\|^{p}_{p,m,[t_i,t_{i+1}]}\bigg)^{\frac{1}{p}}+ \eta \bigg)+ (1+N_{\alpha})^{\frac{p-2}{p}}\bigg(\sum_{i=0}^{N_{\alpha}}  \|\E_{\cdot}R\|^{\frac{p}{2}}_{\frac{p}{2},m,[t_i,t_{i+1}]}\bigg)^{\frac{2}{p}},
\end{align*}
where we applied \cite[Lemma A.5]{AllanPieperTeichmann2025} to $R^2$. By analogous arguments as in \eqref{eq: Gronwall bound without remainder} we may thus conclude.
\end{proof}

As a first application we show the following a priori bound for linear RSDEs (already contained in \cite{BCN24} in the continuous setting):

\begin{proposition}\label{proposition: a priori estimates additive linear RSDE}
    Let $p \in [2,3)$, $m \in [2,\infty)$, $\bX$ be a $p$-rough path, $\xi\in L^m(\mathfrak{F}_0)$, $b$ a stochastic Lipschitz vector field of linear growth from $\R^{d_Y}$ to $\R^{d_Y}$, $M\in \cM^{p,m}$ as well as $(A,A') \in \sV^{p,m,\infty}_{X} \cL(W, \cL(\R^n, W))$. Then, if  $(Y_t)_{t\in[0,T]}$ is an $L^m$-solution to
    \begin{equation}\label{eq: linear_RSDE}
        Y_t =  \xi+ M_t + \int_0^t b_s(Y_s) \dd s + \int_0^t (A,A')_s Y_s\dd \bX_s
    \end{equation}
    and let $K>0$ be a constant such that
    \begin{equation*}
        \|b\|_{\infty} \vee \|\xi\|_{L^m} \vee T \vee \|A,A'\|_\infty \vee \|A,A'\|_{p,m,\infty,[0,T]} \vee \|N\|_{p,m,[0,T]} \vee \|\bX\|_{p,[0,T]} \leq K,
    \end{equation*}
    then it holds
     \begin{equation} \label{eq: a priori bound for linear RSDE}
        \Big\| \sup_{v \in [0,T]}|Y_v| \Big\|_{L^m} +\|Y\|_{p,m,[0,T]}+\|\E_\cdot R^{Y}\|_{\frac{p}{2},m,[0,T]} \leq \tilde{\Gamma}(p,m,K),
    \end{equation}
    where the constant $\tilde{\Gamma}(p,m,K)>0$ only depends on $p$, $m$ and $K$.
\end{proposition}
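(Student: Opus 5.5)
The estimate \eqref{eq: a priori bound for linear RSDE} should follow from the rough stochastic Grönwall lemma \cref{lemma: gronwall type result p var}, applied to $Y$ itself (not to a difference of solutions), with $R^1 = AY$, $R^2 = X$ and $R = R^{Y,AY}$. Since Chen's relation for controlled paths gives $\delta R_{s,u,t} = \delta(AY)_{s,u}\otimes \delta X_{u,t}$, we may take $S^1 = S^2 = 0$ in \eqref{eq: Chen for remainders perturbed}. The inhomogeneity $\eta$ will collect $\|M\|_{p,m,[0,T]}$ together with the contributions of the drift at zero. The first step is therefore to verify the local inequality \eqref{eq: assumption Grönwall type lemma} on an arbitrary interval $[s,t]$.

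For that local inequality I split $\delta Y_{s,v} = \delta M_{s,v} + \int_s^v b_r(Y_r)\dd r + \int_s^v (A,A')_r Y_r \dd \bX_r$ and bound each part.

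\begin{itemize}
\item The martingale term is bounded by $\|M\|_{p,m,[s,t]}$. For the supremum norm we use Doob/BDG, noting that by \cite[Remark 2.14]{AllanPieper2026} the $p,m$-norm controls the relevant increments. Alternatively, we keep $\|\sup|\delta M|\|_{L^m}\lesssim \|M_T\|_{L^m}$ inside $\eta$; this is finite since $M\in\cM^{p,m}$.
\item The drift term is bounded by $(t-s)\big(\|b\|_\infty + \|Y_s\|_{L^m} + \|Y\|_{p,m,[s,t)}\big)$, using linear growth. The part $(t-s)\|b(0)\|$ goes into $\eta$, and $(t-s)$ is absorbed into $w_1$.
\item For the rough stochastic integral I invoke \cref{lemma: existence of rough stochastic integrals}, i.e. \eqref{eq: p,q,r,s- estimate for rough stochastic integral} and \eqref{eq: Lqr- estimate for rough stochastic integral}, with integrand $(AY, (A'+A^2)Y)$. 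The needed norms of this integrand follow from the product estimates \cite[Lemma 4.6 \& Lemma 4.9]{AllanPieper2026}, exactly as in the proof of \cref{proposition: exsitence of additive linear RSDEs with random coeff}. Roughly, $\|AY\|_{p,m,[s,t)} \lesssim_K \|Y\|_{p,m,[s,t)} + \|A\|_{p,m,\infty,[s,t)}\|Y_s\|_{L^m}$. The $\E_\cdot R^{AY}$ term is handled analogously using $\|\E_\cdot R^{A,A'}\|_{\frac p2,\infty}$ and $\|\E_\cdot R^Y\|_{\frac p2,m}$, and $\sup_u\|(A'+A^2)_uY_u\|_{L^m}\lesssim_K \|Y_s\|_{L^m}+\|Y\|_{p,m,[s,t)}$.
\item The supremum term $\|\sup_v|\int_s^v\dots|\|_{L^m}$ comes from \cite[Lemma 4.13]{AllanPieper2026}, as in \cref{corollary: the fixed point argument is applicable for mixed moment assumptions}.
\item The remainder $\E_s R^Y_{s,t}$ equals $\E_s\big[\int_s^t b\,\dd r\big] + \E_s\big[\int_s^t (A,A')Y\dd\bX - A_sY_s\delta X_{s,t}\big]$, since $\E_s[\delta M_{s,t}]=0$. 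This is bounded by \eqref{eq: Lqr- estimate for rough stochastic integral} plus the drift contribution.
\end{itemize}

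Collecting terms gives \eqref{eq: assumption Grönwall type lemma} with $C=C(p,m,K)$, with
\[
w_1(s,t)=\|\bX\|_{p,[s,t]}^p+\|A,A'\|^p_{p,m,\infty,[s,t]}+(t-s)
\]
(up to $p$-th powers, using superadditivity of these controls), and with $\eta\lesssim_K \|M\|_{p,m,[0,T]}+\|M_T\|_{L^m}+\|b\|_\infty$. The estimates produce terms like $\|Y\|_{p,m,[s,t)}w_1^{1/p}$ plus the left-point values $\|Y_s\|_{L^m}$, $\|R^1_s\|_{L^m}=\|A_sY_s\|_{L^m}\le K\|Y_s\|_{L^m}$, which matches the structure the lemma requires.

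Applying \cref{lemma: gronwall type result p var} then bounds the left-hand side by
\[
\Gamma\exp\Big(\beta(1+N_\alpha)\big(1+\sup w_1(t-,t)^{1/p}\big)\Big)\big(\|\xi\|_{L^m}+K\|\xi\|_{L^m}+\eta\big).
\]
Then $\|\sup_v|Y_v|\|_{L^m}\le\|\xi\|_{L^m}+\|\sup_v|\delta Y_{0,v}|\|_{L^m}$. It remains to check that $N_\alpha(w_1(\cdot,\cdot-))$ and the jump size are bounded in terms of $K$ only. The jump size is clear, since $w_1(t-,t)\le w_1(0,T)\lesssim K^p$. The count follows from superadditivity: $N_\alpha\le w_1(0,T)/\alpha+1$.

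The main obstacle is the first step. One must ensure that every term of the rough integral estimate, notably $\sup_u\|Y'_u\|_{L^n}$ in \eqref{eq: p,q,r,s- estimate for rough stochastic integral}, is localised to $[s,t)$, expressed through $\|Y_s\|_{L^m}$ plus $p$-variation over $[s,t)$, and multiplied by a factor $w_1(s,t)^{1/p}$. Only then does the right-open-interval form required by the Grönwall lemma hold. Linear growth of $b$ also breaks boundedness; I would treat it as a $(t-s)$-Lipschitz perturbation and absorb it into $w_1$ as above.
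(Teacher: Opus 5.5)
Your proposal is correct and is essentially the paper's argument. It derives a local estimate on $[s,t]$ (linear growth for the drift, Doob/BDG for $M$, and the Allan--Pieper rough stochastic integral and product bounds for $\int_s^{\cdot}(A,A')_rY_r\dd\bX_r$), closed by the rough stochastic Gr\"onwall lemma \cref{lemma: gronwall type result p var}. Your explicit choices $R^1=AY$, $R^2=X$, $S^1=S^2=0$ and $w_1$ built from $\|\bX\|_{p,[s,t]}^p$, $\|A,A'\|^p_{p,m,\infty,[s,t]}$ and $(t-s)$ only make precise what the paper's short proof leaves implicit (as in its proof of \cref{proposition: exsitence of additive linear RSDEs with random coeff}); the remaining bookkeeping is routine, namely re-adding $(A'_s+A_s^2)Y_s\X_{s,t}$ in the remainder and absorbing $\|AY\|_{p,m,[s,t]}$ via the bound on $\|Y\|_{p,m,[s,t]}$.
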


\begin{proof}
    By Fubini's theorem and the assumed linear growth we have that 
    \begin{equation*}
        \bigg\|\sup_{v\in [s,t]} \bigg| \int_s^v b_u(Y_u) \dd u \bigg|\bigg\|_{L^m} \leq  \bigg\| \bigg| \int_s^t |b_u(Y_u)| \dd u \bigg|\bigg\|_{L^m}\leq \|b\|_{\infty}\Big(1+ \|Y_s\|_{L^m}+ \|Y\|_{p,m,[s,t)}\Big)(t-s).
    \end{equation*}
    By the BDG inequality, we have that 
    \begin{align*}
       \Big\| \sup_{v\in [s,t]} |M_v| \Big\|_{L^m}\lesssim \|M\|_{p,m,[s,t]}.
    \end{align*}
    Further, by \cite[Lemma 4.6]{AllanPieper2026} and analogous estimates as in \cite[Proposition 3.3]{BCN24} we have that
    \begin{align*}
        \bigg\|\sup_{v\in [s,t]} \bigg|\int_s^v \big(A_u Y_u,  A'_u Y_u + A_u^2 Y_u \big)\, \dd \bX_u \bigg| \bigg\|_{L^m}\lesssim
          \Big( \|Y_s\|_{L^m}+\|Y\|_{p,m,[s,t)} + \|\E_\cdot R^{Y}\|_{\frac{p}{2},m,[s,t)}\Big)\|\bX\|_{p,[s,t]},
    \end{align*}
    where the multiplicative constant only depends on $p,m$ and $K$. Thus, together with \cite[Lemma 4.8]{AllanPieper2026} we have that 
    \begin{align*}
        &\Big\| \sup_{v\in [s,t]}|Y_v| \Big\|_{L^m} +\|Y\|_{p,m,[s,t]}+\|\E_\cdot R^{Y}\|_{\frac{p}{2},m,[s,t]} \\
        &\lesssim_{p,m,K} \Big( \|Y_s\|_{L^m}+\|Y\|_{p,m,[s,t)} + \|\E_\cdot R^{Y}\|_{\frac{p}{2},m,[s,t)}\Big)\big(\|\bX\|_{p,[s,t]}+(t-s)\big) + (t-s)+ \|N\|_{p,m, [s,t]}
    \end{align*}
    for any $(s,t)\in \Delta_{[0,T]}$ and thus the claim follows by \cref{lemma: gronwall type result p var}.
\end{proof}

It will turn out to be useful to also have the following variant of the previous result, which follows directly by the bounds in \cite[Lemma 4.3]{AllanPieper2026}, \cite[Proposition 3.3]{BCN24} and the previously established a priori bound:

\begin{corollary}\label{corollary: a priori control bound linear additive RSDE}
    Let $p,m,\xi, b, N, \bX, (A,A')$ and $K$ as in \cref{proposition: a priori estimates additive linear RSDE}, then we also have for every $(s,t) \in \Delta_{[0,T]}$, that 
    \begin{equation*}
       \|Y\|_{p,m,[s,t]} \lesssim \|\bX\|_{p,[s,t]}+ \|M\|_{p,m, [s,t]}
    \end{equation*}
    and
    \begin{equation*}
       \|\E_{\cdot}R^{Y}\|_{\frac{p}{2}, m,[s,t]} \lesssim (\|\bX\|_{p,[s,t]}+ \|A,A'\|_{p,m, \infty,[s,t]})^2
    \end{equation*}
    where the multiplicative constant may only depend on $p,m$ and $K$.
\end{corollary}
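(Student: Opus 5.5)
The statement to be proved is the local-in-time bound in \cref{corollary: a priori control bound linear additive RSDE}. The plan is to rerun the local estimates from the proof of \cref{proposition: a priori estimates additive linear RSDE} on $[s,t]$. This time I will not feed them into the Grönwall lemma. Instead I will close them using the global a priori bound \eqref{eq: a priori bound for linear RSDE}, which is available because $K$ controls all the data.

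\textbf{Step 1: increment of $Y$.} Fix $(s,t)\in\Delta_{[0,T]}$ and write
\[
\delta Y_{s,t}=\delta M_{s,t}+\int_s^t b_u(Y_u)\dd u+\int_s^t (AY,(A'+A^2)Y)_u\dd\bX_u .
\]
The martingale part contributes $\|M\|_{p,m,[s,t]}$. The drift part is bounded by $\|b\|_\infty(1+\sup_v\|Y_v\|_{L^m})(t-s)$, using linear growth.

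\textbf{Step 2: the rough integral.} For the rough integral I use \eqref{eq: p,q,r,s- estimate for rough stochastic integral} from \cref{lemma: existence of rough stochastic integrals}, with germ $(AY,(A'+A^2)Y)$. Its norms $\|AY\|_{p,m,[s,t)}$, $\|(A'+A^2)Y\|_{p,m,[s,t)}$ and $\|\E_\cdot R^{AY}\|_{\frac p2,m,[s,t)}$ are controlled by product estimates as in \cite[Proposition 3.3]{BCN24}. These are bounded by $\|A,A'\|_\infty$ and $\|A,A'\|_{p,m,\infty}$ times $\sup_v\|Y_v\|_{L^m}+\|Y\|_{p,m,[0,T]}+\|\E_\cdot R^Y\|_{\frac p2,m,[0,T]}$. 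By \eqref{eq: a priori bound for linear RSDE} this last quantity is at most $\tilde\Gamma(p,m,K)$. Adding the leading term $A_sY_s\delta X_{s,t}$, I obtain
\[
\|\delta Y_{s,t}\|_{L^m}\lesssim_{p,m,K}\|\bX\|_{p,[s,t]}+(t-s)+\|M\|_{p,m,[s,t]} .
\]

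\textbf{Step 3: summing to $p$-variation.} Raise this to the power $p$ and sum over a partition of $[s,t]$. Since $\|\bX\|^p_{p,[u,v]}$, $\|M\|^p_{p,m,[u,v]}$ and $(v-u)^p$ are superadditive, this gives the first claim. The term $(t-s)$ is absorbed into the constant, since $b$ is only relevant through $K$; otherwise one keeps $(t-s)$ as an additional control, which is harmless.

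\textbf{Step 4: the remainder.} Conditioning kills the martingale increment, so
\[
\E_u R^Y_{u,v}=\E_u\Big[\int_u^v(AY,\dots)\dd\bX-A_uY_u\delta X_{u,v}\Big]+\E_u\int_u^v b .
\]
The first piece is bounded by \eqref{eq: Lqr- estimate for rough stochastic integral}, namely by
\[
\|\E_\cdot R^{AY}\|_{\frac p2,m,[u,v)}\|X\|_{p,[u,v]}+\|(A'+A^2)Y\|_{p,m,[u,v)}\|\X\|_{\frac p2,[u,v]} ,
\]
plus the germ term $(A'_u+A_u^2)Y_u\X_{u,v}$. Expanding $R^{AY}$ via Chen gives the contribution $\|A\|_{p,m,\infty,[u,v]}\|X\|_{p,[u,v]}$ together with $\|\bX\|^2_{p,[u,v]}$. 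With the global bounds from \eqref{eq: a priori bound for linear RSDE} this yields the quadratic control $(\|\bX\|_{p,[u,v]}+\|A,A'\|_{p,m,\infty,[u,v]})^2$. Summing the $\frac p2$-powers is again fine by superadditivity, and the drift contribution $(v-u)$ is handled the same way as before.

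\textbf{Main obstacle.} The hardest step is bookkeeping the product estimate for $\E_\cdot R^{AY}$ in the mixed-moment, càdlàg setting. Stochastic $A$ forces the $L^{m,\infty}$ norm on $A$ and $L^m$ on $Y$, via the Hölder inequality \eqref{eq: Holder's inequality for q,r,s norm}. The terms must also be arranged so that every factor is a genuine local control rather than a global norm. I would proceed as in \cite[Lemma 4.9]{AllanPieper2026}, keeping all global norms inside the $K$-dependent constant.
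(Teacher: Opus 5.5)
Your route is the paper's: its one-line proof derives the corollary from \eqref{eq: Lqr- estimate for rough stochastic integral}/\eqref{eq: p,q,r,s- estimate for rough stochastic integral}, the product bounds of \cite[Proposition 3.3]{BCN24} and the global bound \eqref{eq: a priori bound for linear RSDE}. Steps 1--3 prove the first estimate, apart from the drift issue below. There only a bound linear in $\|\bX\|_{p,[s,t]}$ is needed, so every global norm may indeed go into the constant. Two points do not go through as written.

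\textbf{Step 4 does not close.} The identity behind your expansion is
\[
\E_a R^{AY}_{a,b}=\E_a\big[R^{A}_{a,b}\big]Y_a+A_a\,\E_a R^{Y}_{a,b}+\E_a\big[\delta A_{a,b}\,\delta Y_{a,b}\big].
\]
The first and third terms do give contributions $\lesssim_K\|A,A'\|_{p,m,\infty,[u,v]}\|X\|_{p,[u,v]}$. The middle term, missing from your list, gives $\|A\|_\infty\|\E_\cdot R^Y\|_{\frac p2,m,[u,v)}\|X\|_{p,[u,v]}$, i.e.\ the very quantity you are estimating.

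If you bound it globally, as your last paragraph proposes, you are left with $K\tilde\Gamma\,\|X\|_{p,[u,v]}$. This is linear in the local control. It is therefore not $\lesssim(\|\bX\|_{p,[u,v]}+\|A,A'\|_{p,m,\infty,[u,v]})^2$ on short intervals. It is also not summable in $\frac p2$-variation, because $\|X\|_{p,[\cdot,\cdot]}^{p/2}$ is not superadditive.

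You need a closing step. Set $\Psi(u,v):=\|\E_\cdot R^Y\|_{\frac p2,m,[u,v]}$ and $Q(u,v):=(\|\bX\|_{p,[u,v]}+\|A,A'\|_{p,m,\infty,[u,v]})^2$. Sum the $\frac p2$-powers of the local bound over a partition of $[u,v]$, using $\Psi(a,b-)\le\Psi(a,b)$, superadditivity of $\Psi^{p/2}$, and superadditivity (up to constants) of the $p$-th powers of the controls. This gives
\[
\Psi(u,v)\le C_K\,Q(u,v)+C_K\,\|X\|_{p,[u,v]}\,\Psi(u,v).
\]
If $C_K\|X\|_{p,[u,v]}\le\frac12$, absorb the last term; this is legitimate since $\Psi\le\tilde\Gamma<\infty$. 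Otherwise $\Psi(u,v)\le\tilde\Gamma\le 4C_K^2\tilde\Gamma\,\|X\|_{p,[u,v]}^2$. Either way you get the second estimate with a constant depending only on $p,m,K$. The paper's one-line proof does not spell this step out either.

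\textbf{The drift cannot be absorbed into the constant.} Take $\bX\equiv0$, $M\equiv0$, $(A,A')\equiv0$ and $b\equiv1$. Then $Y_t=\xi+t$, so $\|Y\|_{p,m,[s,t]}=\|\E_\cdot R^Y\|_{\frac p2,m,[s,t]}=t-s$, while both right-hand sides vanish. So the corollary holds only for $b\equiv0$, which is the case used in the proof of \cref{lemma: Duhamel Y is strongly stochastically controlled}, or with an additional $(t-s)$ control on the right. Your fallback of keeping $(t-s)$ is a modified statement, not an absorption.
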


We will need the following sequential stability result for linear rough SDEs. Notice, that local Lipschitz estimates for the solutions of linear rough SDEs with respect to the coefficient fields (see e.g. \cite[Theorem 3.13]{BCN24}) usually include a loss of integrability which would rule out linear RSDEs driven by general Lévy noise. 

\begin{lemma}\label{lemma: stability of linear RSDEs without integrability loss}
     Let $p \in [2,3)$, $m \in [2,\infty)$ and let $\bX$ be a $p$-rough path and $(\bX^n)_{n \in \N}$ a sequence of $p$-rough paths such that
     \begin{equation*}
         \|\bX^n- \bX\|_{\tp,[0,T]} \longrightarrow 0
     \end{equation*}
     as $n\to \infty$ for some $\tp>p$ as well as  $(A,A') \in \sV^{p,m,\infty}_{X} \cL(W, \cL(\R^n, W))$ and $((A^n, (A^n)')_{n\in \N})\subset \sV^{p,m,\infty}_{X^n}\cL(W, \cL(\R^n, W))$ a sequence such that 
     \begin{equation*}
         \sup_{t\in [0,T]}\Big(|A_t- A^n_t|_{\cL} + |A'_t- (A^n)'_t|_{\cL}\Big) \longrightarrow 0 \qquad \P\text{-a.s.}
     \end{equation*}
     as $n\to \infty$. Also assume that there exists a $K>0$ such that 
     \begin{equation}\label{eq: uniform boundedness of A^ns}
        \sup_n\|A^n, (A^n)'\|_{p,m,\infty, [0,T]} \vee     \|A, A'\|_{p,m,\infty, [0,T]} \vee \sup_n\|\bX^n\|_{p,[0,T]} \vee \|\bX\|_{p,[0,T]} \leq K.
     \end{equation}
     Then, if $(Y,AY)$ and $(Y^n, A^nY^n)$ are the solutions to \eqref{eq: linear_RSDE} with initial data $(\xi, M, (A,A'),\bX)$ and  $(\xi, M, (A^n,(A^n)'),\bX^n)$ respectively, we have that
     \begin{equation*}
         \Big\|\sup_{t\in [0,T]} |Y^n_t-Y_t| \Big\|_{L^m} \longrightarrow 0
     \end{equation*}
     as $n \to \infty$.
\end{lemma}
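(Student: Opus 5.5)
The plan is to compare $Y^n$ and $Y$ on the common rough path $\bX$ first, and only afterwards control the change of driver. I will introduce an intermediate solution $\hat Y^n$ of \eqref{eq: linear_RSDE} with data $(\xi, M, (A^n,(A^n)'), \bX)$, or, if $(A^n,(A^n)')$ is not $X$-controlled, the analogous intermediate with $(A,A')$ on $\bX^n$. The difference $Y^n - Y$ is then split into two pieces: one with frozen driver and varying coefficients, and one with frozen coefficients and varying driver. Setting $D^n := Y^n - Y$, both $Y^n$ and $Y$ are $L^m$-solutions, and the a priori bound \cref{proposition: a priori estimates additive linear RSDE} together with \eqref{eq: uniform boundedness of A^ns} bounds $\|Y^n\|_{p,m,[0,T]}$, $\|\E_\cdot R^{Y^n}\|_{\frac p2,m,[0,T]}$ and $\|\sup_t|Y^n_t|\|_{L^m}$ uniformly in $n$. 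This uniform bound is what prevents any loss of integrability.

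Next I will derive a local estimate of the form \eqref{eq: assumption Grönwall type lemma} for $D^n$, with remainder $R = R^{Y^n} - R^{Y}$ satisfying the perturbed Chen relation \eqref{eq: Chen for remainders perturbed}. The natural choice is $R^1 = A^nY^n - AY$ and $R^2 = X$, with $S^1 = A^nY^n$ and $S^2 = X^n - X$. Using \cite[Lemma 4.6 \& Lemma 4.9]{AllanPieper2026} as in the proof of \cref{proposition: exsitence of additive linear RSDEs with random coeff}, the increment of $\int (A^n,(A^n)')Y^n\dd\bX^n - \int (A,A')Y\dd\bX$ is bounded by two contributions. The first is linear in the $D^n$-quantities times $\|\bX\|_{p,[s,t]}$. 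The second is an error term
\[
\eta_n \lesssim_K \Big\|\sup_t |(A^n-A)_t Y^n_t| + |((A^n)'-A')_tY^n_t|\Big\|_{L^m} + \text{(a $\tp$-variation term in } \bX^n-\bX).
\]
Here I will work in $\tp$-variation rather than $p$-variation, which costs nothing since the Grönwall lemma \cref{lemma: gronwall type result p var} applies with $\tp$ in place of $p$. Applying that lemma then gives $\|\sup_t|D^n_t|\|_{L^m}\lesssim_K \eta_n$, with a constant uniform in $n$ by \eqref{eq: uniform boundedness of A^ns}.

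The remaining task is to show $\eta_n\to0$. The coefficient part tends to zero by dominated convergence: $\sup_t|A^n_t-A_t|\to0$ almost surely, the $\|A^n\|_\infty$ are uniformly bounded, and $\sup_t|Y^n_t|$ is bounded in $L^{m'}$ for some $m'>m$. If no higher moments are available, I will instead use uniform integrability of $(\sup_t|Y^n_t|)^m$, which I expect to obtain from the a priori bound applied with the same $m$ together with Vitali's theorem.

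The main obstacle is the rough-path-difference term, which requires controlling $\|A^nY^n\|_{\tp,m,[0,T]}\,\|\bX^n-\bX\|_{\tp,[0,T]}$ and the corresponding second-level term without introducing products of stochastic norms; those products are exactly what causes the integrability loss in \cite[Theorem 3.13]{BCN24}. Because the rough paths are deterministic, this term is simply a uniformly bounded stochastic norm times a deterministic factor tending to zero, so it vanishes. The delicate point is checking that the coefficient-difference terms enter only through $\|\sup|A^n-A|\cdot|Y^n|\|_{L^m}$ and similar quantities, and never through $\|A^n-A\|_{p,m,\infty}$, which need not converge. I would try to arrange this by interpolating the $p$-variation of $A^n-A$ between its uniform convergence and the uniform $p$-variation bound, which yields $\tp$-variation convergence.
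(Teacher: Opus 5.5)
Your architecture is the paper's: a local estimate for $D^n=Y^n-Y$ in $\tp$-variation, the perturbed Chen relation with $R^1=A^nY^n-AY$, $R^2=X$, $S^1=A^nY^n$, $S^2=X^n-X$, the Grönwall lemma with $\tp$ in place of $p$, and then $\eta_n\to0$. The gap is in the last step, and it comes from how you add and subtract.

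\textbf{The failing step.} You put the coefficient error against $Y^n$, i.e.\ $A^nY^n-AY=A(Y^n-Y)+(A^n-A)Y^n$. So $\eta_n$ contains $\|\sup_t|(A^n-A)_tY^n_t|\|_{L^m}$ and its $\tp$-variation and remainder analogues. Neither of your justifications works.
\begin{itemize}
\item The hypotheses give only $\xi\in L^m$ and $M\in\cM^{p,m}$, so there is no uniform $L^{m'}$ bound on $\sup_t|Y^n_t|$ for any $m'>m$.
\item A uniform bound on $\|\sup_t|Y^n_t|\|_{L^m}$ gives uniform integrability of $(\sup_t|Y^n_t|)^{m'}$ only for $m'<m$, not for $m'=m$. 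So Vitali's theorem does not apply.
\item Writing $Y^n=Y+D^n$ inside $\eta_n$ does not rescue it. The cross term $\|\sup_t|(A^n-A)_t|\,|D^n_t|\|_{L^m}$ is the quantity you are estimating times an $O(1)$ constant. As part of a global error term it cannot be absorbed, since the Grönwall lemma only absorbs terms carrying the small factor $w_1(s,t)^{1/\tp}$.
\end{itemize}

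\textbf{The fix.} Split the other way, as the paper does: $A^nY^n-AY=A^n(Y^n-Y)+(A^n-A)Y$.
\begin{itemize}
\item The $A^n(Y^n-Y)$ part goes into the Grönwall structure through the control $w_1^n(s,t)=\|\bX\|_{\tp,[s,t]}^{\tp}+\|A^n\|_{\tp,m,\infty,[s,t]}^{\tp}$. Its total size and jumps, hence the Grönwall constant, are bounded uniformly in $n$ by $K$. Note that your local estimate needs this $\|A^n\|$-contribution to $w_1$ anyway, not only $\|\bX\|_{p,[s,t]}$, because $R^1$ sits on the left-hand side.
\item Every coefficient difference now multiplies the \emph{fixed} limit. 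The terms are $(A-A^n)Y$, $(A'-(A^n)')Y$, $(A^{\otimes2}-(A^n)^{\otimes2})Y$, $\E_\cdot(\delta A-\delta A^n)\delta Y$, $\E_\cdot(R^A-R^{A^n})Y$ and $\E_\cdot(A^n-A)R^{Y}$. Here $\sup_t|(A^n-A)_tY_t|\to0$ almost surely and is dominated by an $n$-independent constant times $\sup_t|Y_t|\in L^m$, so dominated convergence applies.
\item The $\tp$-variation norms then follow by interpolation in $L^m$-based norms, e.g.\ $\|(A-A^n)Y\|_{\tp,m,[0,T]}\lesssim\|\sup_t|(A-A^n)_tY_t|\|_{L^m}^{1-p/\tp}$ times the uniformly bounded $p$-variation norm to the power $p/\tp$.
\end{itemize}

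\textbf{Two smaller points.} Do not interpolate $\|A^n-A\|_{\tp,m,\infty}$ itself. That would need $\sup_t|A^n_t-A_t|\to0$ in $L^\infty(\Omega)$, but you only have almost sure convergence; interpolate the products instead. Also drop the intermediate solution $\hat Y^n$. It is never used, and neither version is well posed: $(A^n,(A^n)')$ is controlled by $X^n$ and $(A,A')$ by $X$, so neither can be integrated against the other rough path.
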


\begin{proof}
    Fix any $\epsilon>0$, then by definition, \cite[Lemma 4.6 \& Lemma 4.9]{AllanPieper2026} and adding zero we have that 
    \begin{align*}
         &\Big\| \sup_{v\in[s,t]} |\delta Y_{s,v}-\delta Y^{n}_{s,v}| \Big\|_{L^m} +\|Y- Y^{n}\|_{\tp ,m,[s,t]}+\|\E_\cdot (R^{Y}-R^{Y^{n}})\|_{\frac{\tp}{2},m,[s,t]}\\
         &\lesssim \Big(\|(A-A^n) Y\|_{\tp,m ,[s,t)}+ \|(A'-(A^n)') Y\|_{\tp,m ,[s,t)}+ \|(A^{\otimes 2}-(A^n)^{\otimes 2}) Y\|_{\tp,m ,[s,t)} \\
         &\qquad + \|(A'_s - (A^n)'_s) Y_s\|_{L^m}  +\|(A^{\otimes 2}_s- (A^n)^{\otimes 2}_s)  Y_s\|_{L^m}+\|(A_s-A^n_s) Y _s\|_{L^{m}}\Big) \|\bX\|_{\tp,[s,t]}\\
         &\quad + \|\E_{\edot}(R^{AY}-R^{A^nY^n}) \|_{\frac{\tp}{2},m ,[s,t)} \|\bX\|_{\tp,[s,t]}\\
         &\quad +\Big( \|A^n_s (Y _s- Y^n_s)\|_{L^{m}}+ \|A^n (Y- Y^n)\|_{\tp,m, [s,t)} +\|(A^n)' (Y-Y^n)\|_{\tp, m ,[s,t)} \\
         &\qquad+\|(A^n)^{\otimes 2} (Y-Y^n)\|_{\tp, m ,[s,t)}+ \| (A^n)'_s (Y_s-Y^n_s)\|_{L^m}+\|(A^n)^{\otimes 2}_s  (Y_s-Y^n_s)\|_{L^m}\Big)\|\bX\|_{\tp,[s,t]}\\
         &\quad+ \Big(\|AY\|_{\tp,m,[0,T)} + \|(AY)'\|_{\tp,m,[0,T)} + \|\E_{\edot} R^{AY}\|_{\frac{\tp}{2},m,[0,T)}\\
         &\qquad + \sup_{u \in [0,T)} \|(AY)'_u\|_{L^{m}} + \sup_{u \in [0,T)} \|A_u Y_u\|_{L^{m}}\Big) \|\bX - \bX^n\|_{\tp, [s,t]}
    \end{align*} 
    Further, notice that 
    \begin{equation*}
        R^{AY}_{s,t}= \delta A_{s,t}\delta Y_{s,t} + A_s R^{Y,Y'}_{s,t}+ R^{A}_{s,t} Y_s
    \end{equation*}
    and as the analogous relation holds for $R^{A^n Y^n}$ we therefore find that 
    \begin{align*}
        &\|\E_{\edot}(R^{AY}-R^{A^nY^n}) \|_{\frac{\tp}{2},m ,[s,t)} \\
        &\lesssim \|\E_{\edot}(\delta A- \delta A^n) \delta Y \|_{\frac{\tp}{2},m ,[s,t)}+\|\E_{\edot}(R^A- R^{A^n})Y \|_{\frac{\tp}{2},m ,[s,t)} +  \|\E_{\cdot} (A^n-A)R^{Y}\|_{\frac{\tp}{2}, m, [s,t)}\\
        &\quad + \Big(\|Y_s-Y^n_s\|_{L^m} + \|Y-Y^n\|_{\tp ,m,[s,t)} + \|\E_{\cdot} R^{Y,Y'}-R^{Y^n}\|_{\frac{\tp}{2}, m, [s,t)}\Big),
    \end{align*}
    where we use the abuse of notation for $(A_s)_{s\in [0,T]}$ a one-parameter process and $(B_{s,t})_{(s,t)\in \Delta_{[0,T]}}$ to write $(AB)_{s,t}:= A_s B_{s,t}$.
    Thus, using the assumed bounds on $(A,A'), (A^n, (A^n)')$ the a priori bounds on $Y$ and $Y^n$ provided by \cref{proposition: a priori estimates additive linear RSDE} where we note that these bounds may be chosen independently of $n\in \N$ due to \eqref{eq: uniform boundedness of A^ns} and similar calculations to those in the proof of \cite[Theorem 3.3]{BCN24}, we find that 
    \begin{align*}
         &\Big\| \sup_{v\in[s,t]} |\delta Y_{s,v}-\delta Y^{n}_{s,v}| \Big\|_{L^m} +\|Y- Y^{n}\|_{\tp ,m,[s,t]}+\|AY-A^nY^n\|_{\tp,m, [s,t]} +\|\E_\cdot (R^{Y}-R^{Y^{n}})\|_{\frac{\tp}{2},m,[s,t]}\\
         &\lesssim \Big(\|Y_s-Y^n_s\|_{L^m} + \|Y-Y^n\|_{\tp ,m,[s,t)} + \|\E_{\cdot} R^{Y,Y'}-R^{Y^n,(Y^n)'}\|_{\frac{\tp}{2}, m, [s,t)}\Big)\\
         &\quad \times (\|\bX\|_{\tp,[s,t]} + \|A^n\|_{\tp,m,\infty, [s,t]})  + \eta_n
    \end{align*}
    where we used that $\|A^n(Y-Y^n)\|_{\tp,m, [s,t]} \lesssim \|Y-Y^n\|_{p,m,[s,t]} + \|A^n\|_{\tp,m,\infty,[s,t]} \|Y_s-Y^n_s\|$ and define the control $\eta_n$ to be 
     \begin{equation}\label{eq: definition of eta_n}
    \begin{split}
         \eta_n &:= \|(A-A^n) Y\|_{\tp,m ,[0,T)}+ \|(A'-(A^n)') Y\|_{\tp,m ,[0,T)}+ \|(A^{\otimes 2}-(A^n)^{\otimes 2}) Y\|_{\tp,m ,[0,T)}\\
        & \quad + \sup_{v\in [0,T]}\Big(\|(A'_v - (A^n)'_v) Y_v\|_{L^m}  +\|(A^{\otimes 2}_v- (A^n)^{\otimes 2}_v)  Y_v\|_{L^m}+ \|(A_v-A^n_v) Y _v\|_{L^{m}}\Big) \\
        &\quad + \|\E_{\edot}(\delta A- \delta A^n) \delta Y \|_{\frac{\tp}{2},m ,[0,T)}+\|\E_{\edot}(R^A- R^{A^n})Y \|_{\frac{\tp}{2},m ,[0,T)} +  \|\E_{\cdot} (A^n-A)R^{Y}\|_{\frac{\tp}{2}, m, [0,T)}\\
        &\quad+ \|\bX-\bX^n\|^{\tp}_{\tp,[0,T]}.
    \end{split}
    \end{equation}
    Thus, we may apply \cref{lemma: gronwall type result p var} with $R= R^{Y,Y'}-R^{Y^n}$, $R^1 = (Y'-(Y^n)')= (AY-A^n Y^n)$, $R^2= X$, $S^{1}= (Y^n)'=A^n Y^n$, $S^{2}= X-X^n$, $w^n_1(s,t):=(\|\bX\|^{\tp}_{\tp,[s,t]} + \|A^n\|^{\tp}_{\tp,m,\infty, [s,t]})$ and $\eta:= \eta_n$, to find that 
    \begin{equation*}
        \Big\|\sup_{t\in [0,T]} |Y^n_t-Y_t| \Big\|_{L^m} \leq C_1 \big(\|(A_0-A^n_0)\xi\|_{L^m}+ \eta_n\big)
    \end{equation*}
    where the constant $C_1$ is only depending on $K, \|\bX\|_{\tp,[0,T]}, p,m$ and $T$ and especially is independent of $n\in \N$.
    By interpolation we find that 
    \begin{equation*}
        \|(A-A^n) Y\|_{\tp,m ,[0,T]} \lesssim \Big\|\sup_{s\in [0,T]} |(A-A^n)_s Y_s| \Big\|_{L^m}^{(1-\frac{p}{\tp})} (\|AY\|_{p,m,[0,T]}+\|A^n Y\|_{p,m, [0,T]})^{\frac{p}{\tp}}.
    \end{equation*}
    As we know by \cref{proposition: a priori estimates additive linear RSDE} that
    $$\Big\|\sup_{t\in [0,T]} |A_t| \Big\|_{L^{\infty}}\vee \Big\|\sup_{t\in [0,T]} |A^n_t| \Big\|_{L^{\infty}}\vee \Big\|\sup_{t\in [0,T]} |Y_t| \Big\|_{L^m} <\infty$$
    we have by dominated convergence, that
    \begin{equation*}
        \Big\|\sup_{s\in [0,T]} |(A-A^n)_s Y_s| \Big\|_{L^m}^{\frac{p}{\tp}} \longrightarrow 0
    \end{equation*}
    and thus 
    \begin{align*}
        \|(A-A^n) Y\|_{\tp,m ,[0,T]} &\lesssim \Big\|\sup_{s\in [0,T]} |(A-A^n)_s Y_s| \Big\|_{L^m}^{(1-\frac{p}{\tp})}(\|AY\|_{p,m,[0,T]}+\|A^n Y\|_{p,m, [0,T]})^{\frac{p}{\tp}}\\
        &\lesssim \Big\|\sup_{s\in [0,T]} |(A-A^n)_s Y_s| \Big\|_{L^m}^{(1-\frac{p}{\tp})}\longrightarrow 0 
    \end{align*}
    as $n\to \infty$ as well. 
    Similarly, we can treat the terms $(A^{\otimes 2}- (A^n)^{\otimes 2})Y$ and $(A'-(A^n)')Y$, apprearing in \eqref{eq: definition of eta_n}.
    Also using interpolation, the known bounds as well as DCT we find that 
    \begin{align*}
         \|\E_{\cdot}(\delta A- \delta A^n) \delta Y \|_{\frac{\tp}{2},m ,[s,t)} \lesssim \Big\|\sup_{(s,t) \in [0,T]} (\delta A_{s,t}-\delta A^n_{s,t})\delta Y_{s,t} \Big\|^{1- \frac{p}{\tp}}_{L^m} \longrightarrow 0.
    \end{align*}
    as well as 
    \begin{align*}
        \|\E_{\cdot} (A^n-A)R^{Y}\|_{\frac{\tp}{2}, m, [0,T]}\lesssim \Big \|\sup_{(s,t)\in \Delta_{[0,T]}} (A_s-A^n_s) R^{Y,Y'}_{s,t} \Big\|^{1-\frac{p}{\tp}}_{L^m} \longrightarrow 0
    \end{align*}
    as $n\to \infty$.
    Finally, we note that 
    \begin{align*}
        \sup_{(s,t)\in \Delta_{[0,T]}} |R^A_{s,t}-R^{A^n}_{s,t}|&\lesssim \sup_{s\in [0,T]} |A_s-A^n_s|+ \sup_{s\in [0,T]} |A'_s-(A^n)'_s|\|\bX\|_{\tp,[0,T]}\\
        &\quad+ \sup_{s\in [0,T]} |(A^n)'_s| \|\bX-\bX^n\|_{\tp, [0,T]} \to 0 \quad \P\text{-a.s.}
    \end{align*}
    and thus by similar interpolation arguments as before we also know that 
    \begin{equation*}
       \|\E_{\edot}(R^A- R^{A^n})Y \|_{\frac{\tp}{2},m ,[0,T]} \longrightarrow 0
    \end{equation*}
    which all together gives $\eta_n \to 0$ as $n\to \infty$, and thus yet another DCT argument shows that
    \begin{equation*}
        \Big\|\sup_{t\in [0,T]} |Y^n_t-Y_t| \Big\|_{L^m} \leq C_1 \big(\|(A_0-A^n_0)\xi\|_{L^m}+ \eta_n\big) \longrightarrow 0
    \end{equation*}
    as $n\to \infty$ as wanted.
\end{proof}

\printbibliography
\end{document}